\newcommand{\hatStwofour}{\hat{\mathbb{S}}^{2,4} }
\newcommand{\M}{\mathcal{M}} 
\newcommand{\Mh}{\mathcal{M}_h}
\newcommand{\Fix}{\mathsf{Fix}} 
\newcommand{\proj}{\mathsf{proj}} 
\newcommand{\spann}{\mathsf{span}}
\newcommand{\V}{\mathcal{E}}
\newcommand{\Gtwo}{\mathsf{G}_2}
\newcommand{\Gtwosplit}{\mathsf{G}_2'}
\newcommand{\Ein}{\mathsf{Ein}} 
\newcommand{\tu}{\tilde{u}}
\newcommand{\imoct}{\mathsf{Im}\, \Oct'}
\newcommand{\bioct}{ (\Oct')^\C }
\newcommand{\Gr}{\mathsf{Gr}} 
\newcommand{\Stab}{\mathsf{Stab}} 
\newcommand{\R}{\mathbb R}
\newcommand{\RP}{\mathbb R \mathbb P}
\newcommand{\CP}{\mathbb C \mathbb P}
\newcommand{\C}{\mathbb C}
\newcommand{\Z}{\mathbb Z}
\newcommand{\Ha}{\mathbb{H}} 
\newcommand{\F}{\mathbb{F}}
\newcommand{\g}{\mathfrak{g}}
\newcommand{\frakk}{\mathfrak{k}} 
\newcommand{\frakp}{\mathfrak{p}} 
\newcommand{\h}{\mathfrak{h}}
\newcommand{\K}{\mathcal{K}}
\newcommand{\End}{\mathsf{End}}
\newcommand{\Stwofour}{\mathbb{S}^{2,4}} 
\newcommand{\quadric}{\hat{ \mathbb{S}}^{2,4}}
\newcommand{\SL}{\mathfrak{sl}}
\newcommand{\GL}{\mathsf{GL}}
\newcommand{\gl}{\mathfrak{gl}}
\newcommand{\tr}{\text{tr}}
\newcommand{\id}{\text{id}}
  \newcommand{\Aut}{\mathsf{Aut}}
 \newcommand{\Hom}{\mathsf{Hom}}
 \newcommand{\Fr}{\mathsf{Fr}\,} 
 \newcommand{\fraks}{\mathfrak{s}} 
 \newcommand{\Ad}{\text{Ad}} 
 \newcommand{\ad}{\text{ad}} 
 \newcommand{\del}{\partial}
\newcommand{\delbar}{\overline{\partial}}
\newcommand{\Ann}{\mathsf{Ann}}
\newcommand{\bm}{\boldsymbol} 
\newcommand{\gr}{\mathsf{gr}}
\newcommand{\der}[1]{\frac{\partial}{\partial #1}} 
\newcommand{\deriv}[2]{\frac{\partial #1}{\partial #2}} 
\newcommand{\sgn}{\text{sgn}}
\newcommand{\zbar}{\bar{z}} 
\newcommand{\selfarrow}{\lcirclearrowleft}
\newcommand{\im}{\text{im}}
\newcommand{\Diff}{\mathsf{Diff}} 
\newcommand{\Isom}{\mathsf{Isom}} 
\newcommand{\Eintwothree}{\mathsf{Ein}^{2,3}} 
\newcommand{\Oct}{\mathbb{O}} 
\newcommand{\diag}{\mathsf{diag}}
\theoremstyle{plain}
\newtheorem{theorem}{Theorem}[section]
\newtheorem{proposition}[theorem]{Proposition}
\newtheorem{lemma}[theorem]{Lemma}
\newtheorem{corollary}[theorem]{Corollary}
\newtheorem{definition}[theorem]{Definition}
\newtheorem{conjecture}[theorem]{Conjecture} 
\newtheorem{remark}[theorem]{Remark}
\newcommand{\alignL}{\begin{flushleft}}
\newcommand{\alignLend}{\end{flushleft}}
\title{Polynomial Almost-Complex Curves in $\quadric$} 
\author{Parker Evans\footnote{The author acknowledge(s) support from U.S. National Science Foundation grants DMS 2005551, 1107452, 1107263, 1107367 "RNMS: Geometric Structures and Representation Varieties" (the GEAR Network) as well as NSF DMS-1745670.} 
\footnote{Keywords: $G_2'$, Harmonic map, almost-complex curve, sextic differential, polygon, annihilator.} 
\footnote{This work is based off of part of the author's PhD thesis \cite{Eva24}.} }
\date{April 2024}
\numberwithin{equation}{section}
\newcommand{\U}{\mathcal{U}}
\begin{document}
\maketitle

\begin{abstract}

For solutions to the $\g_2$ affine Toda field equations in $\C$ with respect to \emph{polynomial} holomorphic sextic differential $q$, we study the associated almost-complex curves $\nu_q: \C \rightarrow \quadric$. The asymptotic boundary $\Delta := \partial_{\infty}(\nu_q)$ of $\nu_q$ 
is found to be a polygon in $\Eintwothree$ with $\deg q + 6$ vertices. The polygon $\Delta$ satisfies an \emph{annihilator property}, which is related to a $\Gtwosplit$-invariant discrete metric $d_3: \Eintwothree \times \Eintwothree \rightarrow \{0,1,2,3\}$ on $\Eintwothree$. In fact, we show $\Gtwosplit = \Isom(d_3) \cap \Diff(\Eintwothree)$. 
The asymptotic boundary defines a map $\alpha: \mathsf{MS}_{k} \rightarrow \mathsf{MP}_{k+6}$ between the equidimensional moduli spaces of holomorphic polynomial sextic differentials of degree $k$ and of annihilator polygons with $k+6$ vertices and is conjectured to be a homeomorphism onto its image.
We also discuss the relationship between $\nu_q$ and a related minimal surface $f_q: \C \rightarrow \Gtwosplit/K$ in the symmetric space $\Gtwosplit/K$, showing how to realize their mutual harmonic lift to $\Gtwosplit/T$ geometrically. Before beginning the geometry, we prove the existence and uniqueness of a complete (real) solution to the $\g_2$ affine Toda field equations in $\C$ associated to 
polynomial $q \in H^0(\K_\C^6)$.

 \end{abstract}  

\newpage

\tableofcontents

\section{Introduction} 

We first provide a brief overview of the main objects and results. 
Almost complex curves $\phi: \Sigma \rightarrow (\mathbb{S}^6, J_{\mathbb{S}^6})$ from a Riemann surface $\Sigma$ are now well-studied, though the same is not true for almost-complex curves in $(\quadric, J_{\quadric})$. Let $\Gtwosplit$ denote the split real form of the complex exceptional Lie group $\Gtwo$.
In 2010, Baraglia constructed $\rho$-equivariant almost-complex curves $\nu: \widetilde{\Sigma} \rightarrow \quadric$ associated to representations $\rho: \pi_1 S \rightarrow \Gtwosplit$ in the $\Gtwosplit$-Hitchin component. However, Baraglia does not examine the geometry of such curves, and this geometry has remained unexplored until the recent works \cite{CT23, Nie24}. This chapter studies the asymptotic geometry of almost-complex curves $\nu_q: \C \rightarrow \quadric$, now in the planar setting.

The curves $\nu_q$ are associated to a holomorphic sextic differential $q \in H^0(\K^6_\C)$ by solving a $2 \times 2$ coupled system of PDE called the $\g_2$ affine Toda (field) equations (with appropriate reality conditions). First, we prove the existence and uniqueness of complete solutions to these equations when $q$ is a polynomial (cf. Theorem A). 

We next show that the asymptotic boundary $\Delta = \partial_{\infty}( \, \nu_{q} )$ of a polynomial almost-complex curve $\nu_q: \C \rightarrow \quadric$ is a polygon in the space $\Eintwothree$, the frontier of $\Stwofour$ in $\mathbb{P}( \R^{3,4})$. The polygon $\Delta$ satisfies a condition we call the \emph{annihilator property} (cf. Theorem B, Theorem B'). We then give descriptions of $\Gtwosplit$ as the subgroup of the diffeomorphism group $\Diff(\Eintwothree)$ preserving various forms of annihilators
(cf. Theorem C). Moreover, Theorem B allows us to define a map $\alpha: \mathsf{TS}_{k} \rightarrow \mathsf{TP}_{k+6}$ between a 
vector space of holomorphic polynomial sextic differentials of degree $k$ in $\C$ 
and a moduli of space of marked annihilator polygons with $k+6$ vertices.
The space $\mathsf{TS}_{k} \cong_{\mathbf{Top}} \R^{2(k-1)}$ is a cell of real dimension $2(k-1)$. On the other hand, the generic locus in $\mathsf{TP}_{k+6}$ consists of a disjoint union of cells of dimension $2(k-1)$. 
Hence, this dimension count invites the question of whether the map $\alpha$ is injective and 
moreover whether $\alpha$ furnishes a homeomorphism with a component of $\mathsf{TP}_{k+6}$.  

Along the way, we discuss the geometric relationship between the minimal surface $f_q: \C \rightarrow \Gtwosplit/K$ in the symmetric space associated to $q$ and the almost-complex curve $\nu_q$. Each map carries equivalent data, as they each lift to a harmonic map in $\Gtwosplit/T$,
where $T < K$ is a maximal torus. 

\subsection{Background} 

Harmonic maps from a Riemann surface $\Sigma$ into $\mathbb{S}^n, \RP^n, \CP^n$ are now classical \cite{BW92, EW83}. Among such target spaces, $\mathbb{S}^6$ is 
exceptional since it carries a \emph{canonical} almost-complex structure $J_{\mathbb{S}^6}$.
Any almost-complex curve $\phi: \Sigma \rightarrow (\mathbb{S}^6, J_{\mathbb{S}^6})$ is weakly conformal, harmonic, and still harmonic when regarded as a map into $\RP^6$ or $\CP^6$. Some rich geometry has been discovered by studying these almost-complex curves \cite{BPW95, BVW94, Bry82, Fer14, Gra69, MP18, ZJ12}.\\

Let us now relate $(\mathbb{S}^6, J_{\mathbb{S}^6})$ to the present setting. The almost-complex structure $J_{\mathbb{S}^6}$ is invariant under the compact real form $\Gtwo^c$ of $\Gtwo$. We study instead the almost-complex manifold $(\quadric, J_{\quadric})$, now invariant under the \emph{split} real form $\Gtwosplit$. These Lie groups can be realized as $\Gtwosplit := \Aut_{\R-\mathbf{alg}}(\Oct')$ and $ \Gtwo^c := \Aut_{\R-\mathbf{alg}}(\Oct)$, where $\Oct$ and $ \Oct'$ are the \emph{octonions} and \emph{split octonions}, respectively. Both $\Oct$ and $\Oct'$ are real algebras (with unit) over 8-dimensional vector spaces and are endowed with various algebraic structures $q, \odot, \times$, all of which are invariant under the relevant 
real form of $\Gtwo$; see Section \ref{Gtwosplit} for details.
These algebraic structures on $\Oct$ and $\Oct'$ lead to the almost-complex structures $J_{\quadric}$ and $J_{\mathbb{S}^6}$. For brevity, we describe $\Oct$ first
and note the relevant differences for $\Oct'$. The octonions are equipped with a Euclidean inner product $q_{\Oct}$. Taking the \emph{imaginary} octonions by orthogonally splitting $\Oct = \R \{1_{\Oct}\} \oplus \mathsf{Im} \, \Oct$, the space $\mathsf{Im} \, \Oct$ comes equipped with a \emph{cross-product} $\times_{\Oct}: \mathsf{Im} \, \Oct \times \mathsf{Im} \, \Oct \rightarrow \mathsf{Im} \, \Oct$ in the sense that $\times_{\Oct}$ is skew-symmetric, orthogonal (with respect to $q_{\Oct}$), and normalized by $q_{\Oct} (u \times_{\Oct} v) = q_{\Oct}(u)q_{\Oct}(v) - q_{\Oct}(u, v)^2$. Denote $Q_+(V, q) := \{ x \in V \, | \, q(x) = +1\}$. Since $q_{\Oct}$ is Euclidean, identify $\mathbb{S}^6 = Q_+(\mathsf{Im} \, \Oct)$.  Then $J_{\mathbb{S}^6}$ is defined at $x \in \mathbb{S}^6$ by identifying $T_x \mathbb{S}^6 \cong [x^\bot \subset \mathsf{Im} \, \Oct ]$ and setting $J_x(u) = x \times u$.\footnote{In fact, the complex structure on $\mathbb{S}^2$ can be realized this way. Splitting $\Ha = \R \oplus \mathsf{Im} \Ha$, one finds $(\R^3, \times) \cong (\mathsf{Im} \Ha,\times_{\Ha}) $. Then $J_{\mathbb{S}^2}$ defined by $J_{x}(u) = u \times_{\Ha} x$ gives a (trivially integrable) almost-complex structure on $\mathbb{S}^2 =Q_+(\mathsf{Im}\Ha)$.} On the other hand, $q_{\Oct'}$ is of split signature (4,4).  The pseudosphere $\quadric := Q_+(\imoct)$ also obtains a non-integrable almost-complex structure via $J_{x}(u) : = x \times_{\Oct'} u$, now using the cross-product on $\imoct$. We are naturally led to ask about almost-complex curves $\nu: \Sigma \rightarrow \quadric$ from a Riemann surface $\Sigma$. \\

The same argument as in the $\mathbb{S}^6$-case tells us almost-complex curves $\nu: \Sigma \rightarrow \quadric$ are weakly conformal, harmonic, and also harmonic as maps to $\RP^6$ and $\CP^6$.  A key difference from the $\mathbb{S}^6$-case is that the pseudo-Riemannian metric $q_{\Oct'}|_{\quadric}$ is of signature $(2,4)$. Hence, the signature of the induced metric on $S = \im\, \nu$ is not a priori prescribed. We study \emph{timelike} almost-complex curves (negative-definite induced metric) with \emph{spacelike} second fundamental form.\footnote{Alternatively, dualize with $\overline{\imoct}:= (\imoct, -q_{\imoct})$. Then the aforementioned almost-complex curves $\nu$ in $\quadric$ correspond to 
 spacelike surfaces with timelike second fundamental form in pseudo-hyperbolic space $\hat{\Ha}^{4,2} := Q_-(\overline{\imoct})$. This perspective is taken in \cite{CT23}.} Such almost-complex curves were first discussed by Baraglia in his PhD thesis \cite{Bar10}. However, Baraglia studied $\rho$-equivariant almost-complex curves $\nu_q: \widetilde{\Sigma} \rightarrow \quadric$ associated to holomorphic sextic differentials $q \in H^0(K_{\Sigma}^6)$ by solving a $2 \times 2$ coupled elliptic system of PDE called the $\g_2$ affine Toda (field) equations, where $\rho: \pi_1 \Sigma \rightarrow \Gtwosplit$. These almost-complex curves are associated to a holomorphic sextic differential $q \in H^0(K_{\Sigma}^6)$ by solving a $2 \times 2$ coupled elliptic system of PDE called the $\g_2$ affine Toda (field) equations. 
 Instead, we focus on almost-complex curves $\nu_q: \C \rightarrow \quadric$
associated to these equations for polynomial $q \in H^0(\K^6_\C)$, in the planar (i.e., non-equivariant) setting. We study the asymptotic geometry of such curves. 

\subsection{Main Results } 

We now describe the main results. As noted, we study planar almost-complex curves $\nu_{q}: \C \rightarrow \quadric$ associated to holomorphic sextic differential $q \in H^0(\K_\C^6)$ that are \emph{polynomial}, i.e., $q = \tilde{q} \, dz^6$  for $\tilde{q}$ a holomorphic polynomial. 
To furnish such curves, we must solve the $\g_2$ affine Toda equations \eqref{HitEuc} associated to $q$ on $\C$. To this end, we prove: \\

\textbf{Theorem A}: Let $q \in H^0(\K_\C^6)$ be a holomorphic polynomial sextic differential. Then there exists a unique complete, real solution to the $\g_2$ affine toda equations for $q$
on the complex plane. \\

The definition of complete can be found in Definition \ref{CompletenessDefinition}. 
Given polynomial $q \in H^0(\K_\C^6)$, the unique aforementioned complete solution $\bm{u} = (u_1, u_2)$ to the affine Toda equations provides
a unique almost-complex curve $\nu = \nu_q: \C \rightarrow \quadric$, up to global $\Gtwosplit$-isometry. We show that one recovers $q$ from $ \nu_q$, up to an explicit constant $C$, by $ q = C (\nu_z \times \nu_{zz}) \cdot \nu_{zzz}$. 
We embed the surface in projective space via $\overline{\nu}: \C \rightarrow \Stwofour \hookrightarrow \mathbb{P}\imoct \cong \mathbb{P}(\R^{3,4}) $ and show that the \emph{asymptotic boundary} of $\Sigma = \im \overline{\nu}$ is a \emph{polygon} $\Delta$ with $ \deg q+ 6$ vertices. Here, we mean polygon in the following sense: a piecewise linear curve formed by concatenating (projective) line segments 
between adjacent vertices. Furthermore, we characterize the $\Gtwosplit$-properties of these polygons. 
The polygon $\Delta$ is contained in $\Eintwothree := \{ \, [x] \in \mathbb{P} \imoct \; | \; q(x) = 0\}$, the space of null lines in $\imoct$.
Defining $\Ann(u) := \{l \in \mathbb{P}(\imoct) \; | \; l \times_{\Oct'} u = 0 \}$ as in \cite{BH14}, we find $\Delta$ satisfies the \emph{annihilator property}: denote the cyclic ordering of the vertex set of $\Delta$ as $(p_i)_{i \in \mathbb{Z}_n}$ and 
 $\Ann(p_i) = \mathbb{P} ( \spann_{\R} \langle p_{i-1}, p_{i}, p_{i+1} \rangle )$.
 That is, linear combinations of $p_i$ and its neighbors are exactly all the lines in $\mathbb{P} (\imoct)$ which annihilate the given line $p_i$. To summarize, our second main result is as follows: \\

\textbf{Theorem B}: Let $q \in H^0(\K_\C^6)$ be polynomial, $\nu_q: \C \rightarrow \Stwofour$ its associated almost-complex curve.
Then the boundary at infinity $\Delta = \partial_{\infty} \nu_q \subset \Eintwothree$ is an annihilator polygon with $\deg q+6$ vertices. \\

We now give two reformulations of Theorem B after discussing some structures on $\Eintwothree$. 
First, as a corollary to the proof \cite[Theorem 16]{BH14} that the diagonal action of $\Gtwosplit$ on $\Eintwothree \times \Eintwothree$ has exactly four  orbits,
we show there is a $\Gtwosplit$-invariant discrete metric $d_3: \Eintwothree \times \Eintwothree \rightarrow \{0,1,2,3\}$ defined by labeling the orbit the pair comes from. The space $\Eintwothree$ also carries a canonical (2,3,5) distribution $\mathscr{D}$, invariant under $\Gtwosplit$. That is, $\mathscr{D}$ is maximally non-integrable in the sense that 
$\mathscr{D}^1: = \mathscr{D}+ [\mathscr{D}, \mathscr{D}]$ is 3-dimensional and 
$\mathscr{D}^2 := \mathscr{D}^1 + [\mathscr{D}^1, \mathscr{D}^1] $ is 5-dimensional. In fact, $\g_2'$ was first described by Cartan as the space of infinitesimal
symmetries of $(\Eintwothree, \mathscr{D})$ \cite{Agr08}. We refer the reader to \cite{Agr08} for further history of $\Gtwo$,
however, we remark that $\mathscr{D}$ was only later connected to annihilators as we have defined here 
(cf. \cite{Sag06}, \cite[Appendix C]{BM09}). Both $\mathscr{D}, d_3$ allow us to give reinterpretations of the annihilator condition.\\

\textbf{Theorem B'}: Let $(p_i)_{i=1}^n = \Delta \subset \Eintwothree$ be the boundary at infinity of a polynomial almost-complex curve. Writing the vertices cyclically as $(p_i)_{i \in \Z_n}$, we have
	\begin{itemize} 
		\item $d_3( p_i, p_{i+j } ) = j$ for $j \in \{0,1,2,3\}$.
		\item Away from the vertices, $\Delta$ is an integral curve of the (2,3,5) distribution $\mathscr{D}$ on $\Eintwothree$. 
	\end{itemize} 

Take indices $i \leq j \in \{1,\dots, n\}$. Define $d_{\mathsf{graph}}(p_i, p_j) := \min \{j-i, i+ n-j \}$ as the distance between vertices on $\Delta$ as a cyclic graph. Then Theorem B' says $d_3(p_i, p_j) = d_{\mathsf{graph}}(p_i, p_j)$ if $d_{\mathsf{graph}}(p_i, p_j) \leq 3$. We currently do not know whether the points are in general position -- that is, whether $d_{3}(p_i, p_{i+j} ) =3$ when $ d_{\mathsf{graph}}(p_i, p_j) > 3$.\\

We also describe the relationship between $\Gtwosplit$ and annihilators in various formulations, as we now describe.
Denote $\mathsf{Diff}(\Eintwothree)$ as the group of diffeomorphisms of $\Eintwothree$. 
By \cite[Theorem 12]{BH14}, $\Ann([u]) \cong_{\mathbf{Diff}} \RP^2$ is a 2-dimensional submanifold 
of $\Eintwothree$ for $[u] \in \Eintwothree$. We say $\varphi \in \Diff(\Eintwothree)$ preserves \emph{annihilator submanifolds} when
$\varphi( \Ann(x)) = \Ann(\varphi(x))$ for $x \in \Eintwothree$. We say $\varphi$ preserves \emph{annihilators infinitesimally} when
$ \varphi^* \mathscr{D} = \mathscr{D}$. We have one final notion of annihilators. Namely, given $x, y \in \Eintwothree$ such $d_3(x, y ) = 1$,
or equivalently, $x \times_{\Oct'} y=0$ and $x \neq y$, we form the projective line $\mathscr{L}_{x,y} := \mathbb{P}\,( \spann_{\R} \langle x, y \rangle \, ) \subset \Eintwothree$, since $x \cdot y = 0$. Then 
we say $\varphi$ preserves \emph{annihilator lines} when $\varphi (\mathscr{L}_{x,y} ) = \mathscr{L}_{\varphi(x), \varphi(y)} $ for all $x,y \in \Eintwothree$
with $d_3(x,y) = 1$. We show that all of these
annihilator conditions are equivalent. (Here, recall $\Gtwosplit := \Aut_{\R-\mathbf{alg}}(\Oct')$ so that $\psi \in \Gtwosplit$ induces a diffeomorphism 
of $\Eintwothree$ that we denote by $\psi|_{\Eintwothree}$.) \\

\textbf{Theorem C}: Let $F: \Gtwosplit \rightarrow \Diff(\Eintwothree)$ denote the map $F(\psi) = \psi |_{\Eintwothree}$. Then
$F$ is injective and $\varphi \in \Diff(\Eintwothree)$ satisfies $\varphi \in \mathsf{image}(F)$ if and only if 
	\begin{itemize}
		\item \cite{Car10, Sag06} $\varphi$ preserves annihilators infinitesimally. 
		\item $\varphi$ preserves annihilator submanifolds. 
		\item $\varphi \in \Isom(d_3)$.  
		\item $\varphi$ preserves annihilator lines. 
	\end{itemize} 
Thus, the group $\Gtwosplit$ is fundamentally linked to annihilators.

We also discuss the relationship between the almost-complex curve $\nu: \C \rightarrow \quadric$ and the harmonic map $f: \C \rightarrow \mathcal{S}_{\Gtwosplit}$ to the $\Gtwosplit$-symmetric space $\mathcal{S}_{\Gtwosplit} = \Gtwosplit/K$, where $K<\Gtwosplit$ is a maximal compact subgroup. In particular, the commutativity of the diagram in Figure \ref{HarmonicMapDiagram} is established. We explain some of the diagram now. By work of Baraglia, associated to a (real) solution to the $\g_2$ affine Toda 
is a harmonic lift into $ \Gtwosplit/T$ of the map $f$, where $T < K$ is a maximal torus \cite{Bar15}. 
This lift is related to the notion of a cyclic surface in \cite{Lab17} and to that of a $\tau$-primitive map in \cite{BPW95}. 
While Baraglia's work is done for general split real forms of complex simple Lie groups, we attempt to explain this lift in as concrete and geometric fashion as possible in the $\Gtwosplit$-setting. To this end, we identify 
$\Gtwosplit/K \cong_{\mathbf{Diff}} \mathscr{X}:= \{ P \in \Gr_{(3,0)}(\imoct) \; | \; P \times_{\Oct'} P = P \}$ and $\Gtwosplit/T$ as a space of certain block-decompositions $\mathscr{Y}$ of $\imoct$. Using these geometric descriptions, we explain how an almost-complex curve $\nu$ admits a natural lift $G_{\nu}: \C \rightarrow \mathscr{Y}$.
We also realize a `geometric' version of the harmonic map in the symmetric space as $f: \C \rightarrow \mathscr{X}$. 
We then clarify the naturality of these constructions. The Toda frame field $e^{\Omega}$ of the almost-complex curve defines a map $\hat{\mathcal{F}_{\nu}}: \C \rightarrow \Gtwosplit$ such that the projections $F_K := \pi_K \circ \hat{\mathcal{F}_{\nu}}$ and $F_T:= \pi_T \circ \hat{\mathcal{F}_{\nu}}$ are harmonic, where $\pi_T: \Gtwosplit \rightarrow \Gtwosplit/T$ and $\pi_K: \Gtwosplit \rightarrow \Gtwosplit/K$. There is also a natural projection $\pi_{\Stwofour}: \mathscr{Y} \rightarrow \Stwofour$,
where $\Stwofour = \{ [x] \in \mathbb{P} \imoct \; | \; q(x) > 0 \}$ is the projective model for the pseudosphere. The almost-complex curve $\nu$ is recovered (projectively) from the lift $G_{\nu}$ by $\nu = \pi_{\Stwofour} \circ G_{\nu}$. We show the equivalence of $F_K, f$ and $F_T, G_{\nu}$ in a large commutative diagram in Theorem \ref{THM:GeometricCorrespondence}. 
Identifying $\mathscr{Y}$ with $\Gtwosplit/T$, the map $G_{\nu}$ is harmonic and serves as a mutual lift of $\nu$ and $f$. 

\subsection{Related Work} 
 
Theorem A is closely related to recent work of Li \& Mochizuki \cite{ML20a, ML20b} on the existence and uniqueness of complete solutions to the 
$\SL_n\C$ affine Toda equations on non-compact Riemann surfaces. Also related are the works of Biquard \& Boalch \cite{BB04} and Mochizuki \cite{Moc14} on solutions to Hitchin's equations for
cyclic wild Higgs bundles over $\CP^{1}$. However, the $\g_2$ affine Toda equations are not known to be linearly equivalent to the $\SL_n\C$ affine Toda equations for any $n$. We remark further in Section \ref{HitchinsEquationsSubsection} why there is no naive equivalence. Guest, Its, and Lin studied similar ``Toda-like'' equations for $\SL_n\C$ in the complex plane \cite{GIL15}. Conversely, for a compact Riemann surface $\Sigma$ of genus $g > 1$, Baraglia proved the existence and uniqueness of (real) solutions to the affine Toda field equations for any complex simple Lie algebra \cite[Theorem 2]{Bar15}.

Theorem B is a natural continuation of a line of work on maps $\phi:= \phi_q: \C \rightarrow X$ associated to 
polynomial holomorphic data $q \in H^0(\K_{\C}^j)$ with polygonal boundary $\Delta := \partial_{\infty}(\im \, \phi_q) $ \cite{HTTW95, DW15, Tam19, TW25}. 
This theme begins in \cite{HTTW95}, where Han, Tam, Treibergs, and Wan show that any orientation-preserving harmonic map $\phi: \C \rightarrow H^2,$ where $H^2$ is the hyperbolic plane, 
that is a smooth embedding with polynomial Hopf differential $q_2$ has frontier an ideal polygon with $k+2$ vertices on $\partial_{\infty} H^2$.
Even more closely related are the works \cite{DW15, Tam19, TW25}, in higher rank settings, where the geometric maps $\phi: \C \rightarrow X$ associated to polynomial holomorphic 
differential have asymptotic boundary a linear polygon and the map $\phi$ factors
a harmonic map $f: \C \rightarrow \mathcal{S}_{G^\R} $ into the symmetric space $\mathcal{S}_{G^\R}$ of a split real semi-simple rank two Lie group $G^\R$. For $G^\R = \mathsf{PSL}_2 \R \times \mathsf{PSL}_2 \R $, Tamburelli studied spacelike maximal surfaces in $\mathsf{AdS}_3$ 
associated to a polynomial differential $q_2 \in H^0(\K_{\C}^2)$. For $G^\R= \mathsf{SL}_3\R$, Dumas and Wolf studied hyperbolic affine spheres $\phi: \C \rightarrow \R^3$ with polynomial pick differential $q_3 \in H^0(\K_{\C}^3)$. Most recently, for $G^\R = \mathsf{Sp}_4\R$, Tamburelli and Wolf studied spacelike maximal surfaces in pseudo-hyperbolic space $\sigma: \C \rightarrow \Ha^{2,2} $ associated to a polynomial quartic differential $q_4 \in H^0(\K_{\C}^4)$. In each case, the appropriate boundary at infinity $\Delta = \partial_{\infty} S$ of the surface $S=\im \, \phi$ is a polygon with $\deg q + j$ vertices, where $q \in H^0(\K_{\C}^j)$ is a holomorphic polynomial $j$-differential. Theorem B extends the polynomial to polygon results to $\Gtwosplit$, the remaining split real simple rank two Lie group.

Much of the work in this thesis is done using Higgs bundles. 
Most critically, we adapt Baraglia's ideas for studying \emph{cyclic} $\Gtwo$-Higgs bundles over a compact Riemann surface $\Sigma$.
Baraglia uses the holomorphic vector bundle $\V_{\Sigma} = \K_{\Sigma}^3 \oplus \K_{\Sigma}^2 \oplus \K_{\Sigma}^1 \oplus \mathcal{O} \oplus \K_{\Sigma}^{-1} \oplus \K_{\Sigma}^{-2} \oplus \K_{\Sigma}^{-3} $ to form a Higgs bundle $(\V, \varphi)$. He reduces the structure group of the bundle $\V$ to $\Gtwo$ by identifying
the fibers of $\V$ with $(\imoct)^\C$ so that that local coordinate transitions for $\V$ are $\Gtwo$-transformations. See Remark \ref{HiggsBundleRemark}. 
We use Baraglia's same identifications in the bundle $\V_\C = \bigoplus_{i=3}^{-3} \K^i_{\C}$, but now the identification is done once-and-for-all in a single holomorphic coordinate $z = x+ i y$ for $\C$.  We remark that while we work in the complex plane, the use of the same bundle $\V_\C$ allows for the constructions to extend locally to the case of $\V_{\Sigma}$ when the same Higgs bundle is instead considered over a closed Riemann surface $\Sigma$. 

Since the first draft of this paper, Collier and Toulisse produced the work \cite{CT23} on the differential geometry of alternating almost-complex curves in $\quadric$.
They dualize the metric and treat the curves in $\quadric$ as maximal surfaces in pseudo-hyperbolic space $\hat{\Ha}^{4,2} = Q_-(\R^{4,3})$ under the natural anti-isometry 
with $\quadric$. One of their main results is that an alternating almost-complex curve in $\quadric$ is equivalent to a \emph{cyclic surface} in the space $\Gtwo^\C/T$, where $T< K < \Gtwosplit$ is a maximal torus in the maximal compact subgroup $K<\Gtwosplit$. Their construction of a complex Frenet frame gives the natural bijection between the two classes of objects. 
Moreover, the correspondence holds regardless of whether the almost-complex curves are equivariant or not. 

In \cite[Section 2.4]{Bar10}, Baraglia relates minimal surfaces in quadrics $Q_{\pm}(\R^{k,l})$ and $\mathsf{SO}(p,q)$ Higgs bundles, but this relationship is not extended to the case of $\Gtwo < \mathsf{SO}(3,4)$ Higgs bundles and almost-complex curves in $\quadric$. Recent work of Nie extends Baraglia's work, showing correspondences between minimal surfaces in pseudo-hyperbolic spaces, harmonic maps in the $\mathsf{SO}(p,p+1)$ symmetric space, and cyclic $\mathsf{SO}(p,p+1)$ Higgs bundles \cite{Nie24}. 
Nie also handles the case of $\Gtwosplit$ explicitly and he constructs the map $f$ in $\mathscr{X} = \Gr_{(3,0)}^\times \imoct $ from $\nu$ in the same fashion we do, but does not discuss the mutual lift of $f$ and $\nu$ to $\Gtwosplit/T$. While our work in Section \ref{AlmostComplexCurve} is subsumed by the work of Collier-Toulisse, the constructions given here are a bit more geometrically concrete and from a different perspective, so we provide them nonetheless. 

\subsection{Outline} 

We begin in Section \ref{G2HiggsBundle} by defining the Higgs bundle $(\V, \varphi)$ of interest. Section \ref{AlmostComplexCurve} describes the construction of the almost-complex curves via Higgs bundle techniques and discusses many associated maps. We note that Section \ref{GeometricCorrespondence} is logically independent from the rest of the paper, in which the geometric correspondence between the almost-complex curve $\nu$ and the minimal surface $f$ in the symmetric space is described.
In Section 3, Hitchin's equations for the Higgs bundle are shown to equivalently describe the integrability of the Toda frame $\Omega$ for $\nu$;
these equations are then related to certain geometric invariants of the \emph{harmonic map sequence} of $\nu$, analogous to work of Baraglia \cite{Bar10}. We then prove the existence of a \emph{complete} solution to Hitchin's equations in $\C$ in Section \ref{Existence} for $q \in H^0(\K^6_\C)$ polynomial and we prove the uniqueness of a complete solution in Section \ref{Uniqueness}. In the proof of existence, we produce sub and super-solutions $\bm{u}^-$ and $\bm{u}^+$, respectively, to Hitchin's equations. Denote 
$\tilde{\bm{u}} := \bm{u} - \bm{u}^-$ as the error term. We prove the precise exponential rate of decay of $\tilde{\bm{u}} $ in Section \ref{ErrorEstimates} in special coordinates called
\emph{natural coordinates} for $q$. We then compute the asymptotic parallel transport along geodesic rays in the flat $|q|^{1/3}$ metric in a natural coordinate in Section \ref{ParallelTransport}. We find \emph{stable sectors} as well as \emph{critical angles} where the limits along rays change. In Section \ref{AsymptoticBoundary}, we prove the main theorem (Theorem \ref{THM:AnnihilatorPolygon}) using the asymptotics of Section \ref{ParallelTransport} to show the asymptotic boundary $\Delta: = \partial_{\infty} \nu_q$ polynomial $\nu_q$ is an  polygon in $\Eintwothree$ with $\deg q + 6$ vertices. A (local) comparison of $\Delta$ to the boundary $\Delta_0$ of the model surface associated to $q \equiv 1$
shows that $\Delta$ has the annihilator property. We note that Sections \ref{AnnihilatorPolygons} and \ref{d3}, in which we prove Theorem C (Corollary \ref{GtwoEquivalence}) on $\Gtwosplit$ and its relationship to annihilators, is logically independent of Sections 3-7.

\subsection*{Acknowledgements} 

I would like to thank Mike Wolf and Andrea Tamburelli for many useful and stimulating conversations about various 
components of this paper. I thank J\'er\'emy Toulisse and Brian Collier for useful discussions on $\g_2$ Lie theory  
and $\Gtwosplit$-geometry. Thanks are in order to Christos Mantoulidis for pointing out some errors and typos in Section 4 of a previous draft. I also thank Franz Pedit for a hosting a trip to University of Massachusetts Amherst to discuss $\Gtwosplit$ and some results of the paper.

\section{Background Material} \label{Background} 

In this section, we provide an introduction to $\Gtwo$, focused on the split real (adjoint) form $\Gtwosplit$. 
The reader can find further exposition on $\Gtwo$ in \cite{Fon10} or \cite{HL82}. 
The key object here is the \emph{split octonions} $\Oct'$, an $\R$-algebra over an 8-dimensional vector space $V$, also endowed with a 
quadratic form $q$ of split signature $(4,4)$. 
The group $\Gtwosplit := \Aut_{\R-\textbf{alg}}(\Oct')$ is the Lie group of linear automorphisms of $V$ also preserving the algebra product $\odot$.
After introducing all the structures on $\Oct$, we discuss the notion of a \emph{complex cross-product basis} for $\imoct^\C$. We then use such a basis to
reduce the structure group of the bundle $\V = \bigoplus_{i=3}^{-3} \K_{\C}^i$ to $\Gtwo$. We place compatible real structures $\hat{\tau}_{\V}$ and $ \hat{\tau}$ on $\V$ and $\End(\V)$, respectively. 
We then write Hitchin's equations for a harmonic metric $h \in \V$ 
and obtain a flat connection $\nabla = \nabla_{\del, h} + \varphi + \varphi^{*h}$ that preserves $\hat{\tau}_{\V}, \hat{\tau}$ and the cross-product $\times$ on $\V$.
All of the algebraic structures in $\V$ are defined following Baraglia's work on cyclic $\Gtwo$-Higgs bundles in Section 3.6 of \cite{Bar10}. 

\subsection{$\Oct'$ and $\Gtwosplit$}\label{Gtwosplit}
We now provide a brisk tour through the algebraic structures on $\Oct'$. \\

The split octonions $\Oct'$ are a real algebra over the $8$-dimensional vector space $\Ha \oplus \Ha$, where $\Ha$ 
denotes the quaternions. For $u = (a,b)\in \Oct'$, the algebra multiplication is
	\begin{align}\label{OctMultiplication} 
		(a,b) \odot (c,d) := (ac + db^*, a^*d+ cb),
	\end{align} 
where $*$ is quaternionic conjugation given by $x^* =x$ for $x \in \R$ and $x^*= -x$ for $x \in \spann_{\R} \langle i,j,k \rangle$. 
While not associative in the strict sense, there is some associativity in $\Oct'$. Indeed, $\Oct'$ is 
\emph{alternative}, i.e., the subalgebra generated by any two elements is associative. 
In particular, the identities $(x^2)y = x(xy)$ and $(xy)x = x(yx)$ hold for any $x,y \in \Oct'$. 

The \emph{Moufang} identities, true in any associative algebra, are also very useful \cite{Fon10}: for any $u,v,w \in \Oct'$,
	\begin{align}\label{Moufang} 
		\begin{cases} 
			(uvu)w &= u(v(uw)) \\
			(uv)(wu) &= u(vw)u 
		\end{cases} 
	\end{align}
The complex split-octonions $(\Oct')^\C = \Oct' \otimes_{\R} \C$\footnote{We exclusively use $i$ to denote a split-octonion and
use $\sqrt{-1}$ for the new scalar added by the complexification.} is the complexification of $\Oct'$ and carries analogous versions of all the structures $\Oct'$ carries. We use the shorthand $\Gtwo^\F$ for $\Gtwo^\R = \Gtwosplit$ and $\Gtwo = \Gtwo^\C $. For now, we use the working definition $\Gtwo^\F := \{ \psi \in \GL(\imoct)^\F \; | \; \psi(x \odot y) = \psi(x) \odot \psi(y) \; \}$, however we note useful equivalent definitions at the end of the section. The group $\Gtwosplit $ has trivial center (i.e., is the adjoint form) but has $\pi_1(\Gtwosplit) = \Z_2$, while $\Gtwo^\C$ is simply-connected and has trivial center \cite{Fon10}. \\

We now define a particularly useful basis for $\Oct'$. Consider, for abuse of notation, $i = (i,0)$, $j := (j, 0), l:= (0, 1) $ in $\Ha \oplus \Ha$. 
Defining the element $k := i\odot j$, we obtain a vector space basis 
\begin{align} \label{MultiplicationBasis}
	\mathcal{M} = (1, i, j, k, l, li, lj, lk) = (m_i)_{i=0}^7
\end{align} 
for $\Oct'$. We will call this basis 
$\mathcal{M} = (m_i)_{i=0}^7$ the \emph{standard multiplication basis} for $\Oct'$. The multiplication table for $\Oct'$ in this basis is shown in Figure 1.

We identify a distinguished copy of the quaternions in $\Oct'$ by $\Ha := (\Ha, 0)$ and then obtain 
an intrinsic direct sum decomposition $\Oct' = \Ha \oplus l\Ha$.\footnote{Care is needed here. We use $(a,b) \in \Ha \oplus \Ha$ to denote $a+ lb$. Some others authors instead prefer to use $(a,b)$ to denote $a+ bl$. With the latter identification, $\Oct'$ multiplication on $\Ha \oplus \Ha$ is given by $ (a,b) \odot (c,d) = (ac+d^*b, da+ bc^*)$.} 
\begin{figure}[ht]
\centering
\includegraphics[width = .6\textwidth]{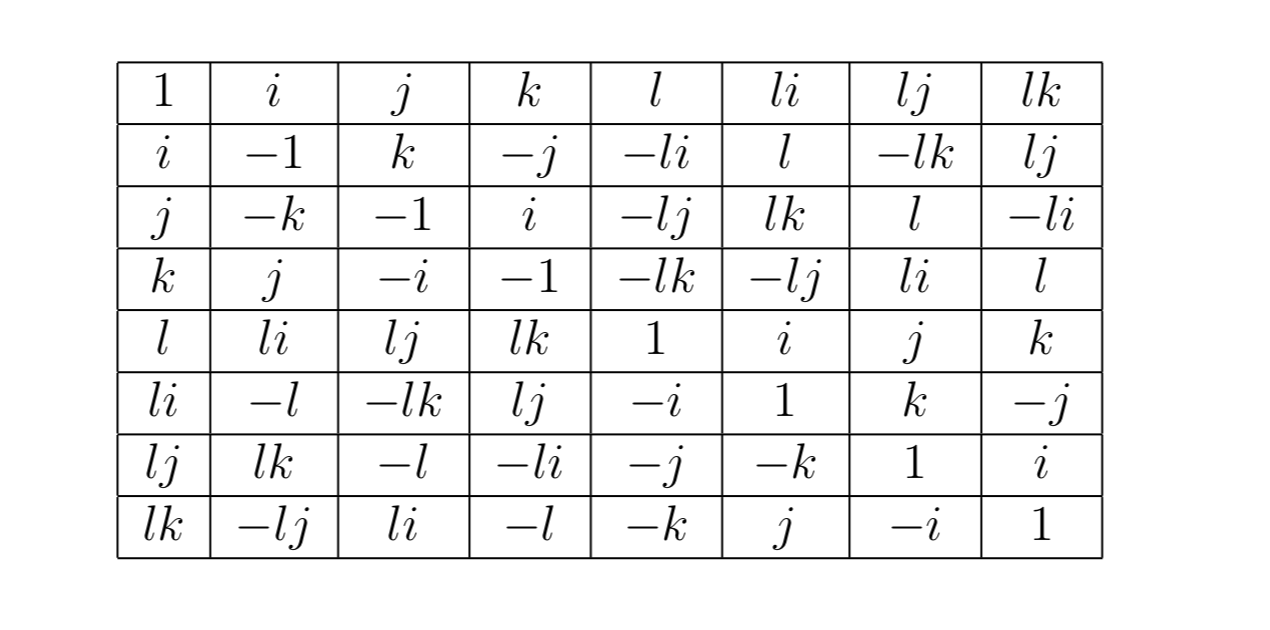}
\caption{Multiplication table for the split octonions $\Oct'$, from \cite{Bar10}.} 
\end{figure} 
Using the decomposition
$\Oct' = \Ha \oplus l\Ha$, the signature (4,4) inner product $q$ is given by 
$$ q(a + l b , c + l d ) \; = \; \langle a, c \rangle_{\Ha} - \langle b, d \rangle_{\Ha} $$ 
Write $q(u) = q(u,u)$ for the induced quadratic form. In the basis $\mathcal{M}$, 
set $ u = \sum_{i=0}^7 c_i m_i $ and we have $q(u) = c_0^2 + c_1 ^2 + c_2^2 + c_3^2 -  c_4^2 - c_5^2 - c_6^2 - c_7^2$. 
In particular, the direct sum decomposition $\Oct' = \Ha \oplus l\Ha$ is an orthogonal decomposition. \\

The split-octonions come with an $\R$-linear involution called \emph{conjugation} $*: \Oct' \rightarrow \Oct'$, which satisfies $ x^* = x $ if and only if $x \in \R$.
In particular, $\Oct'$ splits orthogonally as $ \Oct' = \R \oplus \imoct$, where $\imoct$ is the \emph{imaginary} split-octonions, the $(-1)$-eigenspace of $*$.
The quadratic form $	q$ is also given by
\begin{align}\label{QuadraticForm} 
	q(x) = xx^*. 
\end{align}

Just as in the quaternions, $ (xy)^* = y^* x^*$ for $x,y \in \Oct'$. Now by definition, any $\varphi \in \Gtwo^\F$ satisfies $\varphi(1) = 1$. Thus, it is standard to restrict the action of $\Gtwo^\F$ to $\imoct^\F$, and the representation $\Gtwo^\F \rightarrow \GL(\imoct^\F)$ is now irreducible. Moreover, $\g_2$ admits no irreducible faithful representation on a vector space of dimension less
than 7 \cite{Fon10}. \\

The space $\imoct$ is also endowed with a skew-symmetric map $\times: \imoct \times \imoct \rightarrow \imoct$ defined by 
\begin{align}
	x \times y : = x \odot y \; - \;q( x, y ). \label{CrossProductDefinition}
\end{align}
We call $\times$ a \emph{cross-product} due to it satisfying the familiar properties of $(\R^3, \times_{\R^3})$. Namely, $\times$ is
(i) bilinear, (ii) skew-symmetric, (iii) orthogonal (i.e. $x \times y \; \bot \; x,y$), and (iv) satisfies the normalization 
\begin{align}\label{CrossProductNormalization} 
	 q(x \times y) = q(x) q(y) - q(x,y)^2.
\end{align}
We will use often that \eqref{CrossProductDefinition} implies that $uv = u \times v$ when $u, v \in \imoct$ are orthogonal. \\

 \textbf{Structures on $\bioct$.}  

We now define all the same structures on $\bioct$. The algebra $\bioct$ is equipped with the $\C$-bilinear extension $q_{\C}$ of the inner product $q$ on $\Oct'$, the $\C$-linear extension of $*$, and the natural extension of $\odot$ to $\bioct = \Oct' \otimes_{\R} \C$. The cross product on $\imoct^\C$ 
is then defined by the same equation \eqref{CrossProductDefinition} as in the real case, now using the multiplication and inner-product on $\bioct$.
The cross-product on $\bioct$ also satisfies (i)-(iv). Finally, as a complexification of real vector space, $\bioct$ is equipped with a complex-conjugation
$x \mapsto \overline{x}$\,. For readability, we will fall back on the more standard notation of $\cdot$ to denote $q_{\imoct}$ or $q_{\imoct}^\C$ going forward. \\

There is one final crucial structure on $\Oct'$: the scalar triple product $\Omega$ given by
\begin{align}\label{Calibration} 
	\Omega(x,y, z) = (x \times y) \cdot z.
\end{align} 
$\Omega$ is skew-symmetric due to the skew-symmetry of $\times$ and the symmetry of $q$. More surprisingly, $\Omega$
provides a \emph{calibration} in the sense of Harvey \& Lawson \cite{HL82} and is intimately related to $\Gtwo^\F$ geometry. 
In fact, $\Gtwosplit$ can be \emph{defined} in terms of $\Omega$, as we note below.  \\

We will use the following non-trivially equivalent characterizations of $\Gtwosplit$ as groups of linear transformations \cite{Fon10}: 
\begin{enumerate}
	\item Split-octonion algebra product automorphisms: $\{ \psi \in \GL(\Oct') \; | \; \psi( x \odot y) = \psi(x) \odot \psi(y) \}$, transformations which turn out to automatically 
	also preserve $\times, q, *$.
	\item Cross-product preserving transformations: $\{ \psi \in \GL(\imoct) \; | \; \psi(x \times y) = \psi(x) \times \psi(y) \}$. 
	\item Stabilizing transformations of $\Omega: \{ \psi \in \GL(\imoct) \; | \; \psi^*\Omega= \Omega \}$. 
\end{enumerate} 

\subsection{Some $\Oct'$ Identities} 

We record the following identities, which we learned the excellent text \cite{Fon10}, for future use.\\

 The \textbf{double cross-product} identity holds for any $x,y \in \imoct$. 
\begin{align} \label{DCP} 
	x \times (x \times y) = (x \cdot y) \, x - q(x) \, y.
\end{align} 
 When applied twice to $x = z+w$, the previous identity gives us the \textbf{generalized double cross-product} identity.
\begin{align}
	z \times (w \times y) + w \times (z \times y) = (z \cdot y) w + (w \cdot y) z - 2(z \cdot w) \, y. \label{GDCP} 
\end{align} 
A very useful corollary to \eqref{GDCP} occurs in the case that $x,y ,z$ are pairwise orthogonal. Indeed, in this case,
\begin{align}\label{DCP_Orthogonal} 
	x \times (y \times z) = - y \times (x \times z) .
\end{align} 
Finally, we note a very useful orthogonality property of the cross product. 
 	\begin{lemma}[Orthogonality of Cross-Product] \label{CrossProductOrthogonality} 
		Let $u, v, w \in \imoct $ be pairwise orthogonal. Then we have $ (w \times u) \; \bot (w \times v) $.  
		\end{lemma}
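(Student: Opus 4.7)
The plan is to reduce the statement $(w\times u)\cdot(w\times v)=0$ to a computation involving the double cross-product, using the totally alternating nature of the scalar triple product $\Omega$ defined in \eqref{Calibration}. First I would observe that although the excerpt only explicitly notes skew-symmetry of $\Omega$ in its first two arguments, $\Omega$ is in fact a 3-form: property (iii) of $\times$ gives $\Omega(x,y,x)=(x\times y)\cdot x=0$ and $\Omega(x,y,y)=(x\times y)\cdot y=0$, and combined with skew-symmetry in the first two slots this forces $\Omega$ to be alternating in all three arguments. In particular $\Omega$ is invariant under cyclic permutation of its entries.

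Using this cyclic symmetry, I would rewrite
\begin{align*}
(w\times u)\cdot(w\times v) \;=\; \Omega(w,\,u,\,w\times v) \;=\; \Omega(u,\,w\times v,\,w) \;=\; \bigl(u\times (w\times v)\bigr)\cdot w.
\end{align*}
Since $u,w,v$ are pairwise orthogonal by hypothesis, identity \eqref{DCP_Orthogonal} applies and gives $u\times(w\times v)=-w\times(u\times v)$. Substituting yields
\begin{align*}
(w\times u)\cdot(w\times v) \;=\; -\bigl(w\times(u\times v)\bigr)\cdot w,
\end{align*}
and the right-hand side vanishes by property (iii) of $\times$, which guarantees $w\times(u\times v)\perp w$.

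There is really no obstacle here; the only subtlety worth spelling out explicitly is the promotion of $\Omega$ from being skew-symmetric in two slots (as stated in the excerpt) to being alternating in all three, which requires only the orthogonality property of the cross-product. Alternatively, one could bypass the cyclic rewriting and apply the double cross-product identity \eqref{DCP} directly, computing $w\times(w\times v)=(w\cdot v)w-q(w)v=-q(w)v$ (using $w\cdot v=0$) and then invoking the skew-symmetry of $\Omega$ in the form $(w\times u)\cdot(w\times v)=-(w\times(w\times v))\cdot u=q(w)(v\cdot u)=0$. I prefer the first route because it makes transparent the geometric role of orthogonality of all three vectors, and because it calls on the identity \eqref{DCP_Orthogonal} that has just been recorded.
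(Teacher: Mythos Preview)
The paper does not actually give a proof of this lemma; it is recorded without argument alongside the other identities attributed to \cite{Fon10}. Your proof is correct. Both routes you sketch are valid; the alternative via \eqref{DCP} is in fact slightly more economical, since it uses only $w\cdot v=0$ and $u\cdot v=0$ and never invokes $u\cdot w=0$, whereas your preferred route through \eqref{DCP_Orthogonal} genuinely needs all three orthogonality relations. Your careful justification that $\Omega$ is alternating in all three slots is not strictly necessary here, since the paper already treats $\Omega$ as a $3$-form (see the discussion around \eqref{Calibration} and \eqref{G2_3Form}), but spelling it out does no harm.
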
 

\subsection{$\quadric$ and its Almost-Complex Structure}

We now define $ \quadric: = \{ \, x \in \imoct \; | \, q(x) = +1 \}$ as the unit sphere in the imaginary split-octonions. 
The space $\quadric$ is a 2-1 cover of $\Stwofour := \{ [x] \in \mathbb{P} \imoct \; | \; q(x) > 0\}$, the space of positive-definite lines.
We may use the projective model $\Stwofour$ later. \\

We begin by identifying $ T_{x} \quadric \cong [x^\bot \subset \imoct ] $. Then $q_{\Oct'}$ defines a $\Gtwosplit$-invariant signature (2,4) pseudo-Riemannian
metric on $\quadric$. The natural almost-complex structure $J$ on $\imoct$ comes from the cross-product $\times$. Recall here that an \emph{almost-complex structure} on a manifold $M$ is a bundle endomorphism $J: TM \selfarrow$ satisfying $J^2 = -\id$.
Moreover, an \emph{almost-complex curve} is a map $\phi: (M, J_M) \rightarrow (X, J_X)$ satisfying $ d\phi \circ J_M = J_X \circ d\phi$. \\

We now define $J$. First, take a position vector $w \in \quadric$ and denote $C_{w}:= (v \mapsto w \times v) $ as the cross-product endomorphism of $w$. 
 The double cross-product identity \eqref{DCP} implies $ (C_{w} \circ C_{w})|_{w^\bot} = -\mathsf{id}_{w^{\bot}}$. Hence, $J$ is given pointwise by $J_{w}: T_w \mathbb{S}^{2,4} \rightarrow  T_w \mathbb{S}^{2,4}$ via $J_w := C_w$, or 
\begin{align}\label{AlmostComplexStructure} 
	J_{w}( u) = w \times u.
\end{align}
Of course, since $\Gtwosplit$ preserves $q_{\imoct}$ and the cross-product $\times$ on $\imoct$, the group acts by isometries on $\Stwofour$ that preserve $J$. While $\Gtwosplit$ is not the full isometry group of $(\quadric, q|_{\quadric})$, the group $\Gtwosplit$ is the full automorphism group of $J$-preserving isometries: $\Gtwosplit = \Aut(\quadric, q|_{\quadric}, J_{\quadric})$. 

The almost-complex structure $J_{\quadric}$ is non-integrable as is the case for the canonical almost-complex structure $J_{\mathbb{S}^6}$ \cite{CT23, Leb87}.\\

We now finish the preliminaries by providing a Stiefel-manifold description of $\Gtwosplit$, which will be useful in the construction of the almost-complex curve in Section 
\ref{AlmostComplexCurve}. The idea of understanding the $\Gtwosplit$ action of $\imoct$ via Stiefel triplets is a foundational 
one, and is extremely useful \cite{BH14, Fon10, HL82, Nie24}. 
To this end, we define:

\begin{definition}\label{MultiplicationFrame}
We call a basis $\mathcal{F} = (f_i)_{i=1}^7$ for $\imoct$ a \textbf{multiplication basis} when $\mathcal{F} = \varphi(\mathcal{M})$ for $\varphi \in \Gtwo$,
where $\mathcal{M}$ is the standard multiplication basis \eqref{MultiplicationBasis}. That is, $\mathcal{F}$ is a multiplication basis when the linear map $\varphi := (m_i \mapsto f_i)$ preserves the cross-product. 
\end{definition} 

The following proposition tells us a $\Gtwosplit$ transformation is determined by its 
action on an algebra basis of $\Oct'$, in particular, by the action of $\varphi$ on the algebra generators $(i,j, l)$. 
Note that the signs in the subscript refer to the $q_{\imoct}$-norm of the triplets. Indeed, $q_{\imoct}(x) = -x^2$, so for $x\in \imoct$, $x^2 = \pm 1$ if and only if $q(x) = \mp 1$. 

\begin{proposition}[ $(+,+,-)$ Stiefel Description of $\Gtwosplit$] \label{StiefelTripletModel} 
Consider the set 
$$ V_{(+,+,-)}(\imoct) := \, \{ \, (x,y,z) \; | \; \; x,y,z \in \imoct, \; x^2 = y^2 = -1, \; z^2 = 1, \; \, x \cdot y \, = \, x \cdot z =\,  y\cdot z = 0 , \; z \, \cdot \, (x\times y) = 0\; \} $$
of (ordered) orthonormal triplets. Then $\Gtwosplit$ acts simply transitively on $V_{(+,+,-)}(\imoct)$. 
\end{proposition}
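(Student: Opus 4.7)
The plan is to split simple transitivity into freeness (uniqueness of the group element moving a given triplet) and transitivity (existence), exploiting the characterization of $\Gtwosplit$ as cross-product-preserving linear automorphisms of $\imoct$ extended by $\psi(1)=1$.

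For freeness, suppose $\psi \in \Gtwosplit$ fixes $(i, j, l)$. Since the pairs $(i,j), (l,i), (l,j), (l,k)$ are mutually $q$-orthogonal in $\imoct$, we have $k = i \times j$, $li = l \times i$, $lj = l \times j$, $lk = l \times k$, and preservation of $\times$ forces $\psi$ to fix all of these as well. Hence $\psi$ is the identity on the standard multiplication basis $\mathcal{M}$, and so $\psi = \id$. Because $\Gtwosplit$ preserves $q$ and $\times$, its linear action on $\imoct$ carries $V_{(+,+,-)}(\imoct)$ into itself. Triviality of the stabilizer of $(i, j, l)$ then promotes to triviality of every stabilizer, so the action is free.

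For transitivity, given a target $(x, y, z) \in V_{(+,+,-)}(\imoct)$, the plan is to define $\psi \in \GL(\Oct')$ by $\psi(1) = 1$ and
$$ (i, j, k, l, li, lj, lk) \;\longmapsto\; (x, y, x \times y, z, z \times x, z \times y, z \times (x \times y)). $$
Using the normalization $q(u \times v) = q(u)q(v) - (u\cdot v)^2$ together with Lemma \ref{CrossProductOrthogonality}, these seven images are mutually $q$-orthogonal and carry the same squared norms as the $m_i$; in particular they form a basis of $\imoct$ and $\psi$ preserves $q$. Since $u \odot v = u \times v - u \cdot v$ on $\imoct$ and $\psi$ fixes $1$, preservation of $\odot$ (equivalently $\psi \in \Gtwosplit$ in characterization (1)) will then follow from preservation of $\times$.

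The main obstacle is this last step: verifying $\psi(m_a \times m_b) = \psi(m_a) \times \psi(m_b)$ for all $\binom{7}{2} = 21$ basis pairs. Pairs with orthogonal factors reduce to products in the algebra $\odot$, which match by construction. ``Repeat'' products such as $x \times (x \times y) = -y$ and $z \times (z \times x) = x$ follow from the double cross-product identity \eqref{DCP} with signs arising from $q(x) = 1$ and $q(z) = -1$. The delicate checks are the mixed products like $(z \times x) \times (z \times y) = x \times y$, which I would handle either with the generalized double cross-product identity \eqref{GDCP} and the Moufang identities \eqref{Moufang}, or more structurally via Cayley-Dickson doubling: the subalgebra $\R\{1\} \oplus \spann_{\R}\langle x, y, x \times y \rangle$ of $\Oct'$ is isomorphic to $\Ha$ (the cross-product relations among $x, y, x \times y$ match those among $i, j, k$ by \eqref{DCP}), the vector $z$ is a $q$-negative imaginary element orthogonal to this subalgebra (with orthogonality to $x \times y$ supplied by the Stiefel condition $z \cdot (x \times y) = 0$), and the universal property of the Cayley-Dickson construction produces the required algebra automorphism uniquely. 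As a sanity check on simple transitivity, $\dim V_{(+,+,-)}(\imoct) = 6 + 5 + 3 = 14 = \dim \Gtwosplit$.
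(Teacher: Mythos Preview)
Your proposal is correct and follows essentially the same approach as the paper: both construct the candidate automorphism by sending the standard multiplication basis $\mathcal{M}$ to the basis generated from $(x,y,z)$ via products (equivalently cross products, since the factors are orthogonal), and your ``more structural'' Cayley--Dickson argument is exactly what the paper invokes (via Lemma~A.8 of Harvey--Lawson) to conclude that the resulting linear map lies in $\Gtwosplit$. The freeness arguments are likewise equivalent, each amounting to the observation that an element of $\Gtwosplit$ is determined by its values on a generating triple.
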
 

\begin{proof}
Given any triplet $p = (x,y,z) \in V_{(+,+,-)}(\imoct)$, one naturally creates an orthonormal (ordered) vector space basis $\mathcal{B} = (b_i)_{i=1}^7$ for $\imoct$
in the same way we get $\mathcal{M}$, \eqref{MultiplicationBasis}, from $i,j,l$. Namely, 
\begin{align}\label{BasisCreation} 
	\mathcal{B}= (x, y, xy, z, zx, zy, z(xy)). 
\end{align} 
We consider the linear map $\varphi := m_i \mapsto b_i$. In particular, $\varphi(i) = x, \varphi(j)= y, \varphi(l) = z$.
If we can show $\varphi \in \Gtwosplit$, then $\Gtwosplit$ acts transitively on $ V_{(+,+,-)}$. \\

To prove $\varphi \in \Gtwosplit$, one can use some general
identities of $\Oct'$, adapting the argument of Lemma A.8 of \cite{HL82}. 
By the orthogonality property $(x \times y) \cdot z = 0$, if we define the copy of the quaternions $H := \spann_{\R-\mathsf{alg}} <x,y>$,
then $z \in H^\bot$. Then the argument from Lemma A.8 of \cite{HL82} tells us that for $a,b,c,d \in H$, 
\begin{align} \label{Multiplication} 
 (a + zb ) (c + zd) = ac + db^* + z(a^*d+ cb).
\end{align} 
We will see that this identity implies $\varphi \in \Gtwosplit$. 
Now, the fact that $\mathcal{B}$ is a basis implies we have a direct sum decomposition $ \Oct' = H \oplus z H$. Then equation \eqref{Multiplication}
says that if we take $(a,b) \in H \oplus H$ corresponding to $a+ zb$, then the multiplication of $\Oct'$ in this model is given by
exactly the formula \eqref{OctMultiplication}. In particular, the map $\varphi$ preserves $\odot$. 
We conclude that $\Gtwosplit$ acts transitively on $V_{(+,+,-)}(\imoct)$. On the other hand, suppose $\varphi, \psi \in \Gtwosplit$ act with $\varphi \cdot p = q, \; \psi \cdot p = q $ for $p, q \in V_{(+,+,-)}(\imoct)$, then one finds $\psi (\mathcal{B}_p) = \varphi(\mathcal{B}_p) = \mathcal{B}_q$ for the basis creation process \eqref{BasisCreation} since $\varphi$ preserves multiplication. But then $\varphi = \psi$ by linearity.
Hence, the action of $\Gtwosplit$ on $ V_{(+,+,-)}$ has no fixed points. 
\end{proof} 

As a corollary to the proof, we learn the following: 

\begin{corollary}[$\Gtwosplit$ and Multiplication Frames] 
The group $\Gtwosplit$ acts simply transitively on the set of all multiplication frames for $\imoct$. 
\end{corollary}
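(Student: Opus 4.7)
The plan is to reduce this to Proposition \ref{StiefelTripletModel} via a natural $\Gtwosplit$-equivariant bijection between multiplication frames and Stiefel triplets, a bijection that is essentially already constructed inside the proof of that proposition.

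First, I would extract the bijection. Given a multiplication frame $\mathcal{F} = (f_i)_{i=1}^7 = \varphi(\mathcal{M}|_{\imoct})$ with $\varphi \in \Gtwosplit$, set $\Phi(\mathcal{F}) := (f_1, f_2, f_4) = (\varphi(i), \varphi(j), \varphi(l))$. Since $(i,j,l) \in V_{(+,+,-)}(\imoct)$ and $\varphi$ preserves $q_{\imoct}$, $\times$, and $\cdot$, the triplet $\Phi(\mathcal{F})$ lies in $V_{(+,+,-)}(\imoct)$. Conversely, the basis-creation process \eqref{BasisCreation} of Proposition \ref{StiefelTripletModel} sends a triplet $(x,y,z)$ to $\mathcal{B} = (x, y, xy, z, zx, zy, z(xy))$, which was shown there to be a multiplication frame. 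Because $ij = k$, $li$, $lj$, and $l(ij) = lk$ are exactly the remaining entries of $\mathcal{M}|_{\imoct}$, the operations $\Phi$ and $\mathcal{B}$ are mutually inverse: the three generators uniquely determine the other four frame entries via octonion products. Equivariance of $\Phi$ with respect to $\Gtwosplit$ is immediate, since any $\psi \in \Gtwosplit$ preserves cross-products and hence commutes with the basis-creation recipe.

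Given this equivariant bijection, the corollary is an immediate translation of Proposition \ref{StiefelTripletModel}. Indeed, transitivity on multiplication frames follows from transitivity on $V_{(+,+,-)}(\imoct)$, and if some $\psi \in \Gtwosplit$ fixes a multiplication frame $\mathcal{F}$ pointwise, then it fixes $\Phi(\mathcal{F}) \in V_{(+,+,-)}(\imoct)$, forcing $\psi = \id$ by the free action part of the proposition.

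An entirely self-contained short argument is also available and worth noting: transitivity is already built into Definition \ref{MultiplicationFrame}, since by definition the set of multiplication frames is the $\Gtwosplit$-orbit of $\mathcal{M}|_{\imoct}$; for freeness, if $\psi \in \Gtwosplit$ pointwise fixes any multiplication frame $(f_i)_{i=1}^7$, then $\psi|_{\imoct} = \id$ because $(f_i)$ is a basis of $\imoct$, and combined with $\psi(1) = 1$ we get $\psi = \id$. There is no real obstacle here; the only point requiring a moment's care is confirming that the basis-creation map and the ``read off the three generators'' map are mutually inverse, which follows because in a multiplication frame the entries in positions $3, 5, 6, 7$ are determined by algebra products of those in positions $1, 2, 4$.
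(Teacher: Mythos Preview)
Your proposal is correct, and your first argument via the $\Gtwosplit$-equivariant bijection with $V_{(+,+,-)}(\imoct)$ is exactly what the paper has in mind when it labels this a corollary to the proof of Proposition \ref{StiefelTripletModel}. Your self-contained alternative---transitivity is built into Definition \ref{MultiplicationFrame} and freeness follows because a multiplication frame is a basis of $\imoct$---is also valid and even shorter than what the paper implicitly uses.
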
 

\begin{corollary}[$\Gtwosplit$ and $\quadric$] 
$\Gtwosplit$ acts transitively on $\quadric$. 
\end{corollary}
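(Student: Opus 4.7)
The plan is to deduce transitivity on $\quadric$ as an immediate consequence of the preceding Proposition \ref{StiefelTripletModel}, which gives the simply transitive action of $\Gtwosplit$ on $V_{(+,+,-)}(\imoct)$. Concretely, given $x, x' \in \quadric$ (so $q(x) = q(x') = 1$, equivalently $x^2 = (x')^2 = -1$), I will complete each to a triplet in $V_{(+,+,-)}(\imoct)$ and then let the element $\varphi \in \Gtwosplit$ furnished by Proposition \ref{StiefelTripletModel} do the work: if $\varphi$ sends the triplet extending $x$ to the triplet extending $x'$, then in particular $\varphi(x) = x'$, yielding transitivity.

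Thus the entire argument reduces to extending an arbitrary $x \in \quadric$ to a triplet $(x, y, z) \in V_{(+,+,-)}(\imoct)$. This is a pure signature count, using that $q|_{\imoct}$ has signature $(3,4)$. First, since $q(x) = 1$, the orthogonal complement $x^{\bot} \subset \imoct$ has signature $(2,4)$ and so contains a unit positive vector; pick $y \in x^{\bot}$ with $q(y) = 1$, equivalently $y^2 = -1$. Next, by the normalization \eqref{CrossProductNormalization}, $q(x \times y) = q(x) q(y) - (x \cdot y)^2 = 1$, and by the orthogonality property of $\times$, the vector $x \times y$ is orthogonal to both $x$ and $y$. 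Consequently $\spann_{\R}\{x, y, x \times y\}$ is a positive-definite $3$-plane in $\imoct$.

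Finally, the orthogonal complement of this $3$-plane inside the signature $(3,4)$ space $\imoct$ must have signature $(0,4)$, and hence is negative-definite of dimension $4$. Picking any unit vector $z$ in this $4$-plane yields $q(z) = -1$, i.e., $z^2 = 1$, and by construction $z$ is orthogonal to each of $x$, $y$, and $x \times y$, so the defining conditions of $V_{(+,+,-)}(\imoct)$, including the last pairing $z \cdot (x \times y) = 0$, all hold automatically. Hence $(x, y, z) \in V_{(+,+,-)}(\imoct)$, completing the reduction.

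There is essentially no genuine obstacle here; the proof is a brief signature count followed by invoking the previous proposition. The only point worth double-checking is that $x \times y$ contributes the \emph{third} positive direction (rather than a negative one), which is precisely what ensures the residual orthogonal complement has signature $(0,4)$ and admits a negative unit vector $z$. Since $q(x \times y) = 1 > 0$ as computed, this goes through cleanly.
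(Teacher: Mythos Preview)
Your proof is correct and matches the paper's approach: the corollary is stated without proof as an immediate consequence of Proposition~\ref{StiefelTripletModel}, and your argument---extending an arbitrary $x \in \quadric$ to a triplet in $V_{(+,+,-)}(\imoct)$ via the signature count---is precisely the intended deduction. The only minor streamlining available is that you need not extend both $x$ and $x'$; since $(i,j,l) \in V_{(+,+,-)}(\imoct)$ already has first entry $i \in \quadric$, it suffices to extend a single $x$ and invoke transitivity once.
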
 

\subsection{A Complex Cross-Product Basis for $(\imoct)^{\C}$}

Following Baraglia \cite{Bar15}, we will use the following carefully selected basis \\
$\mathcal{B} = ( u_3, u_2, u_1, u_0, u_{-1}, u_{-2}, u_{-3} )$ for $\imoct^\C$.
The basis $(u_i)_{i=3}^{-3}$ will be useful for understanding almost-complex curves in $\Stwofour$ in Section \ref{AlmostComplexCurve}. The name `complex cross-product basis' is motivated by property (3) of Proposition \ref{PropBaragliaBasis}. 

\begin{align} \label{BaragliaBasis}
	\begin{cases} 
	u_{3} &= \frac{1}{\sqrt{2}}( \, jl + \sqrt{-1} kl).\\
	u_{2} &=\frac{1}{\sqrt{2}}( \, j + \sqrt{-1} k).\\
	 u_{1} &= \frac{1}{\sqrt{2}}( \, l + \sqrt{-1} il).\\
	 u_{0} &= i.\\
	 u_{-1} &= \frac{1}{\sqrt{2}}( \, l- \sqrt{-1} il).\\
	 u_{-2}&= \frac{1}{\sqrt{2}}( \, j - \sqrt{-1} k).\\
	 u_{-3} &= \frac{1}{\sqrt{2}}( \, jl - \sqrt{-1} kl).
	 \end{cases} 
\end{align}

Observe that $u_{-i} = \overline{u_i}$, where $x \mapsto \overline{x}$ denotes complex conjugation with respect to the standard complex 
structure on $\bioct$. We state all of the useful properties of $\mathcal{B}$ we know, of which only (1) is given in \cite{Bar15}. 

\begin{proposition}[Properties of Complex Cross-Product Basis]\label{PropBaragliaBasis}
The basis $\mathcal{B}$ satisfies:
	\begin{enumerate}
		\item The inner-product with respect to $\mathcal{B}$ is given by
		 	\begin{align}	\label{InnerProductBaragliaBasis}
			 	q_{\C}( \mathbf{x}, \mathbf{y} ) = \mathbf{x}^T \; \begin{pmatrix} & & & & & & -1\\
				 & & & & & 1 & \\
				  & & & &-1 & & \\
				 & & & 1& & & \\
				  & & -1& & & & \\
				 &1 & & & & & \\
				  -1& & & & & &  \end{pmatrix} \;\mathbf{y},
			\end{align}
		\item Define the operator $C_{u_0}: \imoct^\C \rightarrow \imoct^\C$ by $C_{u_0}:= (u_0 \times \, \cdot)$. Then the $\pm \sqrt{-1}$ eigenspaces are given by 
		$$ \mathbb{E}_{\sqrt{-1}}(C_{u_0}) = \spann_\C < u_3, u_{-2}, u_{-1} > \; \; \text{and} \; \;  \mathbb{E}_{-\sqrt{-1}}(C_{u_0} ) = \spann_{\C} < u_{-3}, u_{2}, u_1 > .$$
		\item More generally, we have $ u_{i} \times u_j = C_{ij} u_{i+j}$ for constants $C_{ij} \in \C$, possibly 0, where we set $u_{i} = 0$ for $ | i | >3$.
		\item For constants $C_{ijk} \in \C$, the 3-form $\Omega$ in the basis $\mathcal{B}$ has the form 
			\begin{align}\label{G2_3Form}
				\Omega = \sum_{i + j + k = 0} C_{ijk} \, u_i^* \wedge u_j^* \wedge u_k^*.
			\end{align}
		\item A transformation $D \in \GL(\imoct^\C)$ of the form $D = \diag(d_i)_{i=3}^{-3} $ in the basis $\mathcal{B}$ satisfies $D \in \Gtwo^\C$ if
		and only if $d_i d_j d_k = 1$ for all pairwise distinct triplets $(i,j,k)$ such that $i + j + k = 0$. 
	\end{enumerate} 
\end{proposition}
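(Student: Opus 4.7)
The plan is to verify the five items in order, observing that (4) and (5) reduce to (1) and (3) almost formally, while (1)--(3) are direct computations in the basis $\mathcal{B}$.

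For (1), I would use the orthogonal decomposition $\imoct = \Ha \oplus l\Ha$, where $q$ is positive-definite on $\Ha$ and negative-definite on $l\Ha$. Each $u_j$ lies in exactly one summand ($u_0, u_{\pm 2}$ in $\Ha$; $u_{\pm 1}, u_{\pm 3}$ in $l\Ha$), and $\C$-bilinearly extending $q$ reduces the matrix of $q_\C$ in $\mathcal{B}$ to at most the seven diagonal entries plus six antidiagonal entries. Each vector $u_j$ has the form $\frac{1}{\sqrt{2}}(a+\sqrt{-1}b)$ with $a,b$ orthonormal and of the same norm-type, so $q_\C(u_j, u_j) = 0$; each antidiagonal pairing $q_\C(u_j, u_{-j})$ reduces to a two-term computation such as $\tfrac12(q(l,l)+q(il,il)) = -1$ and yields the stated matrix.

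For (2), I would compute each $u_0 \times u_j = i \times u_j$ directly from (\ref{CrossProductDefinition}) and the multiplication formula (\ref{OctMultiplication}) applied componentwise. Since $u_0$ is orthogonal to every other $u_j$, we have $i \times u_j = i \odot u_j$, and each product $i\odot m$ for $m \in \{j,k,l,li,lj,lk\}$ lies on a single basis vector of $\mathcal{M}$. Assembling these yields $u_0 \times u_j = \sqrt{-1}\, u_j$ for $j \in \{3,-2,-1\}$ and $u_0\times u_j = -\sqrt{-1}\,u_j$ for $j \in \{-3,2,1\}$, establishing the claimed eigenspace decomposition.

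For (3), with (2) in hand, I would exhibit a Cartan element $H \in \g_2^\C$ acting on $\imoct^\C$ with eigenvalue $j$ on $u_j$. Such an $H$ exists because the weights of the 7-dimensional representation of $\g_2^\C$ are $\{\pm 3,\pm 2,\pm 1,0\}$, each occurring with multiplicity one, and the subscripts of $\mathcal{B}$ have been chosen precisely to match these weights. Since $\g_2^\C$ acts on $\imoct^\C$ by derivations of $\times$, we get $H(u_i\times u_j) = (i+j)(u_i\times u_j)$, forcing $u_i\times u_j \in \C\,u_{i+j}$ whenever $|i+j|\le 3$ and $u_i\times u_j = 0$ otherwise. (A brute-force alternative is possible but far messier; the weight argument is the clean route.) Then (4) is immediate: $\Omega(u_i,u_j,u_k) = C_{ij}(u_{i+j}\cdot u_k)$ vanishes by (1) unless $i+j+k=0$. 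For (5), using the characterization $\Gtwo^\C = \{\psi \in \GL(\imoct^\C)\,|\,\psi^*\Omega=\Omega\}$ from the end of Section \ref{Gtwosplit}, we have $D^*\Omega = \sum_{i+j+k=0} C_{ijk}\,d_id_jd_k\, u_i^*\wedge u_j^*\wedge u_k^*$, so $D\in \Gtwo^\C$ iff $d_id_jd_k=1$ for every pairwise distinct triple with $C_{ijk}\neq 0$.

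The only real obstacle is the non-vanishing input to (5): I need to know that $C_{ijk}\neq 0$ for \emph{every} pairwise-distinct triple $(i,j,k)$ with $i+j+k=0$, not merely for some. This reduces to checking that each relevant $C_{ij}$ from (3) is nonzero, which is a short enumeration of the ordered pairs $(i,j)$ with $|i+j|\le 3$ and $\{i,j\}\neq\{0\}$. I would carry this out once and for all as a table, which then also supplies the explicit structure constants needed for later sections of the paper.
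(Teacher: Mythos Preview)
Your approach to (1), (2), (4), (5) is essentially the paper's: direct computation for (1) and (2), then (4) from (1)+(3), then (5) from (4) via the $\Omega$-characterization. You are in fact more careful than the paper on one point: you flag that (5) needs $C_{ijk}\neq 0$ for \emph{every} pairwise-distinct triple with $i+j+k=0$, and plan to tabulate the structure constants to confirm this. The paper's sketch simply says ``(5) is an immediate consequence of (4)'' without addressing non-vanishing.

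The substantive divergence is (3). The paper does the brute-force computation of all $u_i\times u_j$ and records the constants (e.g.\ $u_3\times u_{-1}=\sqrt{2}\,u_2$, $u_2\times u_1=\sqrt{2}\,u_3$, $u_i\times u_{-i}=\sgn(i)\sqrt{-1}\,u_0$, together with (2) and conjugates). Your route via a Cartan element $H$ with $Hu_j=ju_j$ is more elegant once $H$ is in hand, but as written it has a gap: knowing abstractly that the $7$-dimensional irrep has weights $\{0,\pm1,\pm2,\pm3\}$ does not tell you that the \emph{particular} vectors $u_j$ are the weight vectors. The sentence ``the subscripts of $\mathcal{B}$ have been chosen precisely to match these weights'' is exactly the claim to be verified, not a justification. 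To close the gap you must exhibit $H$ concretely---e.g.\ write $H=\diag(3,2,1,0,-1,-2,-3)$ in $\mathcal{B}$ and check it is a derivation of $\times$, or produce two one-parameter subgroups of $\Gtwosplit$ via the Stiefel model and compute their action on each $u_j$. Either verification is a direct computation of comparable length to the paper's ``tedious'' one; so the weight argument is conceptually nicer but not actually shorter. Your own parenthetical (``a brute-force alternative is possible'') is the honest fallback, and is what the paper in fact does.
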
 

\begin{proof}
We sketch the proof. First, using that $q_{\imoct} = \mathsf{diag}(+1,+1,+1,-1,-1,-1,-1)$ in the basis $\mathcal{M} = (m_i)_{i=1}^7$, one can directly 
compute that (1) is the correct expression for $q_{\imoct}^\C$ with respect to $\mathcal{B}$. For example, it is immediately evident that $u_i \cdot u_j = 0$
for $j \notin \{-i, +i\}$. \\

We can see (2) by direct calculation, using that $C_{u_0}$ acts as the standard complex structure $\begin{pmatrix} 0 & -1 \\ 1 & 0 \end{pmatrix} $ 
on the 2-planes $\spann_{\R} \langle m_2, m_3 \rangle$, $\spann_{\R} \langle m_4, m_5 \rangle,$ $\spann_{\R} \langle m_6, m_7\rangle$ in the ordered bases $(m_2, m_3)$, $(m_5, m_4)$, $(m_7, m_6)$, respectively.  \\

(3) is a tedious and direct calculation. Here are the other multiplicative relations amongst the basis \eqref{BaragliaBasis}. 
\begin{itemize}
	\item $u_{3} \times u_{-1} = \sqrt{2} u_2$.
	\item $u_{3} \times u_{-2} = \sqrt{2} u_1 $. 
	\item $u_{2} \times u_1 = \sqrt{2} u_{3}.$
	\item $u_{i} \times u_{-i} = \sgn(i) \sqrt{-1} u_0$, for $i \in \{1,2,3\}$. 
\end{itemize} 
Since $u_{-i} = \overline{u}_i$, the equation $\overline{xy} = \overline{x} \; \overline{y}$ for $x,y \in \bioct$ implies the remaining multiplicative relations.

(5) is an immediate consequence of (4). If $D$ is diagonal, then $D$ preserves $\Omega$ iff it preserves each decomposable tensor $u^{i,j,k}$
and the claim follows. 
\end{proof} 

\begin{remark}
Later, we use an analogous basis called a `real cross-product basis' in Section \ref{Sec:RealCrossProductBasis}, adapted to the real vector space $\imoct$.
\end{remark}

\subsection{A Model for $\g_2$} 

Recall that $\Gtwo^\F = \{ \psi \in \GL(\imoct^\F) \; | \; \psi(u \times v) = u \times v\}$. Consequently, the Lie algebra $\g_2^\F$ is the set of transformations 
infinitesimally preserving the cross-product, i.e., derivations of the cross product:  
$$ \g_2^\F = \{ \, A \in \mathsf{End}(\imoct^\C) \; | \;A( x \times y)=  (A \,x) \times y + x \times (Ay) \; \}$$ 

For symmetry of notation, we later write $\g_2' := \g_2^\R$. Fixing the (ordered) basis $\mathcal{B} := (u_i)_{i=3}^{-3}$ for $\imoct^\C$, we obtain a matrix representation $\Psi$ of $\g_2$ by writing these endomorphisms with respect to $\mathcal{B}$. 
 In short, $\Psi: \g_2 \rightarrow \gl_7\C$ by $ A \; \stackrel{\Psi}{\longmapsto} \; [A]_{\mathcal{B}}$. 
Going forward, we will conflate $\g_2$ with its image under this representation $\Psi$. When we use a different basis for $\imoct^\C$, we will make this clear. \\

For completeness, we include the full details of $\g_2$ Lie theory in the basis $\mathcal{B}$ in the Appendix. Specifically,
Section \ref{RootSpace} contains the root space decomposition, Section \ref{KostantTheory} contains our choice of model principal 3-dimensional subalgebra $\fraks$ of Kostant \cite{Kos59}, and Section \ref{HitchinTheory} contains the Cartan involution $\sigma$ from \cite{Hit92}.

\subsection{Cyclic $\Gtwosplit$-Higgs Bundles} \label{G2HiggsBundle} 

We now form the Higgs bundle $\V$ in which we work for most of the paper. While the vector bundle used below is, of course, a trivial holomorphic  vector bundle, all of the structure here is intentionally set up to be completely analogous to the Higgs bundle construction $\V_{\Sigma}$ 
considered over a closed Riemann surface $\Sigma$. 

Denote $\K$ as the holomorphic cotangent bundle of $\C$ and we fix the holomorphic vector bundle
\begin{align}
	\mathcal{E} =  \K^3 \oplus \K^2 \oplus \K \oplus \mathcal{O} \oplus \K^{-1} \oplus \K^{-2} \oplus \K^{-3}.
\end{align} 
We place an orthogonal structure $E$ on the bundle by taking a coordinate vector $\mathbf{x} = (x_i)_{i=3}^{-3}$ in the basis $ (dz^i)_{i=3}^{-3}$ and forming
the metric 
	$$E( \mathbf{x}, \mathbf{y} ) = \mathbf{x}^T R \, \mathbf{y} ,$$
where 
$$R = \begin{pmatrix} & & & & && -1 \\ & & & & & 1 &  \\ & & & & -1& & \\ & & & 1 && &  \\ &&-1&&&& \\ & 1& & & && &  \\  -1&&&&&& \end{pmatrix} . $$

Then $E$ reduces the structure group of $\V$ to $\mathsf{SO}_7\C$. We then further reduce the structure group by identifying our $\C^7$ fibers with $\imoct^\C$ as follows.
Let $( \mathbf{e}_{3},  \mathbf{e}_{2},  \mathbf{e}_{1}, \mathbf{e}_{0}, \mathbf{e}_{-1}, \mathbf{e}_{-2}, \mathbf{e}_{-3} ) $ be the standard basis of $\C^7$, indexed to match the powers of $\K$. We fix once-and-for-all the identification of the fibers of $\V$ with complex split-octonions by identifying 
$\mathbf{e}_i$ with $u_i \in \mathcal{B}$ from our complex cross-product basis from Proposition \ref{BaragliaBasis}. By equation \eqref{InnerProductBaragliaBasis}, this identification respects our initial choice of orthogonal structure. That is to say, the $\imoct$ dot-product $q$ in the basis $(u_i)_{i=3}^{-3}$ is the same as is the dot-product $E$ in
the basis $(\mathbf{e}_i)_{i=3}^{-3}$. This identification $\mathbf{e}_i \leftrightarrow u_i$ defines a reduction of structure to $\Gtwo$ by considering only frames that preserve the $(\imoct)^\C$ cross-product. 

\begin{remark} \label{HiggsBundleRemark} We emphasize that the identification of fibers of $\V_{\Sigma} := \bigoplus_{i=3}^{-3} \K_{\Sigma}^i$ with $(\imoct)^\C$ extends to any Riemann surface to give a reduction of structure to $\Gtwo$. Let $(z_{\alpha}, U_{\alpha})$ be an atlas for $\Sigma$. Then applying the aforementioned identification $dz_{\alpha}^i \leftrightarrow u_i$ in the local holomorphic frames $\mathcal{B}_{\alpha} = (dz_{\alpha}^i)_{i=3}^{-3} $, one finds that the local transitions are in $\Gtwo$, i.e., they respect the cross-product $\times$ on $(\imoct)^\C$. Indeed, if $z_{\beta} = g_{\alpha \beta} z_{\alpha} $, then set $ \zeta := \deriv{ g_{\alpha \beta}}{z_{\alpha}}$ and we have
$\mathcal{B}_{\beta} = \psi \, \mathcal{B}_{\alpha} $, where 
$$\psi = \mathsf{diag}(\zeta^3, \zeta^2, \zeta^1, 1, \zeta^{-1}, \zeta^{-2}, \zeta^{-3}).$$
By Proposition \ref{PropBaragliaBasis} part (5), $\psi \in \Gtwo$. Thus, these local identifications yield a globally well-defined 
bundle map $\times_{\V}: \V \times \V \rightarrow \V$ defined by using any of the local cross products. \end{remark} 

The (Kostant) invariants of $\g_2$ are 1 and 5. Thus, the $\Gtwosplit$-Hitchin component is parametrized by pairs of holomorphic quadratic and sextic differentials $q_2, q_6$. Since we are interested in only \emph{cyclic} Higgs bundles, we set $q_2 =0$ and denote $q:= q_6$.
Following the definition of the Hitchin section, the Higgs field is of the form $\varphi = \tilde{e} + q e_{\gamma}$, where $\tilde{e}$ comes from our 
distinguished principal Kostant 3DS $\fraks = \spann_{\C} < x, e, \tilde{e} > $ and $e_{\gamma}$ is the (normalized) highest weight vector given by 
			$$ e_{\gamma} = \begin{pmatrix} 0& & & & & 1 &\\
				      & 0&  & & & & 1\\
			  	      & &0 & & & &\\
			 	      & & &0 && &\\
				      & & & & 0& &\\
				      & & & &&0&\\
				      & & & & &&0\\ \end{pmatrix} .$$ 
Thus, the Higgs field in these coordinates is 
	\begin{align}
		\varphi &= \begin{pmatrix} 0&  & & & & q&\\
				     \sqrt{3} & 0&  & & & & q\\
			  	      & \sqrt{5}&0 & & & &\\
			 	      & & -\sqrt{-6}&0 && &\\
				      & & & -\sqrt{-6}& 0& &\\
				      & & & &\sqrt{5} &0&\\
				      & & & & & \sqrt{3}&0\\
				     \end{pmatrix} .	
	\end{align} 

\subsubsection{Harmonic Metric \& Hitchin's Equation} \label{HitchinsEquationsSubsection} 

We now search for a Hermitian metric $h$ on $\V$ such that the connection $\nabla = \nabla_{\delbar, h} + \varphi + \varphi^{*h}$ flat. The flatness of $\nabla$ is equivalent 
to the metric $h$ satisfying \emph{Hitchin's equations} \cite{Hit92}. 
\begin{align}
	F_{h} + [\varphi, \varphi^{*h} ] &= 0 \label{Hitchin1} \\
	\nabla_{\delbar, h}^{0,1} \varphi &= 0. \label{Hitchin2}
\end{align}

Here, $F_{h}$ is the curvature of the Chern connection $\nabla_{\delbar, h}$. 
By our construction of the Higgs field, equation \eqref{Hitchin2} is equivalent to the holomorphicity of $q$. 
Due to our interest in \emph{cyclic} Higgs bundles, we look for a metric $h$ that is diagonal in the standard holomorphic coordinates \cite{Bar15}.
Let $r, s > 0$ be positive functions. Then in the standard coordinates $(dz^i)_{i=3}^{-3}$, we search for $h$ of the form 
\begin{align}\label{HitchinMetric}
	h( \mathbf{x}, \mathbf{y} ) = \mathbf{\overline{x}}^T \, \mathsf{diag} (r^{-1}s^{-1}, r^{-1}, s^{-1}, 1, s, r, rs)\, \mathbf{y} =  \mathbf{\overline{x}}^T \,H \, \mathbf{y}.
\end{align} 
We will prove there is a unique such metric under some completeness conditions in Section \ref{Uniqueness}. 

Now, the $h$-adjoint of $\varphi$ is given by $\varphi^{*h} = H^{-1} \overline{ \varphi}^T H$.
Thus,
\begin{align}
		\varphi^{*h} &= \begin{pmatrix} 0& \sqrt{3}\, s & & & & &\\
				      & 0& \sqrt{5} \frac{r}{s}& & & & \\
			  	      & &0 &\sqrt{-6}s  & & &\\
			 	      & & &0 &\sqrt{-6}s & &\\
				      & & & & 0&\sqrt{5} \frac{r}{s}&\\
				      \frac{ \overline{q}}{ r^2s }& & & & &0& \sqrt{3} \,s \\
				      & \frac{ \overline{q}}{ r^2s } & & & &&0\\
				     \end{pmatrix} 	.
	\end{align} 
The curvature of the Chern connection is $ F_{h} = \delbar ( H^{-1} \del H )= (\log H)_{z\zbar} \; d\zbar \wedge dz.$ Writing $\varphi = \widehat{\varphi} \, dz , \varphi^{*h} = \widehat{\varphi^{*h}} d\zbar$, Hitchin's equation \eqref{Hitchin1} becomes the following equation
\begin{align}\label{HitchinsEquation1}
	 (\log H)_{z\zbar} d\zbar  \wedge dz= [\widehat{\varphi}, \widehat{\varphi^{*h}} ] d\zbar  \wedge dz.
\end{align} 
 Taking the second and third components along the diagonal, equation \eqref{Hitchin1} is equivalent to the following $2 \times 2$ coupled system of PDE for $r,s$.
We use $\Delta := \partial_{\zbar} \circ \partial_{z} $ going forward. 
 \begin{align}\label{HitchinsEquations_rs}
 	\begin{cases} 
 		\Delta \log r  &=  5 \frac{r}{s} - 3s- \, \frac{|q|^2}{r^2 s} \\
		\Delta \log s  &= 6s-  5\frac{r}{s} .
	\end{cases} 
 \end{align} 
Substituting $u_1= \log r + \frac{1}{2}\log s$, $u_2 = \frac{1}{2}\log s$, the system \eqref{HitchinsEquations_rs} becomes 
  \begin{align}\label{HitEuc}
 	\begin{cases}  
		 2 \Delta u_1&= 5 e^{(u_1-3u_2)} - 2 e^{-2u_1}\,|q|^2 .\\
		2 \Delta u_2 &=  6e^{2u_2} -5e^{(u_1-3u_2) }. \end{cases} 
 \end{align} 
 
 Later, it will be useful to have a form of these equations with the constants all equal. 
Hence, define $ c, d > 0 \in \R$ which uniquely solve the system
\begin{align}\label{cdSystem}
 	\begin{cases} 5c^{5/6}d^{-1/2} - 2c^{-5/3} &= 0 \\
	-5c^{5/6}d^{-1/2} +6d^{1/3} &= 0. \end{cases} 
\end{align}
The exact solution to \eqref{cdSystem} is $d = \frac{5}{6\sqrt{3}}, c = (\frac{2}{5})^{2/5} d^{1/5}.$ \\

Define $b= 3 d^{1/3} $. Then $\bm{w} = (w_1, w_2)$ solves \eqref{HitEuc_Clean} if and only if $\bm{u} = (u_1, u_2)$ defined by $u_1 = w_1 + \log(c^{5/6} ) $
and $ u_2 = w_2 + \log(d^{1/6}) $ solves \eqref{HitEuc}. 
 \begin{align}\label{HitEuc_Clean}
 	\begin{cases}  
		   \Delta w_1&= b\left( e^{(w_1-3w_2)} - e^{-2w_1}\,|q|^2 \right) \\
		  \Delta w_2 &= b\left( e^{2w_2}  -e^{(w_1-3w_2) } \right) . \end{cases} 
 \end{align} 
 For later purposes, we also re-write the system \eqref{HitEuc} with respect to a different conformal metric $\sigma = \sigma(z) |dz|^2$. Given a solution
 $\bm{u}$ of \eqref{HitEuc}, we define $\bm{\psi} = (\psi_1, \psi_2)$ by 
 $$\begin{cases}
 	e^{\psi_1} \sigma^{5/2} &= e^{u_1} \\
	e^{\psi_2} \sigma^{1/2} &= e^{u_2} .
 \end{cases} $$
A calculation shows that $\bm{\psi} $ solves the system \eqref{G2Hitchin_GeneralMetric_old}: 
\begin{align}\label{G2Hitchin_GeneralMetric_old} 
	\begin{cases}  
		2 \Delta_\sigma \psi_1 &= 5e^{\psi_1 - 3 \psi_2} - 2 |q|^2_{\sigma} \, e^{-2\psi_1} + \frac{5}{2} \kappa_{\sigma} \\
		2 \Delta_\sigma \psi_2 &= -5e^{\psi_1 - 3\psi_2} + 6 e^{2\psi_2} + \frac{1}{2} \kappa_\sigma  \end{cases} ,
\end{align} 
where $\Delta_{\sigma} := \frac{1}{\sigma} \Delta$, $|q|^2_\sigma := \frac{q \bar{q}}{\sigma^6} $ and $\kappa_{\sigma} = - \frac{2}{\sigma} \Delta \log \sigma$.\\

\begin{remark} 
There is no \emph{naive} linear equivalence between the $\g_2$ affine Toda equations and the $\SL_6\C$ or $\SL_7\C$ affine Toda equations. 
The affine Toda equations for $\SL_n\C$ depend on a holomorphic differential $q \in H^0(\K^n_\C)$. Thus, the $\SL_7\C$ affine Toda equations
have the wrong order differential. 
One can then show that no linear substitution yields an equivalence between the $\g_2$ affine Toda equations and is the $\SL_6\C$ affine Toda equations. 
To the author's best knowledge, there is no relationship between the solutions to the $\g_2$ and $\SL_6 \C$ affine Toda equations. 
\end{remark} 

\subsubsection{Parabolic Structure} 

We now define a parabolic structure on $\V$.
Take a section $s = s_i dz^i \in \Gamma(\C, \V)$ that is meromorphic in the usual sense, i.e., each $s_i $ is meromorphic. Define $(s_i)_{\infty} \in \Z$ as the order of $s_i$ at $\infty$. We then define a parabolic structure on $\V$ as follows. 
We define weights $(\alpha_{i})_{i=3}^{-3} := ( -\frac{n}{2},  -\frac{n}{3}, -\frac{n}{6},0,  \frac{n}{6}, \frac{n}{3}, \frac{n}{2})$. For each meromorphic section $s \in \Gamma(\C, \V)$,
we define the order at infinity by 
$$v_{\infty}(s) := \max_{-3 \leq i \leq 3} \{ (s_{i})_{\infty} + \alpha_i \} .$$
We can explain where this definition comes from by noting the asymptotics proven later for the unique complete real solution  
$h = \mathsf{diag} (\frac{1}{rs}, \frac{1}{r}, \frac{1}{s}, 1, s, r, rs) $ to Hitchin's equations for $\V$ in the standard frame $(dz_i)_{i=3}^{-3}$. 
We prove in Theorem \ref{ExistenceTheorem} that 
$r \asymp |q|^{2/3} \asymp |z|^{ \frac{2n}{3} }$ and $s \asymp |q|^{\frac{1}{3}} \asymp |z|^{ \frac{n}{3} }$, where $q$ is a monic polynomial of degree $n$. 
We write $f(z) \asymp g(z)$ here to denote asymptotically comparable in the sense that $f(z) \in O(g(z)) $ and $g(z) \in O(f(z))$. 

In particular, the asymptotics for $r,s$ imply that the canonical section $\sigma_i := dz^i \in \Gamma(\C, \K_\C^i)$ satisfies that $h(\sigma_i, \sigma_i) \asymp |z|^{2 \alpha_i }$ for the aforementioned constants $\alpha_i$.
 Thus, the metric $h$ respects the parabolic structure in the sense that for any meromorphic section $s \in \Gamma(\C, \V)$, we have 
 \begin{align}\label{MetricAsymptoticGrowth} 
  h(s(z), s(z) ) \asymp \, |z|^{2 \, v_{\infty}(s)} .
  \end{align} 

We can think of the weights $(\alpha_i)_{i=3}^{-3}$ as being canonically associated to the bundle $(\V, \varphi)$ in the sense that there is a unique 
complete solution (in the sense of definition \eqref{CompletenessDefinition}) $h = \mathsf{diag}(h_i)_{i=3}^{-3} \in \Gtwo$ with $h_i : \C \rightarrow \R_+$  to the equation \eqref{Hitchin1}. Then
this solution $h$ satisfies $h_i \asymp |z|^{2\alpha_i}$.

On the other hand, we have a unique diagonal metric $h$ on $(\V, \varphi)$ solving Hitchin's equations if we instead demand the asymptotic \eqref{MetricAsymptoticGrowth}
instead of the completeness condition \eqref{CompletenessDefinition}. Indeed, given any two metrics 
$h_1 =\mathsf{diag} (\frac{1}{r_1s_1}, \frac{1}{r_1}, \frac{1}{s_1}, 1, s_1, r_1, r_1s_1), \; h_2 =\mathsf{diag} (\frac{1}{r_2s_2}, \frac{1}{r_2}, \frac{1}{s_2}, 1, s_2, r_2, r_2s_2)$, 
then using our same substitution to define $u^{(i)}_1 := \log r_i + \frac{1}{2}\log s_i$, $u_2^{(i)} = \frac{1}{2}\log s_i$ and 
$ \bm{u}^{ (i) } := (u^{ (i) }_1,u^{ (i) }_2)$, by the asymptotic \eqref{MetricAsymptoticGrowth}, we find that $\bm{u}^{ (1) }, \bm{u}^{(2)}$ are mutually bounded. Hence,
by Lemma \ref{UniquenessMutuallyBounded}, we have $\bm{u}^{(1)} = \bm{u}^{(2)}$. \\

We refer the reader to \cite{FN21} for further details on parabolic Higgs bundles over $\CP^1$.

\subsubsection{Real Forms on $\V$ and $\End( \V )$}  \label{Subsection_RealForms}

Previously, we reduced structure group of $\V$ to $\Gtwo$. 
We now define a $\nabla$-parallel real structure on $\End( \V )$, for $\nabla = \nabla_{\delbar, h} + \varphi + \varphi^{*h}$
the flat connection. \\

First, we define a compact involution $\hat{\rho}$ on $\End(\V)$ as a conjugate of our model compact involution $\rho = (A \mapsto -\overline{A}^T )$ 
from \ref{ModelRealForms}. Set
\begin{align}
	\hat{\rho} = H^{-1} \rho H = \Ad_{H^{-1}} \circ \rho.
\end{align} 
Observe that $\Fix(\hat{\rho}) = \mathfrak{u}(H)$, the set of $H$-skew-hermitian matrices.
Now, the involution $\hat{\rho}$ commutes with the Hitchin involution $\sigma$.
The global split real form is given by $\hat{\tau} = \sigma \hat{\rho} $, or in coordinates, 
\begin{align}
	\hat{\tau}(A) =  H^{-1}Q \overline{A} Q H,
\end{align} 
where 
\begin{align}\label{Qmatrix} 
	Q = \begin{pmatrix} & & & & & &1\\
					& & & & &1&\\
				   	& & & & 1& & \\
				  	& & & 1& & & \\
				  	& & 1&& & &\\
				   	& 1& & & & & \\
				   	1 & & & & & & \end{pmatrix}.
\end{align} 

Writing $\hat{\tau} = \Ad_{H^{-1}} \circ \tau$ shows that $\hat{\tau}$ defines a split real form, where $\tau$ is the model real form from \ref{ModelRealForms}. Hence,  $\Fix(\hat{\tau}) $ defines a $\g_2'$-fibered sub-bundle
of endomorphisms. We write $\End( \V )^\R := \mathsf{Fix}(\hat{\tau})$. \\

We now define a real structure $\hat{\tau}_{\V}$ on $\V$ that is compatible with our real structure $\hat{\tau}$ on $\End(\V)$. Indeed, $\psi \in \End(\V)^\R$ will preserve the real sub-bundle $\V^\R := \Fix( \, \hat{\tau}_{\V} \, )$. To this end, define
\begin{align}\label{ReaIBundleInvolution}
 	\hat{\tau}_{\V}( \mathbf{x} ) = H^{-1}Q \, \overline{\mathbf{x}},
\end{align}
in the basis $\mathcal{B}$. As a matrix expression, $\hat{\tau}_{\V}$ is given by
\begin{align}\label{ParallelConjugation} 
	\hat{\tau}_{\V}(\mathbf{x}) :=  \begin{pmatrix} & & & & && \frac{1}{rs} \\ & & & & & \frac{1}{r} &  \\ & & & & \frac{1}{s} & & \\ & & & 1 && &  \\ &&s&&&& \\ & r& & & && &  \\  rs&&&&&& \end{pmatrix} \overline{\mathbf{x}} .
\end{align} 
 We directly exhibit a $\hat{\tau}_\V$-real frame for $\V$ in the next section.\\
 
As noted by Baraglia \cite{Bar10}, $\End(\V)^\R$ preserves $\V^\R$ by
a certain `multiplicative' compatibility: 
 \begin{align}\label{MultiplicativeCompatibility}
 	\hat{\tau}_{\V}(\, A \mathbf{x} \,) = H^{-1}Q\overline{A\mathbf{x}} = (H^{-1}Q\overline{A}QH)(H^{-1}Q\overline{x}) = \hat{\tau}(A) \, \hat{\tau}_{\V}(\mathbf{x} ).
 \end{align} 
Indeed, equation \eqref{MultiplicativeCompatibility} says that if $\psi \in \End( \V)^\R, \mathbf{x} \in \V^\R$, then $\psi \mathbf{x} \in \V^\R$.\\

We now observe by direct calculation that $\hat{\tau}( \varphi + \varphi^{*h} ) = \varphi + \varphi^{*h}.$ Let $D_H $ denote the Chern connection 1-form and 
we have trivially that $D_H$ preserves $h$, so $\hat{\rho}(D_H) = D_H$. Since $D_H \in \h$ point-wise, $\sigma(D_H) = D_H$. Thus, $\hat{\tau}(D_H) = D_H$. We conclude that 
both $\hat{\tau}, \hat{\tau}_{\V}$ are $\nabla$-parallel. 

 \section{The Minimal Surface $f$ $\&$ the Almost-Complex Curve $\nu$}  \label{AlmostComplexCurve} 
 
 In this section, we discuss a detailed picture of the geometric relationship between the almost-complex curve $\nu$ 
 and the minimal surface in the symmetric space. We proceed in steps towards proving Theorem \ref{THM:GeometricCorrespondence},
 in which we prove the commutativity of the diagram in Figure \ref{HarmonicMapDiagram}. \\
 
 In Section \ref{BaragliasConstruction}, we review Baraglia's construction $\nu$ in the Higgs bundle $\V$ \cite[Section 3.6]{Bar10}
 We then discuss the \emph{harmonic map sequence} of $\nu$ and use it to recover Hitchin's equations from the almost-complex curve.
 In Section \ref{Toda}, we discuss Toda frames and the $\g_2$ affine Toda Field equations; we show the integrability equations of the Toda frame $e^{\Omega}$ for $\nu$ are equivalently Hitchin's equations. The Toda frame defines a global lift $\hat{\mathcal{F}}_{\nu}: \C \rightarrow \Gtwosplit$ of $\nu$. In Section \ref{GeometricCorrespondence}, we use the harmonic map sequence to define an abstract geometric lift $G_{\nu}$ of $\nu$ to
 a model space $\mathscr{Y} \cong_{\mathsf{Diff}} \Gtwosplit/ T$. Here,  $T$ is a maximal torus in the maximal compact $K < \Gtwosplit$. We also give a geometric interpretation of the metric $h$ on $\V$ as defining a map $f: \C \rightarrow \mathscr{X} \cong_{\mathsf{Diff}} \Gtwosplit/ K$. Theorem \ref{THM:GeometricCorrespondence} shows the compatibility of the 
maps $f, G_{\nu}$ with the projections $F_K: = \pi_K \circ \hat{\mathcal{F}}_{\nu}$ and $F_T: =  \pi_K \circ \hat{\mathcal{F}}_{\nu}$, where $\pi_K: \Gtwosplit \rightarrow \Gtwosplit/K$
and $\pi_T: \Gtwosplit \rightarrow \Gtwosplit/T$ are natural projections.

 \subsection{Constructing $\nu$ in the Higgs Bundle} \label{BaragliasConstruction} 
 
 We now recall how the harmonic metric $h$ in the Higgs bundle $(\V, \varphi) $ gives rise to an almost-complex curve $\nu: \C \rightarrow \quadric$ \cite{Bar10}. 
Surprisingly, $\nu$ arises from the tautological 
section $\sigma$ of the bundle $\V$.
 
Let $\sigma:  \C \rightarrow \V$ be the tautological section, given by $\sigma(z) = (0,0,0,1,0,0,0)$ in standard coordinates. 
Recall the fixed identifications of each fiber of $\V$ with $(\imoct)^\C$ in Section \ref{G2HiggsBundle}. We can interpret $\sigma$ 
 as a map $\nu: \C \rightarrow (\imoct)^\C$. 
Indeed, the correspondence $\nu \, \leftrightarrow \, \sigma$ is that $\nu(z) \in \imoct^\C$ is given by $\nabla$-parallel translation of $\sigma(z) \in \V_z$
to $\nu(z) \in \V_{p_0} = \imoct^\C$, where $p_0$ is the origin. Further, the section $\sigma$ obeys reality conditions. That is, since $\sigma \in \Gamma(\C, \V^\R)$ is a 
$\hat{\tau}_{\V}$-real section, we have 
$\nu(z) \in \imoct  $, identified as $\Fix(\hat{\tau}_\V)_{p_0}$. Thus, we regard $\nu$ as a map $\nu: \C \rightarrow \imoct$. 
Moreover, $\sigma(z)$ identifies with $u_0 = i \in \imoct$ in each fiber. In particular, $q_{\imoct}(\sigma) = +1$. As $\nabla$-parallel translation preserves $q_{\imoct}^\C$,
the map $\nu$ satisfies $q_{\imoct}(\nu(z)) = +1$. Hence, we may regard $\nu$ as a map $\nu: \C \rightarrow \quadric$. 

We will repeatedly use the correspondence between $u: \C \rightarrow \imoct^\C$ and sections 
of $\V$ by conflating $u$ with $\sigma_{u} \in \Gamma(\C, \V)$ given by $\nabla$-parallel translation of $u(z) \in \V_{p_0}$ to $\sigma_{u}(z) \in \V_z$. Moreover, 
under this correspondence, $d\nu(X)$ corresponds to $\nabla_X \sigma$. To see this, write $\nu(z) = c^i(z) \mathbf{e}_i$ in Einstein summation. Then denote
$\widetilde{\mathcal{B}} := \widetilde{\mathbf{e}}_i$ as the $\nabla$-parallel frame obtained by parallel translating $ (\mathbf{e}_i|_{p_0} )_{i=3}^{-3}$.
Observe that $\sigma_u(z) = c^i(z)  \widetilde{\mathbf{e}}_i|_{z}$. Since $\nabla = d$ in the frame $\widetilde{\mathcal{B}}$, the correspondence follows.

Next, we show that $\nu$ is an almost-complex curve. We recall the connection 1-form $\mathcal{A}$ of the flat connection $\nabla$, which splits into (1,0) and (0,1) parts, $\mathcal{A} = \mathcal{A}^{1,0} + \mathcal{A}^{0,1} $ as follows:

$$ \mathcal{A}^{0,1}  =\varphi^{*h} = \begin{pmatrix} 0& \sqrt{3}\, s & & & & &\\
				      & 0& \sqrt{5} \frac{r}{s}& & & & \\
			  	      & &0 &\sqrt{-6}s  & & &\\
			 	      & & &0 &\sqrt{-6}s & &\\
				      & & & & 0&\sqrt{5} \frac{r}{s}&\\
				      \frac{ \overline{q}}{ r^2s }& & & & &0& \sqrt{3} \,s \\
				      & \frac{ \overline{q}}{ r^2s } & & & &&0\\
				     \end{pmatrix} 	\; \; \text{and}$$

$$ \mathcal{A}^{1,0} = \begin{pmatrix} -\del (\log r + \log s)& & & & & q&\\
				      \sqrt{3} & -\del \log r  & & & & &q \\
			  	      &\sqrt{5} & -\del \log s & & & &\\
			 	      & &- \sqrt{-6}&0 &  & &\\
				      & & & -\sqrt{-6}&  \del \log s& &\\
				      & & & &\sqrt{5} & \del \log r& \\
				      & & & & & \sqrt{3} &\del( \log r + \log s)\\
				     \end{pmatrix} 	. $$

In the standard coordinate $z = x + \sqrt{-1}y$, we now calculate derivatives of $\nu$ with the above correspondence. Since the coefficients of $\sigma$
are constant in $\mathcal{B}$ coordinates, $ \nabla \sigma = \mathcal{A} \sigma$. Hence, 
	\begin{align}
		\nabla_{\der{z}} \sigma &= -\sqrt{-6} u_{-1}. \\
		 \nabla_{\der{\zbar}} \sigma = &= \sqrt{-6} \, s\,u_{1}.
	\end{align}
	
Since $u_{-1}, u_1$ are isotropic split-octonions, we conclude that $ \nu_z,\nu_{\zbar}$ are isotropic. Thus, $\nu$ is weakly conformal.
We will compute the induced metric shortly and shortly and see that $\nu$ is conformal. 

We now see that $\nu$ is an almost-complex curve since $\nu_{z} $ corresponds to $ -\sqrt{-6} u_{-1}:$ 
$$ J_{\quadric} \circ d \nu \left ( \der{z} \right)  = \nu \times \nu_{z} = \sqrt{-1} \, \nu_{z}  = (d\nu \circ J_{\R^2}) \left( \der{z} \right) ,$$
where we use that $ u_{-1} \in \mathbb{E}_{\sqrt{-1}}(\mathcal{C}_{u_0} ) $ by Proposition \ref{PropBaragliaBasis}.

We can now write down a global $h$-unitary, $\hat{\tau}_{\V}$-real multiplication frame 
$$\mathcal{M}_h = (w_1,\, w_2, \,w_3,\, w_4, \,w_5,\, w_6, \,w_7).$$
That is, the linear map $(\mathcal{M} \mapsto \mathcal{M}_h) \in \Gtwo^\C$, where $\mathcal{M}$ is from \eqref{MultiplicationBasis}. This frame will be convenient to relate $f$ and $\nu$ later. 

Recall equation \eqref{ParallelConjugation} describing $\hat{\tau}_\V$. By direct calculation, we find the following imaginary split octonions are fixed by $\hat{\tau}_{\V}$.
In particular, $\V^\R = \spann_{\R} (w_i)_{i=1}^7$. 
	\begin{align} \label{HUnitaryMultiplicationFrame}
		\begin{cases}
		w_1 &= i = u_0.\\
		w_2 &= \,\frac{1}{\sqrt{2}} \, ( \, r^{1/2} u_{2} + r^{-1/2} u_{-2} \, ) \\ 
		w_3 &=  - \, \frac{ \sqrt{-1} }{\sqrt{2}} ( \, r^{1/2} u_{2} - r^{-1/2} u_{-2} \, ) \\ 
		w_4& = \,\frac{1}{\sqrt{2}} \, ( \, s^{1/2} u_{1} + s^{-1/2} u_{-1} \, ) \\
		w_5& = \,\frac{ \sqrt{-1} }{\sqrt{2}} \, ( \, s^{1/2} u_{1} - s^{-1/2} u_{-1} \, ) \\
		w_6& = -\,\frac{1}{\sqrt{2}} \, ( \, (rs)^{1/2} u_{3} + (rs)^{-1/2} u_{-3} \, ) \\ 
		w_7 &= \,\frac{ \sqrt{-1} }{\sqrt{2}} \, ( \, (rs)^{1/2} u_{3} - (rs)^{-1/2} u_{-3} \, ) \\ 
		\end{cases} 
	\end{align}
One calculates that $\mathcal{M}_h$ is $h$-unitary using the expression \eqref{HitchinMetric} for $h$. Moreover, $\mathcal{M}_h$ is a \emph{multiplication frame} for $\imoct$. 
To see this, observe following relations for $\mathcal{M} = (i, \, j, \, k, \, l, \, li,\, lj, \, lk)$: 
	\begin{align}\label{StandardMultiplicationFrame} 
		\begin{cases}
		m_1 &= i \\
		m_2 &= \,\frac{1}{\sqrt{2}} \, ( \, u_{2} + u_{-2} \, ) \; = j \\ 
		m_3 &=  - \, \frac{ \sqrt{-1} }{\sqrt{2}} ( \, u_{2} -u_{-2} \, ) = k \\
		m_4 &= \,\frac{1}{\sqrt{2}} \, ( \, u_{1} + u_{-1} \, ) = l \\
		m_5 &= \,\frac{ \sqrt{-1} }{\sqrt{2}} \, ( \, u_{1} - u_{-1} \, ) = li \\
		m_6 &= -\,\frac{1}{\sqrt{2}} \, ( \, u_{3} + u_{-3} \, ) = lj \\ 
		m_7 &= \,\frac{ \sqrt{-1} }{\sqrt{2}} \, ( \, u_{3} - u_{-3} \, ) = lk
		\end{cases}
	\end{align} 
Now, observe that $ H^{-1/2} \mathcal{M} = \Mh$, where $H$ is the matrix representative of $h$ from \eqref{HitchinMetric}
in the basis $(u_i)_{i=3}^{-3}$. For example, set $g:= H^{-1/2}$ and $g \cdot u_{+1} = s^{1/2} u_{+1}, \; g \cdot u_{-1} = s^{-1/2} u_{-1}$. Hence, 
$g \cdot m_4 = w_4$ by the linear relations \eqref{StandardMultiplicationFrame}, \eqref{HUnitaryMultiplicationFrame}.

On the other hand, since  $H  \in \Gtwo$, it follows that 
$$H^{-1/2} = \diag (\, (rs)^{1/2}, \; r^{1/2},\; s^{1/2}, \;1,\; s^{-1/2}, \; r^{-1/2}, \; (rs)^{-1/2} \,) \in \Gtwo$$ 
as well by Proposition \ref{PropBaragliaBasis} part (5). 
Hence, $H^{-1/2} \in \Gtwo$ maps the standard multiplication basis $\mathcal{M}$
to the multiplication frame $\mathcal{M}_h  $. 

We now compute the real tangent vectors $\nu_x, \nu_y$.  In the $\hat{\tau}_{\V}$-real coordinates, $(w_i)_{i=1}^7$ from \eqref{HUnitaryMultiplicationFrame}: 
\begin{align}
	\sigma_x &= \nabla_{\der{z}} \sigma + \nabla_{\der{\zbar}} \sigma = (\varphi_0+ \varphi^{*h}_0 )\sigma= -\sqrt{-6}\, u_{-1} +\sqrt{-6} \, s \, u_{+1} = \sqrt{12s} \, \; w_5\\
	\sigma_y &= \sqrt{-1} \left(\nabla_{\der{z} } \sigma - \nabla_{\der{\zbar} } \sigma \right)  =  \sqrt{-1} (\varphi_0 - \varphi^{*h}) \sigma  = \sqrt{-1} ( -\sqrt{-6}u_{-1} -\sqrt{-6}\, s u_{+1})
				=  \sqrt{12s} \, \; w_4.
\end{align}
Since $q_{\imoct}(w_4) = -1 = q_{\imoct}(w_5) $ are unit time-like vectors, the pullback metric on $\nu$ is  
\begin{align}\label{AC_InducedMetric} 
	g_{\nu} = -12 s\, |dz|^2. 
\end{align}
In particular, since $s > 0$ is positive everywhere by definition, $\nu$ is conformal and timelike. 

 \subsection{Harmonic Map Sequence of $\nu$} \label{HarmonicMapSequence} 
In this subsection, we explain one way to recover the harmonic metric $h$ in $\V$ from the almost-complex curve $\nu$. 
 The key to the correspondence is the \emph{harmonic map sequence}, which we very briefly recall. 
We include this subsection for two reasons. First, while Baraglia explains the connection between minimal surfaces in quadrics and affine Toda equations in \cite[Section 2.4]{Bar15}, this construction is never explicitly applied to the almost-complex curve $\nu$. Secondly, we will need the harmonic map sequence to relate $f$ and $\nu$ in the Section \ref{GeometricCorrespondence}. \\

 Following \cite{Bar15, BPW95,BVW94,BW92, EW83}, we define the \textbf{harmonic map sequence} of a harmonic map
$\psi: \Sigma \rightarrow \CP^n$ from a Riemann surface $\Sigma$ (in local coordinates) by first taking a local lift $\widetilde{\psi}: U \rightarrow \C^{n+1}$ such that $\proj_{\widetilde{\psi}} \; (\widetilde{\psi}_{\zbar}) = 0$. One then defines 
 \begin{align}\label{HarmonicSequence} 
 	\widetilde{\psi_{i+1}} = \mathsf{proj}_{\widetilde{\psi_i}^\bot}( \; (\widetilde{\psi_i})_z ) = (\widetilde{\psi_i})_z - \frac{ \langle  \, (\widetilde{\psi_i})_z, \,  \widetilde{\psi_i }  \rangle \,}{ h_i } \widetilde{\psi_i},
\end{align}
where $h_i =  \; \langle\widetilde{\psi_i}, \widetilde{\psi_i} \rangle $ and $\langle \cdot, \cdot \rangle $ denotes a hermitian metric on $\C^{n+1}$.
As long as $\widetilde{\psi_i} \nequiv 0$, the map $\psi_{i+1}$ into $\CP^n $ can be extended across any zeros and
gives a well-defined map globally by $\psi_{i+1} = [ \widetilde{\psi}_i]$ that is also harmonic \cite{BVW94}. If $h_i$ is not identically zero, the singularities are isolated \cite{BW92}.

We now apply the harmonic map sequence to $\nu$. First, observe that the map $[\nu]: \C \rightarrow \Stwofour $
is harmonic since $\nu_{z\zbar} \equiv 0 \,(\text{mod}\, \nu)$. 
Similarly, $\nu$ is harmonic
when included into $\RP^6$ or $\CP^6$. 
We note that since $ q_{\Oct'}^\C( \nu) = +1$, we have $ \nu_{z}, \nu_{\zbar} \; \bot \nu$, where the orthogonality is with respect to the hermitian metric $\langle u,v \rangle = q_{\Oct'}^\C(u, \overline{v} )$. Thus, the current situation special as the harmonic map $[\nu]:\C \rightarrow (\imoct)^\C$ to which we run the construction comes with a canonical lift
$\nu: \C \rightarrow (\imoct)^\C$ that satisfies $\proj_{[\nu]}(\nu_{\zbar}) = 0$. Hence, we hold in our hands the next element of the harmonic map sequence already. We define the (local) harmonic map sequence by setting $ \widetilde{\nu}_0:= \nu$ and inductively defining   
the (truncated) sequence $(\widetilde{\nu_0}, \, \widetilde{\nu_1}, \, \widetilde{\nu_2}, \, \widetilde{\nu_3} )$ via \eqref{HarmonicSequence}. In the construction, we must use
the hermitian metric $\langle u,v \rangle \, = q_{\Oct'}^\C(u, \overline{v})$. 
For the truncated sequence $(\widetilde{\nu_0}, \, \widetilde{\nu_1}, \, \widetilde{\nu_2}, \, \widetilde{\nu_3} )$, we are singularity-free in the sense that $h_i < 0$ or $h_i > 0$
for $i \in \{1,2,3\}$ as shown in Proposition \ref{Prop:HarmonicMapSequence} below. 
We recall that by $\nabla$-parallel translation, each map $\widetilde{\nu}_i: \C \rightarrow \imoct^\C$ corresponds uniquely to a section $\sigma_i \in \Gamma(\C, \V)$. The harmonic sequence of $\nu$ is seen in the bundle $\V$ as follows:

\begin{proposition}[Harmonic Map Sequence] \label{Prop:HarmonicMapSequence} 
The harmonic map sequence of $\nu$ is given by 
\begin{itemize} 
	\item $\sigma_1 = -\sqrt{-6} u_{-1} $ and $q_{\C}(\nu_1) = 0$ 
	\item $\sigma_2 = -\sqrt{-30} u_{-2} $ and $q_{\C}(\nu_2) = 0$. 
	\item $\sigma_3 = -\sqrt{-30}(q u_{3} + \sqrt{3} u_{-3} ) $. Hence, $ q_{\Oct'}^\C(\widetilde{\nu}_3, \widetilde{\nu}_3) = 60\sqrt{3} \; q.$
\end{itemize} 
Moreover, the invariants $h_i$ are given by 
\begin{itemize}
	\item $h_1 = -6 s < 0. $
	\item $h_2 = 30 r > 0. $
	\item $h_3 = -30(\frac{|q|^2}{rs} + 3rs ) < 0.$
\end{itemize} 
\end{proposition}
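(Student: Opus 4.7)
My plan is to carry out the harmonic map sequence directly inside the Higgs bundle $\V$, exploiting the fact that $\nabla$-parallel translation intertwines every operation in the construction with a bundle operation. The correspondence $u \leftrightarrow \sigma_u$ sends $\partial_z u$ to $\nabla_z \sigma_u$, sends $q_\C$ on $\imoct^\C$ to the bundle bilinear form $E$, and (because $\nabla$ preserves $\hat{\tau}_\V$ by Section \ref{Subsection_RealForms}) sends standard complex conjugation of maps to $\hat{\tau}_\V$ of sections. The sequence therefore becomes
\[
\sigma_{i+1} \;=\; \nabla_z \sigma_i \;-\; \frac{E(\nabla_z \sigma_i,\; \hat{\tau}_\V \sigma_i)}{h_i}\,\sigma_i, \qquad h_i \;=\; E(\sigma_i,\, \hat{\tau}_\V \sigma_i),
\]
with $\nabla_z$ acting on $\sigma_i$ by the relevant column of the Higgs field $\mathcal{A}^{1,0}$, and $\hat{\tau}_\V$ sending $u_i$ to a real multiple of $u_{-i}$ read off from the diagonal metric $H$.

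With this set-up each step is a short computation. Column $0$ of $\mathcal{A}^{1,0}$ gives $\nabla_z \sigma = -\sqrt{-6}\, u_{-1}$; the projection vanishes since $E(u_{-1}, u_0) = 0$, yielding $\sigma_1 = -\sqrt{-6}\, u_{-1}$. Then $q_\C(\sigma_1) = 0$ by isotropy of $u_{-1}$, and $h_1 = -6s$ follows from $\hat{\tau}_\V(u_{-1}) = s\, u_1$ combined with $E(u_{-1}, u_1) = -1$. Column $-1$ of $\mathcal{A}^{1,0}$ produces $\nabla_z \sigma_1 = -\sqrt{-6}(\partial \log s)\, u_{-1} - \sqrt{-30}\, u_{-2}$; only the $u_{-1}$-piece contributes to $E(\nabla_z \sigma_1, \hat{\tau}_\V \sigma_1)$, giving projection coefficient exactly $\partial \log s$, which cancels the diagonal term to leave $\sigma_2 = -\sqrt{-30}\, u_{-2}$, from which $h_2 = 30r$. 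Column $-2$ of $\mathcal{A}^{1,0}$ then introduces the holomorphic $q$ hitting $u_3$, a diagonal $(\partial \log r)$-piece along $u_{-2}$, and a $\sqrt{3}$-drop to $u_{-3}$; the same logarithmic-derivative mechanism cancels the diagonal piece and gives $\sigma_3 = -\sqrt{-30}(q\, u_3 + \sqrt{3}\, u_{-3})$.

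The final two statements $q_\C(\sigma_3) = 60\sqrt{3}\, q$ and $h_3 = -30(|q|^2/(rs) + 3rs)$ then follow from direct bilinear expansions using $E(u_3, u_{-3}) = -1$, $\hat{\tau}_\V(u_3) = (rs)^{-1}\, u_{-3}$, and $\hat{\tau}_\V(u_{-3}) = rs\, u_3$. The only conceptually non-routine point is the translation of standard complex conjugation on $\imoct^\C$ to $\hat{\tau}_\V$ on bundle sections; once that identification is in place (via $\nabla$-parallelism of $\hat{\tau}_\V$), the proof is a tidy recursion in which at each stage the projection scalar equals the logarithmic derivative of the preceding $h_i$, precisely cancelling the diagonal component of $\nabla_z \sigma_i$ and cleanly advancing us to $\sigma_{i+1}$.
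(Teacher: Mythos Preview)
Your proposal is correct and follows essentially the same route as the paper: both carry out the recursion inside $\V$ using $\nabla_z \sigma_i = \mathcal{A}^{1,0}\sigma_i$, identify complex conjugation with the $\nabla$-parallel involution $\hat{\tau}_\V$, and read off the $h_i$ from the anti-diagonal pairing $E(u_i,u_{-i})$. Your explicit observation that the projection scalar at each stage equals $\partial\log h_i$ (hence exactly cancels the diagonal entry of $\mathcal{A}^{1,0}$) is a nice way to package the cancellation, but it is the same computation the paper performs.
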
 

\begin{proof}
These calculations are straightforward. Since $\sigma = (0,0,0,1,0,0,0)$, we directly compute $\sigma_{1} = \nabla_{\der{z}}\sigma = \mathcal{A} \sigma = \varphi \sigma = -\sqrt{-6}u_{-1}$. 
Similarly, 
$$\sigma_{2} = \proj_{u_{-1}^\bot} \left ( \nabla_{\der{z}} \sigma_1 \right) = -\sqrt{-6} \proj_{u_{-1}^\bot} \left ( \mathcal{A} u_{-1}  \right)  =  \proj_{u_{-1}^\bot} \left( -\sqrt{-30} u_{-2} - \sqrt{-6} \deriv{s}{z}  \, u_{-1} \right) = -\sqrt{-30} u_{-2} .$$
The calculation of $\sigma_{3}$ is similar. 

Next, we need that $\hat{\tau}_\V$ is the $\nabla$-parallel version of complex conjugation. That is, $\nu_i \leftrightarrow \sigma_i$ 
implies that $\overline{\nu}_i$ corresponds to $\hat{\tau}_\V( \sigma_i)$. 
We compute using \eqref{ParallelConjugation} $\hat{\tau}_\V(\sigma_1) = \sqrt{-6} s \, u_{1},\; \hat{\tau}_\V(\sigma_2) = \sqrt{-30} r \, u_{2}$,
and $\hat{\tau}_\V(\sigma_3) = \sqrt{30} \left( \frac{\overline{q}}{rs} u_{-3} + \sqrt{3} rs \, u_{3} \right)$. Then since $ q_{\Oct'}^\C(u_i, \, u_{-i} ) = \mathsf{sgn}(i) \in \{-1, +1\}$ by Proposition \ref{PropBaragliaBasis} part (1), we get the expressions for each $h_i$. 
\end{proof} 

Proposition \ref{Prop:HarmonicMapSequence} says the defining components $r,s$ of the harmonic metric $h$ are recovered from the harmonic sequence invariants $h_1, h_2$. 

We now remark on a way to recover $q$, without having to compute the harmonic map sequence. 
Proposition \ref{Prop:HarmonicMapSequence} says that $(\widetilde{\nu}_1 \times \widetilde{\nu}_2) $ corresponds to $ -\sqrt{-6} u_{-1} \times  -\sqrt{-30} u_{-2} $, which is $Cu_{-3}$, up to a fixed scalar $C$. By Proposition \ref{PropBaragliaBasis} part (1), we find $(\widetilde{\nu}_1 \times \widetilde{\nu}_2) \cdot \widetilde{\nu}_3 = C' \, q$, up to a fixed constant $C'$. We argue now that, in fact, $(\nu_{z} \times \nu_{zz}) \cdot \nu_{zzz} = C'' \, q$ for some constant $C''$. Since $\widetilde{\nu}_2  = \nu_{zz} + f \,\nu_{z}$ for some function $f$, we see that $\widetilde{\nu}_2 \times \widetilde{\nu}_1 = \nu_{zz} \times \nu_z$. Moreover, $\widetilde{\nu}_3 = \nu_{zzz} + w$ for some function $w \in \spann_{\C} \langle \nu_{z}, \nu_{zz} \rangle$. 
In particular, this means $(\widetilde{\nu}_1 \times \widetilde{\nu}_2) \cdot \widetilde{\nu}_3 = (\widetilde{\nu}_1 \times \widetilde{\nu}_2) \cdot \nu_{zzz}$. Combining these facts, we have shown that $(\nu_z \times \nu_{zz} ) \cdot \nu_{zzz} = C'' q(z)$ recovers $q$ up to a fixed constant $C''$. Alternatively, using the harmonic map sequence, Proposition \ref{Prop:HarmonicMapSequence} shows $q_{\Oct'}^\C(\widetilde{\nu}_3, \widetilde{\nu}_3) = C\, q$ recovers the sextic differential, as in (Proposition 2.4.1 \cite{Bar10}). 

\begin{remark} In the language of Baraglia \cite[Definition 2.4.1]{Bar10}), $\nu$ is \emph{superconformal}. The map $\nu$ has \emph{isotropy order} 2 since $q_{\Oct'}^\C(\widetilde{\nu}_i) = 0$ for $i \in \{1,2\}$ and $q_{\Oct'}^\C(\widetilde{\nu}_3) \nequiv 0$. The reader may consult \cite{EW83} for further discussion of isotropy,
focused on the case of ``totally isotropic'', or \emph{superminimal} harmonic maps $\phi: \Sigma \rightarrow \CP^n$.  
\end{remark} 
We can now show that these harmonic maps are \emph{linearly full} when $q$ is a non-constant polynomial.

\begin{definition}\cite{EW83} 
A map $f: M \rightarrow \RP^n$ is \textbf{linearly full} when $\im \,f$ is not contained a proper projective subspace of $\RP^n$.  
\end{definition} 

In our case, linear fullness of $\nu$ is equivalent to the nonexistence of a global orthogonal vector $N \in \imoct$ such that $N \,  \bot\, \nu(z) $ for all $z \in \C$.
We later note in Remark \ref{GlobalPolar} that in the case $q$ is constant, $\nu_{q}$ has a timelike global orthogonal line, so the proof below handles only
the non-constant case. 

\begin{proposition}[Linear Fullness]\label{LinearlyFull} 
Let $\nu: \C \rightarrow \hatStwofour$ be the almost-complex curve associated to polynomial $q \in H^{0}(\K^6_\C)$. Then $\nu$ is linearly full 
if and only if $q$ is non-constant.
\end{proposition}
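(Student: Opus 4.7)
The plan is to prove the contrapositive of the non-trivial direction: if $\nu$ is not linearly full, then $q$ must be constant. The converse direction (for nonzero constant $q$) is deferred to Remark \ref{GlobalPolar}, as noted.

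Suppose $\nu$ is not linearly full, and fix a nonzero $N \in \imoct$ with $N \cdot \nu(z) \equiv 0$. Under the identification $\V_{p_0} \cong \imoct^\C$, the vector $N$ extends via parallel transport to a $\nabla$-parallel section $\sigma_N \in \Gamma(\C, \V^\R)$. Since $\nabla$ is flat and $\sigma_N(p_0) = N \ne 0$, the section $\sigma_N$ is nowhere zero. Because $\nabla$ preserves the inner product and $\sigma_N$ is parallel, differentiating the identity $\sigma_N \cdot \sigma_0 \equiv 0$ repeatedly in $\partial_z$ and $\partial_{\bar{z}}$ yields $\sigma_N \cdot \nabla_z^{(k)} \sigma_0 \equiv 0$ and $\sigma_N \cdot \nabla_{\bar{z}}^{(k)} \sigma_0 \equiv 0$ for all $k$. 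I would then expand $\sigma_N(z) = \sum_{i=-3}^{3} N_i(z) u_i$ in the basis $\mathcal{B}$ and test these orthogonality conditions against the harmonic-sequence representatives from Proposition \ref{Prop:HarmonicMapSequence}, noting that $\hat{\tau}_{\V}(\sigma_j) = \nabla_{\bar{z}}^{(j)} \sigma_0$ up to lower-order terms absorbed by previous orthogonalities.

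Using the inner-product matrix from Proposition \ref{PropBaragliaBasis}(1), the orthogonality conditions against $\sigma_0 = u_0$, $\sigma_1 \propto u_{-1}$, $\hat{\tau}_{\V}(\sigma_1) \propto s\, u_1$, $\sigma_2 \propto u_{-2}$, and $\hat{\tau}_{\V}(\sigma_2) \propto r\, u_2$ force the middle five coefficients of $\sigma_N$ to vanish identically: $N_0 \equiv N_{\pm 1} \equiv N_{\pm 2} \equiv 0$, using $r, s > 0$ to cancel prefactors. The remaining orthogonality conditions $\sigma_N \cdot \sigma_3 \equiv 0$ and $\sigma_N \cdot \hat{\tau}_{\V}(\sigma_3) \equiv 0$ then give the $2 \times 2$ linear system
\begin{equation*}
\sqrt{3}\, N_3 + q\, N_{-3} \equiv 0, \qquad \tfrac{\overline{q}}{rs}\, N_3 + \sqrt{3}\, rs\, N_{-3} \equiv 0.
\end{equation*}
Since $\sigma_N$ is nowhere zero while all other components vanish, $(N_3(z), N_{-3}(z)) \ne (0,0)$ at every $z$, which forces the determinant $\frac{3(rs)^2 - |q|^2}{rs}$ of this system to vanish identically. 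Equivalently, $|q|^2 \equiv 3(rs)^2$ on all of $\C$.

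Finally, since $rs$ is smooth and strictly positive on $\C$, the identity $|q|^2 = 3(rs)^2 > 0$ shows that $q$ is nowhere zero; a polynomial with no zeros in $\C$ is a nonzero constant by the Fundamental Theorem of Algebra, so $q$ is constant, as required. The main delicate step is translating the pointwise algebraic degeneracy of the $(N_3, N_{-3})$ system into a global identity on $\C$: this uses crucially that $\sigma_N$ is nowhere vanishing (a consequence of being a nontrivial $\nabla$-parallel section) together with the explicit structure of $\sigma_3$ and $\hat{\tau}_{\V}(\sigma_3)$ from Proposition \ref{Prop:HarmonicMapSequence}, which produces a determinant proportional to $3(rs)^2 - |q|^2$ and hence a rigid constraint between $q$ and the harmonic-metric data.
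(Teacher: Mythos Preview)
Your proof is correct and closely parallels the paper's, though with two noteworthy differences. Both arguments hinge on the same identity $|q|^2 \equiv 3(rs)^2$, deduced from the harmonic map sequence data in Proposition~\ref{Prop:HarmonicMapSequence}. The paper reaches it by invoking the classical $\CP^n$ theory \cite{BW92}: the span $V(z)$ of $\nu$ together with $\widetilde{\nu}_i, \overline{\widetilde{\nu}_i}$ equals the smallest containing subspace almost everywhere, and $\dim V(z)=6$ forces $\sigma_3, \hat\tau_{\V}(\sigma_3)$ to be linearly dependent, which unwinds to the same $2\times 2$ determinant you compute. Your route is the dual one, working directly with a nonzero orthogonal vector extended to a $\nabla$-parallel (hence nowhere-vanishing) section; this avoids the citation to the $\CP^n$ literature and yields the identity pointwise rather than first almost everywhere. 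The second difference is in the endgame: from $|q|^2=3(rs)^2$ the paper observes that $\log(rs)$ becomes harmonic and invokes the asymptotics from Theorems~\ref{ExistenceTheorem} and~\ref{UniquenessTheorem} together with a Liouville-type argument, whereas you simply note $rs>0$ forces $q$ to be zero-free and conclude via the Fundamental Theorem of Algebra. Your final step is more elementary and self-contained; the paper's has the minor virtue of tying the result back to the PDE structure, but at the cost of forward-referencing the existence/uniqueness theory.
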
 

\begin{proof}
We include $\hatStwofour \hookrightarrow \RP^n \hookrightarrow \CP^n$. Now, we recall some classical theory on harmonic maps in $\CP^n$. Define $V(z) := \spann_{\C} < \nu(z), \nu_i(z) , \overline{\nu_{i}}(z) >$ for $i \in \{1,2,3\}$, where $\nu_i$ comes from the harmonic map sequence. On the other hand, consider $W:=  \bigcup_{z \in \C} \spann_{\C} < \nu(z) > $ the smallest subspace of $\C^{7}$ containing the image of $\nu$ in $\CP^6$. Then $W = V(z)$ for $z \in \C$ except possibly at a set of isolated singularities \cite[Lemma 1.1, 1.2]{BW92}. Hence, to prove linear fullness, it suffices to see $\dim V(z) = 7$ almost everywhere.

By the proof of Proposition \ref{Prop:HarmonicMapSequence}, we see that $\dim V \geq 6$ everywhere. Moreover, $\dim V(z) = 6$ if and only if $\nu_3(z)$ and $\overline{\nu}_{3}(z)$ are linearly dependent. We now recall that up to a fixed constant, $\sigma_{3} = qu_{3} + \sqrt{3}u_{-3}$ and $\sigma_{-3} = \frac{ \overline{q}}{rs} \, u_{-3} + \sqrt{3}rs \, u_{3}$. 
Thus, $\sigma_{3},  \sigma_{-3}$ are linearly dependent exactly when $ \sigma_{3} = \zeta \sigma_{-3}$ for some function $\zeta: \C \rightarrow \C$. Equating $\imoct^\C$ components, we see $\sigma_3, \sigma_{-3}$ are linearly dependent only if $ q = \zeta \sqrt{3}rs \;(1)$ and $ \sqrt{3}= \zeta \frac{ \overline{q}}{rs} \;(2)$. These equations (1), (2) imply $\overline{q} = \frac{1}{\zeta} \sqrt{3}rs$, so that
 $\frac{1}{\zeta} = \overline{\zeta}$. Thus, $|\zeta|^2 = 1$. Hence, if $\sigma_3$ and $ \sigma_{-3}$ are linearly dependent, then $|q|^2 = 3r^2s^2$. 

Suppose $\nu$ is not linearly full. Then $\dim V = 6$ almost everywhere and hence $|q|^2 = 3r^2s^2$ almost everywhere. Since $|q|^2$ is continuous, $|q|^2 = 3r^2s^2$ identically. We show $q$ is a constant. Substituting $|q|^2 = 3r^2s^2$ into \eqref{HitchinsEquations_rs}, we find $\log(rs) $ is harmonic. Recall that $r, s > 0$ and $r =e^{\frac{1}{2}(u_1 - u_2)}$ and $ s = e^{u_2}$. 
Then the existence and uniqueness results from Theorems \ref{ExistenceTheorem} and \ref{UniquenessTheorem} give asymptotics for $r,s$. Indeed, the sub, super-solutions tell us $ r \asymp |q|^{2/3}$ and $s \asymp |q|^{1/3}$, where $f \asymp g$ denotes $f \in O(g)$ and $g \in O(f)$. Hence, $\log(rs) \asymp \log(|q|)$ is a constant by Liouville's theorem. Thus, $q$ is a constant as well. 
\end{proof} 

We saw earlier that $r,s$ are recovered from the harmonic map sequence invariants $h_1,h_2$. We now note that Hitchin's equations appear in a new context here
for $h_1, h_2$. Indeed, for a harmonic map $\phi: S \rightarrow \mathbb{CP}^n$ of isotropy order $\geq i$, we have the equation \cite[Proposition 2.4.1]{Bar10} or \cite[Equation (1.15)]{BW92}
\begin{align} \label{HarmonicMapSequenceEquation} 
	( \log h_i)_{z\zbar} = \frac{h_{i+1}}{h_i} - \frac{ h_i}{h_{i-1}} .
\end{align} 
Since $\nu$ has isotropy order 2, we can apply these equations to $h_1, h_2$. 

We find that the equations \eqref{HarmonicMapSequenceEquation} for $i \in \{1,2\}$ are Hitchin's equations \eqref{HitchinsEquations_rs}: 
\begin{align}
	(\log h_1)_{z\zbar} = (\log s)_{z\zbar} &= -5 \frac{r}{s} + 6s =  \frac{h_{2}}{h_1} - \frac{ h_1}{h_{0}} . \\
	(\log h_2)_{z\zbar} = (\log r)_{z\zbar} &= 5 \frac{r}{s} - \frac{|q|^2}{r^2 s} - 3 s = \frac{h_{3}}{h_2} - \frac{ h_2}{h_{1}} .
\end{align} 

Note that $h_0 := +1$ by convention, since $\nu \in Q_+(\imoct)$. However, this construction still does not \emph{directly}
explain how we obtain the almost-complex curve $\nu$ by solving these equations. To this end, we now give an interpretation of \eqref{HitchinsEquations_rs} 
as integrability equations for a particular frame field of $\nu$.

\subsection{Toda Frames for $\nu$ and for $\V$}\label{Toda}

We first relate the affine Toda field equation to the Higgs bundle. 
Write $h= e^{-2\Omega}$ uniquely for a positive-definite matrix $\Omega \in \h$, as in \cite[Proposition 2.2.1]{Bar10}. 
Unraveling Hitchin's equations as equations for $\Omega$, rather than $h$, we recover the \emph{affine Toda field equations} for $(\Omega, q)$ \cite[Equation (20)]{Bar15}
as originally explained by Baraglia \cite{Bar15}. We clarify here that how to consider $\Omega$ as a frame field for $\nu$ and reinterpret the Toda field equations as integrability conditions. 

Start with equation \eqref{HitchinsEquation1} concretely in $\g_2$. We use
the fixed Chevalley generators $e_{\alpha}, t_{\alpha}$ from Section \ref{RootSpace} here, a CSA $\h < \g$ with roots $\Delta$, along with the Kostant P3DS $\fraks$ from \ref{KostantTheory}. Recall $\varphi = \widetilde{e} + qe_{\gamma} = \sqrt{3} e_{-\beta} + \sqrt{5} e_{-\alpha} + q\, e_{\gamma}$ and $\varphi^{*h} = \sqrt{3}\, s\, e_{\beta}  + \sqrt{5} \frac{r}{s} \,e_{\alpha}+ \frac{ \overline{q}}{r^2 s} \, \, e_{-\gamma}$, using that $e_{-\alpha} = \overline{e_{\alpha}}^T$.  
Hence, equation \eqref{HitchinsEquation1} becomes
\begin{align}
	(-2 \Omega)_{z\zbar} = ( \log H)_{z\zbar} = [\varphi, \varphi^{*h}] =- 3s \,t_{\beta}  - 5\, \frac{r}{s} \,t_{\alpha}+  \, \frac{|q|^2}{r^2 s}\, t_{\gamma},
\end{align} 
using the relation $[e_{\delta}, e_{-\delta}] = t_{\delta}$ for $\delta \in \Delta$.
Writing $\varphi^{*h} =  \Ad_{e^{2\Omega}}(\overline{\varphi}^T)$, the previous equation becomes: 
\begin{align}\label{AffineTodaFieldEquation} 
	2\Omega_{z\zbar} = \sum_{i=1}^2 k_i \, e^{2 \alpha_i(\Omega) } \, t_{\alpha_i} + |q|^2 e^{-2\gamma (\Omega) } t_{-\gamma} ,
\end{align} 
where $ \alpha_1: = \alpha,\; \alpha_2 := \beta$ are the primitive roots for $\Delta$, $k_1 := 5, \, k_2 : = 3$, and $t_{\sigma} \in \h$ is the co-root to $\sigma$. 
Equation \eqref{AffineTodaFieldEquation} is the $\g_2$ \emph{affine Toda field equation} \cite{Bar15, BPW95}.\footnote{We remark that in \cite{BPW95},
the authors refer to a Toda frame only away from the zeros of $q$ and after taking a local \emph{natural coordinate} for $q$, in which $q \equiv 1$.}
Note that $\Omega \in \spann_{\R} < t_{\alpha}, t_{\beta} > $ satisfies $\hat{\rho}(\Omega) = - \Omega$; yet, this reality condition
entails that the frame $e^{\Omega}$ is $\hat{\tau}$-real, as we explain shortly. 

We now recover the Toda frame $\Omega$ from the almost-complex curve $\nu$, using the harmonic map
sequence $(\,\widetilde{\nu}_0, \, \widetilde{\nu}_1, \, \widetilde{\nu}_2, \,\widetilde{\nu}_3) $. Define $ F_i := \frac{\widehat{\nu}_i}{ |h_i| ^{1/2} }$,
where again $h_i = \, q_{\imoct}^\C( \widetilde{\nu}_i, \overline{\widetilde{\nu}_i} ) $.  Then define the frame 
\begin{align}\label{TodaFrameConstruction} 
	F_{\nu} := \left ( -\frac{1}{\sqrt{2}} \overline{F_1 \times F_2}, \; -\sqrt{-1} \;  \overline{F_2}, \; -\sqrt{-1} \; \overline{F}_1, \; \nu, \;\sqrt{-1} F_1, \; \sqrt{-1} F_2,  \; -\frac{1}{\sqrt{2}} F_1 \times F_2 \right). 
\end{align} 
More precisely, writing $F = (f_i)_{i=3}^{-3}$ in column-vector form, $F$ is the $\C$-linear transformation 
$ u_i \mapsto f_i$ for $\mathcal{B} = (u_i)_{i=3}^{-3}$ the basis \eqref{BaragliaBasis}. As before with $\nu$ and $\sigma$, the frame $F$ corresponds to a section
$\hat{F}_\nu \in \Fr(\V)$ uniquely. Using Proposition \ref{Prop:HarmonicMapSequence}, one calculates $\hat{F} \in \Fr(\V)$ is
\begin{align}
 	\hat{F_\nu} =  \left ( \sqrt{rs} \, u_3, \;\sqrt{r} \, u_2, \;\sqrt{s} \, u_1, \; u_0, \;  \frac{1}{\sqrt{s}}\, u_{-1}, \frac{1}{\sqrt{r}}\, u_{-2}, \, \frac{1}{\sqrt{rs}} \, u_{-3} \right ) = e^{\Omega}.
\end{align} 
In particular, $F \in \Gtwo^\C$. Hence, the construction \eqref{TodaFrameConstruction} recovers the Toda frame for $\V$ as a moving frame for $\nu$. We now explain
how the Maurer-Cartan integrability equation for $\omega_{\nu} = F^{-1}_{\nu} dF_{\nu}: \C \rightarrow \g_2$ coincides with Hitchin's equations. 
The argument is similar to the proof of \cite[Proposition 2.3.1]{Bar10}, but is motivated from a different perspective, so we provide it for completeness. 

A key object needed here is $\tau := \exp( \frac{ 2\pi i }{6} \, x) \in \Gtwo$, where $x = \diag(3,2,1,0,-1,-2,-3) \in \h$ is a grading element of $\g_2$ with respect
to the base $\Pi = \{\alpha, \beta\}$. That is, $[x, e_{\alpha} ] = \mathsf{height}(\alpha) \, e_{\alpha}$. It follows that $ \Ad_{\tau}( e_{\alpha} ) = e^{\frac{2\pi i}{6} \, \mathsf{height}(\alpha)\, } e_{\alpha}$. Since the longest root $\gamma = 2 \alpha + 3\beta$ has height 5,
we get an eigen-decomposition of $\g_2$ under the action of $\Ad_{\tau}$ into\footnote{Since $\zeta = e^{\frac{2\pi k}{6}}$ has $\zeta^{5} = \zeta^{-1}$, we can use index from -1 to 4
rather than 0 to 5. This choice makes the grading components appear symmetric in the following discussion.} 
\begin{align}
	\g_2 = \bigoplus_{i=-1}^4 \g_{\tau, k},
\end{align} 
where $\g_{\tau, k} := \{ X \in \g_2 \; | \; \tau(X) = e^{ \frac{2\pi i\, k}{6} } \; X \}$. Moreover, $[ \g_{\tau, j}, \g_{\tau, k} ] \subseteq \g_{\tau, j+k}$. We will decompose the Maurer-Cartan
equation into $\g_{\tau, i}$ components. First, we compute $\omega_\nu $. In the standard (Higgs) frame $\mathcal{B}$, we have $\mathcal{B}^{-1} d\mathcal{B} = \omega_{\mathcal{B}} = (D_H + \varphi )dz + \varphi^{*h} d\zbar$. The transformation equation for Maurer-Cartan shows
\begin{align} 
	\omega_{\nu} =  \left( - \Omega_{z} + \Ad_{e^{-\Omega}} (\varphi) \right) dz + \left ( \Omega_{\zbar} + \Ad_{e^{-\Omega} }(\varphi^{*h} ) \right) d\zbar. 
\end{align} 
Note that $\g_{\tau,+1}= \bigoplus_{\sigma \in \{\alpha, \beta, -\gamma \} } \g_{\sigma} $ and $\g_{\tau,-1}= \bigoplus_{\sigma \in \{-\alpha, -\beta, +\gamma \} } \g_{\sigma} $.
In particular, one finds $\varphi = \widetilde{e} + qe_{\gamma} \in \g_{\tau, -1}$ and $\varphi^{*h} \in \g_{\tau, +1}$. It follows that  
$\Ad_{e^{-\Omega} }(\varphi) \in \g_{\tau, -1}, \; \Ad_{e^{-\Omega} }(\varphi^{*h}) \in \g_{\tau, +1}$ as well. 
Now, decompose $\omega_{\nu}= (A_0 + A_{-1} ) dz + (B_0 + B_{+1}) d\zbar$ with $A_i, B_i \in \g_{\tau, i}$. That is,
$$A_0 =- \Omega_{z}, \; A_{-1}=- \Omega_{z},\; B_0 =  \Omega_{\zbar}, \; B_1 =  \Ad_{e^{-\Omega} }(\varphi^{*h} ).$$  
Then the equation $d\omega_{\nu} = -\frac{1}{2}[ \omega_\nu, \omega_\nu]$
decomposes into $\g_{\tau, -1}, \g_{\tau, 0}, \g_{\tau, +1}$ components, respectively, as follows: 
\begin{align}
	\begin{cases} 
		\; (A_{-1} )_{\zbar} + [B_0, A_{-1}] &= 0. \\
		\; (A_0)_{\zbar} - (B_0)_z &= [A_{-1}, B_{+1} ]. \\
		\; (B_{+1})_{z} + [ A_0, B_{+1} ] & =0. 
	\end{cases} 
\end{align} 
Applied explicitly to the components of $\omega_{\nu}$, these equations in order are: 
\begin{align}
	\begin{cases}\label{ConcreteMC} 
	\;\; (\Ad_{e^{-\Omega}}(\varphi) \, )_{\zbar} + [ \Omega_{\zbar}, \Ad_{e^{-\Omega}}(\varphi) ]= 0 \\ 
	\;\; -2\Omega_{z\zbar} = [ \Ad_{e^{-\Omega}}(\varphi),  \Ad_{e^{-\Omega}}(\varphi^*)] = \Ad_{e^{-\Omega}} [\varphi, \varphi^*] =  [\varphi, \varphi^*] \\
	\;\; (\Ad_{e^{-\Omega}} (\varphi^{*h} )\, )_{z}  - [\Omega_{z}, \Ad_{e^{-\Omega}}(\varphi^*)\, ] = 0 .
	\end{cases}
\end{align} 
Note that in the frame $\hat{F}_{\nu}=  e^{\Omega}$, the hermitian metric $h$ is the identity. Hence, the Chern connection 1-form is the unitary 1-form 
$D_H = (-\del \Omega + \delbar \Omega)$ in this frame. It follows that the Maurer-Cartan equations \eqref{ConcreteMC} are, in order, equivalent to: 

\begin{align*} 
	\begin{cases}
	 \;\;\nabla^{0,1}_{\delbar, h} \varphi = 0\\
	\;\; F_h = [\varphi, \varphi^{*h} ] \\ 
	 \;\;\ \nabla^{1,0}_{\delbar, h} \varphi^{*h} = 0.
	 \end{cases}
\end{align*} 

 In particular, if we start with $\varphi$ holomorphic, then 
then $\omega_{\nu}$ is integrable if and only if $ \nabla^{1,0}_{\delbar, h} \varphi^{*h} = 0$ and $F_h = [\varphi, \varphi^{*h}]$, which happens if and only 
if $\nabla = \nabla_{\delbar_E,h} + \varphi + \varphi^{*h}$ is flat. Moreover, $F_h = [\varphi, \varphi^{*h}]$ is precisely the $\g_2$ affine Toda field equation \eqref{AffineTodaFieldEquation}.

Finally, we discuss reality conditions for $\omega_{\nu}$. As $e^{\Omega}$ is $h$-unitary the compact involution $\hat{\rho}$ on $\V$ is in this frame is the standard
one $\rho := A \mapsto -\overline{A}^T$. The equation $\rho(-\del \Omega+ \delbar \Omega) = -\del \Omega + \delbar \Omega$ follows since $\overline{\Omega} = \Omega$;
this is why $\hat{\rho}(\Omega) = -\Omega$ is the correct reality condition. Observe that $\rho(A_{-1}) = -B_{+1}$, $\rho(B_{+1}) = -A_{-1}$
and also $\sigma(A_{-1}) = -A_{-1},  \; \sigma(B_{+1}) = -B_{+1}$. We conclude that $\hat{\tau}(\omega_{\nu}) = \omega_{\nu}$. Thus, $\omega_{\nu}$ defines a unique map $\mathcal{F}_{\nu}: \C \rightarrow \Gtwosplit$ with $\mathcal{F}_{\nu}^{-1} d\mathcal{F}_{\nu} = \omega_{\nu}$,
up to global $\Gtwosplit$ isometry. Moreover, this construction shows how to avoid Higgs bundles in defining $\nu$. Indeed, we can define $\nu$ associated to $q$ instead by
solving equation \eqref{AffineTodaFieldEquation} for $(\Omega, q) $ and defining $\nu:= \mathcal{F}_{\nu}(u_0)$. 

We now define a few of the maps in Figure \ref{HarmonicMapDiagram} for the main theorem in the section. 
Since $\sigma |_{\h} = \id_{\h}$, it follows that $\Fix(\hat{\tau}|_{\h} ) = \Fix(\hat{\rho}|_{\h})$. Thus, $\mathfrak{t}:= \Fix( \, \hat{\tau} |_\h )$ is a maximal abelian subalgebra of $\mathfrak{k} < \g_2'$. Let $T  <\Gtwosplit$ be generated by $\exp(\mathfrak{t})$. Then $T$ is a maximal torus in $K$. Let $\pi_{K}: \Gtwosplit \rightarrow \Gtwosplit/K$ and $\pi_T: \Gtwosplit \rightarrow \Gtwosplit/ T$ be natural projections. 
Define $ F_{T} = \pi_T \circ \mathcal{F}_{\nu}$ and $F_{K} = \pi_K \circ \mathcal{F}_{\nu}$. The integrability equations of $\omega_{\nu}$ imply the harmonicity of $F_{T}$ \cite[Proposition 2.3.1]{Bar15}. 

 \subsection{Geometric Correspondence Between $f$ and $\nu$ } \label{GeometricCorrespondence} 
In this subsection, we first define geometric model spaces $\mathscr{X}, \mathscr{Y}$ and give $\Gtwosplit$-equivariant identifications $\Gtwosplit/K \cong_{\mathbf{Diff}}\mathscr{X} $ and 
$\Gtwosplit/T \cong_{\mathbf{Diff}} \mathscr{Y}$. 
We then define a geometric lift $G_{\nu}$ of $\nu$ to $\mathscr{Y}$. 
The map $G_{\nu}$ is indeed a lift of $\nu$ as there is a natural projection $\pi_{\Stwofour}: \mathscr{Y} \rightarrow \Stwofour$ such that $\pi_{\Stwofour} \circ G_{\nu} = \nu$. 
We also define a geometric version of the map to the symmetric space by $f: \C \rightarrow \mathscr{X}$. The maps $F_K, f $ and $F_T, G_{\nu}$ are shown to be equivalent
in Theorem \ref{THM:GeometricCorrespondence}.  
 
 We now give three descriptions of the maximal compact in $\Gtwosplit$, summarizing results across the literature. 
 Here, $\mathsf{SU}(2) $ is regarded as the unit sphere in the quaternions $ \Ha$. 
 
 \begin{lemma}[The Maximal Compact in $\Gtwosplit$ ] \label{MaximalCompact} 
 The following subgroups coincide and define a copy of the maximal compact subgroup $\mathsf{SO}(4) \cong K < \Gtwosplit$: 
 	\begin{enumerate}
		\item \cite{Hel78} $H_1 = O(7) \cap \Gtwosplit$.\footnote{To be precise, by $O(7)$, we mean $O( \imoct, g)$, where $\mathcal{M}$ from \eqref{MultiplicationBasis} is an orthonormal frame for the metric $g$.} 
		\item  \cite{Yok77} $H_2 = \{ \psi \in \Gtwosplit \; |  \; \psi( \Ha) \subseteq \Ha \} = \{ \psi \in \Gtwosplit \; |  \; \psi( l \Ha) \subseteq l \Ha \}$. 
		\item  \cite{BM09} $H_3 = \mathsf{image}(\Psi)$ for the group homomorphism $ \Psi: \mathsf{SU}(2) \times \mathsf{SU}(2) \rightarrow \Gtwosplit $ with kernel $\{ (+1, +1), (-1,-1) \}$ given 
		in the splitting $\Oct' = \Ha \oplus l\Ha$ by 
		$$(p,q) \mapsto \Psi_{p,q}:= \;a+ l b\mapsto pap^{-1} + l \, (pbq^{-1} ) .$$
	\end{enumerate} 
 \end{lemma}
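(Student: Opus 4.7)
The plan is to establish the cyclic chain $H_3 \subseteq H_2 \subseteq H_1 \subseteq H_3$. I will use the explicit formula for $\Psi$ to get $H_3 \subseteq H_2$, the signature decomposition of $q_{\Oct'}$ to get $H_2 \subseteq H_1$, and a dimension/maximality argument to close the loop. Along the way I also need the equivalence of the two descriptions of $H_2$: since $\Gtwosplit$ preserves the split-signature form $q_{\Oct'}$ and $l\Ha = \Ha^{\bot_{q_{\Oct'}}}$, preserving $\Ha$ is equivalent to preserving $l\Ha$.

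For $H_3 \subseteq H_2$, I would first verify that $\Psi$ is a well-defined Lie group homomorphism into $\Gtwosplit$. Using the multiplication formula \eqref{OctMultiplication} and $p^{-1} = p^{*}$ for $p \in \mathsf{SU}(2)$, a direct computation shows
\[
\Psi_{p,q}\bigl((a_1,b_1) \odot (a_2,b_2)\bigr) = \bigl(p(a_1 a_2 + b_2 b_1^*)p^{-1},\; p(a_1^* b_2 + a_2 b_1)q^{-1}\bigr),
\]
matching $\Psi_{p,q}(a_1,b_1) \odot \Psi_{p,q}(a_2,b_2)$. The kernel calculation is standard: requiring $pap^{-1}=a$ for all $a \in \Ha$ forces $p = \pm 1$, and then $pbq^{-1}=b$ forces $q = p$. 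The formula $\Psi_{p,q}(a + lb) = pap^{-1} + l(pbq^{-1})$ then manifestly preserves the decomposition $\Oct' = \Ha \oplus l\Ha$, so $H_3 \subseteq H_2$.

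For $H_2 \subseteq H_1$: take $\psi \in H_2$. The form $q_{\Oct'}$ restricts to a positive-definite form on $\Ha$ and a negative-definite form on $l\Ha$, and the Euclidean reference metric $g$ for which $\mathcal{M}$ is orthonormal agrees with $q_{\Oct'}$ on $\Ha$ and with $-q_{\Oct'}$ on $l\Ha$. Since $\psi \in \Gtwosplit$ preserves $q_{\Oct'}$ and also (by hypothesis) the orthogonal decomposition, it preserves $g$, hence $\psi \in O(7) \cap \Gtwosplit = H_1$.

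To close with $H_1 \subseteq H_3$, I would invoke a dimension/connectedness argument. The domain $\mathsf{SU}(2) \times \mathsf{SU}(2)$ is compact connected of dimension $6$, and $\ker\Psi$ is discrete, so $H_3$ is a compact connected $6$-dimensional subgroup of $\Gtwosplit$. The maximal compact subgroup of $\Gtwosplit$ is $\mathsf{SO}(4)$, which is connected of dimension $6$; therefore $H_3$ must coincide with its enclosing maximal compact, i.e.\ $H_3 \cong_{\mathbf{Lie}} \mathsf{SO}(4)$ is itself maximal compact. On the other hand $H_1 = O(7) \cap \Gtwosplit$ is compact (intersection of the compact group $O(7)$ with the closed subgroup $\Gtwosplit$ of $\GL(\imoct)$) and contains $H_3$; maximality of $H_3$ forces $H_1 = H_3$. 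This completes the chain $H_3 \subseteq H_2 \subseteq H_1 = H_3$, so all three groups coincide and are isomorphic to $\mathsf{SO}(4)$.

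The main obstacle is the final step: one must know (or cite from \cite{Hel78}) that the maximal compact of $\Gtwosplit$ is the $6$-dimensional connected group $\mathsf{SO}(4)$, so that a connected compact subgroup of the right dimension is automatically maximal. Everything else is either a direct algebraic verification or an immediate consequence of $\Gtwosplit$-invariance of $q_{\Oct'}$.
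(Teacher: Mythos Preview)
Your proof is correct, but the route you take to close the cycle differs from the paper's. You argue $H_3 \subseteq H_2 \subseteq H_1$ and then close with a soft dimension/maximality argument: $H_3$ is compact connected of dimension $6$, hence equals a maximal compact $\cong \mathsf{SO}(4)$, and since $H_1$ is compact and contains $H_3$, maximality forces $H_1 = H_3$. The paper instead proves $H_1 = H_2$ directly in both directions (your $H_2 \subseteq H_1$ argument, together with the observation that preserving both $g_3 \oplus g_4$ and $g_3 \oplus (-g_4)$ forces preserving the splitting, giving $H_1 \subseteq H_2$), and then proves $H_2 = H_3$ \emph{constructively} via the Stiefel triplet model of Proposition~\ref{StiefelTripletModel}: both $H_2$ and $H_3$ correspond under $\varphi \mapsto (\varphi(i),\varphi(j),\varphi(l))$ to exactly the triplets $(x,y,z) \in V_{(+,+,-)}$ with $x,y \in \mathsf{Im}\,\Ha$ and $z \in l\Ha$, the surjectivity of $\Psi$ onto such triplets coming from unwinding the $2$-to-$1$ cover $\mathsf{SU}(2) \to \mathsf{SO}(3)$. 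Your approach is shorter and entirely valid, but it leans on the cited fact that the maximal compact of $\Gtwosplit$ is the $6$-dimensional $\mathsf{SO}(4)$; the paper's Stiefel argument is more self-contained given the machinery already set up, and it makes the identification $H_2 = H_3$ explicit rather than abstract.
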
 
 
 \begin{proof} 
 By general theory, we know (1) defines a copy of the maximal compact \cite{Hel78}. By (3), we see that $H_3 \cong (\mathsf{SU}(2) \times \mathsf{SU}(2))/ \{ \pm (1,1) \} \cong \mathsf{SO}(4)$. Thus, it suffices to see these subgroups are equivalent. 
 
 Given $\psi \in H_2$, then $\psi$ preserves the orthogonal splitting $\Ha \oplus l\Ha$.
 Let $g_{n}$ denote the standard Euclidean metric on $\R^n$. Thus, $\psi$ preserves the metric $q_{\imoct} = g_3 \oplus -g_4$ and preserves the splitting $\mathsf{Im} \Ha \oplus l \Ha$,
 so it preserves $g_7 = g_3 \oplus g_4$ too.  
 Thus, $H_2 \subseteq H_1$. Conversely, if $\varphi \in H_1$, then $\varphi$ preserves the metrics $g_3 \oplus g_4 $ and $g_3 \oplus -g_4$, so it must preserve 
 the space $l\Ha$. But $\varphi \in \Gtwosplit$ preserves $l\Ha$ if and only if it preserves $\Ha = [ \, (l \Ha)^\bot \subset \Oct' ]$. 
 
By a direct calculation using the multiplication formula \eqref{OctMultiplication} for $\Oct'$, one finds that $\Psi_{p,q} \in \Gtwosplit$. A separate direct calculation
shows that $\Psi $ is a group homomorphism. If $(p,q) \in \ker \Psi$, then one first finds $p$ commutes with all of $\Ha$ so that $p \in \R$. Hence, $p = \pm 1$.
Then one find that $q = \pm 1$ accordingly, with the signs matching. Hence, $H_3 \cong \mathsf{SO}(4)$.
 
 Let $F: \Gtwosplit \rightarrow  V_{(+,+,-)}$ be the coordinate  map $\varphi \mapsto (\varphi(i), \varphi(j), \varphi(l))$ as in Proposition \ref{StiefelTripletModel}. 
 We show that $H_2, H_3$ correspond to the same triplets under $F$.  
First, observe that $H_2 $ corresponds to the set of triplets $V_{\mathsf{Im} \, \Ha} := \{ (x,y, z) \in V_{(+,+,-)} \; | \; x, y \in \mathsf{Im} \Ha, \; z \in l\Ha\}.$
Now, also by definition, it is clear that $F(H_3) \subset V_{\mathsf{Im} \, \Ha}$. 
Conversely, take any such triplet $(x_0,y_0,z_0) \in V_{\mathsf{Im} \, \Ha}$. We unpack the 2-1 map $\mathsf{SU}(2) \rightarrow \mathsf{SO}(3) = \Aut_{\R-\mathsf{alg}}(\Ha)$
by $ p \mapsto (q \mapsto pqp^{-1} ) $ to complete the proof. Indeed, we can find
 $p \in \mathsf{SU}(2)$ such that $p i p^{-1} = x_0, p j p^{-1} = y_0$. Then we may write $z_0 = lpq^{-1}$ for a (unique) element $q \in \Ha$ because $z_0, lp \in l\Ha$. Hence,
 $F( \Psi_{p,q} ) = (x_0,y_0,z_0)$. We conclude that $F(H_3) = V_{\mathsf{Im} \Ha} = F(H_2).$ This means $H_2 = H_3$. 
 \end{proof} 
 
 As a corollary, we can describe points in the symmetric space geometrically. 
  \begin{corollary}[The Geometry of $\mathsf{Sym}(G_2')$] \label{GeometricSymmetricSpace} 
 Let $\mathsf{Sym}(\Gtwosplit) := \Gtwosplit/K$ be the $\Gtwosplit$ symmetric space. Then 
 $$\mathsf{Sym}(\Gtwosplit) \cong_{\mathbf{Diff}} \mathscr{X} := \mathsf{Gr}_{(3,0)}^\times(\imoct) = \{ \, P \in \mathsf{Gr}_{3,0}(\imoct) \, | \,P \times_{\imoct} P = P \},$$ 
 under the $\Gtwosplit$-equivariant diffeomorphism $\beta_{\mathscr{X}}: gK \mapsto g \cdot \mathsf{Im} \Ha $. 
 \end{corollary}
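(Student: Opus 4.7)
The plan is to exhibit $\beta_{\mathscr{X}}$ as a $\Gtwosplit$-equivariant bijection between the two homogeneous-type spaces, and then invoke standard homogeneous-space theory to upgrade this to a diffeomorphism. The starting point is Lemma \ref{MaximalCompact}, which characterises the maximal compact $K$ as $H_2 = \{\psi \in \Gtwosplit \mid \psi(\Ha) \subseteq \Ha\}$. Since every $\psi \in \Gtwosplit$ fixes $1_{\Oct'}$, preserving $\Ha$ is equivalent to preserving the orthogonal complement $\mathsf{Im}\,\Ha = [(\R\{1\})^\bot \subset \Ha]$; hence $K$ is precisely the stabilizer of $\mathsf{Im}\,\Ha \in \Gr_{3,0}(\imoct)$. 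This immediately shows $\beta_{\mathscr{X}}(gK) := g \cdot \mathsf{Im}\,\Ha$ is well-defined and injective.

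Next I would verify $\mathsf{Im}\,\Ha \in \mathscr{X}$. It is positive-definite of signature $(3,0)$ because on $\Ha \subset \Oct'$ the form $q$ restricts to the Euclidean norm. For the cross-product condition, apply \eqref{CrossProductDefinition}: for orthogonal $u,v \in \mathsf{Im}\,\Ha$ one has $u \times v = u \odot v$, and quaternionic multiplication sends $\mathsf{Im}\,\Ha \times \mathsf{Im}\,\Ha$ into $\Ha$, with the scalar part killed by the projection $\odot - q(\cdot,\cdot)$. Concretely $i \times j = k$, $j \times k = i$, $k \times i = j$, so $\mathsf{Im}\,\Ha \times \mathsf{Im}\,\Ha = \mathsf{Im}\,\Ha$. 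Since $\Gtwosplit$ preserves $\times$, for any $g \in \Gtwosplit$ the plane $g \cdot \mathsf{Im}\,\Ha$ is positive-definite and satisfies $g \cdot P \times g \cdot P = g \cdot (P \times P) = g \cdot P$; thus the image of $\beta_{\mathscr{X}}$ lies in $\mathscr{X}$, and $\Gtwosplit$-equivariance is built-in.

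The main task, and the step I expect to require the most care, is surjectivity: given any $P \in \mathscr{X}$, produce $g \in \Gtwosplit$ with $g \cdot \mathsf{Im}\,\Ha = P$. The plan is to reduce to the Stiefel triple model of Proposition \ref{StiefelTripletModel}. Choose a $q$-orthonormal basis $(x,y)$ of a 2-plane in $P$ with $q(x) = q(y) = 1$ (equivalently $x^2 = y^2 = -1$); since $P$ is closed under $\times$, the vector $z := x \times y$ lies in $P$, is orthogonal to both $x$ and $y$ by the orthogonality property of $\times$, and satisfies $q(z) = q(x)q(y) - (x\cdot y)^2 = 1$ via \eqref{CrossProductNormalization}. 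So $(x,y,z)$ is an orthonormal basis of $P$. Because $P$ has signature $(3,0)$ inside the $(3,4)$-space $\imoct$, the complement $P^\bot$ has signature $(0,4)$; pick any $w \in P^\bot$ with $q(w) = -1$ (so $w^2 = 1$). The orthogonality $w \cdot (x \times y) = w \cdot z = 0$ now follows because $z \in P$ and $w \in P^\bot$, so $(x,y,w) \in V_{(+,+,-)}(\imoct)$. By Proposition \ref{StiefelTripletModel}, there is a (unique) $\psi \in \Gtwosplit$ mapping $(i,j,l) \mapsto (x,y,w)$, and since $\psi$ preserves $\times$, $\psi(i \times j) = x \times y = z$. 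Thus $\psi(\mathsf{Im}\,\Ha) = \spann_{\R}\langle x,y,z\rangle = P$.

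Finally, $\beta_{\mathscr{X}}$ is a smooth bijection between the orbit $\Gtwosplit/K$ (a homogeneous $\Gtwosplit$-space) and the $\Gtwosplit$-orbit $\mathscr{X} \subset \Gr_{3,0}(\imoct)$; by standard homogeneous-space theory (e.g.\ a $\Gtwosplit$-equivariant smooth bijection from $G/H$ onto a $G$-orbit is a diffeomorphism), $\beta_{\mathscr{X}}$ is a $\Gtwosplit$-equivariant diffeomorphism. The principal subtleties to watch are the sign conventions $q(v) = -v^2$ for $v \in \imoct$ (to match the $V_{(+,+,-)}$ normalization) and the requirement $w \cdot (x \times y) = 0$ in the Stiefel model, both of which are handled by the observation above that $z = x\times y$ already lies in $P$.
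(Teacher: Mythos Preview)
Your proof is correct and follows essentially the same approach as the paper's own proof: an orbit--stabilizer argument using Lemma~\ref{MaximalCompact} to identify $K=\Stab(\mathsf{Im}\,\Ha)$, transitivity via the Stiefel triplet model of Proposition~\ref{StiefelTripletModel}, and then an appeal to equivariance to upgrade the bijection to a diffeomorphism. You are in fact more explicit than the paper about why the auxiliary condition $w\cdot(x\times y)=0$ in $V_{(+,+,-)}$ is automatic (because $x\times y$ already lies in $P$ and $w\in P^\bot$), which is a nice point to spell out.
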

 
 \begin{proof}
 We can use an Orbit-Stabilizer argument. $\Gtwosplit$ acts on $ \mathscr{X}$ since it preserves $\times$. 
 By Lemma \ref{MaximalCompact}, $ \Stab( \mathsf{Im} \, \Ha) = K$. Now, given $P \in \mathscr{X}$, write $P = \spann_{\R} <x, y, x \times y>$ for $x,y \in \quadric$ that are orthogonal. We extend $(x,y)$ to $(x,y,z) \in V_{(+,+,-)}(\imoct)$ from Proposition \ref{StiefelTripletModel}. By the same Proposition,
take $\varphi \in \Gtwosplit$ such that $\varphi \cdot (i,j,k)= (x,y,z)$. Hence, $\varphi(P) = \mathsf{Im} \, \Ha$. 
 We conclude that $\beta_{\mathscr{X}}$ is a bijection. Since $\beta_{\mathscr{X}}$ is clearly $\Gtwosplit$-equivariant, the Equivariant Rank Theorem finishes the job. 
 \end{proof} 

The previous lemma gives a geometric model for $\mathsf{Sym}(\Gtwosplit)$. On the other hand, it does not geometrically explain how the metric $h$ in the bundle,
which is Euclidean on $\V^\R$, defines a map to $\mathscr{X}$. We can explain this point with the use of the following lemma. 
Here, we denote the model space of Euclidean inner products on $\R^7$ by 
$$\mathscr{M} := \{ A \in \mathsf{Mat}_7 \R \; | \; A > 0, \,A^T = A, \det A = +1\}.$$

 \begin{lemma}\label{3,0_OrthogonalMultiplicationFrame} 
Let $A \in \mathscr{M}$ be a Euclidean inner product on $\imoct$ in the basis $\mathcal{M}$. If $\mathcal{F} = (f_{i})_{i=1}^7$, $\mathcal{G} = (g_i)_{i=1}^7$ are 
$A$-orthogonal multiplication frames for $\imoct$, then 
$$ \mathcal{F}^{3,0} := \spann_{\R} <f_1, f_2, f_3 > \, = \spann_{\R} < g_1, g_2, g_3> =: \mathcal{G}^{3,0} .$$
Thus, there is a natural map $m_{(3,0)}: \mathscr{M} \rightarrow \mathscr{X}$ defined by $A \mapsto \mathcal{F}^{3,0}$. 
\end{lemma}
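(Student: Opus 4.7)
The plan is to identify an intrinsic invariant of $A$ that recovers $\mathcal{F}^{3,0}$ from any $A$-orthogonal multiplication frame, thereby making the $(3,0)$-span independent of the choice of such frame. First I would introduce the $A$-self-adjoint endomorphism $B_A \in \End(\imoct)$ uniquely characterized by
\[
A(B_A u, v) = q_{\imoct}(u, v) \qquad \text{for all } u, v \in \imoct.
\]
Because $A$ is positive-definite and $q_{\imoct}$ is symmetric of signature $(3,4)$, $B_A$ is diagonalizable over $\R$ and has a canonical 3-dimensional positive eigenspace $E_+(B_A)$, determined purely by $A$ and the fixed form $q_{\imoct}$.

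Next I would show $\mathcal{F}^{3,0} = E_+(B_A)$ for any $A$-orthogonal multiplication frame $\mathcal{F} = \varphi(\mathcal{M})$ with $\varphi \in \Gtwosplit$. Since $\Gtwosplit$ preserves $q_{\imoct}$, we have $q_{\imoct}(f_i, f_j) = q_{\imoct}(m_i, m_j) = \epsilon_i \delta_{ij}$, where $\epsilon_i = +1$ for $i \in \{1, 2, 3\}$ and $\epsilon_i = -1$ for $i \in \{4, 5, 6, 7\}$. Combined with the $A$-orthogonality $A(f_i, f_j) = a_i \delta_{ij}$ for some $a_i > 0$, this yields that $B_A$ is diagonal in the basis $\mathcal{F}$ with eigenvalues $\epsilon_i / a_i$. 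Hence $E_+(B_A) = \spann_{\R} \langle f_1, f_2, f_3 \rangle = \mathcal{F}^{3,0}$. The identical computation in the basis $\mathcal{G}$ gives $\mathcal{G}^{3,0} = E_+(B_A)$, so $\mathcal{F}^{3,0} = \mathcal{G}^{3,0}$.

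Finally, I would verify that the well-defined assignment $m_{(3,0)}(A) := E_+(B_A)$ lands in $\mathscr{X}$. Here I use that $\mathsf{Im}\, \Ha = \spann_{\R} \langle i, j, k \rangle$ is $q_{\imoct}$-positive-definite and closed under $\times$ (as $i \times j = k$, etc., from the multiplication table of $\Oct'$), so its image $\varphi(\mathsf{Im}\, \Ha) = \mathcal{F}^{3,0}$ under any $\varphi \in \Gtwosplit$ retains both properties and thus lies in $\mathsf{Gr}^\times_{(3,0)}(\imoct) = \mathscr{X}$. I do not anticipate a major obstacle; the argument reduces to two elementary facts, namely that multiplication frames are $q_{\imoct}$-orthogonal with signature pattern $(+,+,+,-,-,-,-)$, and that $B_A$ is an intrinsic spectral invariant of the pair $(A, q_{\imoct})$.
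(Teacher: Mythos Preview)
Your proof is correct. The paper takes a closely related but slightly different route: it considers the transition map $\psi:=(f_i\mapsto g_i)$, observes that $\psi\in\Gtwosplit\cap O(\imoct,A)$, and then invokes the argument from Lemma~\ref{MaximalCompact} (preserving both $A$ and $q_{\imoct}$ forces preservation of the splitting $\mathcal{F}^{3,0}\oplus\mathcal{F}^{0,4}$) to conclude $\psi(\mathcal{F}^{3,0})=\mathcal{F}^{3,0}=\mathcal{G}^{3,0}$. Your approach instead names the intrinsic object that both arguments are implicitly using: the $A$-self-adjoint operator $B_A$ representing $q_{\imoct}$, whose positive eigenspace is exactly the $(3,0)$-span. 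The two arguments are essentially equivalent; yours has the minor advantages of being self-contained (no appeal to the earlier lemma), of working verbatim even if ``$A$-orthogonal'' is read as orthogonal rather than orthonormal (the paper's claim $\psi\in O(\imoct,A)$ tacitly uses orthonormality), and of explicitly checking that $m_{(3,0)}(A)\in\mathscr{X}$, which the paper leaves implicit.
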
 

\begin{proof}
Let $\mathcal{F} = (f_i)_{i=1}^7, \mathcal{G} = (g_i)_{i=1}^7$ be $A$-orthogonal multiplication frames for $\imoct$. 
Now, by definition of a multiplication frame, the linear transformation
$\psi:= (f_i \mapsto g_i ) \in \Gtwosplit$. But moreover $\psi \in \Gtwosplit \cap O(\imoct, A)$. 
As in the proof of Lemma \ref{MaximalCompact}, $\psi$ must preserve the orthogonal splitting $\mathcal{F}^{3,0} \oplus \mathcal{F}^{0,4}$, where $\mathcal{F}^{0,4} =\spann_{\R} < f_4, f_5, f_6, f_7>$. Hence, $\psi( \mathcal{F}^{3,0}) =  \mathcal{F}^{3,0}$. On the other hand, $\psi( \mathcal{F}^{3,0} ) =  \mathcal{G}^{3,0} $ by construction
and we conclude $\mathcal{F}^{3,0} = \mathcal{G}^{3,0}.$
\end{proof}

We now define a ``geometric'' version of the harmonic map in the symmetric space as a map $f: \C \rightarrow \mathscr{X}$, via the Euclidean metric $h$ on $\V^\R$.
Define $f(z)$ as the $\nabla$-parallel translation of $P_z \in \Gr_{3,0}^\times(\V^\R_z) $ to the fiber $\V_{p_0}$, for $p_0$ the origin, where $P_z$ is the (3,0) part of any $\hat{\tau}_{\V}$-real $h(z)$-unitary multiplication frame. In particular, we may use the plane 
$P =\spann_{\R} <w_1, w_2, w_3>$, in terms of our $h$-unitary frame \eqref{HUnitaryMultiplicationFrame}. We will see the compatibility of $f$
with the map $F_K: \C \rightarrow \Gtwosplit/K$ in Theorem \ref{THM:GeometricCorrespondence}.\\

Now, we move on to describing the space $\Gtwosplit/T$ geometrically, where $T<K$ is a maximal torus. Recall here that $\mathcal{C}_x := (y \mapsto x \times y)$. We consider the
space
$ \mathscr{Y} $ of tuples $(x, P, Q, R)$ such that $x \in \quadric$, $P,Q, R$ are 2-planes with signatures $(2,0), (0,2), (0,2)$, respectively, and each 2-plane is set-wise preserved by $\mathcal{C}_x$, and the following splitting is orthogonal:
$$\imoct = \R \{x\} \oplus P \oplus Q \oplus R. $$
 For such a splitting, one finds that $R$ is necessarily given by $R = P \times Q$. In fact, this is a corollary of the transitivity 
 of the action described in the upcoming Lemma \ref{G/T}. Hence, such a decomposition is specified by either triplet $\{ x, P, Q\}$ or $\{x, P, R \} $.

We define a basepoint\footnote{One can think of $p_0$ as a ``natural''
basepoint for $\Gtwosplit/T$, insofar as one accepts the multiplication frame $\mathcal{M} = (i, j, k, l, li, lj, lk)$ as a natural $\imoct$ basis.} 
\begin{align} \label{Basepoint} 
 p_0 = \left ( \, i, \, \spann_{\R} \langle j,k \rangle,  \, \spann_{\R} \langle l, li \rangle, \; \spann_{\R} \langle lj, lk \rangle \, \right) \in \mathscr{Y}. 
\end{align} 
We set $x_0 = i$, $P_0 = \spann_{\R} \langle j, k \rangle, \; Q_0 = \spann_{\R} \langle l, li \rangle, \; R_0 = P_0 \times Q_0 = \; \spann_{\R} \langle lj, lk \rangle $.
$\Gtwosplit$ acts naturally on $\mathscr{Y}$ via the diagonal action. 

In the following Lemma, we show $ T:= \Stab(p_0) \cong_{\mathbf{Lie}} \mathsf{SO}(2) \times \mathsf{SO}(2)$ is a maximal torus in the maximal compact subgroup $K < \Gtwosplit$.
We identify $\Gtwosplit/T$ with $\mathscr{Y}$ as homogeneous $\Gtwosplit$-spaces.

\begin{lemma}[Model for $\Gtwosplit/T$.] \label{G/T}
 $\Gtwosplit/T \cong_{\mathbf{Diff}} \mathscr{Y}$ under the map $\Gtwosplit$-equivariant map 
$\beta_{\mathscr{Y}}: \,gT \mapsto g \cdot p_0$.
\end{lemma}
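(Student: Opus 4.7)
The strategy mirrors Corollary \ref{GeometricSymmetricSpace}: establish that $\Gtwosplit$ acts transitively on $\mathscr{Y}$, identify $\Stab(p_0) = T$ with a maximal torus in $K$, and then invoke the Equivariant Rank Theorem on the smooth equivariant bijection $\beta_{\mathscr{Y}}$. The main work is in proving transitivity, which I reduce to Proposition \ref{StiefelTripletModel} via a Stiefel triplet constructed out of $(x,P,Q,R)$.

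For transitivity, given $(x,P,Q,R) \in \mathscr{Y}$, pick a unit $y \in P$ with $q(y) = +1$ and a unit $z \in Q$ with $q(z) = -1$. I claim $(x,y,z) \in V_{(+,+,-)}(\imoct)$. The norm and pairwise orthogonality conditions are immediate from $y \in P, z \in Q$ and the orthogonality of the splitting $\R\{x\} \oplus P \oplus Q \oplus R$. For the last condition, $(x \times y) \cdot z = 0$ because $\mathcal{C}_x$-invariance of $P$ gives $x \times y \in P$, while $z \in Q \perp P$. Proposition \ref{StiefelTripletModel} then produces a unique $\varphi \in \Gtwosplit$ with $\varphi(i,j,l) = (x,y,z)$, and the associated multiplication basis \eqref{BasisCreation} shows $\varphi(P_0) = \spann_\R\langle y, x \times y\rangle$, which lies in $P$ and, by dimension, equals it. Similarly $\varphi(Q_0) = \spann_\R\langle z, z \times x\rangle \subseteq Q$ using $\mathcal{C}_x$-invariance of $Q$, and again equals $Q$ by dimension. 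Finally, using that $\varphi$ preserves $\times$, we get $\varphi(R_0) = \varphi(P_0) \times \varphi(Q_0) = P \times Q$, so the identification $\varphi(R_0) = R$ reduces to verifying $R = P \times Q$.

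This auxiliary identity $R = P \times Q$ will be the main obstacle, but is straightforward given the octonion calculus developed earlier. I argue $P \times Q \subseteq R$ by orthogonality: $P \times Q \perp P$ and $P \times Q \perp Q$ by Lemma \ref{CrossProductOrthogonality}, while $(p \times q) \cdot x = \Omega(p,q,x) = -(x \times q) \cdot p = 0$ since $x \times q \in Q \perp P$. Then I show $\dim(P \times Q) = 2$: picking $p_1 \in P, q_1 \in Q$ both orthogonal to $x$ and taking $p_2 = x \times p_1$, $q_2 = x \times q_1$, the orthogonal double cross-product identity \eqref{DCP_Orthogonal} applied twice yields $p_2 \times q_1 = -p_1 \times q_2$ and $p_2 \times q_2 = p_1 \times q_1$, reducing the four products to the plane $\spann_\R\langle p_1 \times q_1, p_1 \times q_2\rangle$, which is 2-dimensional because $q(p_1 \times q_1) = q(p_1)q(q_1) \neq 0$. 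Hence $P \times Q = R$.

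For the stabilizer, any $g \in \Stab(p_0)$ fixes $i$ and preserves $\mathsf{Im}\,\Ha = \R\{i\} \oplus P_0$ and $l\Ha = Q_0 \oplus R_0$, so $g \in K$ by Lemma \ref{MaximalCompact}. Writing $g = \Psi_{p,q}$ from description (3) of that lemma, the condition $p i p^{-1} = i$ forces $p$ to commute with $i$, hence $p \in \mathsf{U}(1) := \{a + bi : a^2+b^2 = 1\}$; the condition $\Psi_{p,q}(l) = l p q^{-1} \in Q_0$ then forces $pq^{-1} \in \mathsf{U}(1)$, so $q \in \mathsf{U}(1)$ as well. Thus $T \cong (\mathsf{U}(1) \times \mathsf{U}(1))/\{\pm(1,1)\} \cong \mathsf{SO}(2) \times \mathsf{SO}(2)$, a maximal torus in $K \cong \mathsf{SO}(4)$ (rank 2). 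Since $\beta_{\mathscr{Y}}$ is $\Gtwosplit$-equivariant and bijective between homogeneous $\Gtwosplit$-spaces, the Equivariant Rank Theorem concludes the proof.
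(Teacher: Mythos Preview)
Your proof is correct and follows the paper's strategy: transitivity via a Stiefel triplet extracted from $(x,P,Q)$, identification of $\Stab(p_0)$ as a $2$-torus in $K$, and the Equivariant Rank Theorem. Two executive differences are worth noting. First, you prove $R = P \times Q$ directly via cross-product calculus, whereas the paper sidesteps this: since $R$ is already the orthogonal complement of $\R\{x\}\oplus P\oplus Q$ by definition of $\mathscr{Y}$, once $\varphi$ is an isometry carrying $(i,P_0,Q_0)$ to $(x,P,Q)$ it automatically sends $R_0$ to $R$, and the identity $R = P\times Q$ becomes a \emph{corollary} of transitivity rather than an input. Second, for the stabilizer you use the $\Psi_{p,q}$ parametrization of $K$ from Lemma~\ref{MaximalCompact}(3), while the paper uses the Stiefel description to read off $H \cong \mathsf{SO}(P_0)\times\mathsf{SO}(Q_0)$ directly; both are fine.

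Two small slips in your $R = P\times Q$ computation: the signs should be $p_2\times q_1 = p_1\times q_2$ and $p_2\times q_2 = -\,p_1\times q_1$ (harmless for the span), and asserting that $\spann_\R\langle p_1\times q_1,\,p_1\times q_2\rangle$ is $2$-dimensional requires more than $q(p_1\times q_1)\neq 0$ --- observe that $p_1\times q_2 = -x\times(p_1\times q_1)$ is nonzero and orthogonal to $p_1\times q_1$, hence independent.
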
 

\begin{proof}

We first show transitivity of the action on $\mathscr{Y}$. Take any $ p= (x, P, Q) \in \mathscr{Y}$. We show that there is $\varphi \in \Gtwosplit$
such that $\varphi \cdot p_0 = p$.  To this end, select any $y \in P$, $z \in Q$ such that $q(y) = +1$ and $q(z) = -1$. By hypothesis, 
$x, y, z$ are mutually orthogonal. Since $P$ is stable under $\mathcal{C}_x$, we have $x \times y \in P$. Since $P \, \bot  \, Q $, we conclude $(x \times y) \, \bot \, z $. 
By Proposition \ref{StiefelTripletModel}, one finds that there is $\varphi \in \Gtwosplit$ such that $\varphi(i) = x, \, \varphi(j) = y, \varphi(l) = z$. 
Since $P= \spann_{\R} \langle y, x \times y \rangle$, $Q = \spann_{\R} \langle z, x \times z \rangle$ by $C_x$-stability, it follows that $\varphi \in \Gtwosplit$ has $\varphi(P_0) = P, \; \varphi(Q_0) = Q$.
Thus, $\varphi \cdot p_0 = p$. 

We now consider $H = \Stab(p_0):= \Stab(i) \cap \Stab(P_0) \cap \Stab(Q_0) \cap \Stab(R_0)$. In fact, $H = \Stab(i) \cap \Stab(P_0) \cap \Stab(Q_0)$. Indeed, if $\varphi \in \Gtwosplit$ preserves $P_0, Q_0$, then it preserves $R_0 = P_0 \times Q_0 $ automatically. 
 
Observe that $\mathsf{Im} \Ha = \spann_{\R} \langle i, j, k \rangle= \R\{x_0\} \oplus P_0 $ is fixed by any $\psi \in H$. Hence, $H < K$ is a subgroup of our fixed maximal compact subgroup,
by Lemma \ref{MaximalCompact}. 
We can now finish the proof by describing $H$ in the Stiefel model $V_{(+,+,-)}$ from Proposition \ref{StiefelTripletModel}. Recall that $\varphi \in \Gtwosplit$
identifies as $(\varphi(i), \varphi(j), \varphi(l)) \in V_{(+,+,-)}$. Then $H < V_{(+,+,-)}$ 
identifies as the triplets of the form $(i, x, y)$ with $q(x) = +1, q(y) = -1$ and 
$ x \in P_0, \; y\in Q_0$. It then follows that the map $\Phi: H \rightarrow \mathsf{SO}(P_0) \times \mathsf{SO}(Q_0)
\cong \mathsf{SO}(2) \times \mathsf{SO}(2)$ by $\Phi( g) = (g|_{P_0}, \; g|_{Q_0} ) $ is a Lie group isomorphism. If $ \psi \in \Gtwosplit$ commutes with $H$, then 
one finds $\psi \in \tilde{H} := \Stab(P_0) \cap \Stab(Q_0)$. 
Now, $H$ is not a maximal abelian subgroup, as $\tilde{H}$ is abelian. However, $\tilde{H} = H \times \Z_2$ is disconnected,
where the $\Z_2$-generator is $\varphi \in \Gtwosplit$ corresponding to $(-i, j, l) \in V_{(+,+,-)}$. 
Note that the Lie subalgebra $\mathfrak{t} < \frakk$ corresponding to $H$ is a maximal 
abelian subalgebra of $\frakk$. 
We conclude that $T:= H$ is a maximal torus, i.e., a maximal compact connected abelian Lie subgroup of $K$. The map $\beta_{\mathscr{Y}}$ is clearly equivariant and thus a diffeomorphism by the Equivariant Rank Theorem. 
\end{proof} 

\begin{corollary}[Natural Projections] \label{NaturalProjections} 
There are natural projections $\pi_{\mathscr{X}}: \mathscr{Y} \rightarrow \mathscr{X} $, $\pi_{\quadric}:\mathscr{Y} \rightarrow \quadric$
\begin{align}
	\pi_{\mathscr{X}}:=& (x, P, Q , R) \longmapsto \R \{x\} \oplus P   \\
	\pi_{\quadric} :=& (x, P, Q, R) \longmapsto x 
\end{align} 
\end{corollary}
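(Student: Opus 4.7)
The content of the statement is well-definedness: I need to verify that the proposed formulas land in the claimed target spaces. For $\pi_{\quadric}$ nothing is to check, since by definition of $\mathscr{Y}$ any tuple $(x,P,Q,R)$ already has $x \in \quadric$, and smoothness and $\Gtwosplit$-equivariance are immediate. The work is all for $\pi_{\mathscr{X}}$: I must show that $W := \R\{x\} \oplus P$ is a $3$-plane of signature $(3,0)$ in $\imoct$ and that $W \times_{\imoct} W \subseteq W$, i.e. $W \in \mathscr{X} = \mathsf{Gr}_{(3,0)}^{\times}(\imoct)$.

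The signature claim is immediate from the orthogonal decomposition built into the definition of $\mathscr{Y}$: the line $\R\{x\}$ has $q(x) = +1$, the $2$-plane $P$ is of signature $(2,0)$, and $x \perp P$ since the splitting $\imoct = \R\{x\} \oplus P \oplus Q \oplus R$ is $q$-orthogonal. Hence $q|_W$ is positive definite of rank $3$.

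For the $\times$-closure, the efficient approach is to leverage the transitivity established in Lemma \ref{G/T}. The property of being a $(3,0)$-plane closed under $\times_{\imoct}$ is $\Gtwosplit$-invariant, since $\Gtwosplit$ preserves both $q$ and $\times$. Hence it suffices to verify the condition at the basepoint $p_0$ from \eqref{Basepoint}, where $\R\{x_0\} \oplus P_0 = \R\{i\} \oplus \spann_{\R}\langle j,k\rangle = \mathsf{Im}\,\Ha$. On this subspace the cross-product coincides with the standard cross-product of $\mathsf{Im}\,\Ha$ (via \eqref{CrossProductDefinition} and the associativity of the quaternion subalgebra), and in particular $i\times j=k,\ j\times k=i,\ k\times i=j$, so $\mathsf{Im}\,\Ha \in \mathscr{X}$. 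Transporting by any $\varphi \in \Gtwosplit$ with $\varphi \cdot p_0 = (x,P,Q,R)$, which exists by Lemma \ref{G/T}, yields $\R\{x\} \oplus P = \varphi(\mathsf{Im}\,\Ha) \in \mathscr{X}$.

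The main (mild) obstacle is recognizing that one need not verify $\times$-closure pointwise from the $\mathcal{C}_x$-stability of $P$ alone; the short route is this transitivity-plus-basepoint argument. Smoothness of both $\pi_{\mathscr{X}}$ and $\pi_{\quadric}$ follows since they are the descents of obvious smooth $\Gtwosplit$-equivariant maps $\Gtwosplit/T \to \Gtwosplit/K$ and $\Gtwosplit/T \to \quadric$ under the identifications $\beta_{\mathscr{Y}}$, $\beta_{\mathscr{X}}$ from Lemma \ref{G/T} and Corollary \ref{GeometricSymmetricSpace}, and the transitive action of $\Gtwosplit$ on $\quadric$.
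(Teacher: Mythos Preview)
The paper gives no explicit proof of this corollary; it is stated immediately after Lemma~\ref{G/T} as a direct consequence of the transitivity established there. Your argument is correct and makes explicit exactly what the paper leaves implicit: well-definedness of $\pi_{\mathscr{X}}$ reduces, via the transitive $\Gtwosplit$-action on $\mathscr{Y}$, to checking that at the basepoint $p_0$ one has $\R\{i\}\oplus P_0 = \mathsf{Im}\,\Ha \in \mathscr{X}$, which is immediate.
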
 

Before we use the model $\mathscr{Y}$ to define a lift $G_{\nu}$ of $\nu$, we need some structural equations on almost-complex curves;
the proof is exactly the same as in the case for $\mathbb{S}^6$ \cite[Page 413]{BVW94}.

\begin{proposition}\label{StructureEquations1} 
Let $\nu: \Sigma \rightarrow \mathbb{S}^{2,4}$ be an almost-complex curve. Then $\nu$ is weakly conformal, harmonic,
and satisfies the following for $(\tilde{\nu}_0, \tilde{\nu_1}, \tilde{\nu_2}, \tilde{\nu_3} )$ its truncated local harmonic map sequence:
	\begin{align}
		\nu \times \widetilde{\nu_1} &= \sqrt{-1}  \, \widetilde{\nu_1} \\
		\nu \times \widetilde{\nu_2} &= \sqrt{-1} \,  \widetilde{\nu_2} \\ 
		\widetilde{\nu_1} \times \widetilde{\nu_2} &+ \nu \times \widetilde{\nu_3} = \sqrt{-1} \, \widetilde{\nu_3}.\label{nu3}
	\end{align} 
\end{proposition}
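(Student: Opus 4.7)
My plan is to derive all four claims from the almost-complex condition $\nu\times\nu_z = \sqrt{-1}\,\nu_z$ (which unravels from $d\nu\circ J_\Sigma = J_{\quadric}\circ d\nu$ using $J_x = C_x$ and $J_\Sigma(\partial_z) = \sqrt{-1}\partial_z$) together with successive differentiation. As a preliminary I would verify that the canonical lift starts the harmonic map sequence with $\widetilde{\nu_1} = \nu_z$: differentiating $q(\nu) = 1$ yields $\nu\cdot\nu_z = \nu\cdot\nu_{\bar z} = 0$, so $\mathsf{proj}_{[\nu]}(\nu_{\bar z}) = 0$ and the first sequence element is indeed $\nu_z$. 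The first structural identity is then the almost-complex condition itself, and weak conformality $\nu_z\cdot\nu_z = 0$ follows by pairing with $\nu_z$ and using the cross-product orthogonality $(u\times v)\cdot v = 0$.

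For harmonicity I would differentiate the first identity by $\bar z$ and its complex conjugate $\nu\times\nu_{\bar z} = -\sqrt{-1}\,\nu_{\bar z}$ by $z$, producing
\begin{align*}
\nu_{\bar z}\times\nu_z + \nu\times\nu_{z\bar z} &= \sqrt{-1}\,\nu_{z\bar z},\\
\nu_z\times\nu_{\bar z} + \nu\times\nu_{z\bar z} &= -\sqrt{-1}\,\nu_{z\bar z}.
\end{align*}
Adding and using skew-symmetry $\nu_{\bar z}\times\nu_z = -\nu_z\times\nu_{\bar z}$ cancels both the cross-product terms on the left and the $\pm\sqrt{-1}\,\nu_{z\bar z}$ on the right, leaving $\nu\times\nu_{z\bar z} = 0$. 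The double cross-product identity \eqref{DCP} gives $C_\nu^2(v) = (\nu\cdot v)\nu - v$, so $\ker C_\nu = \C\nu$ in $(\imoct)^\C$; hence $\nu_{z\bar z}\in\C\nu$, which is the ambient harmonic map equation and yields harmonicity of $\nu$ as a map into $\Stwofour$, $\RP^6$, and $\CP^6$ simultaneously.

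The second and third identities then follow formally by iterating $\partial_z$ and invoking the recursion \eqref{HarmonicSequence}. Differentiating the first identity by $z$ with $\nu_z\times\nu_z = 0$ gives $\nu\times\nu_{zz} = \sqrt{-1}\,\nu_{zz}$; writing $\widetilde{\nu_2} = \nu_{zz} - c_1\nu_z$ for a scalar $c_1$ coming from the recursion, linearity plus the first identity produces the second identity $\nu\times\widetilde{\nu_2} = \sqrt{-1}\,\widetilde{\nu_2}$. Differentiating this by $z$ gives $\nu_z\times\widetilde{\nu_2} + \nu\times(\widetilde{\nu_2})_z = \sqrt{-1}\,(\widetilde{\nu_2})_z$; substituting $\widetilde{\nu_3} = (\widetilde{\nu_2})_z - c_2\widetilde{\nu_2}$ from the recursion, using the second identity to absorb the $c_2\widetilde{\nu_2}$ contribution, and recognizing $\widetilde{\nu_1} = \nu_z$ rearranges to the third identity after isolating $\widetilde{\nu_1}\times\widetilde{\nu_2}$.

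The one nonformal step is harmonicity, which crucially requires combining the $\partial_z$ and $\partial_{\bar z}$ derivatives of the almost-complex condition together with the double cross-product identity to pin down $\ker C_\nu$; the remaining identities are mechanical consequences of skew-symmetry and orthogonality of $\times$ on $\imoct$ together with the recursion \eqref{HarmonicSequence}. The argument closely parallels the well-known derivation for almost-complex curves in $\mathbb{S}^6$ from \cite{BVW94} referenced by the paper.
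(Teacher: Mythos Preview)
Your proposal is correct and is essentially the argument the paper defers to: the paper gives no proof of its own, simply citing \cite[p.~413]{BVW94} and noting that the $\mathbb{S}^6$ computation carries over verbatim to $\quadric$. You have faithfully reconstructed that computation—differentiating the almost-complex relation $\nu\times\nu_z=\sqrt{-1}\,\nu_z$, using skew-symmetry and \eqref{DCP} to obtain $\nu_{z\bar z}\in\C\nu$, and then propagating the eigenvalue relation through the recursion \eqref{HarmonicSequence}—so there is nothing to add.
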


\begin{corollary}\label{OrthogonalityProperties} 
Under the same hypotheses as Proposition \ref{StructureEquations1}, we have \\$\nu, \, \Re \widetilde{\nu_1}, \, \Im \widetilde{\nu_1}, \, \Re \widetilde{\nu_2}, \, \Im \widetilde{\nu_2}$ 
are pairwise orthogonal under $q_{\imoct}$. 
\end{corollary}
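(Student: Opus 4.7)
Write $\tilde{\nu}_i = a_i + \sqrt{-1}\, b_i$ with $a_i = \Re\tilde{\nu}_i$, $b_i = \Im\tilde{\nu}_i$ in $\imoct$. Pairwise orthogonality of $\nu, a_1, b_1, a_2, b_2$ under $q_{\imoct}$ will follow once three complex identities are established: (I) $q_\C(\nu, \tilde{\nu}_i) = 0$ for $i = 1,2$; (II) $q_\C(\tilde{\nu}_i, \tilde{\nu}_i) = 0$ for $i = 1,2$; and (III) the simultaneous vanishings $q_\C(\tilde{\nu}_1, \tilde{\nu}_2) = 0 = q_\C(\tilde{\nu}_1, \overline{\tilde{\nu}_2})$. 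Separating real and imaginary parts in each then extracts the required real orthogonalities.

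Identities (I) and (II) are immediate from the eigenvalue equation $\nu \times \tilde{\nu}_i = \sqrt{-1}\, \tilde{\nu}_i$ of Proposition \ref{StructureEquations1}: pair it with $\nu$ (resp.\ with $\tilde{\nu}_i$) and invoke the orthogonality property of the cross product ($u \times v \perp u, v$). Identity (II) additionally says $\tilde{\nu}_i$ is isotropic, which unpacks into $q_{\imoct}(a_i, a_i) = q_{\imoct}(b_i, b_i)$ and $a_i \perp b_i$.

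For (III), the polarization of the cross-product normalization \eqref{CrossProductNormalization} reads
\[
q_\C(\nu \times y,\ \nu \times z) \;=\; q_\C(\nu)\, q_\C(y,z) \;-\; q_\C(\nu, y)\, q_\C(\nu, z).
\]
With $y = \tilde{\nu}_1, z = \tilde{\nu}_2$, using $q_\C(\nu) = 1$ together with (I) collapses the right-hand side to $q_\C(\tilde{\nu}_1, \tilde{\nu}_2)$, while the two eigenvalue equations rewrite the left-hand side as $-q_\C(\tilde{\nu}_1, \tilde{\nu}_2)$. Hence $q_\C(\tilde{\nu}_1, \tilde{\nu}_2) = 0$. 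The companion vanishing $q_\C(\tilde{\nu}_1, \overline{\tilde{\nu}_2}) = 0$ is precisely the hermitian orthogonality of consecutive harmonic-map-sequence terms built into the construction \eqref{HarmonicSequence}. Expanding both complex identities in terms of real and imaginary parts produces a linear system whose unique solution forces all four cross pairings $q_{\imoct}(a_1,a_2), q_{\imoct}(b_1,b_2), q_{\imoct}(a_1,b_2), q_{\imoct}(b_1,a_2)$ to vanish.

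The main subtlety is that $q_\C(\tilde{\nu}_1, \tilde{\nu}_2) = 0$ on its own determines only two of those four real pairings, so genuinely both the bilinear and the hermitian vanishings must be brought to bear to close the system. This is consistent with Proposition \ref{PropBaragliaBasis}(2), where the $+\sqrt{-1}$-eigenspace of $C_{u_0}$ is modeled by the totally isotropic subspace $\spann_\C \langle u_3, u_{-2}, u_{-1}\rangle$, in which any two vectors are simultaneously bilinearly and hermitianly orthogonal.
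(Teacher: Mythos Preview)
Your proof is correct and follows what the paper implicitly intends (the corollary is stated without proof, as a direct consequence of Proposition \ref{StructureEquations1} together with the harmonic-map-sequence construction). Your identification that both $q_\C(\tilde\nu_1,\tilde\nu_2)=0$ and $q_\C(\tilde\nu_1,\overline{\tilde\nu_2})=0$ are needed is exactly right, and the latter indeed comes from the defining hermitian projection in \eqref{HarmonicSequence}.

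One minor streamlining: your polarized-normalization argument for $q_\C(\tilde\nu_1,\tilde\nu_2)=0$ can be replaced by the one-line observation that $C_\nu$ is $q_\C$-skew-adjoint (since $q_\C(\nu\times u,v)=\Omega(\nu,u,v)=-\Omega(\nu,v,u)=-q_\C(u,\nu\times v)$), so any eigenspace for a nonzero eigenvalue is automatically totally $q_\C$-isotropic. This is the same content, just packaged without the explicit polarization.
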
 

We now relate $\widetilde{\nu}_2$ from the harmonic map sequence to the second fundamental form $\Pi$ of $\Sigma := \im(\nu)$. Denote $\Pi^\C$ as the $\C$-bilinear extension of $\Pi$. 
Then by definition of $\widetilde{\nu_2}$, we have 
$ \Pi^\C( \der{z}, \der{z} ) = \proj_{\nu_z^\bot} (\nu_z) = \widetilde{\nu}_2$. In fact, the outputs of $\Pi$ determined by $\widetilde{\nu}_2$. 
A calculation using $\tr( \Pi )= 0$ shows that $\Pi( \der{x}, \der{x} ) = 2 \Re(\widetilde{\nu}_2)$ and $ \Pi( \der{x}, \der{y} ) = -2\Im(\widetilde{\nu}_2)$. In particular, we have
$\Pi = 2 \begin{pmatrix} \Re \widetilde{\nu}_2 & - \Im \widetilde{\nu}_2 \\ -\Im \widetilde{\nu}_2 & -\Re \widetilde{\nu}_2 \end{pmatrix} $. 
This expression shows $\Pi( X, J \, Y ) =J \Pi (X , Y )$ since $\nu \times \widetilde{\nu}_2 = \sqrt{-1}\widetilde{\nu}_2$ corresponds to $\nu \times \Im \widetilde{\nu}_2 = \Re \widetilde{\nu}_2$
and $\nu \times \Re(\widetilde{\nu}_2) = - \Im \widetilde{\nu}_2$. We call the $\nu(x)$-complex line $ P(x) =\{  \Pi(X,Y) \; | \; X, Y \in T_{\nu(x)}\Sigma \}$
the \textbf{normal line} of the surface. In particular, the two-planes  
$ \spann_{\R} \langle \Re(\widetilde{\nu_1}),\, \Im (\widetilde{\nu_1}) \rangle, \spann_{\R} \langle \Re(\widetilde{\nu_2}), \Im (\widetilde{\nu_2}) \rangle$ do not depend on choice of local lifts $\tilde{\nu}_1, \tilde{\nu}_2$ from the harmonic map sequence. Note that $h_2 > 0$ corresponds to $P$ being spacelike. \\

We now show the orthogonality and metric properties of the truncated harmonic map sequence $(\widetilde{\nu_0}, \, \widetilde{\nu_1}, \, \widetilde{\nu_2}) $
determine a lift of $\nu$ to $\mathscr{Y}$. Note that the following lemma needs no assumption on the source 
Riemann surface $\Sigma$. 

\begin{lemma}[Geometric Lift of Almost-Complex Curves] 
Let $\hat{\nu}: \Sigma \rightarrow \quadric$ be an alternating almost-complex curve. 
Define $ P = \spann_{\R} \langle \Re(\widetilde{\nu_2}), \Im (\widetilde{\nu_2}) \rangle ,$ $Q = \spann_{\R} \langle \Re(\widetilde{\nu_1}), \Im(\widetilde{\nu_1}) \rangle$,
and $R = P \times Q$. Then $\hat{\nu}$ admits the canonical lift $ G_{\hat{\nu}}: \Sigma \rightarrow \mathscr{Y} $ 
associated to the tuple $(\hat{\nu}, P, Q, P \times Q) \in \mathscr{Y}$. 
\end{lemma}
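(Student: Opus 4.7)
The plan is to verify each defining condition of $\mathscr{Y}$ for the tuple $T(z) := (\hat{\nu}(z), P(z), Q(z), P(z) \times Q(z))$, using the structure equations (Proposition \ref{StructureEquations1}), the orthogonality relations (Corollary \ref{OrthogonalityProperties}), the cross-product identities from Section 2.2, and the assumption that $\hat{\nu}$ is \emph{alternating} (which I interpret as ensuring $h_1 < 0$ and $h_2 > 0$ in the sense of Proposition \ref{Prop:HarmonicMapSequence}). Writing $\tilde{\nu}_i = a_i + \sqrt{-1} b_i$ with $a_i, b_i$ real, the isotropy $q_{\Oct'}^\C(\tilde{\nu}_i, \tilde{\nu}_i) = 0$ forces $q(a_i) = q(b_i)$ and $q(a_i, b_i) = 0$, while $h_i = q(a_i) + q(b_i) = 2q(a_i)$. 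Thus $P$ is a 2-plane of signature $(2,0)$ and $Q$ a 2-plane of signature $(0,2)$.

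Next, I would establish $\mathcal{C}_{\hat{\nu}}$-stability of $P$ and $Q$. Expanding $\hat{\nu} \times \tilde{\nu}_1 = \sqrt{-1}\tilde{\nu}_1$ into real and imaginary parts yields $\hat{\nu} \times a_1 = -b_1$ and $\hat{\nu} \times b_1 = a_1$, so $\mathcal{C}_{\hat{\nu}}$ acts as a rotation on $Q$; the identical argument applied to $\tilde{\nu}_2$ gives the same for $P$. For the splitting, Corollary \ref{OrthogonalityProperties} gives pairwise orthogonality among $\hat{\nu}, a_1, b_1, a_2, b_2$, so $\R\{\hat{\nu}\}, P, Q$ are mutually orthogonal. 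The property (iii) of the cross-product shows $R = P \times Q$ is orthogonal to both $P$ and $Q$, and orthogonality of $R$ to $\hat{\nu}$ follows from equation \eqref{nu3}: $\tilde{\nu}_1 \times \tilde{\nu}_2 = \sqrt{-1}\tilde{\nu}_3 - \hat{\nu} \times \tilde{\nu}_3$, in which the first summand is $\bot\, \hat{\nu}$ because $\tilde{\nu}_3$ is constructed as a projection onto $\hat{\nu}^\bot$, and the second is $\bot\, \hat{\nu}$ by cross-product orthogonality.

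Setting $p_i = a_{2,i}, q_j = a_{1,j}$ (with $a_{i,1} = \Re \tilde{\nu}_i$, $a_{i,2} = \Im \tilde{\nu}_i$), I would compute the dimension and signature of $R$ via the pairwise-orthogonal double cross-product identity \eqref{DCP_Orthogonal}, which applies since $\hat{\nu}, p_i, q_j$ are pairwise orthogonal. Combining with the rotation formulas, this yields $\hat{\nu} \times (p_1 \times q_1) = p_1 \times q_2 = -p_2 \times q_1$ and $p_1 \times q_1 = p_2 \times q_2$, so $R = \spann_{\R}\langle p_1 \times q_1, \, p_1 \times q_2\rangle$ is $\mathcal{C}_{\hat{\nu}}$-stable. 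Lemma \ref{CrossProductOrthogonality} gives $p_1 \times q_1 \bot p_1 \times q_2$, and the normalization \eqref{CrossProductNormalization} gives $q(p_1 \times q_j) = q(p_1) q(q_j) < 0$; hence $R$ is a nondegenerate 2-plane of signature $(0,2)$. The orthogonal sum $\R\{\hat{\nu}\} \oplus P \oplus Q \oplus R$ then has real dimension $1+2+2+2 = 7 = \dim \imoct$, so it exhausts $\imoct$, completing the verification that $T(z) \in \mathscr{Y}$.

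For well-definedness and smoothness of $G_{\hat{\nu}} := T$, I would invoke the observation stated just before the lemma that the planes $P, Q$ are independent of the choice of local lifts $\tilde{\nu}_1, \tilde{\nu}_2$, and that the harmonic map sequence elements extend smoothly across isolated zeros (as noted in the discussion of \eqref{HarmonicSequence}); hence $G_{\hat{\nu}}$ patches to a globally defined smooth map $\Sigma \to \mathscr{Y}$. The main conceptual obstacle is the dimension-and-signature calculation for $R = P \times Q$: one must convert the complex-bilinear structure equations into real identities among the four cross-products $p_i \times q_j$ via \eqref{DCP_Orthogonal}, but once these relations and the orthogonality lemma are in hand, the argument is purely algebraic.
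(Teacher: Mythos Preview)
Your proposal is correct and follows essentially the same route as the paper: verify orthogonality and $\mathcal{C}_{\hat{\nu}}$-stability of $P$ and $Q$ via Proposition~\ref{StructureEquations1} and Corollary~\ref{OrthogonalityProperties}, then analyze $R$ using the identity \eqref{DCP_Orthogonal} together with Lemma~\ref{CrossProductOrthogonality} and the normalization \eqref{CrossProductNormalization}. The one substantive difference is how you establish $R \perp \hat{\nu}$: you invoke the structure equation \eqref{nu3} and the orthogonality of $\tilde{\nu}_3$ to $\hat{\nu}$, whereas the paper uses the skew-symmetry of the 3-form $\Omega$ directly via $(x \times y)\cdot \hat\nu = \Omega(x,y,\hat\nu) = -(x \times \hat\nu)\cdot y = 0$, avoiding any reference to $\tilde{\nu}_3$. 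Both are valid; the paper's is slightly more self-contained.

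Two minor caveats. First, your appeal to cross-product property (iii) for $R \perp P, Q$ only becomes complete once you combine it with the relations among the $p_i \times q_j$ that you derive afterwards: property (iii) alone gives $(p_1 \times q_1) \perp p_1$ but not $(p_1 \times q_1) \perp p_2$, and it is the identity $p_1 \times q_1 = \pm p_2 \times q_2$ that closes this. Second, the signs in those relations are off---a careful application of \eqref{DCP_Orthogonal} with $\hat\nu \times p_1 = -p_2$, $\hat\nu \times q_1 = -q_2$ yields $p_1 \times q_2 = +\,p_2 \times q_1$ and $p_1 \times q_1 = -\,p_2 \times q_2$---but this does not affect your conclusion that $R$ is a $\mathcal{C}_{\hat\nu}$-stable 2-plane of signature $(0,2)$.
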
 

\begin{proof}
We check that the splitting does indeed live in $\mathscr{Y}$. By Corollary \ref{OrthogonalityProperties}, the spaces $\R\{\nu\}, P, Q$ are pairwise orthogonal and $P,Q$ are $\mathcal{C}_{\nu}$-stable. By hypothesis, $P$ is spacelike and $Q$ is timelike. Thus, $R := P \times Q$ is timelike by the multiplicativity of $q$. Choosing any elements
$x \in P, y \in Q$, we find that $P = \spann_{\R} \langle x, \nu \times x \rangle, \;Q = \spann_{\R} \langle y,  \nu \times y \rangle$. Hence,
we find $R = \spann_{\R} \langle x \times y, \nu \times (x \times y) \rangle $ by the identity \eqref{DCP_Orthogonal}. Hence, $R$ is $\mathcal{C}_{\nu}$-stable. 
Clearly $(x \times y) \cdot  x = 0 = (x \times y) \cdot y =0 $. Then by Lemma \ref{CrossProductOrthogonality}, we find $(x \times y) \, \bot \, (\nu \times x)$ since $x\, \bot \, \nu$ and $y \bot \, x $ and $y \bot \nu$.
Thus, $(x \times y) \, \bot \, P$. Similarly, one finds $(x \times y) \, \bot \,(\nu \times y)$ so $(x \times y )\, \bot\,  Q$. We need only show that $R \, \bot\, \nu$ to finish the proof.
To see this, note that $(x \times y) \cdot \nu = \Omega(x, y, \nu) = - \Omega(x, \nu, y) = - (x \times \nu) \cdot y = 0$. Finally,
$\nu \, \bot \,( \nu \times (x \times y)) $ holds by definition of cross-product. Hence,  
$(x, P, Q, R) \in \mathscr{Y}$.
\end{proof} 

While not employing this geometric model, Collier and Toulisse study \emph{cyclic surfaces} in $\Gtwo^\C/T$ in \cite{CT23}. The above map $G_{\hat{\nu}}$ is
such a cyclic surface, after including $\Gtwosplit/T \hookrightarrow \Gtwo^\C/T$. They then show that this operation of lifting the almost-complex curve produces a bijection between the moduli space of equivariant alternating almost-complex curves and the moduli space of such cyclic surfaces. 

We now describe the lift of $G_{\hat{\nu}}$ corresponding to our almost-complex curve $\hat{\nu}$ explicitly in the bundle $\V$. 
We use again the correspondences $\widetilde{\nu}_i \leftrightarrow \sigma_i$. 
Recall that complex conjugation on $\widetilde{\nu}_i$ corresponds to the $\nabla$-parallel bundle endomorphism $\tau_{\V}$ \eqref{ParallelConjugation} on $\sigma_i$ in $\V$. We find that $\Re( \widetilde{\nu}_2) = ( \nu_2 + \tau_{\V}(\widetilde{ \nu}_2 ) \, )$ and $\Im(\widetilde{\nu}_2) = \sqrt{-1}( \widetilde{\nu}_2 - \tau_{\V} (\widetilde{\nu}_2) \, ) $ correspond to $ -\sqrt{15r} \; w_3$ and $ -\sqrt{15 r} \, w_2$. In particular, the normal line of the surface is given in the bundle by 
$P = \spann_{\R} \langle w_2, w_3 \rangle$. On the other hand, we calculated earlier that the (real) tangent space of $\hat{\nu}$ is given by
$ Q = \spann_{\R} \langle w_4, w_5 \rangle$. Combining all of these observations, the lift $G_{\hat{\nu}}: \C \rightarrow \mathscr{Y}$ corresponds to the following tuple  in the fibers of $\V^\R$: $(w_1, \spann_{\R} \langle w_2, w_3 \rangle, \, \spann_{\R} \langle w_4, w_5 \rangle, \, \spann_{\R} \langle w_6,w_7 \rangle )$.

We saw earlier that the map $f: \C \rightarrow \mathscr{X}$ corresponds to the sub-bundle $ \spann_{\R} \langle w_1, w_2, w_3 \rangle$ of $\V^\R$. Hence, we see how the 
map $\nu$ geometrically recovers $f$: we have $ f = \pi_{\mathscr{X} } \circ G_{\hat{\nu}}$, where $\pi: \mathscr{Y} \rightarrow \mathscr{X}$ is the natural projection in Corollary \ref{NaturalProjections}.\footnote{Similar ideas were independently discovered by Nie \cite[Corollary 4.14]{Nie24}. However,
he only recovers the harmonic map in the symmetric space, but not the lift to $\Gtwosplit/T$.} Moreover, by the other projection map $\pi_{\Stwofour}: \mathscr{Y} \rightarrow \quadric$, we find that the lift $G_{\hat{\nu}}$ recovers $\hat{\nu}$ by
$ \hat{\nu} = \pi_{\quadric} \circ G_{\hat{\nu}}$. Thus, we can summarize the situation with the following commutative diagram; the following result holds
equally well in a local chart for a compact Riemann surface $\Sigma$.  

\begin{theorem}[Geometric Relationships Between Harmonic Maps] \label{THM:GeometricCorrespondence} 
Let $\nu: \C \rightarrow \quadric$ be an almost-complex curve associated to polynomial $q \in H^0(\K^6_\C)$. Then $F_T, F_K$ are harmonic and the following diagram commutes.
\end{theorem}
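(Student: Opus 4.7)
The plan is to reduce the theorem to two tasks: (a) verify that the Toda frame $\mathcal{F}_{\nu}: \C \to \Gtwosplit$ of Section \ref{Toda} transports the basepoints $\mathsf{Im}\,\Ha \in \mathscr{X}$ and $p_0 \in \mathscr{Y}$ (from \eqref{Basepoint}) to the geometrically defined maps $f$ and $G_{\nu}$; and (b) deduce harmonicity of $F_K, F_T$ from the Maurer--Cartan / Toda structure of $\omega_{\nu}$. Both tasks are essentially bookkeeping once the explicit Toda frame $\hat{F}_{\nu} = e^{\Omega}$ is in hand.

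For the commutativity, I would work fiberwise in $\V$ via $\nabla$-parallel translation to the origin (the same device that turned $\sigma$ into $\nu$ in Section \ref{BaragliasConstruction}). The key calculation is that $\hat{F}_{\nu} = e^{\Omega}$ sends the standard multiplication frame $\mathcal{M}$ to the $h$-unitary multiplication frame $\mathcal{M}_h = (w_i)_{i=1}^{7}$ of \eqref{HUnitaryMultiplicationFrame}; this was already established in Section \ref{BaragliasConstruction} by showing $H^{-1/2}\mathcal{M} = \mathcal{M}_h$ with $H^{-1/2} \in \Gtwo$. Since $w_1 = u_0$, $\spann_{\R}\langle w_2, w_3\rangle, \spann_{\R}\langle w_4, w_5\rangle, \spann_{\R}\langle w_6, w_7\rangle$ pair with $i, \spann_{\R}\langle j,k\rangle, \spann_{\R}\langle l, li\rangle, \spann_{\R}\langle lj, lk\rangle$ respectively under $\hat{F}_{\nu}$, the $\Gtwosplit$-equivariance of $\beta_{\mathscr{Y}}$ from Lemma \ref{G/T} gives $\beta_{\mathscr{Y}} \circ F_T = G_{\nu}$ provided one checks that the decomposition $(w_1, \spann_{\R}\langle w_2, w_3\rangle, \spann_{\R}\langle w_4, w_5\rangle, \spann_{\R}\langle w_6, w_7\rangle)$ equals the tuple $(\nu, P, Q, P\times Q)$ associated to the harmonic map sequence. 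This follows from Proposition \ref{Prop:HarmonicMapSequence}: computing $\Re\widetilde{\nu}_i, \Im\widetilde{\nu}_i$ in terms of $u_{\pm i}$ and using $\mathcal{M}_h$ shows the normal line is spanned by $w_2, w_3$ and the tangent plane by $w_4, w_5$, exactly as noted at the end of Section \ref{GeometricCorrespondence}. The analogous identification $\beta_{\mathscr{X}} \circ F_K = f$ is immediate since both correspond to the $(3,0)$-plane $\spann_{\R}\langle w_1, w_2, w_3\rangle$ via Lemma \ref{3,0_OrthogonalMultiplicationFrame} and Corollary \ref{GeometricSymmetricSpace}. The remaining triangles of the diagram, $\pi_{\quadric} \circ G_{\nu} = \nu$ and $\pi_{\mathscr{X}} \circ G_{\nu} = f$, are then tautological from the definitions in Corollary \ref{NaturalProjections}.

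For harmonicity, I would decompose the Maurer--Cartan 1-form $\omega_{\nu} = F_{\nu}^{-1}dF_{\nu}$ into the $\Ad_{\tau}$-eigenspaces $\g_{\tau, k}$ exactly as in Section \ref{Toda}. The key point is that the $\g_{\tau,\pm 1}$-components of $\omega_{\nu}$ are $\Ad_{e^{-\Omega}}(\varphi)\,dz$ and $\Ad_{e^{-\Omega}}(\varphi^{*h})\,d\bar{z}$, which lie in the $-1$ eigenspace of the Cartan involution $\sigma$ (i.e., in $\mathfrak{p} \subset \g_2'$). Harmonicity of $F_K = \pi_K \circ \mathcal{F}_{\nu}$ is equivalent to $d(\ast \mathcal{F}_{\nu}^{-1}d\mathcal{F}_{\nu}|_{\mathfrak{p}}) = 0$; this is precisely the $\g_{\tau,0}$-component of the Maurer--Cartan equation \eqref{ConcreteMC}, which equals Hitchin's equation $F_h = [\varphi, \varphi^{*h}]$. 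Harmonicity of $F_T$ is the stronger statement that the same form is $T$-equivariantly harmonic; by the $\tau$-cyclic structure of $\omega_{\nu}$, this is Baraglia's \cite[Proposition 2.3.1]{Bar15} applied to our solution of the affine Toda equation \eqref{AffineTodaFieldEquation}, which we have already seen is equivalent to Hitchin's equations for $\Omega$.

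The main obstacle is not conceptual but organizational: the theorem's content is essentially a compatibility of four different viewpoints (Higgs bundle metric $h$, Toda frame $e^{\Omega}$, harmonic map sequence, and Stiefel triplet model of $\Gtwosplit$), and one must carefully track identifications — especially the distinction between a frame viewed as a section of $\Fr(\V)$ versus an element of $\Gtwosplit$ via the identification $\V_{p_0} \cong \imoct^{\C}$. I expect the only subtlety worth writing out carefully is verifying that the parallel-translation conventions used to define $\nu$, $f$, and $G_{\nu}$ are all consistent with the convention defining $\mathcal{F}_{\nu}$ via $\omega_{\nu}$; once this is pinned down, each arrow in the diagram reduces to a one-line identification from the data assembled above.
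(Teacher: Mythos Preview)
Your approach to the commutativity is essentially identical to the paper's: both reduce to the single computation $e^{\Omega}\cdot m_i = w_i$ (i.e.\ $H^{-1/2}\mathcal{M} = \mathcal{M}_h$), then read off $\beta_{\mathscr{X}}\circ F_K = f$ and $\beta_{\mathscr{Y}}\circ F_T = G_{\nu}$ from the action on the basepoints $\mathsf{Im}\,\Ha$ and $p_0$, with the remaining triangles tautological. The paper additionally traces the route through $\mathscr{M}$ via $\iota$ and $m_{(3,0)}$, but you have this implicitly in your invocation of Lemma~\ref{3,0_OrthogonalMultiplicationFrame}.

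One small correction on harmonicity: the harmonic map equation for $F_K$ is $d^{\nabla_{\mathfrak{k}}}(\ast\,\omega_{\mathfrak{p}}) = 0$, and with $\omega_{\mathfrak{p}} = A_{-1}\,dz + B_{+1}\,d\bar z$ this unwinds to the $\g_{\tau,\pm 1}$-components of \eqref{ConcreteMC} (equivalently $\nabla^{0,1}_{\delbar,h}\varphi = 0$ and its conjugate), \emph{not} the $\g_{\tau,0}$-component $F_h = [\varphi,\varphi^{*h}]$. The latter is the $\mathfrak{k}$-part of the flatness (Maurer--Cartan) equation, not the harmonicity equation. This does not affect your overall argument, since the paper itself does not prove harmonicity in this proof but defers to \cite[Proposition 2.3.1]{Bar15} exactly as you do for $F_T$; once the Toda structure of $\omega_{\nu}$ is established, both $F_K$ and $F_T$ are harmonic by that reference.
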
 

\begin{figure}[ht]
\centering 
\includegraphics[width=.4\textwidth]{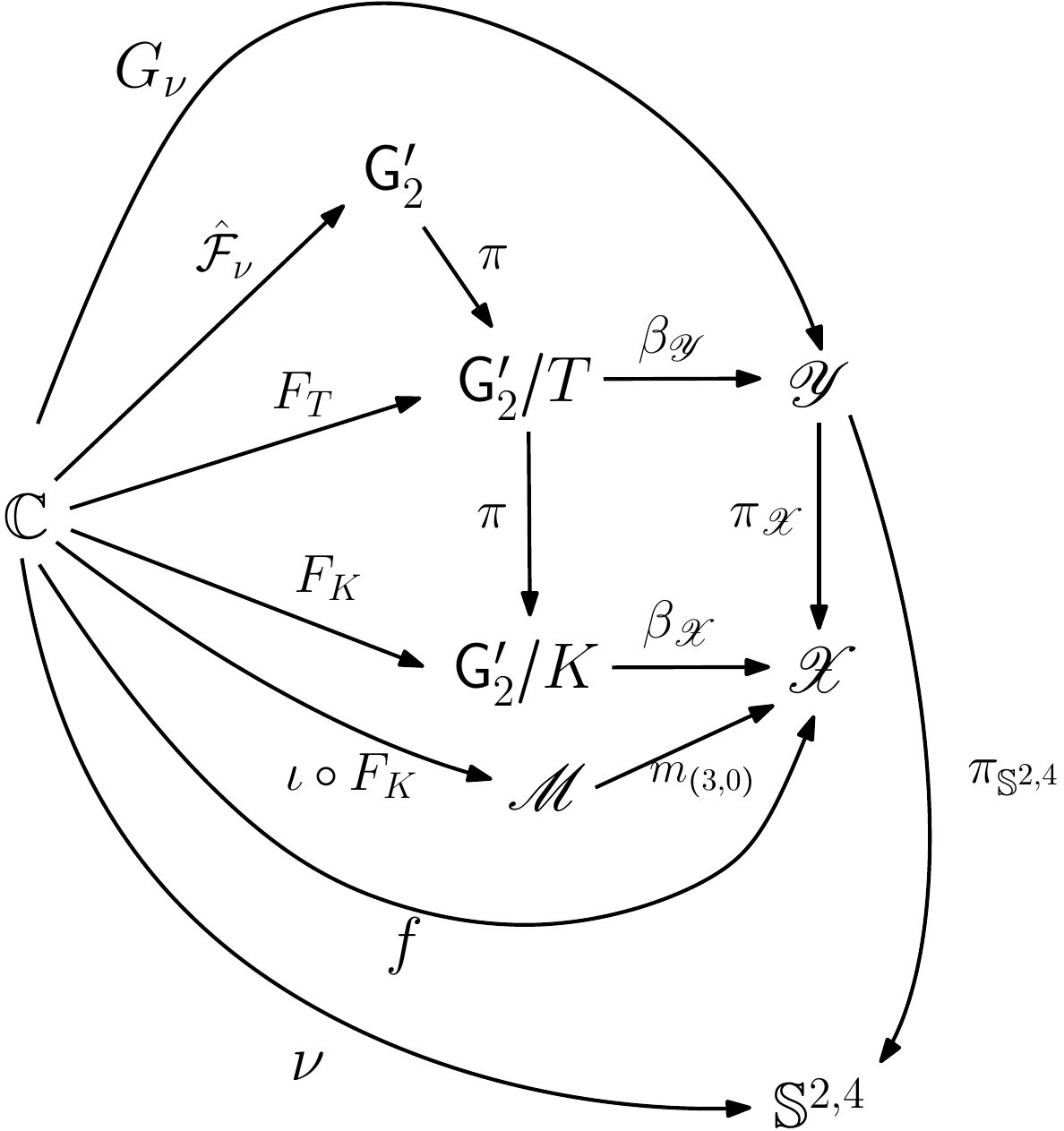}
\caption{\small{\emph{The geometric relationship between the minimal surface in the symmetric space associated to $h$ and the almost-complex curve $\nu$.}}} 
\label{HarmonicMapDiagram} 
\end{figure} 

\begin{proof}
We first verify the commutativity around $\Gtwosplit/K$ and $\mathscr{X}$. Here, $\iota: \Gtwosplit/K \hookrightarrow \mathscr{M}$
is the composition $\iota := \left(\Gtwosplit/K \hookrightarrow \mathsf{SL}_7\R/\mathsf{SO}_7\R \stackrel{\cong}{\longrightarrow} \mathscr{M}\right)$, where we first embed 
$\Gtwosplit/K$ totally geodesically in $\mathsf{SL}_7\R/\mathsf{SO}_7\R$ \cite[Theorem 7.2]{Hel78} and then identify $[g] \in \mathsf{SL}_7\R/\mathsf{SO}_7\R$
with the metric $(g^{-1})^T g^{-1} \in \mathscr{M}$. 

Recall that $F_K = e^{\Omega} K$. Hence, in the bundle $\V$, the map
$\iota \circ F_K$ corresponds to the metric $ e^{-2\Omega} = h$. We then recall that $\mathcal{M}_h = H^{-1/2} \mathcal{M} = e^{\Omega} \mathcal{M}$ from \eqref{HUnitaryMultiplicationFrame} is an $h$-orthogonal multiplication frame. By Lemma \ref{3,0_OrthogonalMultiplicationFrame}, we see 
$$ m_{(3,0)} \circ \iota \circ F_K = e^{\Omega} \cdot \mathsf{Im} \, \Ha = \beta_{\mathscr{X}} \circ F_K.$$
Earlier, we saw that $f$ could be realized as $e^{\Omega} \cdot \mathsf{Im} \, \Ha$ in the bundle, hence $f = \beta_{\mathscr{X}} \circ F_K$, finishing this portion of the diagram.

The commutativity $F_T = \pi_T \circ \hat{\mathcal{F}}_{\nu}, \; F_K = \pi_K \circ \hat{\mathcal{F}}_{\nu}$ holds by definition.
We earlier verified $\nu = \pi_{\Stwofour} \circ G_{\nu}$. The only thing remaining to verify is the equivalence of $G_{\nu}, F_T$ as well as $f, F_K$ under the identifications of $\Gtwosplit/T \leftrightarrow \mathscr{Y}$ and $\Gtwosplit/K \leftrightarrow \mathscr{X}$, respectively. We recall the basepoints \\
$p_{\mathscr{X}} := \spann_{\R} \langle  i, j, k \rangle \; \in \mathscr{X}$ and $p_{\mathscr{Y}} = p_0$ from \eqref{Basepoint}.  Since $\hat{\mathcal{F}_{\nu}} = e^{\Omega}$ in $\V$, we need to check that 
\begin{align} 
	e^{\Omega} \cdot p_{\mathscr{X}}&=  f = \spann_{\R} \langle w_1, w_2, w_3 \rangle  \label{Decomposition1} \\
	 e^{\Omega} \cdot p_{\mathscr{Y}} &= G_{\nu} = (w_1, \, \spann_{\R} \langle w_2, w_3 \rangle, \, \spann_{\R} \langle w_4, w_5 \rangle, \, \spann_{\R} \langle w_6,w_7 \rangle ) \label{Decomposition2} 
\end{align} 
Now, $ e^{\Omega} = \mathsf{diag} \left( \sqrt{rs}, \sqrt{r}, \sqrt{s}, 1, \frac{1}{\sqrt{s}}, \frac{1}{\sqrt{r}} ,\frac{1}{\sqrt{rs}} \right)$ in the basis $\mathcal{B}$ from \eqref{BaragliaBasis}.
Recall the linear relations \eqref{StandardMultiplicationFrame} for $\mathcal{M}$ along with
$e^{\Omega} \mathcal{M} = \mathcal{M}_h = (w_i)_{i=1}^7$. Let $g:= e^{\Omega}$ and by a direct calculation, one finds $g(m_i) = w_i$. In particular,
$$ g \cdot p_{\mathscr{X}} = g \cdot \spann_\R \langle m_1, m_2, m_3 \rangle = \spann_{\R} \langle w_1, w_2, w_3 \rangle $$
and similarly $g \cdot p_{\mathscr{Y}} = G_{\hat{\nu}}$ easily follows from \eqref{Decomposition2}. 
Also, \eqref{Decomposition1}, \eqref{Decomposition2} imply $\pi_{\mathscr{X}} \circ G_{\hat{\nu}} = f$
and $\pi_{\quadric} \circ G_{\hat{\nu}} = \hat{\nu}$ follows from $w_1 = m_4 =i$. Hence, the diagram commutes. 
\end{proof} 

\section{Uniqueness of Complete Solutions} \label{Uniqueness} 

In this section, we show the uniqueness of a complete solution to the equations \eqref{HitEuc}. Here, the appropriate 
notion of completeness is as follows:

\begin{definition}\label{CompletenessDefinition} 
We call a solution $\bm{u} = (u_1, u_2)$ to \eqref{HitEuc} \textbf{complete} when the metrics $\sigma_1:= e^{u_1 -3u_2} |dz|^2$, $\sigma_2: = e^{2u_2} |dz|^2 $, and $\sigma_3 = |q|^2e^{-2u_1}|dz|^2$ are complete (completeness of $\sigma_3$ applies away from the zeros of $q$).
\end{definition} 
	
We remark where this definition comes from. Let $f: \C \rightarrow \Gtwosplit/K$ be the minimal surface
in the symmetric space associated to a solution to \eqref{HitEuc}. The induced metric
$g_q: =  f_q^*g_{\mathsf{Sym}(\Gtwosplit) }$ is given by 
\begin{align}\label{SymmetricSpaceMetric} 
	g_q = 2 \; \tr( \varphi \varphi^{*h}) = 2\left ( 18e^{2u_2} + 10 e^{u_1-3u_2} + 2|q|^2 e^{-2u_1} \right).
\end{align} 
Thus, a complete solution yields a complete minimal surface in the symmetric space. We show that when $\bm{u} =(u_1, u_2)$ is complete, each metric $\sigma_i$ is quasi-isometric to $|q|^{1/3}$.
In particular, the induced metric $g_{\nu} =-12e^{2u_2}$ of $\nu_q$ constructed in Section \ref{BaragliasConstruction} is complete. We do not know if the completeness of $g_q$ entails the completeness of each $\sigma_i$. The completeness of $\sigma_3$ will be crucial
for certain upper bounds of complete solutions proven in Lemma \ref{CompleteUpperBounds}. 

  \subsection{Global Estimates} 
 
The $\g_2$ affine Toda equations give immediate lower bounds on curvature for each $\sigma_i$, just as in the case for $\SL_n\C$ \cite[Lemma 3.15]{ML20a}.
 
 \begin{proposition}[Curvature Bounds] 
 Let $\bm{u} = (u_1, u_2)$ solve equation \eqref{HitEuc}. Then $\sigma_1:= e^{u_1 -3u_2} |dz|^2$, $\sigma_2: = e^{2u_2} |dz|^2 $, and $\sigma_3 = |q|^2e^{-2u_1}|dz|^2$ satisfy 
 $$ \kappa_{\sigma_1}  \geq -20, \; \kappa_{\sigma_2} \geq -12, \; \kappa_{\sigma_3} \geq -4 .$$ 
 \end{proposition}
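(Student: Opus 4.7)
The plan is purely computational: for each $\sigma_i = e^{2\phi_i}|dz|^2$, use the standard formula
\[
\kappa_{\sigma_i} \;=\; -\frac{2}{e^{2\phi_i}}\,\Delta \log(e^{2\phi_i}) \;=\; -\frac{4\,\Delta \phi_i}{e^{2\phi_i}},
\]
substitute the Toda system \eqref{HitEuc}, and observe that what remains after cancellation is a negative constant plus a manifestly nonnegative term.

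First I would handle $\sigma_2 = e^{2u_2}|dz|^2$, which is the cleanest case. Here $\phi_2 = u_2$ and the second Toda equation gives $2\Delta u_2 = 6e^{2u_2} - 5e^{u_1 - 3u_2}$, so
\[
\kappa_{\sigma_2} \;=\; -\frac{2(6e^{2u_2} - 5e^{u_1-3u_2})}{e^{2u_2}} \;=\; -12 + 10\,e^{u_1 - 5u_2} \;\geq\; -12.
\]
Next I would treat $\sigma_1 = e^{u_1 - 3u_2}|dz|^2$, where $2\phi_1 = u_1 - 3u_2$. Combining the two Toda equations yields
\[
2\Delta u_1 - 6\Delta u_2 \;=\; 20\,e^{u_1-3u_2} - 2|q|^2 e^{-2u_1} - 18\,e^{2u_2},
\]
and dividing by $-e^{u_1-3u_2}$ gives
\[
\kappa_{\sigma_1} \;=\; -20 + 2|q|^2 e^{3u_2 - 3u_1} + 18\,e^{5u_2 - u_1} \;\geq\; -20.
\]

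The only case requiring a tiny bit more care is $\sigma_3 = |q|^2 e^{-2u_1}|dz|^2$, which is only defined away from the zero set $Z(q)$. Since $q$ is holomorphic, $\log|q|$ is harmonic on $\C \setminus Z(q)$, so writing $2\phi_3 = 2\log|q| - 2u_1$ one has $\Delta \phi_3 = -\Delta u_1$ on this set. Then by the first Toda equation,
\[
\kappa_{\sigma_3} \;=\; \frac{4\,\Delta u_1}{|q|^2 e^{-2u_1}} \;=\; \frac{10\,e^{u_1 - 3u_2} - 4|q|^2 e^{-2u_1}}{|q|^2 e^{-2u_1}} \;=\; -4 + \frac{10\,e^{3u_1 - 3u_2}}{|q|^2} \;\geq\; -4,
\]
which is the claimed bound on $\C \setminus Z(q)$. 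There is no real obstacle here; the only thing to keep straight is the sign/normalization convention for the Laplacian $\Delta = \partial_z \partial_{\bar z}$ versus $\kappa = -\tfrac{2}{\sigma}\Delta \log \sigma$ as used elsewhere in the paper (e.g.\ in \eqref{G2Hitchin_GeneralMetric_old}), so that the coefficients from \eqref{HitEuc} land with the correct multiplicative factor.
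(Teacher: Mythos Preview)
Your proof is correct and follows essentially the same approach as the paper: compute each $\kappa_{\sigma_i} = -\tfrac{2}{\sigma_i}\Delta\log\sigma_i$ directly, substitute the Toda equations \eqref{HitEuc}, and observe that what remains is the claimed negative constant plus manifestly nonnegative terms. The paper's computation is line-for-line the same as yours (including the use of harmonicity of $\log|q|$ away from the zeros for $\sigma_3$), just presented slightly more tersely.
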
 
 
 \begin{proof}
 These are all straightforward, with the signs of \eqref{HitEuc} conspiring to yield success. 
  \begin{align*}
 	\kappa_{\sigma_2}  &= \frac{-2}{\sigma_2} \Delta \log(\sigma_2 )= \frac{2}{e^{2u_2}} (5e^{u_1- 3u_2} - 6e^{2u_2} ) \geq -12.\\
 	\kappa_{\sigma_1} &= \frac{-2}{\sigma_1} \Delta \log(\sigma_1) = \frac{1}{e^{u_1-3u_2}} ( -20e^{u_1 -3 u_2} + 2e^{-2u_1} |q|^2 + 18e^{2u_2} ) \geq  -20.
 \end{align*} 
Similarly, 
$$ \kappa_{\sigma_3} = \frac{-2}{\sigma_3} \Delta \log(\sigma_3) =  \frac{2e^{2u_1}}{|q|^2}(  -2e^{-2u_1} |q|^2 +5 e^{u_1 -3 u_2})  = -4 +10 |q|^{-2} e^{3u_1-3u_2} \geq -4,$$
so that $\kappa_{\sigma_3} $ is bounded below away from the zeros of $q$.
 \end{proof} 

We now recall the Cheng-Yau maximum principle from \cite[Theorem 8]{CY75}.
 
 \begin{lemma}[Cheng-Yau Maximum Principle]\label{ChengYau}
 Let $(M,g)$ be a complete Riemannian manifold (without boundary) with Ricci curvature bounded below. Then if $u$ is a $C^2$ function on $M$ with 
 $\Delta u \geq f(u)$ such that there exists a continuous function $g: [a, \infty) \rightarrow \R_{+}$ such that 
 \begin{itemize} 
 	\item $g$ is non-decreasing 
 	\item $\int_b^{\infty}\left( \int_a^{x} g(\tau) d\tau \right)^{-1/2} dx < \infty$ for some $b \geq a$
	\item $ \lim \inf_{t \rightarrow \infty} \frac{f(t)}{g(t)} > 0 $,
\end{itemize}
then $u$ is bounded above. Moreover, if $f$ is lower semicontinuous, $f(\sup u) \leq 0 $. 
 \end{lemma}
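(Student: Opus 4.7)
The plan is a proof by contradiction using a radial barrier built from the integral condition on $g$, combined with the Omori--Yau maximum principle to sidestep both the possible non-attainment of the supremum and the cut-locus issue. I would fix a basepoint $p_0 \in M$, set $r(x) = d(p_0, x)$, and use the Ricci lower bound plus completeness to obtain a Laplacian comparison of the form $\Delta r \leq C_0(R)$ on each ball $B(p_0, R)$ (valid in the barrier/distributional sense across the cut locus).

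To construct the barrier, observe that the integral hypothesis is precisely the finite blow-up time criterion for the ODE $\phi'' = g(\phi)$ with $\phi(0) = a$ and $\phi'(0) > 0$: the first integral $\tfrac{1}{2}(\phi')^2 = G(\phi) - G(a) + C$, where $G' = g$, forces $\phi$ to reach $+\infty$ at the finite time $T = \int_a^\infty d\xi / \sqrt{2(G(\xi) - G(a) + C)}$. Setting $w(x) := \phi(\lambda r(x))$ for a small scale parameter $\lambda > 0$, the chain rule and the Laplacian comparison give
\begin{equation*}
\Delta w \;\leq\; \lambda^2 g(\phi(\lambda r)) \;+\; C_0 \lambda \sqrt{2(G(\phi(\lambda r)) - G(a) + C)}.
\end{equation*}
Since $\liminf_{t \to \infty} f(t)/g(t) > 0$ by hypothesis, one can fix $\delta > 0$ and $T_0$ with $f(t) \geq \delta g(t)$ for $t \geq T_0$. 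After rescaling (replace $\phi'' = g(\phi)$ by $\phi'' = \epsilon g(\phi)$ with $\epsilon$ tuned against $\delta$) and taking $\lambda$ small, the right-hand side is dominated by $f(w)$ whenever $w \geq T_0$, using that $g$ is non-decreasing to absorb the cross-term.

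For the contradiction, assume $\sup_M u = +\infty$ and consider $v := u - w$ on $\Omega_\lambda := \{x : r(x) < T/\lambda\}$. Since $w \to \infty$ at $\partial \Omega_\lambda$ while $u$ is allowed to be arbitrarily large at interior points under the unboundedness assumption, after translating $\phi$ by an additive constant we may arrange $\sup_{\Omega_\lambda} v$ to be a large finite value approached at interior points. The Omori--Yau maximum principle, which applies precisely because of completeness and the Ricci lower bound, produces a sequence $x_n \in \Omega_\lambda$ with $v(x_n) \to \sup v$ and $\Delta v(x_n) \leq 1/n$. But $\Delta v(x_n) = \Delta u(x_n) - \Delta w(x_n) \geq f(u(x_n)) - f(w(x_n))$, which for $u(x_n)$ large enough (relative to $w(x_n)$) exceeds any fixed positive number, a contradiction.

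The ``moreover'' clause is then immediate: with $u$ bounded, apply Omori--Yau directly to $u$ to get $x_n$ with $u(x_n) \to \sup u$ and $\Delta u(x_n) \leq 1/n$, whence $f(u(x_n)) \leq \Delta u(x_n) \leq 1/n$, and lower semicontinuity of $f$ gives $f(\sup u) \leq \liminf_n f(u(x_n)) \leq 0$. The main obstacle is the delicate calibration of $\lambda$, $\epsilon$, and the energy constant $C$ so that the barrier inequality $\Delta w \leq f(w)$ truly holds throughout the regime where $u - w$ realizes its supremum; in particular, controlling the cross-term $C_0 \lambda \phi'$ against $f(\phi)$ from only the asymptotic lower bound $f \geq \delta g$ at infinity may require iterating barriers of varying scales, or approximating $f$ from below by a smooth increasing function to which the comparison argument applies cleanly.
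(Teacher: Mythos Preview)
The paper does not prove this lemma at all: it is stated without proof and attributed directly to Cheng--Yau \cite[Theorem 8]{CY75}, so there is no ``paper's own proof'' to compare against. Your proposal is therefore not being measured against anything in this paper, but rather against the original Cheng--Yau argument.

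That said, your sketch is in the spirit of the original but mixes two distinct mechanisms in a way that creates a gap. Cheng--Yau's actual proof builds the radial barrier $w = \phi(\lambda r)$ exactly as you describe, using the finite blow-up time of $\phi'' = g(\phi)$ together with Laplacian comparison, and then compares $u$ to $w$ on the ball $\{r < T/\lambda\}$ via the ordinary interior maximum principle (not Omori--Yau): since $w \to +\infty$ at the boundary of this ball while $u$ is finite there, the function $u - w$ attains an interior maximum, and one derives a bound on $u$ at the center depending only on the barrier data. Sending $\lambda \to 0$ then gives the global bound. Your version instead assumes $\sup u = +\infty$ and tries to invoke Omori--Yau on $v = u - w$ over the open ball $\Omega_\lambda$, but Omori--Yau requires a complete manifold, and $\Omega_\lambda$ with the induced metric is not complete; moreover, $v$ is not bounded above on $\Omega_\lambda$ under your assumption, so the almost-maximum sequence need not exist. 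The cleaner route is to drop Omori--Yau entirely for the boundedness part and use the classical barrier comparison directly, reserving Omori--Yau only for the ``moreover'' clause, where your argument is correct.
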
 
 
 We will also need a version of Cheng-Yau for manifolds with boundary from \cite[Lemma 3.4]{ML20a}. 
 
  \begin{lemma}[Cheng-Yau Maximum Principle with Boundary]\label{ChengYauBoundary}
 Let $(M,g)$ be a complete Riemannian manifold with smooth compact boundary and
 Ricci curvature bounded below. Then if $u$ is a $C^2$ function on $M$ with 
 $\Delta u \geq f(u)$ such that there exists a continuous function $g: [a, \infty) \rightarrow \R_+$ such that 
 \begin{itemize} 
 	\item $g$ is non-decreasing 
 	\item $\int_b^{\infty}\left( \int_a^{x} g(\tau) d\tau \right)^{-1/2} dx < \infty$ for some $b \geq a$
	\item $ \lim \inf_{t \rightarrow \infty} \frac{f(t)}{g(t)} > 0 $,
\end{itemize}
then $u$ is bounded above. 
Suppose $f$ is lower semicontinuous. Then either $ \sup_{p \in M} u(p) = \sup_{p \in \partial M}$
or $f(\sup u) \leq 0$. 
 \end{lemma}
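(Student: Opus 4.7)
The plan is to reduce the boundary version to the closed version (Lemma \ref{ChengYau}) by a localization argument that isolates the behavior of $u$ away from the compact boundary $\partial M$. First I would dispose of the trivial alternative: if $\sup_M u = \sup_{\partial M} u$, the first alternative holds and we are done. So assume $\sup_M u > \sup_{\partial M} u$ and set $K := \sup_{\partial M} u$; the goal becomes $f(\sup u) \leq 0$.

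If the supremum is attained at an interior point $p \in M \setminus \partial M$, the classical strong maximum principle gives $\Delta u(p) \leq 0$, so $f(u(p)) \leq \Delta u(p) \leq 0$ and the lower semicontinuity of $f$ yields $f(\sup u) \leq 0$ immediately. The substantive case is when the supremum is attained only in the limit at infinity. Here I would mimic the argument for Lemma \ref{ChengYau}: pick a basepoint $p_0 \in M \setminus \partial M$ with $u(p_0) > K$, choose a large parameter $R$, and work on the truncated domain $M_R := B_R(p_0) \cap M$. Following the standard Cheng--Yau scheme, one constructs a cutoff $\eta_R$ vanishing on $\partial B_R(p_0) \cap M$ and on $\partial M$, applies the maximum principle to $\eta_R (u - K)$, and uses the Ricci lower bound together with Laplacian comparison to control $\Delta \eta_R$. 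The hypotheses on $g$ and the lim inf condition on $f/g$ then force the maximum value of $\eta_R (u-K)$ to be bounded independently of $R$ unless $f(\sup u) \leq 0$, which gives the conclusion exactly as in the closed case.

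The key subtlety — and the main obstacle — is ensuring that the cutoff maximum argument is not sabotaged by the extra boundary component $\partial M \cap M_R$. One needs to know that, for $R$ sufficiently large, the maximum of $\eta_R (u - K)$ on $M_R$ is attained at an interior point rather than on $\partial M$. This is precisely where the choice $u(p_0) > K$ is used: since $u - K \leq 0$ on $\partial M$ but $(u-K)(p_0) > 0$, for $R$ large enough the positivity $\eta_R(p_0)(u(p_0) - K) > 0$ forces the maximum into the interior away from $\partial M$. Once this is in hand, the contribution from $\partial M$ drops out and the rest of the Cheng--Yau computation proceeds verbatim, yielding $f(\sup u) \leq 0$. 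A clean alternative, if one prefers to avoid repeating the cutoff argument, is to double $M$ across $\partial M$ and smooth the metric near the seam so that the Ricci lower bound persists on the double $\widetilde{M}$; then Lemma \ref{ChengYau} applies directly to a $\Z_2$-invariant extension of $u$, and the dichotomy in the statement follows by tracking whether the attained supremum lies on the seam.
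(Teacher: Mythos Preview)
The paper does not supply a proof of this lemma; it is simply quoted from \cite[Lemma 3.4]{ML20a}, so there is no ``paper's own proof'' to compare against. Your main line of argument --- using the hypothesis $\sup_M u > \sup_{\partial M} u$ to force the maximum of the Cheng--Yau auxiliary function into the interior, whereupon the original cutoff computation from Lemma \ref{ChengYau} goes through unchanged --- is the natural reduction and is sound as a sketch.

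Two cautions. First, your ``clean alternative'' via doubling has a genuine gap: the $\Z_2$-invariant (even) extension of $u$ across $\partial M$ is in general only continuous at the seam --- it is $C^1$ there only if $\partial_\nu u \equiv 0$ on $\partial M$ --- so Lemma \ref{ChengYau}, which requires $u \in C^2$, does not apply to the double. Smoothing the metric does not fix this; you would have to smooth $u$ as well, which destroys the differential inequality $\Delta u \geq f(u)$ near the seam. Second, you have not addressed the first conclusion of the lemma, that $u$ is bounded above; this also needs a word in the boundary case (for instance, run the boundedness half of the Cheng--Yau argument on the interior region, using that near $\partial M$ the function $u$ is already bounded by compactness of $\partial M$).
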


  We now show that the ratios $\frac{\sigma_i}{\sigma_j}$ are each bounded due to the completeness hypotheses on $\sigma_1, \sigma_2, \sigma_3$.
  
  \begin{lemma}[Global Estimates] \label{GlobalEstimates} 
  Let $\bm{u}$ be a complete solution to \eqref{HitEuc}. Then the following inequalities hold on all of $\C$. 
  	\begin{align}
  		1 \leq \frac{\sigma_1}{\sigma_2} = e^{u_1- 5u_2} &\leq \frac{6}{5} \label{GlobalBound1}\\
		\frac{\sigma_3}{\sigma_2} =  |q|^{2} e^{-2u_1 -2u_2} &\leq 3 \label{GlobalBound2} \\ 
		\frac{\sigma_3}{\sigma_1} = |q|^{2} e^{-3u_1 + 3u_2} &\leq \frac{5}{2}  \label{GlobalBound3}
	  \end{align} 
Furthermore, each of the upper bounds is strict everywhere or is an equality everywhere. 
  \end{lemma}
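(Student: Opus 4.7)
The plan is to apply the Cheng-Yau maximum principle (Lemmas \ref{ChengYau} and \ref{ChengYauBoundary}) to suitable linear combinations of $u_1, u_2$, working on one of the complete surfaces $(\C, \sigma_i)$. In each case the key point is that, after dividing by the right conformal factor, one obtains an inequality of the form $\Delta_{\sigma_i} w \geq c\,(e^w - 1)$ with $c > 0$; such an exponential right-hand side satisfies the Cheng-Yau integral hypotheses with $g(t) = e^t$, and the curvature lower bounds $\kappa_{\sigma_1} \geq -20$, $\kappa_{\sigma_2} \geq -12$ supply the required Ricci control.

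I begin with the lower bound $e^{u_1-5u_2} \geq 1$ in \eqref{GlobalBound1}. Set $w := 5u_2 - u_1$. A direct computation from \eqref{HitEuc} gives $\Delta w = 15 e^{2u_2} - 15 e^{u_1-3u_2} + |q|^2 e^{-2u_1}$, so dividing by the conformal factor of $\sigma_1$ yields
\begin{equation*}
\Delta_{\sigma_1} w \;=\; 15(e^w - 1) \,+\, |q|^2 e^{-3u_1 + 3u_2} \;\geq\; 15(e^w - 1).
\end{equation*}
Since $\sigma_1$ is complete, Lemma \ref{ChengYau} forces $w \leq 0$. For \eqref{GlobalBound2}, put $w := \log|q|^2 - 2u_1 - 2u_2$ on $\C \setminus \{q=0\}$; since $\log|q|^2$ is harmonic off the zeros, one obtains $\Delta_{\sigma_2} w = 2(e^w - 3)$. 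Because $w \to -\infty$ near the zeros of $q$, I apply Lemma \ref{ChengYauBoundary} to the exhaustion $M_\epsilon := \{|q| \geq \epsilon\}$ (for $\epsilon$ a regular value), noting that $\sup_{\partial M_\epsilon} w \to -\infty$ as $\epsilon \to 0$ by smoothness of $u_1, u_2$. The dichotomy conclusion of \ref{ChengYauBoundary} then yields $w \leq \log 3$ on each $M_\epsilon$ for small $\epsilon$, and passing to the limit and extending trivially over $\{q=0\}$ proves \eqref{GlobalBound2}.

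The remaining two bounds cascade from these. For the upper bound in \eqref{GlobalBound1}, set $w := u_1 - 5u_2 - \log(6/5)$; a computation gives
\begin{equation*}
\Delta_{\sigma_2} w \;=\; 18 e^w - 15 - |q|^2 e^{-2u_1 - 2u_2} \;\geq\; 18(e^w - 1),
\end{equation*}
where the last inequality invokes \eqref{GlobalBound2}. Lemma \ref{ChengYau} on $(\C, \sigma_2)$ gives $w \leq 0$. For \eqref{GlobalBound3}, set $w := \log|q|^2 - 3u_1 + 3u_2 - \log(5/2)$; a parallel computation gives
\begin{equation*}
\Delta_{\sigma_1} w \;=\; 9\,e^{5u_2 - u_1} - 15 + 3\,|q|^2 e^{-3u_1+3u_2} \;\geq\; \tfrac{15}{2}(e^w - 1),
\end{equation*}
where I use $e^{u_1 - 5u_2} \leq 6/5$ from the just-proven upper bound (so $9e^{5u_2-u_1} \geq 15/2$) together with the identity $|q|^2 e^{-3u_1+3u_2} = (5/2)e^w$. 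The exhaustion argument of \eqref{GlobalBound2} handles the singularities at zeros of $q$.

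The strict/equality dichotomy follows from the strong maximum principle: each inequality above has the form $\Delta_{\sigma_i} w \geq f(w)$ with $f$ smooth and $f(0)=0$, so if equality holds at any interior point, $w$ is a subsolution of an elliptic equation attaining its maximum value $0$ interiorly, forcing $w \equiv 0$. The main technical obstacle is the bookkeeping at the zeros of $q$ in \eqref{GlobalBound2} and \eqref{GlobalBound3}: one must check both that $\sup_{\partial M_\epsilon} w \to -\infty$ as $\epsilon \to 0$ and that $M_\epsilon$ inherits completeness with smooth compact boundary from $(\C, \sigma_i)$; the latter follows from $M_\epsilon$ being a closed subset of a complete surface, and the former from the smoothness of $u_1, u_2$ combined with the explicit $2\log\epsilon$ contribution of $\log|q|^2$ on $\partial M_\epsilon$.
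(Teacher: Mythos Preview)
Your proof is correct and follows essentially the same route as the paper: apply the Cheng--Yau maximum principle (and its boundary version near the zeros of $q$) to the same linear combinations $-2u_1-2u_2+\log|q|^2$, $u_1-5u_2$, and $-3u_1+3u_2+\log|q|^2$, on the complete metrics $\sigma_1,\sigma_2$ whose curvature is bounded below, cascading the bounds in the same logical order. The only notable variation is your treatment of the lower bound in \eqref{GlobalBound1}: you work directly with $w=5u_2-u_1$ and the inequality $\Delta_{\sigma_1}w\ge 15(e^w-1)$, whereas the paper applies Cheng--Yau to $\alpha=e^{5u_2-u_1}$ via the auxiliary inequality $\Delta\alpha\ge\alpha\,\Delta\log\alpha$ to reach $\Delta_{\sigma_1}\alpha\ge 30(\alpha^2-\alpha)$; your version is slightly cleaner but otherwise equivalent.
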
 
 
  \begin{proof} 
 First, we consider $\alpha = \log( |q|^2 e^{-2u_1- 2u_2} )$. Let $Z:= q^{-1}(0)$. Since $q$ is holomorphic,
 $$ \Delta_{{\sigma_2}}\alpha = 2|q|^{2} e^{-2u_1-2u_2} -6 = 2e^{\alpha} - 6.$$ 
 Since $\alpha \rightarrow- \infty$ as $z \rightarrow z_0$ for $z_0$ a zero of $q$, the sup of $\alpha$ occurs away from the zeros. 
Define  $X = \C - U$ for $U = \bigcup_i U_i$, where each $U_i$ is a small domain containing a unique zero of $q$. 
Choosing $U_i$ small enough, we can guarantee $\sup_{z \in \C - Z} \alpha(z)= \sup_{z \in \mathsf{int}(X)} \alpha(z) = : M$. 
We can now apply Lemma \ref{ChengYauBoundary} on $X$ with 
$f(t) = 2e^t -6$ and $g(t) = e^t$. We conclude that $\alpha$ is bounded above on $X$.
Suppose $p \in \mathsf{int}(X)$ achieves $\sup \alpha$. Then the equation $\Delta \alpha(p) \leq 0$ implies $ \widetilde{M} := e^{M} \leq 3$,
which is the inequality \eqref{GlobalBound2}. 
 
Next, we handle equation \eqref{GlobalBound1}. Compute 
\begin{align}\label{TightnessBound}
 2\Delta_{\sigma_2} (u_1 -5u_2) = 30 e^{u_1 -5u_2} -30 -2|q|^{2}e^{-2u_1-2u_2} \geq 30e^{u_1- 5u_2} - 36,
 \end{align} 
 where the last inequality follows from \eqref{GlobalBound2}. Applying Cheng-Yau again now with $f(t) = 30e^t-36, g(t) = e^t$, we learn $\alpha = u_1 -5u_2$ is bounded above, as desired. Moreover, the final statement of Cheng-Yau gives the upper bound of \eqref{GlobalBound1}. 

Let $\alpha = e^{5u_2- u_1}$. We use the inequality $\Delta u \,  \geq u \, \Delta \, \log u$. Hence, 
$$ 2 \, \Delta \alpha \geq 2\alpha\,( \Delta \log \alpha ) = 2\alpha ( 5 \Delta u_2 - \Delta u_1 ) = \alpha(-30e^{u_1-3u_2} + 30 e^{2u_2} + 2|q|^2 e^{-2u_1} )$$ 
Hence, using the complete metric $\sigma_1$, 
$$ 2 \, \Delta_{\sigma_1} \alpha = \alpha ( -30 + 30 \alpha + 2|q|^2 e^{-3u_1 + 3u_2} ) \geq 30(\alpha^2 - \alpha).$$
We can now apply Cheng-Yau, with $f(t) = 30(t^2 -t)$, $g(t) = t^2$. We find that $\alpha$ is bounded above and $f( \sup \alpha ) \leq 0$ gives
the lower bound of \eqref{GlobalBound1}. 
 
The proof of the bound \eqref{GlobalBound3} is similar, using the upper bound of \eqref{GlobalBound1} and Cheng-Yau with respect to $\sigma_1$ on $\alpha = \log(|q|^2 e^{-3u_1 +3u_2})$. 

The strong maximum principle implies each of the inequalities is globally strict or is a global equality \cite[Theorem 3.3.1]{Jos13}.   \end{proof}
  
As a corollary, $u_2$ is subharmonic, or equivalently, $ \kappa_{\sigma_2} \leq 0$. 

 \begin{corollary}
 Let $\bm{u} = (u_1, u_2)$ be a complete solution to \eqref{HitEuc}. Then $\kappa_{\sigma_2} \leq 0$. Moreover, $\kappa_{\sigma_1} = 0$ at a point if and only if $\kappa_{\sigma_2} \equiv 0$. Also, $\kappa_{\sigma_2} < 0 $ if and only if $q$ is non-constant. 
 \end{corollary}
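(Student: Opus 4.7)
The plan is to handle the three claims sequentially, combining direct curvature identities derived from \eqref{HitEuc} with the pointwise bounds and the strong maximum principle from Lemma \ref{GlobalEstimates}. A direct calculation gives $\kappa_{\sigma_2} = -12 + 10\, e^{u_1 - 5u_2}$, so the upper bound $e^{u_1-5u_2} \leq 6/5$ in \eqref{GlobalBound1} immediately yields $\kappa_{\sigma_2} \leq 0$; the dichotomy ``$\kappa_{\sigma_2} \equiv 0$ or $\kappa_{\sigma_2} < 0$ everywhere'' follows from the strict-or-equality statement in Lemma \ref{GlobalEstimates}.

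For the equivalence $\kappa_{\sigma_2} < 0 \Leftrightarrow q$ non-constant, I would analyze the saturated case: $\kappa_{\sigma_2} \equiv 0$ forces $u_1 = 5u_2 + \log(6/5)$, whereupon the second Toda equation collapses to $\Delta u_2 \equiv 0$, and the first forces $|q|^2 = 3\, e^{2u_1+2u_2}$. Writing $u_2 = \Re v$ for an entire holomorphic $v$, the entire function $q/e^{6v}$ has constant modulus, hence equals a constant $C$; so $q = C e^{6v}$ is a polynomial that is nowhere zero, forcing $v$, and therefore $q$, to be constant. Conversely, when $q$ is constant, the constant complete solution from \eqref{cdSystem} (unique by Theorem A) satisfies $e^{u_1-5u_2} = 6/5$, so $\kappa_{\sigma_2} \equiv 0$.

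For the statement about $\kappa_{\sigma_1}$, set $A := |q|^2 e^{-3u_1+3u_2}$ and $B := e^{-u_1+5u_2}$, so that $\kappa_{\sigma_1} = 2(A + 9B - 10)$. The pointwise bounds of Lemma \ref{GlobalEstimates} only yield $\kappa_{\sigma_1} \leq 3$, so I would apply Cheng-Yau (Lemma \ref{ChengYau}) to $T := A + 9B$. A computation using \eqref{HitEuc} and the harmonicity of $\log|q|^2$ gives $\Delta_{\sigma_1} \log A = -15 + 3A + 9B$ and $\Delta_{\sigma_1} \log B = -15 + A + 15B$; combining these with the standard identity $\Delta f = f\,\Delta \log f + f|\nabla \log f|^2$ and the Cauchy--Schwarz estimate $2A^2 + 54 B^2 \geq T^2/2$ (equivalently $3(A - 3B)^2 \geq 0$) one obtains
\begin{equation*}
\Delta_{\sigma_1} T \;\geq\; T^2 + 2A^2 + 54 B^2 - 15 T \;\geq\; \tfrac{3}{2}\, T(T - 10).
\end{equation*}
Lemma \ref{ChengYau} with $f(t) = \tfrac{3}{2} t(t-10)$, $g(t) = t^2$ then forces $T \leq 10$ globally, i.e., $\kappa_{\sigma_1} \leq 0$. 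If $\kappa_{\sigma_1}(p) = 0$, the strong maximum principle applied to $T$ gives $T \equiv 10$; tracing equality back through the chain forces $|\nabla \log A| = |\nabla \log B| = 0$ and $A = 3B$, so $A \equiv 5/2$ and $B \equiv 5/6$, whence $e^{u_1-5u_2} \equiv 6/5$ and $\kappa_{\sigma_2} \equiv 0$. Conversely, $\kappa_{\sigma_2} \equiv 0$ yields $A \equiv 5/2$, $B \equiv 5/6$ by the saturation analysis of the previous paragraph, so $\kappa_{\sigma_1} \equiv 0$.

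The main obstacle is the global bound $\kappa_{\sigma_1} \leq 0$: the naive pointwise bounds on $A, B$ give only $\kappa_{\sigma_1} \leq 3$, and one must produce the quadratic Cheng-Yau estimate above with the Cauchy-Schwarz inequality sharpened exactly enough to identify the saturated locus $A = 3B$. The holomorphicity-of-$q$ argument converting ``$q = C e^{6v}$ is a nowhere-vanishing polynomial'' into ``$q$ is constant'' is the other essential conceptual ingredient.
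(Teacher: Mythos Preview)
Your proposal is correct, but you have worked much harder than the paper does, because you took the statement at face value while the paper's statement (and proof) contains a typo: every occurrence of $\kappa_{\sigma_1}$ in the second sentence is meant to be $\kappa_{\sigma_2}$. This is visible in the paper's own proof, which claims ``$\kappa_{\sigma_1}\le 0$ is clear from the bounds'' (only true for $\sigma_2$, since the pointwise bounds give merely $\kappa_{\sigma_1}\le 3$, as you noted) and deduces $5e^{u_1-5u_2}\equiv 6$ from ``$\kappa\equiv 0$'' (again the $\sigma_2$ formula). With that reading, the paper's argument is one line for each claim: $\kappa_{\sigma_2}=10e^{u_1-5u_2}-12\le 0$ by the upper bound in \eqref{GlobalBound1}; the dichotomy is the tightness clause of Lemma~\ref{GlobalEstimates}; and if $\kappa_{\sigma_2}\equiv 0$ then \eqref{TightnessBound} gives $|q|^2 e^{-2u_1-2u_2}\equiv 3$, so $q$ has no zeros, hence the \emph{polynomial} $q$ is constant.

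By contrast, you proved the genuinely stronger inequality $\kappa_{\sigma_1}\le 0$ via a new Cheng--Yau argument on $T=A+9B$, using the exact algebraic identity $3A^2+18AB+135B^2=\tfrac{3}{2}T^2+\tfrac{3}{2}(A-3B)^2$ to get $\Delta_{\sigma_1}T\ge \tfrac{3}{2}T(T-10)$. This is correct and does not appear in the paper; it is a nice strengthening. One small gap to close: the step $\Delta_{\sigma_1}A\ge A\,\Delta_{\sigma_1}\log A$ uses $\log A$ and so is only valid where $q\ne 0$; at a zero of $q$ you should instead use that $A\ge 0$ attains a minimum there, so $\Delta_{\sigma_1}A\ge 0$, and check directly that $135B(B-1)\ge \tfrac{3}{2}\cdot 9B(9B-10)$, which holds since $270\ge 243$. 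Finally, your ``$q=Ce^{6v}$ is a nowhere-vanishing polynomial'' route is correct but unnecessary: once you know $|q|^2 e^{-2u_1-2u_2}$ is a positive constant, $q$ has no zeros, and a polynomial without zeros is already constant.
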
 
 
 \begin{proof}
 $\kappa_{\sigma_1} \leq 0$ is clear from the bounds, and the strong maximum principle proves the tightness claim. Finally, if 
 $q$ is constant, we show later that $\kappa_{\sigma_1} \equiv 0$. On the other hand, suppose $ \kappa_{\sigma_1} \equiv 0$.
 Then $5e^{u_1-5u_2} \equiv 6$. Hence, by \eqref{TightnessBound}, we see $|q|^{2} e^{-2u_1 -2u_2}\equiv 6$, which implies
 $q$ has no zeros and thus is constant. 
 \end{proof}
 
 In particular, the induced metric on the almost-complex curve is strictly negatively curved unless $q$ is a constant.  

  \subsection{Mutual Boundedness of Complete Solutions} 
 
We now show any complete solution $\bm{u} = (u_1, u_2)$ to \eqref{HitEuc} has $ e^{u_1} \sim |q|^{5/6}$ and $ e^{u_2} \sim |q|^{1/6}$ as $ |z| \rightarrow \infty$. Thus, the asymptotics of a complete solution is determined by the polynomial $q$. We begin by showing that any complete solution to \eqref{HitEuc} dominates the naive solution 
 $$\bm{v} = \left ( \frac{5}{3} \log( c|q| ),  \frac{1}{3} \log( d|q|)\right) $$ to \eqref{HitEuc}, away from the zeros of $q$. 
 Here, $c,d$ come from \eqref{cdSystem}. 
 
 \begin{lemma}[Lower Bounds] \label{CompleteLowerBounds} 
 Let $\bm{u}$ be a complete solution to \eqref{HitEuc}. Then $\bm{u} \geq \bm{v}$:
  	\begin{align}
		u_1 &\geq  \frac{5}{3} \log( c|q| )\\
		u_2 &\geq \frac{1}{3} \log( d|q|) 
	\end{align} 
 \end{lemma}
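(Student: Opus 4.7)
The plan is to derive the asserted lower bounds as an immediate corollary of Lemma \ref{GlobalEstimates}, by observing that the model pair $\bm{v}$ is tuned precisely so as to \emph{saturate} each of the three upper bounds \eqref{GlobalBound1}--\eqref{GlobalBound3}. No analytical work beyond algebraic manipulation is required.

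First I would verify that $\bm{v}$ attains equality in all three bounds of Lemma \ref{GlobalEstimates}. Using the defining equations \eqref{cdSystem} for $c$ and $d$, a direct computation yields the identities
\begin{align*}
e^{v_1 - 5v_2} \;=\; \tfrac{6}{5}, \qquad
|q|^2\, e^{-2v_1 - 2v_2} \;=\; 3, \qquad
|q|^2\, e^{-3v_1 + 3v_2} \;=\; \tfrac{5}{2},
\end{align*}
which match, respectively, the upper bounds \eqref{GlobalBound1}, \eqref{GlobalBound2}, and \eqref{GlobalBound3}. Each identity is a short rearrangement of the two relations appearing in \eqref{cdSystem}.

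Next I would compare $\bm{u}$ to $\bm{v}$ via these global bounds. Applying Lemma \ref{GlobalEstimates} to $\bm{u}$ and substituting in the (now saturated) values at $\bm{v}$, taking logarithms converts the three bounds into the pointwise inequalities
\begin{align*}
u_1 - 5u_2 \;\leq\; v_1 - 5v_2, \qquad
u_1 + u_2 \;\geq\; v_1 + v_2, \qquad
u_1 - u_2 \;\geq\; v_1 - v_2,
\end{align*}
valid on $\C \setminus q^{-1}(0)$.

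Finally I would conclude by elementary arithmetic. Summing the last two inequalities gives $2u_1 \geq 2v_1$, hence $u_1 \geq v_1$. Rewriting the first inequality as $5(u_2 - v_2) \geq u_1 - v_1$ and invoking $u_1 - v_1 \geq 0$ then forces $u_2 \geq v_2$. This establishes $\bm{u} \geq \bm{v}$ pointwise on $\C \setminus q^{-1}(0)$, and the inequality extends trivially across the zeros of $q$, where $v_1, v_2 \to -\infty$ while $u_1, u_2$ remain smooth. No step presents a substantive obstacle: the analytical content is supplied entirely by Lemma \ref{GlobalEstimates}, and the proof hinges on the observation that $\bm{v}$ is designed to saturate the three upper bounds.
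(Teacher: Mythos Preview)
Your proof is correct and is in fact cleaner than the paper's. Both arguments rest on Lemma \ref{GlobalEstimates}, and both obtain one of the two lower bounds by combining \eqref{GlobalBound1} with \eqref{GlobalBound2}. The difference lies in the other bound. The paper first derives $u_2 \geq v_2$ from \eqref{GlobalBound1} and \eqref{GlobalBound2}, then, rather than invoking \eqref{GlobalBound3}, substitutes the lower bound for $\psi_2$ back into the PDE for $\psi_1$ and runs a fresh Cheng--Yau argument on $-\psi_1$ to obtain $u_1 \geq v_1$. You instead use \eqref{GlobalBound3} directly, summing it with \eqref{GlobalBound2} to get $u_1 \geq v_1$ purely algebraically, and then deduce $u_2 \geq v_2$ from \eqref{GlobalBound1}. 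Since \eqref{GlobalBound3} is already established in Lemma \ref{GlobalEstimates} (by essentially the same Cheng--Yau step the paper repeats here), your route avoids duplicated analytical work and reduces the lemma to three lines of linear algebra. The paper's approach buys nothing additional; yours is a strict streamlining.
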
 
 
 \begin{proof}
We now use to the flat metric $g_q:= |q|^{1/3} |dz|^2$. Then $g_q$ is complete 
and $\kappa_{g_q} \equiv 0$. We work on on $X = \C - U$ again, for $U = \cup_i U_i$ a union of small open neighborhoods of the zeros of $q$. Make the substitution $ \bm{\psi}= ( \psi_1, \psi_2)$ as earlier so that 
$e^{\psi_1} |q|^{5/6} = e^{u_1} , e^{\psi_2} |q|^{1/6} = e^{u_2}$. Then by \eqref{G2Hitchin_GeneralMetric_old}, $\bm{\psi}$ solves
$$\begin{cases}  
		2 \Delta_{g_q} \psi_1 &= 5e^{\psi_1 - 3 \psi_2} - 2 \, e^{-2\psi_1}\\
		2 \Delta_{g_q} \psi_2 &= -5e^{\psi_1 - 3\psi_2} + 6 e^{2\psi_2}  
\end{cases} .$$ 
The inequality $5e^{u_1- 5u_2} \leq 6$ now tells us $ 5e^{\psi_1 - 5\psi_2} \leq 6$, or 
\begin{align}\label{PsiInequality1} 
	\psi_1 - 5\psi_2 \leq \log \left ( \frac{6}{5} \right).
\end{align} 
Similarly, $2|q|^2 e^{-2u_1 - 2u_2} \leq 6$ now yields $2 e^{-2\psi_1 - 2\psi_2} \leq 6$, or
\begin{align}\label{PsiInequality2} 
	 \psi_1 + \psi_2 \geq \frac{1}{2} \log \left( \frac{1}{3} \right).
\end{align} 
Combining \eqref{PsiInequality1}, \eqref{PsiInequality2}, one finds $ 6 \psi_2 = (\psi_1 + \psi_2) - (\psi_1 - 5\psi_2) > \frac{1}{2} \log(1/3) - \log(6/5) = \log( \frac{5}{6\sqrt{3} } )= \log( d)$.
Hence, $ \psi_2 \geq \log(d^{1/6})$ , giving $ u_2 \geq v_2$. 

Applying the lower bound for $\psi_2$, we have
$$ \Delta_{g_q}(-\psi_1) = 2e^{-2\psi_1} - 5 e^{\psi_1} e^{-3\psi_2} \geq 2e^{-2\psi_1} - 5d^{-1/2} e^{\psi_1}.$$
Now, set $\beta = -\psi_1$ and we can apply Cheng-Yau with respect to $f(t) = 2e^{2t} - 5d^{-1/2}e^{-t}, g(t) = e^{2t}$ and the complete metric $(g_q)|_X$. 
Since $-\psi_1 \rightarrow -\infty$ as $z$ approaches a zero of $q$, maxima of $\beta$ cannot occur near the zeros of $q$. Modifying $U$, if necessary, we can demand 
 $\overline{\beta} := \sup_{z \in \C -Z(q)} \beta(z) = \sup_{z \in \mathsf{int}(X)} \beta(z)$ and Cheng-Yau gives
$ 2e^{2\overline{\beta} } - 5d^{-1/2} e^{- \overline{ \beta} } = f( \overline{ \beta}) \leq 0.$
Using $ \log(c^{5/6} ) = \frac{1}{3} \log( \frac{2}{5} d^{1/2} ) $ by \eqref{cdSystem}, we find $ \psi_1 \geq \log( c^{5/6} )$, which proves $u_1 \geq v_1$. 
 \end{proof} 

We now show a similar upper bound for a complete solution, away from the zeros of $q$. Here, we need the completeness hypothesis of the metric $\sigma_3$.
 
 \begin{lemma}[Upper Bounds for Complete Solutions] \label{CompleteUpperBounds} 
 Let $q \in H^0(\K^6)$ be a polynomial. Then there exist constants $A_i > 0$ such that any complete solution 
$ \bm{u}$ to \eqref{HitEuc} satisfies $ \bm{u} \leq \bm{\tilde{u}} $ for $\bm{\tilde{u}} =( \tilde{u}_1, \tilde{u}_2) $ 
for any $c > 0$. 
   \begin{align}\label{SuperBounds} 
 	\tilde{u}_1 &=  \frac{1}{6} \log \left( A_1 \left( |q|^5 + c \right) \right) \\
	\tilde{u}_2  &= \frac{1}{6} \log \left( A_2 \left( |q| +c \right) \right) 
 \end{align} 
 \end{lemma}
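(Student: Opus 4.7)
The plan is to establish $u_i \leq \tilde u_i$ for $i \in \{1,2\}$ by a maximum principle comparison on $\omega_i := u_i - \tilde u_i$, combining the global a priori estimates of Lemma \ref{GlobalEstimates}, the Cheng-Yau maximum principle (Lemma \ref{ChengYau}), and the completeness of the metrics $\sigma_i$. First I would verify the setup: both $\tilde u_i$ are continuous on $\C$ (the regularization by $c > 0$ removes any singularity at zeros of $q$; they are smooth on $\C \setminus Z(q)$) and have asymptotics $\tilde u_1 \sim \tfrac{5}{6}\log|q|$, $\tilde u_2 \sim \tfrac{1}{6}\log|q|$ at infinity, matching the expected behavior of complete solutions. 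Direct computation away from $Z(q)$ gives
\[
2\Delta \tilde u_1 = \frac{25\,c\,|q|^3 |q'|^2}{12(|q|^5+c)^2}, \qquad 2\Delta \tilde u_2 = \frac{c\,|q'|^2}{12\,|q|(|q|+c)^2},
\]
both non-negative, and both satisfying $\Delta \tilde u_i / e^{2\tilde u_i} \to 0$ as $|z| \to \infty$.

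Next I would exploit the bound $u_1 - 5u_2 \in [0, \log(6/5)]$ from Lemma \ref{GlobalEstimates} to rewrite the second PDE of \eqref{HitEuc} as $2\Delta u_2 = e^{2u_2}\bigl(6 - 5 e^{u_1 - 5u_2}\bigr) \in [0, e^{2u_2}]$, making $u_2$ sub-harmonic and a sub-solution of the scalar Liouville-type inequality $\Delta u_2 \le \tfrac{1}{2} e^{2u_2}$. This partially decouples the system at the price of a one-sided estimate. The central step is the comparison $\omega_2 \le 0$ via a maximum principle argument in the complete metric $\sigma_2$ (whose Ricci curvature is bounded below by $-12$). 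Since the naive direction $\Delta u_2 \le \tfrac12 e^{2u_2}$ is opposite to what Cheng-Yau needs, I would instead apply Cheng-Yau to $\rho := \tilde u_2 - u_2$, using
\[
\Delta \rho = \Delta \tilde u_2 - \tfrac12 e^{2u_2}(6 - 5e^{u_1 - 5u_2}) \ge -\tfrac12 e^{2\tilde u_2} e^{-2\rho},
\]
and choosing $A_2$ small enough that the competitor $-\tfrac12 e^{2\tilde u_2} e^{-2\rho}$ forces $\rho \ge 0$ at any critical point, giving $u_2 \le \tilde u_2$.

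Once $u_2 \le \tilde u_2$ is in hand, the bound $u_1 \le 5u_2 + \log(6/5)$ combined with the elementary inequality $(|q|+c)^5 \le C(c)(|q|^5+c)$ (valid for $C(c) = 16\max(1,c^4)$, say) yields
\[
e^{6u_1} \le (6/5)^6 A_2^5 (|q|+c)^5 \le A_1(|q|^5 + c), \qquad A_1 := C(c)(6/5)^6 A_2^5,
\]
and hence $u_1 \le \tilde u_1$. This reduces the whole lemma to Step 1, the bound on $u_2$.

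The main obstacle I anticipate is precisely this Cheng-Yau step. The scalar inequality $\Delta u_2 \le \tfrac12 e^{2u_2}$ degenerates when $e^{u_1-5u_2}$ saturates the upper bound $6/5$, and the direction is opposite to that of standard Cheng-Yau. Resolving this likely requires one of: (i) invoking the strong maximum principle applied to $u_1 - 5u_2$, which by the argument in Lemma \ref{GlobalEstimates} gives strict inequality $< \log(6/5)$ everywhere except in the degenerate case of constant $q$ (treated separately by direct computation); (ii) working with a weighted combination $\omega_2 + \lambda \omega_1$ or a test function such as $e^{2\omega_2}$ that converts the wrong-direction inequality into the right one; or (iii) mimicking the coupled Cheng-Yau argument in the proof of Lemma \ref{GlobalEstimates}, which likewise had to treat the two-sided structure of the Toda system. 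The technical verification that the chosen $A_2$ actually forces $\rho \ge 0$ uniformly — including near $Z(q)$, where $\tilde u_i$ is only $C^0$ — is the real heart of the proof.
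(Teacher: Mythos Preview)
Your proposal has a genuine gap, and you have correctly located it: the Cheng--Yau step for $u_2$ does not go through in the direction you need. The inequality $\Delta u_2 \le \tfrac12 e^{2u_2}$ is a \emph{sub}-solution inequality, and rewriting it as $\Delta\rho \ge -\tfrac12 e^{2\tilde u_2}e^{-2\rho}$ does not help: as $\rho \to +\infty$ the right-hand side tends to $0$, not to $+\infty$, so Cheng--Yau gives no information. Your alternatives (i)--(iii) do not fix this. Strict inequality $u_1-5u_2 < \log(6/5)$ provides no uniform positive lower bound on $6-5e^{u_1-5u_2}$, and the coupled arguments in Lemma~\ref{GlobalEstimates} all bound \emph{ratios} $\sigma_i/\sigma_j$, never $\sigma_i$ itself; your goal here is precisely an upper bound on $\sigma_2$ (and $\sigma_1$) separately.

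The missing idea is to use the third complete metric $\sigma_3 = |q|^2 e^{-2u_1}|dz|^2$, which you never invoke. Working in $\sigma_3$, the combinations $\eta := \log(\sigma_2/\sigma_3) = \log(|q|^{-2}e^{2u_1+2u_2})$ and $\beta := \log(\sigma_1/\sigma_3) = \log(|q|^{-2}e^{3u_1-3u_2})$ satisfy
\[
\Delta_{\sigma_3}\eta = 6e^{\eta} - 2, \qquad \Delta_{\sigma_3}\beta = 15e^{\beta} - 3 - 9e^{\eta},
\]
which are \emph{correct}-direction inequalities for Cheng--Yau (Lemma~\ref{ChengYau}); the first gives $\eta$ bounded above, and then the second gives $\beta$ bounded above. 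Multiplying $e^{3\eta/2}$ and $e^{\beta}$ yields $|q|^{-5}e^{6u_1} \le D_1$ away from the zeros of $q$; combining $e^{3\eta}$ with the lower bound $e^{6u_1} \ge c^5|q|^5$ from Lemma~\ref{CompleteLowerBounds} yields $|q|^{-1}e^{6u_2} \le D_2$. The $c>0$ regularization then extends these across $Z(q)$ exactly as in your final paragraph. Your reduction of the $u_1$-bound to the $u_2$-bound via $u_1 \le 5u_2 + \log(6/5)$ is correct but becomes unnecessary once both $\eta$ and $\beta$ are controlled.
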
 
 
 \begin{proof}
 We first consider $\eta = \log(|q|^{-2} e^{2u_1+2u_2} )$. Then a calculation shows that $ \Delta_{\sigma_3} \eta = 6 e^{\eta} -2.$
Hence, we may apply Cheng-Yau to conclude that $\eta$ is bounded away from the zeros of $q$.
 
 Similarly, we show $\beta = \log( |q|^{-2}e^{3u_1- 3u_2}) $ is bounded above away from the zeros of $q$.
 $$ \Delta_{\sigma_3} \beta = 15 e^{\beta} - 3 -9e^{\eta} .$$ 
By the boundedness of $\eta$, away from the zeros of $q$, we have $ \Delta_{\sigma_3} \beta \geq 15 e^{\beta} - C_1,$
 for some constant $C_1$. Again, the Cheng-Yau maximum principle gives boundedness of $\beta$ away from the zeros of $q$.
 
 Let us now fix a compact set $K$ containing the zeros of $q$. 
Combining the bounds for $\eta$ and $\beta$, on $\C - K$, there is $D_1 > 0$  such that 
\begin{align}\label{Super1} 
	|q|^{-5}e^{6u_1} = (|q|^{-3} e^{3u_1+ 3u_2} )( |q|^{-2}e^{3u_1- 3u_2} ) = (e^{\frac{3}{2} \eta} ) e^{\beta} \leq D_1. 
\end{align} 
Combining \eqref{Super1} with Lemma \ref{CompleteLowerBounds}, we see similarly that on $\C - K$, there is $D_2 > 0$ such that 
\begin{align}\label{Super2} 
	|q|^{-1} e^{6u_2} = (|q|^{-6} e^{6u_1 +6u_2} )(|q|^{5}e^{-6u_1}) =  e^{3 \eta} ( e^{-6\psi_1} ) \leq D_2. 
\end{align} 
The inequalities \eqref{Super1}, \eqref{Super2} imply our desired inequalities. Indeed, for any $c > 0$ and the functions
 $ c^{-1} e^{6u_2} , c^{-1}e^{6u_1} $ are trivially bounded on any compact set $K$. Hence, for $c >0$, \eqref{Super1} and \eqref{Super2} imply that 
$\frac{e^{6u_1}}{|q|^5 + c} \leq D_3$ and $\frac{e^{6u_2}}{|q| + c} \leq D_4$, and one finds the desired bounds.  \end{proof} 

 \begin{definition} 
 Let $\bm{u} = (u_1, u_2), \, \bm{w} = (w_1, w_2)$. We say $\bm{u}, \bm{w}$ are mutually bounded when $|u_i -w_i|$ is bounded for $i \in \{1,2\}$. 
\end{definition}
 
 \begin{corollary}[Mutual Boundedness of Complete Solutions] \label{MutualBoundedness} 
Let $\bm{u}, \bm{w}$ be complete solutions to equation \eqref{HitEuc}. Then $\bm{u}, \bm{w}$ are mutually bounded. \end{corollary}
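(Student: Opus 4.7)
The plan is to combine Lemmas \ref{CompleteLowerBounds} and \ref{CompleteUpperBounds}, applied separately to $\bm{u}$ and to $\bm{w}$, and exploit the fact that the lower and upper asymptotic envelopes for a complete solution have matching leading order in $|q|$. Since both $\bm{u}$ and $\bm{w}$ are sandwiched between the same asymptotic functions depending only on the polynomial $q$, their difference must be bounded.

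First, I would fix a compact set $K \subset \C$ that contains a small open neighborhood of the zeros of $q$ (say $K$ is a closed disk large enough to contain $q^{-1}(0)$ in its interior and such that $|q| \geq 1$ on $\C - K$). On $K$, each of $u_1, u_2, w_1, w_2$ is a $C^2$ function on a compact set, hence bounded, so $|u_i - w_i|$ is trivially bounded on $K$.

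Outside $K$, the argument is the core one. By Lemma \ref{CompleteLowerBounds} applied to $\bm{u}$, we have lower bounds of the form $u_1 \geq \frac{5}{6}\log(c|q|)$ and $u_2 \geq \frac{1}{6}\log(d|q|)$ on $\C - K$ (this is the content of the Lower Bounds lemma after the substitution $\psi_i$ is unwound; the same holds verbatim for $\bm{w}$). By Lemma \ref{CompleteUpperBounds} applied to $\bm{w}$, we have $w_1 \leq \frac{1}{6}\log(A_1(|q|^5 + c))$ and $w_2 \leq \frac{1}{6}\log(A_2(|q| + c))$ on all of $\C$. On $\C - K$ (where $|q| \geq 1$), both the upper bound on $w_1$ and the lower bound on $u_1$ are asymptotic to $\frac{5}{6}\log|q|$, and similarly the $u_2, w_2$ bounds are both asymptotic to $\frac{1}{6}\log|q|$. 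Thus
\begin{align*}
u_1 - w_1 &\geq \tfrac{5}{6}\log(c|q|) - \tfrac{1}{6}\log\bigl(A_1(|q|^5 + c)\bigr) = O(1), \\
u_2 - w_2 &\geq \tfrac{1}{6}\log(d|q|) - \tfrac{1}{6}\log\bigl(A_2(|q|+c)\bigr) = O(1),
\end{align*}
on $\C - K$, and by symmetry (swap the roles of $\bm{u}$ and $\bm{w}$) the opposite inequalities also hold up to $O(1)$. Combined with the boundedness on $K$, this gives $|u_i - w_i| \in L^\infty(\C)$ for $i \in \{1,2\}$.

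The only subtlety, and the place where one must be careful, is the matching of the leading coefficients in the upper and lower envelopes: the lower bound contributes $\tfrac{5}{6}\log|q|$ and the upper bound (to leading order as $|q| \to \infty$) contributes $\tfrac{1}{6}\log(|q|^5) = \tfrac{5}{6}\log|q|$, and likewise the $\tfrac{1}{6}\log|q|$ coefficients agree for $u_2, w_2$. Once this asymptotic compatibility is observed, the rest of the argument is a bookkeeping combination of the two lemmas; the treatment of the zeros of $q$ requires only the trivial observation that continuous functions on compact sets are bounded. No new maximum principle argument is needed beyond those already invoked in Lemmas \ref{CompleteLowerBounds} and \ref{CompleteUpperBounds}.
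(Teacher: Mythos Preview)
Your proposal is correct and follows essentially the same approach as the paper: both solutions are sandwiched between the lower bound $\bm{v}$ of Lemma~\ref{CompleteLowerBounds} and the upper bound $\tilde{\bm{u}}$ of Lemma~\ref{CompleteUpperBounds}, and one checks that $\tilde{\bm{u}} - \bm{v}$ stays bounded as $|z|\to\infty$ because the leading $\log|q|$ coefficients agree. The paper states this in one sentence, while you spell out the compact-set versus complement split and verify the matching of the $\tfrac{5}{6}$ and $\tfrac{1}{6}$ coefficients explicitly, but the substance is identical.
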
 
 
 \begin{proof}
 We need only see $\tilde{\bm{u}} - \bm{v} > 0 $ is bounded as $|z| \rightarrow \infty$, where $\tilde{\bm{u}}, \bm{v}$
 come from Lemma \ref{CompleteUpperBounds} and \ref{CompleteLowerBounds}. But this follows from the fact that 
 $|q(z) | \rightarrow \infty$ as $|z| \rightarrow \infty$ since $q$ is a polynomial. 
 \end{proof}

\subsection{Uniqueness of Mutually Bounded Complete Solutions} 
 
In this subsection, we prove any two complete, mutually bounded solutions to equation \eqref{HitEuc} must coincide, using the Omori-Yau maximum principle along with an
argument inspired by \cite{ML20a}. 
Combined with our earlier a priori mutual boundedness, we have the uniqueness of all complete solutions to \eqref{HitEuc}. 

\begin{lemma}[Omori-Yau Maximum Principle \cite{Omo67} \cite{Yau75}] 
Let $(M,g)$ be a complete Riemannian manifold with lower bounded Ricci curvature. If $u$ is a $C^2$ function bounded above, then there exists 
a sequence of points $(x_k)_{k=n}^{\infty} $ such that for $\overline{u} = \sup_{x \in M} u(x)$, 
	\begin{itemize}
		\item $ u(x_k) \geq \overline{u} - \frac{1}{k} $
		\item $ |\nabla_g u_k|  \leq \frac{1}{k} $ 
		\item $ \Delta_g u_k \leq \frac{1}{k}. $ 
	\end{itemize} 
\end{lemma}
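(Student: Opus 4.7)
The plan is to establish this classical result by combining the Laplacian comparison theorem with a standard perturbation argument, following Omori and Yau. First, fix a basepoint $p_0 \in M$ and set $r(x) := d_g(x, p_0)$. The Ricci lower bound $\text{Ric}_g \geq -(n-1)K$ together with the Laplacian comparison theorem yields, away from the cut locus of $p_0$, the inequality $\Delta_g r \leq (n-1)\sqrt{K}\coth(\sqrt{K}\, r)$, and hence a bound of the form $\Delta_g r \leq C_1(1+r)$ for $C_1 = C_1(K,n)$. Consequently the smooth function $\psi := \sqrt{1+r^2}$ satisfies $|\nabla_g \psi|_g \leq 1$ and $\Delta_g \psi \leq C_2$ for some $C_2 > 0$, pointwise off the cut locus $\text{Cut}(p_0)$ and globally in the barrier sense.

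Given $\epsilon > 0$, form the perturbation $\varphi_\epsilon := u - \epsilon \psi$. Since $u$ is bounded above while $\psi(x) \to \infty$ as $r(x) \to \infty$ (by completeness of $(M,g)$), $\varphi_\epsilon$ attains its global supremum at some $x_\epsilon \in M$. For any $\delta > 0$, pick $y \in M$ with $u(y) > \overline{u} - \delta$ and then take $\epsilon$ small enough that $\epsilon \psi(y) < \delta$; comparing $\varphi_\epsilon(x_\epsilon) \geq \varphi_\epsilon(y)$ yields $u(x_\epsilon) \geq \overline{u} - 2\delta$. At the interior maximum, first- and second-order necessary conditions give $\nabla_g u(x_\epsilon) = \epsilon \nabla_g \psi(x_\epsilon)$ and $\Delta_g u(x_\epsilon) \leq \epsilon \Delta_g \psi(x_\epsilon)$, so combining with the estimates on $\psi$ produces $|\nabla_g u(x_\epsilon)|_g \leq \epsilon$ and $\Delta_g u(x_\epsilon) \leq \epsilon C_2$. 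Choosing a sequence $\epsilon_k \to 0$ with $\epsilon_k \leq \min\{1/k,\, 1/(k C_2)\}$ and corresponding points $x_k := x_{\epsilon_k}$, and adjusting $\delta$ accordingly, gives the required sequence satisfying all three conclusions.

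The main technical obstacle is the possible non-smoothness of $r$ (and hence $\psi$) on the cut locus $\text{Cut}(p_0)$, since the Laplacian-comparison computations above are only valid pointwise outside this set. The standard remedy is Calabi's trick: if the maximizer $x_\epsilon$ happens to lie in $\text{Cut}(p_0)$, one perturbs the basepoint slightly along a minimizing geodesic from $p_0$ to $x_\epsilon$, placing $x_\epsilon$ outside the cut locus of the perturbed basepoint. The distance function relative to this new basepoint differs from the old by a bounded amount, so all the relevant estimates on $\psi$ persist up to constants, and the derivative calculations at $x_\epsilon$ become pointwise rigorous. Once this is in place, letting $\epsilon \to 0$ completes the argument.
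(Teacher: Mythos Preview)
The paper does not prove this lemma; it is simply quoted with attribution to Omori and Yau as a classical tool, so there is no ``paper's proof'' to compare against. Your argument is the standard proof of the Omori--Yau maximum principle and is essentially correct: perturb $u$ by a small multiple of a proper function $\psi$ with controlled gradient and Laplacian, locate the interior maximum, and read off the three conditions there, invoking Calabi's trick to handle the cut locus.

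One minor imprecision: the intermediate claim $\Delta_g r \leq C_1(1+r)$ is not quite the right shape, since Laplacian comparison gives $\Delta_g r \leq (n-1)\sqrt{K}\coth(\sqrt{K}\,r)$, which behaves like $(n-1)/r$ near $r=0$ rather than being bounded by a linear function. This does not affect your conclusion, because the factor $\psi'(r) = r/\sqrt{1+r^2}$ in $\Delta_g \psi = \psi''(r)|\nabla r|^2 + \psi'(r)\Delta_g r$ vanishes linearly at $r=0$ and cancels the $1/r$ singularity, so $\Delta_g \psi \leq C_2$ still holds globally (in the barrier sense). It would be cleaner to compute $\Delta_g \psi$ directly rather than pass through a bound on $\Delta_g r$ alone.
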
 

Before we state the next result, let us sketch the proof of uniqueness from mutual boundedness that we are headed towards. Suppose we have 
complete, mutually bounded solutions $\bm{u},\bm{w}$ to \eqref{HitEuc}. Then we may form the associated hermitian metrics $h_{\bm{u}}, h_{\bm{w}}$ on $\V$. 
For example, $h_{\bm{u}}$ is represented by the matrix 
$$H_{\bm{u}} = (e^{-u_1 -u_2}, e^{-u_1 +u_2}, e^{-2u_2}, 1, e^{2u_2}, e^{u_1- u_2}, e^{u_1 +u_2}).$$ 
That is, $h_{\bm{u}}(z_1, z_2) = \overline{z_1}^T H_{\bm{u}} z_2$, as in \eqref{HitchinMetric}. To prove $\bm{u} = \bm{w}$, we may instead prove $H_{\bm{u}} = H_{\bm{w}}$. Since $H_{\bm{u}}, H_{\bm{w}}$ are mutually diagonal in the frame $(dz^i)$, there is a diagonal 
matrix $A: \C \rightarrow \mathsf{SL}_7\C$ such that $H_{\bm{u}} = H_{\bm{w}} A$ with $\det A = 1$. Note the inequality $\tr A \geq 7$ follows from $\det A = 1$ by AM-GM. 
We will show $\tr(A) \leq 7$, from which $\tr(A) = 7$ implies $A = I$. The next result will aid us towards the reverse inequality. \\

In the following lemma, we use the same notation and assumptions as above. In particular, the result says that if 
$A$ almost commutes with $\varphi$, then $A$ is close to the identity matrix $I$. The statement and proof come from \cite[Lemma 3.9]{ML20a}.
The proof below is nearly exactly the same as the original, with insignificant modifications due to our Higgs field being slightly different. 
Recall that if $(e_i)_i$ is an orthogonal $h$-frame, then $|| B ||_{h} := \sum_{i} \frac{ ||Be_i ||_{h} }{||e_i||_h} $. 
Take any complex numbers $c_i \neq 0 \in \C$ and $\alpha \in \C$ and construct the endomorphism $\theta \in \SL_7\C$ by 
$$	\theta = \begin{pmatrix} 0&  & & & & \alpha &\\
				     c_1 & 0&  & & & & \alpha \\
			  	      &c_2 &0 & & & &\\
			 	      & & c_3 &0 && &\\
				      & & & c_4 & 0& &\\
				      & & & & c_5 &0&\\
				      & & & & & c_6&0\\
				     \end{pmatrix} .	
$$ 

Of course, $\theta$ serves as an abstract representative for the Higgs field $\varphi(z)$. In the following lemma, the constant $C_2$ has no dependence on $\alpha$. 

\begin{lemma}\label{CommutatorBound}
Suppose $h, h '$ are mutually diagonalizable hermitian metrics on $\C^7$ with $h'(z_1, z_2)= h(z_1, Az_2)$ for $A \in \mathsf{SL}_7\C$.
Suppose further that there are constants $B, C$ such that 
$\frac{ ||e_i||_h}{ ||e_{i+1} ||_h} \leq C$ for all $i$ and $ || A ||_h \leq B$. 
There exists a constants $\epsilon_1 > 0, C_2 > 0$ such that if $ 0 < \epsilon < \epsilon_1$, then 
$$|| \, [\theta, A] A^{-1/2} \, ||_{h_{\bm{u}} } < \epsilon \; \mathrm{implies} \;  || \, A- I\, ||_{h_{\bm{u}} } < C_2 \epsilon .$$
\end{lemma}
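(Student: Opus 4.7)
The plan is to exploit the mutual diagonalizability: in a common $h,h'$-eigenbasis $(e_i)_{i=1}^7$, the endomorphism $A$ is diagonal, $A = \mathrm{diag}(a_1, \ldots, a_7)$ with $a_i > 0$ and $\prod_i a_i = 1$ (from $A \in \SL_7\C$). I would first derive two-sided bounds on the $a_i$: since $\|A\|_h = \sum_i \|Ae_i\|_h/\|e_i\|_h = \sum_i a_i \leq B$, we have $a_i \leq B$, and combined with $\prod_j a_j = 1$ and positivity, $a_i = 1/\prod_{j \neq i}a_j \geq B^{-6}$. Thus $B^{-6} \leq a_i \leq B$, giving in particular $a_i^{-1/2} \leq B^3$.

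Next, I would compute the commutator entrywise: $[\theta, A]_{ji} = (a_i - a_j)\theta_{ji}$, so
\[
[\theta, A]A^{-1/2} e_i = \sum_j a_i^{-1/2}(a_i - a_j)\theta_{ji}\,e_j.
\]
$h$-orthogonality of $(e_j)$ yields
\[
\frac{\|[\theta, A]A^{-1/2}e_i\|_h^2}{\|e_i\|_h^2} = \sum_j a_i^{-1}(a_i - a_j)^2|\theta_{ji}|^2\,\frac{\|e_j\|_h^2}{\|e_i\|_h^2},
\]
and the hypothesis $\|[\theta, A]A^{-1/2}\|_h < \epsilon$ forces each term of this sum to be bounded by $\epsilon^2$. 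Specializing to the subdiagonal positions $\theta_{i+1,i} = c_i \neq 0$ for $i=1,\ldots,6$, and using $\|e_i\|_h/\|e_{i+1}\|_h \leq C$ together with $a_i \leq B$, I obtain
\[
|a_i - a_{i+1}| < C'\epsilon, \qquad C' := CB^{1/2}/\min_i |c_i|,
\]
a constant independent of $\alpha$. This is the crucial observation: the subdiagonal entries $c_i$ alone already chain together consecutive eigenvalues, so the $(1,6)$ and $(2,7)$ entries involving $\alpha$ are never invoked, which is precisely why the resulting $C_2$ is $\alpha$-independent.

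Telescoping yields $|a_i - a_j| < 6C'\epsilon$ for all $i,j$. Since $\prod_k a_k = 1$ with $a_k > 0$, necessarily $\min_k a_k \leq 1 \leq \max_k a_k$, whence $|a_i - 1| \leq \max_k a_k - \min_k a_k < 6C'\epsilon$ for each $i$. Therefore $\|A - I\|_h = \sum_i |a_i - 1| < 42 C'\epsilon$, and the conclusion holds with $C_2 := 42 C'$. The threshold $\epsilon_1$ serves only to ensure that the $a_i$'s remain in a regime where the lower bound $a_i \geq B^{-6}$ is comfortably preserved under perturbation; this comes for free from the standing hypothesis $\|A\|_h \leq B$. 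The main obstacle, such as it is, is bookkeeping: tracking the nonstandard norm $\|\cdot\|_h$ through the computation and confirming that the constants never absorb any $\alpha$-dependence, which the subdiagonal-only argument above handles cleanly.
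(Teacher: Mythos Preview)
Your proof is correct and follows exactly the approach the paper has in mind: the paper does not write out its own argument but cites \cite[Lemma 3.9]{ML20a}, remarking that the proof carries over with only insignificant changes for this Higgs field. Your diagonalization of $A$, the entrywise computation $([\theta,A]A^{-1/2})_{ji}=a_i^{-1/2}(a_i-a_j)\theta_{ji}$, the use of only the nonzero subdiagonal entries $c_i$ to chain $|a_{i}-a_{i+1}|<C'\epsilon$, and the passage from $\prod_k a_k=1$ to $|a_i-1|<6C'\epsilon$ are precisely that argument, and your observation that the $\alpha$-entries are never invoked is exactly why $C_2$ is $\alpha$-independent.

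One minor remark: the lower bound $a_i\ge B^{-6}$ you derive is in fact never used in your final chain of inequalities (only $a_i\le B$ enters), so the role you assign to $\epsilon_1$ is vacuous here --- your estimate $\|A-I\|_h<42C'\epsilon$ holds for all $\epsilon>0$, and $\epsilon_1$ can be taken arbitrary. This does no harm, but you may as well drop that clause.
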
 

Combining a key inequality of Simpson with Omori-Yau will tell us that $A$ almost commutes with $\varphi$, as desired. 
The following proof is modeled on the proof of \cite[Theorem 3.23]{ML20a}. 

\begin{lemma}[Uniqueness of Complete, Mutually Bounded Solutions] \label{UniquenessMutuallyBounded}
Let $\bm{u} ,\bm{w} $ be complete, mutually-bounded solutions to \eqref{HitEuc}. Then $\bm{u} = \bm{w}$. 
\end{lemma}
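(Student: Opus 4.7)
The plan is to encode the difference between $\bm{u}$ and $\bm{w}$ as a single self-adjoint endomorphism and reduce the statement to the Simpson-Donaldson style trace inequality combined with the Omori-Yau maximum principle, following the outline indicated just before the statement. Concretely, form the diagonal harmonic metrics $h := h_{\bm{u}}$ and $h' := h_{\bm{w}}$ on $\V$ and set $A := H^{-1} H' \in \End(\V)$. Because both $h, h'$ reduce the structure group of $\V$ to $\Gtwo$ and are diagonal in the frame $(dz^i)_{i=3}^{-3}$, the endomorphism $A$ is diagonal of the form $A = \diag(a_3, a_2, a_1, 1, a_1^{-1}, a_2^{-1}, a_3^{-1})$ with $a_i > 0$; in particular $\det A = 1$ and by AM-GM
\[
\tr A \;=\; 1 + \sum_{i=1}^3 (a_i + a_i^{-1}) \;\geq\; 7,
\]
with equality if and only if $A = I$, equivalently $\bm{u} = \bm{w}$. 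Mutual boundedness of $\bm{u}, \bm{w}$ yields $\tr A \in L^{\infty}(\C)$ together with pointwise bounds on $\lVert A \rVert_h$ and $\lVert A^{-1} \rVert_h$.

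The second step is a Simpson-type Bochner inequality. Since $\varphi$ is holomorphic and both $h, h'$ satisfy \eqref{Hitchin1} with the same Higgs field, an argument parallel to \cite[Lemma 3.8]{ML20a} (itself modeled on Simpson's original computation) gives, on all of $\C$,
\[
\Delta \tr A \;\geq\; 2\,\bigl\lVert [\varphi, A]\, A^{-1/2}\bigr\rVert_{h}^{2},
\]
where $\Delta$ is the Euclidean Laplacian. The right-hand side is nonnegative, and vanishes at a point precisely when $A$ commutes with $\varphi$ there.

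Next, apply the Omori-Yau maximum principle to the bounded function $\tr A$ on the complete flat manifold $(\C, g_{\mathrm{euc}})$: one obtains a sequence $(z_k) \subset \C$ with $\tr A (z_k) \to M := \sup_{\C} \tr A$ and $\Delta \tr A (z_k) \leq 1/k$. The Bochner inequality then forces $\lVert [\varphi, A] A^{-1/2}\rVert_h (z_k) \to 0$. Working pointwise at each $z_k$ in an $h$-unitary frame adapted to the eigendecomposition of $A$, the norm ratios $\lVert e_i \rVert_h / \lVert e_{i+1} \rVert_h$ are identically $1$ and $\lVert A \rVert_h$ is uniformly bounded by the mutual boundedness constant, so the hypotheses of Lemma \ref{CommutatorBound} hold with uniform constants $B, C$. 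That lemma then yields $\lVert A(z_k) - I \rVert_h \to 0$, hence $\tr A(z_k) \to 7$. Combined with $M \geq 7$ this gives $M = 7$, so $\tr A \equiv 7$, and the equality case of AM-GM forces $A \equiv I$, i.e., $\bm{u} = \bm{w}$.

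The main obstacle is the derivation of the Bochner inequality in the present setting: one must carefully expand $\Delta \tr A$ using \emph{both} instances of Hitchin's equation for $h$ and $h'$ and recognize that the $[\varphi, \varphi^{*h}]$ cross-terms assemble into a manifestly nonnegative expression involving the commutator $[\varphi, A^{1/2}]$. A smaller technical point is the uniformity required in invoking Lemma \ref{CommutatorBound}: the norms entering that lemma are frame-dependent, but the choice of an $h$-orthonormal eigenframe for $A$ at each $z_k$ trivializes the ratio hypothesis, so that the constant $C_2$ depends only on the global mutual bound between $\bm{u}$ and $\bm{w}$ and not on the point $z_k$.
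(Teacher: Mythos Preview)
Your proof follows the same route as the paper: form $A=H_{\bm u}^{-1}H_{\bm w}$, obtain the Simpson-type inequality $\Delta\tr A\ge C\,\lVert[\varphi,A]A^{-1/2}\rVert_h^2$, apply Omori--Yau to $\tr A$, feed the resulting smallness of the commutator into Lemma~\ref{CommutatorBound}, and conclude $\tr A\le 7$. The structure is correct.

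There is one genuine gap in your invocation of Lemma~\ref{CommutatorBound}. You pass to an $h$-unitary frame so that the ratio hypothesis $\lVert e_i\rVert_h/\lVert e_{i+1}\rVert_h\le C$ becomes trivial ($C=1$). But in that rescaled frame the subdiagonal entries of the Higgs field become $c_i'=c_i\cdot \lVert e_{i+1}\rVert_h/\lVert e_i\rVert_h$, which now depend on the point $z_k$. The constant $C_2$ in Lemma~\ref{CommutatorBound} is independent of $\alpha$ but \emph{does} depend on the $c_i$; without control on those rescaled coefficients you cannot claim $C_2$ is uniform in $k$. Establishing that control is equivalent to bounding the ratios $\lVert e_i\rVert_h/\lVert e_{i+1}\rVert_h$ in the original holomorphic frame, so the frame change buys nothing. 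The paper handles this directly: it stays in the holomorphic frame $(dz^i)$ where the $c_i$ are fixed constants, and verifies the ratio bound by observing that $h^i/h^{i+1}\in\{e^{-2u_2},\,e^{-u_1+3u_2}\}$, both of which are globally bounded by the lower bounds of Lemma~\ref{CompleteLowerBounds}. This is precisely where the completeness hypothesis on $\bm u$ enters the proof, and your version bypasses it without a substitute.
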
 

\begin{proof} 
We first note that the lower bounds from Lemma \eqref{CompleteLowerBounds} imply that if $\bm{u}$ is a complete solution
to \eqref{HitEuc}, then $e^{-2u_2}$ and $e^{-u_1+ 3u_2}$ are bounded above. Write $H_{\bm{u}} = (h^i)_{i=1}^7$, temporarily changing our indexing.
By direct inspection $\frac{h^i}{h^{i+1}} \in \{e^{-2u_2}, e^{-u_1+ 3u_2} \}$ for any $1 \leq i \leq 6$. In particular, if $(e_i)_{i=1}^7$ is identified with $(dz^i)_{i=3}^{-3}$, changing our indices again, then we have a global bound $C$ such that $\frac{||e_i||_{h_{\bm{u}} }}{||e_{i+1}||_{h_{\bm{u}}}} \leq C$. 
 The mutual boundedness
of $ \bm{u} ,\bm{w} $ then tells us that if $H_{\bm{w}} = H_{\bm{u}} A$, then $|| A||_{h_{\bm{u}}}$ is bounded. The crux of the proof comes from  
Simpson. He splits the flat connection $\nabla$ into pieces by $D'_h:= D_h^{1,0} + \varphi^{*h}$ and $D_h'' := \delbar_E + \varphi$. 
He then considers a generalized
Laplacian $\Delta_h' := \sqrt{-1} \Lambda D_h'' D_h'$, where $\Lambda$ is contraction with the K\"ahler form. In our setting, $\sqrt{-1} \Lambda(dz \wedge d\zbar) = 2$ 
for the K\"ahler metric $g = |dz|^2 = \frac{1}{2} (dz \otimes d\zbar + d\zbar \otimes dz)$. Now, by \cite[Lemma 3.1(b, c)]{Sim88}, we have 
\begin{align}\label{SimpsonEquation} 
	\tr(\Delta_h' A) = - || D_h''(A) A^{-1/2} ||_{h_{\bm{u}}, g}^2, 
\end{align} 
since $h$ is a self-adjoint section of $(\End(\V), h_{\bm{u}})$ and $\det h = 1$. On the other hand, $\tr(\Delta'_h A) = - 2 \Delta \tr(A)$, where again we write $\Delta = \partial_{z}\partial_{\zbar}$. 
Note that $||dz|_g^2 = 2$ under our choice of hermitian metric $g= |dz|^2$ on $\C$. Writing out the terms of \eqref{SimpsonEquation} shows that for some $C > 0$, we have 
$$ \Delta \tr(A) = C || \, [\varphi, A] A^{-1/2} \, ||^2_{ h_{\bm{u}}, g } + C || \, \delbar_\V(A) \, A^{-1/2} \, ||^2_{h_{\bm{u}}, g } \geq C || \, [\varphi, A] A^{-1/2} \, ||^2_{ h_{\bm{u}} ,g } .$$
Since $A$ is bounded, so is $\tr(A)$. Denote $M = \sup_{z \in \C} \tr(A)$. 
Applying Omori-Yau, we get a sequence of points $(p_k)_{k=m}^{\infty}$ such that $\Delta \tr(A)(p_k) \leq \frac{1}{k} $ and $\tr(A)(p_k) \geq M - \frac{1}{k}$. 
Writing $\varphi = \hat{\varphi} dz$, the previous inequality implies for some $C_3 > 0$ that 
$$ || \, [\hat{\varphi}, A] A^{-1/2} \, ||^2_{ h_{\bm{u}} }(p_k) \leq C_3/ k. $$ 
For $N$ large enough, $k > N$ implies by 
Lemma \ref{CommutatorBound} that for some $C_4 > 0$
$$ || A - I ||_{ h_{\bm{u}}} (p_k) \leq C_4 /\sqrt{k}.$$ 
Writing $A = \diag (A_i)_{i=1}^7$, we conclude that 
$$\tr(A)(p_k) - 7 \leq \sum_{i=1}^7 |A_i(p_k)-1| \leq  7C_4 /\sqrt{k} .$$
 Hence, as $k \rightarrow \infty$, we find $\tr(A) \leq 7$. Since $A$ is diagonal, this is possible only
if $ A= I$. \end{proof} 

\alignL Combining Corollary \ref{MutualBoundedness} and Lemma \ref{UniquenessMutuallyBounded}, we have proven: \alignLend 

\begin{theorem}[Uniqueness of a Complete Solution] \label{UniquenessTheorem} 
Let $q \in H^0(\K_\C^6)$ be a polynomial. Then there is at most one complete solution to \eqref{HitEuc}. 
\end{theorem}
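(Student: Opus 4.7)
The plan is to obtain Theorem \ref{UniquenessTheorem} as a direct consequence of two ingredients already assembled in the excerpt: Corollary \ref{MutualBoundedness} (any two complete solutions are mutually bounded) and Lemma \ref{UniquenessMutuallyBounded} (mutually bounded, complete solutions must coincide). The proof itself is then a single-line combination, but the substance lives in how those two ingredients are established, and I would lay out the proof so that the reader can see the two different styles of argument cleanly separated.

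For mutual boundedness I would first invoke the global estimates $1 \leq e^{u_1 - 5u_2} \leq 6/5$, $|q|^2 e^{-2u_1 - 2u_2} \leq 3$, and $|q|^2 e^{-3u_1+3u_2} \leq 5/2$ from Lemma \ref{GlobalEstimates}. Feeding these into the Cheng--Yau maximum principle with respect to the complete flat metric $|q|^{1/3}|dz|^2$ produces the explicit lower bound $u_1 \geq \tfrac{5}{3}\log(c|q|)$, $u_2 \geq \tfrac{1}{3}\log(d|q|)$ of Lemma \ref{CompleteLowerBounds}. A dual Cheng--Yau argument, this time crucially using the completeness of $\sigma_3 = |q|^2 e^{-2u_1}|dz|^2$, delivers the matching upper bound of Lemma \ref{CompleteUpperBounds}. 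Since $|q| \to \infty$ as $|z| \to \infty$ (here is where polynomiality of $q$ is used), the upper and lower envelopes differ by a bounded quantity, so any two complete solutions $\bm{u}, \bm{w}$ have $|u_i - w_i|$ bounded on all of $\C$.

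For uniqueness given mutual boundedness I would pass to the Higgs-bundle side. Form the diagonal harmonic metrics $H_{\bm{u}}, H_{\bm{w}}$ on $\V$ and write $H_{\bm{w}} = H_{\bm{u}} A$ with $A$ diagonal and $\det A = 1$; mutual boundedness gives $\|A\|_{h_{\bm{u}}}$ globally bounded and the lower bound on $u_2$ provides the uniform ratio bound $\|e_i\|_{h_{\bm{u}}}/\|e_{i+1}\|_{h_{\bm{u}}} \leq C$ required by Lemma \ref{CommutatorBound}. Simpson's identity
$$\tr(\Delta'_h A) \;=\; -\,\| D''_h(A)\, A^{-1/2}\|^2_{h_{\bm{u}}, g}$$
unwinds to $\Delta \tr(A) \geq C_0 \|[\hat{\varphi}, A] A^{-1/2}\|^2_{h_{\bm{u}}}$. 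Since $\tr(A)$ is bounded, Omori--Yau furnishes a sequence $(p_k)$ with $\tr(A)(p_k) \to \sup \tr(A)$ and $\Delta \tr(A)(p_k) \to 0$, forcing $\|[\hat{\varphi}, A]A^{-1/2}\|(p_k) \to 0$. Lemma \ref{CommutatorBound} then yields $\|A - I\|_{h_{\bm{u}}}(p_k) \to 0$. Because AM--GM combined with $\det A = 1$ forces $\tr A \geq 7$ everywhere, the chain of inequalities collapses to $\sup \tr A = 7$, and since $A$ is diagonal and positive this gives $A \equiv I$, i.e.\ $\bm{u} = \bm{w}$.

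The main obstacle is not in the final combination, which is essentially bookkeeping, but in ensuring that the PDE-analytic ingredient (mutual boundedness produced by the Cheng--Yau bootstrap) interlocks correctly with the Higgs-theoretic ingredient (Simpson plus Omori--Yau plus the commutator estimate). Specifically, the lower bound on $u_2$ from the sub-solution $\bm{v}$ is needed not only to sandwich $\bm{u}, \bm{w}$ but also to verify the hypothesis $\|e_i\|_{h_{\bm{u}}}/\|e_{i+1}\|_{h_{\bm{u}}} \leq C$ of Lemma \ref{CommutatorBound}; and the completeness of $\sigma_3$ (not merely $g_q$) is what allows the upper bound to close up. Once those checks are in place the uniqueness assertion follows automatically.
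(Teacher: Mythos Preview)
Your proposal is correct and follows exactly the paper's approach: the paper's proof of Theorem~\ref{UniquenessTheorem} is literally the one-line combination of Corollary~\ref{MutualBoundedness} and Lemma~\ref{UniquenessMutuallyBounded}, and your exposition of how those two ingredients are obtained (Cheng--Yau bootstrap for mutual boundedness, Simpson's identity plus Omori--Yau plus the commutator estimate for uniqueness) accurately tracks the arguments given earlier in Section~\ref{Uniqueness}.
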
 

\section{Existence of Complete Solutions}\label{Existence} 

In this section, we prove the existence of a complete solution to the system \eqref{HitEuc_Clean} on $\C$ when $q$ is a polynomial using the method of sub and super-solutions. We need the following properties of $q$: 
$|q| \rightarrow \infty$ as $|z| \rightarrow \infty$, $q$ has finitely many zeros, and $|q_z| \in O(|q|)$. 
First, we produce global sub and super-solutions $\bm{u}^-, \bm{u}^+$, respectively. 
Then we furnish a global solution $\bm{u}$ by invoking a general existence result. 
In particular, we apply a result of Li \& Mochizuki \cite{ML20a} which says that a system of PDE with appropriate convexity 
which has sub and super solutions $\bm{u}^- <\bm{u}^+$ must have a solution $\bm{u}$ satisfying $ \bm{u}^- \leq \bm{u} \leq \bm{u}^+ $. 

Define the operator $F: \R^2 \rightarrow \R^2$ by
\begin{align}\label{F_system}
	F(u_1, u_2) = b ( e^{(u_1-3u_2)} - e^{-2u_1}\,|q|^2, \;  -e^{(u_1-3u_2) } + e^{2u_2}),
\end{align} 
where $b>0 $ is a constant defined earlier in Section \ref{HitchinsEquationsSubsection}.
Write $F = (F_1, F_2)$ and $\bm{u} = (u_1, u_2)$ and the system \eqref{HitEuc_Clean} becomes $\Delta \bm{u} = F( \bm{u} )$. 
The key convexity property of $F$ here is that for $i \neq j$, 
\begin{align} \label{Convexity}
	\deriv{F_i}{u_j} \leq 0.
\end{align} 
 The property \eqref{Convexity} is precisely the convexity needed to invoke the result of Li \& Mochizuki \cite{ML20a}. 

As noted earlier, away from the zeros of $q$, an exact solution to the system \eqref{HitEuc_Clean} is given by 
\begin{align}\label{ExactSolution} 
	\bm{v} = \left( \; \frac{5}{6} \log( \, |q| \, ), \; \frac{1}{6} \log( \, |q| \, ) \right).
\end{align} 
We will alter the equation \eqref{ExactSolution} to obtain global sub and super-solutions. 

\begin{definition}
A \textbf{sub-solution}, resp. \textbf{super-solution} to $\Delta u = F(u)$ on $\Omega$ is a function $\bm{u} \in W^{1,2}_{loc}(\Omega) \cap C^0(\Omega)$
such that $\Delta u^+_i \geq F(u^+_i) $, resp. $\Delta u^+_i \leq F(u^+_i) $, in the weak sense. 
 \end{definition} 
 
 \subsection{Sub-Solution} 
 
For the sub-solution, we can adapt an idea of Wan \cite{Wan92} to our setting, leveraging the hyperbolic density function,
along with \eqref{ExactSolution}. These ideas have been used similarly in \cite{DW15, ML20a, TW25}.

\begin{lemma}[Sub-Solution]\label{SubSolution} 
Let $q \in H^0(\K_\C^6)$ be a polynomial and take take $\bm{v}$ from \eqref{ExactSolution}. Then there is an $M > 0$ such that 
$$\bm{u}^- = \begin{cases} 
	 \max \{ \bm{v}, \bm{w} \} \ ; & |z| \leq M \\
	 \bm{v}			\; & |z| > M 
	 \end{cases} 
$$ is a sub-solution to \eqref{HitEuc} on $\C$,
where $ \bm{w} = ( \, \frac{5}{2} \log ( g_M), \frac{1}{2} \log ( g_M) )$ and $g_{M}|dz|^2$ is the metric of constant curvature $-2$ on the ball $B_{2M}(0) = \{z \in \C \; : \; |z| < 2M\}$ given by
	$$ g_{M} = \frac{1}{2} \; \left ( \frac{8M}{4M^2-|z|^2} \right)^2.$$
In other words, $\Delta \log g_M = g_M. $ 
\end{lemma}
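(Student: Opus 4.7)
The plan is to verify that $\bm{v}$ and $\bm{w}$ are each sub-solutions on their natural domains and then to patch them via the max operation, exploiting the monotonicity property \eqref{Convexity}.

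Computing $\bm{v}$: since $q$ is holomorphic, $\Delta \log |q| = 0$ off the zeros of $q$, so $\Delta v_i = 0$ there. A direct check gives $e^{v_1-3v_2} = e^{2v_2} = e^{-2v_1}|q|^2 = |q|^{1/3}$, whence $F_1(\bm{v}) = F_2(\bm{v}) = 0$, so $\bm{v}$ is an \emph{exact} solution (hence sub-solution) on $\C \setminus q^{-1}(0)$. Computing $\bm{w}$ on $B_{2M}(0)$: using $\Delta \log g_M = g_M$ and $w_1 = 5w_2 = \tfrac{5}{2}\log g_M$, one has $\Delta w_1 = \tfrac{5}{2} g_M$, $\Delta w_2 = \tfrac{1}{2}g_M$, and $e^{w_1-3w_2} = e^{2w_2} = g_M$. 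Hence $F_2(\bm{w}) = 0 \leq \Delta w_2$, and $F_1(\bm{w}) = b(g_M - g_M^{-5}|q|^2) \leq b g_M$. Using $b = 3d^{1/3}$ and $d = 5/(6\sqrt{3})$ from \eqref{cdSystem}, the required $b \leq 5/2$ reduces to $\sqrt{3} \leq 25/12$, which is true. So $\bm{w}$ is a sub-solution on $B_{2M}(0)$.

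A useful simplification is that $v_1 \geq w_1 \iff v_2 \geq w_2 \iff |q|^{1/3} \geq g_M$, so the componentwise max $\max\{\bm{v}, \bm{w}\}$ equals either $\bm{v}$ or $\bm{w}$ pointwise, with no ``mixing'' of components. I would choose $M$ large enough that (i) all zeros of $q$ lie in $B_M(0)$ and (ii) $|q(z)|^{1/3} > g_M(z)$ on $\{|z|=M\}$; condition (ii) is achievable because $g_M(Me^{i\theta}) = 32/(9M^2) \to 0$ as $M \to \infty$, while $|q|$ is bounded below on $\{|z|=M\}$ for $M$ large (as $q$ is a polynomial). Then $\bm{u}^-$ is continuous on $\C$ and agrees with $\bm{v}$ in a neighborhood of $\{|z|=M\}$, so the two branches of the definition glue together seamlessly.

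The main obstacle is to verify $\Delta u_i^- \geq F_i(\bm{u}^-)$ weakly across the crease curve $\mathscr{C} = \{|q|^{1/3} = g_M\} \cap B_M$ where the max switches. On either side of $\mathscr{C}$, $\bm{u}^-$ is smooth and the pointwise inequality holds by the calculations above (the zeros of $q$ lie in the region $\{\bm{u}^- = \bm{w}\}$, where $\bm{w}$ is smooth). Across $\mathscr{C}$, $\bm{u}^-$ is a pointwise max of smooth functions, so each $u_i^-$ has an upward kink contributing a nonnegative surface measure to $\Delta u_i^-$. The standard max-of-sub-solutions argument then completes the proof by testing against nonnegative $C_c^\infty$ functions and collecting the favorably signed boundary contributions on $\mathscr{C}$; the simultaneous switching of both components reduces the verification to the scalar semilinear case treated in \cite{Wan92, DW15, ML20a}, while the monotonicity \eqref{Convexity} ensures the system structure introduces no new difficulty.
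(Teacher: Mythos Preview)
Your proof is correct and follows essentially the same approach as the paper: verify that $\bm{v}$ and $\bm{w}$ are each sub-solutions of \eqref{HitEuc_Clean} on their respective domains, choose $M$ so that $\bm{v} > \bm{w}$ in a neighborhood of $\{|z|=M\}$, and invoke the max-of-sub-solutions principle. The paper simply cites \cite[Lemma~5.4]{ML20a} for this last step (which uses the off-diagonal monotonicity \eqref{Convexity}), whereas you make the additional observation that both components switch simultaneously along $\{|q|^{1/3}=g_M\}$, reducing the patching to the scalar case; this is a pleasant simplification but not strictly necessary given the cited lemma.
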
 

\begin{proof}
For a system $\Delta \bm{u} = F(\bm{u})$ satisfying \eqref{Convexity}, the max of two sub-solutions is a sub-solution
\cite[Lemma 5.4]{ML20a}. Since $\bm{v}$ is a sub-solution away from the zeros, it suffices to show $\bm{w} $ is a sub-solution and to 
 find $M$ so that $\bm{u}^-$ is continuous.

We handle continuity first.  Let $B_M:= \{ z \in \C :  |z| < M \}$. Choose $M$ large enough that $\{ \, z \; : \; |q(z) | \leq 1 \} \subseteq B_{\frac{1}{2}M}$.
Then $\bm{u}^-$ is continuous on $\{ |z| < M\}$ and $ \{ |z| > M\}$ by definition, since $\bm{w}$ is bounded away from zero. 
Take $M > \frac{4\sqrt{2}}{3} $ so that for $|z| \leq M$, we have $g_{2M}(z) < 1$ and consequently $ w_1, w_2 < 0 $ on $B_M$,
while $u_1, u_2 > 0 $ on $\{ \frac{1}{2} M < |z| < M \}$. Thus, $\bm{v} > \bm{u}$ on a neighborhood of $\{ |z| = M \} $ and continuity of $\bm{u}^-$ follows. 

We now show $\bm{w}$ is a sub-solution. Recalling $ d= \frac{5}{6\sqrt{3}} $ from \eqref{cdSystem}, we find $ 0 < b = 3 \, d^{1/3}< \frac{5}{2}$. Hence, 
	\begin{align*}
		F_1( \bm{w} ) &= b( g - g^{-5}|q|^2 ) \leq b g < \frac{5}{2} g= \Delta w_1 .
	\end{align*}
Also, $ F_2( \bm{w} ) = 0 \leq \frac{1}{2}g = \Delta w_2$. Thus, $\bm{u}^-$ is a sub-solution. 
\end{proof} 

We now construct a super-solution, also by altering $\bm{v}$ from \eqref{ExactSolution}. 
The argument is more technical than that of the sub-solution, but is analogous to the super-solution in \cite[Lemma 2.4]{TW25}.

 \subsection{Super-Solution} 
 
\begin{lemma}[Super-Solution]\label{SuperSolution} 
Let $q \in H^0(\K^6_\C)$ be a non-constant polynomial. Then there is a constant $A > 1$, depending continuously on $q$, such that $\bm{u}^+ = (u^+_1, u^+_2)$ 
is a super-solution to \eqref{HitEuc_Clean} on $\C$, where
	$$ \bm{u}^+= \left( \frac{5}{12} \log(|q|^2+ A), \; \frac{1}{12} \log( |q|^2+ 2A) \; \right).$$
\end{lemma}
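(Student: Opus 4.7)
The plan is as follows. First, since $\tilde q$ is holomorphic in $z$, for any constant $c>0$ one has the Poincar\'e--Lelong-type identity $\Delta\log(|q|^2+c)=c|q_z|^2/(|q|^2+c)^2$, which gives
\[
\Delta u_1^+=\frac{5A|q_z|^2}{12(|q|^2+A)^2},\qquad \Delta u_2^+=\frac{A|q_z|^2}{6(|q|^2+2A)^2}.
\]
Writing $X:=|q|^2$ and substituting $\bm{u}^+$ into $F$, the right-hand sides take the closed form
\[
F_1(\bm{u}^+)=b\!\left[(X+A)^{5/12}(X+2A)^{-1/4}-X(X+A)^{-5/6}\right],\qquad F_2(\bm{u}^+)=b\,(X+2A)^{-1/4}\!\left[(X+2A)^{5/12}-(X+A)^{5/12}\right].
\]
Positivity of $F_2(\bm{u}^+)$ is immediate from $X+2A>X+A$; positivity of $F_1(\bm{u}^+)$ reduces to the elementary polynomial inequality $(X+A)^5\geq X^4(X+2A)$. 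The super-solution property then amounts to $\Delta u_i^+\leq F_i(\bm{u}^+)$ for $i=1,2$.

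I would then split the analysis by the size of $|z|$. Choose $R>0$ large enough that all zeros of $\tilde q$ lie in $\{|z|<R\}$. On the compact set $\{|z|\leq R\}$, both $|q|^2$ and $|q_z|^2$ are uniformly bounded. Using $A/(X+A)^2\leq 1/A$ and $A/(X+2A)^2\leq 1/(4A)$, one obtains $\Delta u_i^+=O(1/A)$ uniformly on $\{|z|\leq R\}$, whereas as $A\to\infty$, uniformly on the same set,
\[
F_1(\bm{u}^+)\to bA^{1/6}2^{-1/4},\qquad F_2(\bm{u}^+)\to bA^{1/6}\bigl(2^{1/6}-2^{-1/4}\bigr)>0.
\]
Hence both inequalities hold on $\{|z|\leq R\}$ for all $A$ beyond a threshold $A_0(R)$ depending continuously on $\max_{|z|\leq R}(|q|^2+|q_z|^2)$.

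On $\{|z|\geq R\}$, $|q|$ is bounded below; since $\tilde q$ is a polynomial of degree $n\geq 1$ we have $|q|\asymp|z|^n$ and $|q_z|\asymp|z|^{n-1}$, so $|q_z|^2\leq C_0|q|^{2-2/n}$. Therefore $\Delta u_i^+\leq C_1\,A\,|q|^{-2-2/n}$. Expanding $F_i(\bm{u}^+)$ as $X\to\infty$ in powers of $\epsilon=A/X$, the leading $X^{1/6}$ contributions cancel by design of the exponents, and the first-order expansion yields
\[
F_1(\bm{u}^+)=\tfrac{3bA}{4}\,X^{-5/6}+O(X^{-11/6}),\qquad F_2(\bm{u}^+)=\tfrac{5bA}{12}\,X^{-5/6}+O(X^{-11/6}).
\]
Since $-2-2/n<-5/3$ for every integer $n\geq 1$, the bound $|q|^{-2-2/n}=o(|q|^{-5/3})$ as $|z|\to\infty$ gives $\Delta u_i^+\leq F_i(\bm{u}^+)$ on $\{|z|\geq R\}$ after possibly enlarging $R$. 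Combining both regimes produces a single threshold $A_0=A_0(q)$, depending continuously on $q$, beyond which $\bm{u}^+$ is a super-solution on $\C$. The main obstacle is the large-$X$ asymptotic expansion in the exterior regime: the exponents $5/12,1/4,1/6$ are precisely what make the leading $X^{1/6}$ terms of $F_i(\bm{u}^+)$ cancel, and one must carefully extract the positive first-order corrections $\tfrac{3A}{4}X^{-5/6}$ and $\tfrac{5A}{12}X^{-5/6}$ with the correct sign. Once these expansions are justified, the comparison $|q|^{-2-2/n}\ll|q|^{-5/3}$ makes the inequality immediate, and the two regimes patch together to give a super-solution on all of $\C$.
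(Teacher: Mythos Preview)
Your computations are correct, and your approach---splitting into a compact disc and its exterior---is different from the paper's. There is, however, a gap in coordinating the two regions. On the exterior you write $F_i(\bm u^+)=c_iAX^{-5/6}+O(X^{-11/6})$ and compare with $\Delta u_i^+\le C_1A\,X^{-1-1/n}$; but the implied constant in $O(X^{-11/6})$ depends on $A$: the expansion is in $\epsilon=A/X$, so the error is really $O(A^2X^{-11/6})$. Thus $F_i\gtrsim c_iAX^{-5/6}$ is only valid where $X\gg A$, and your ``after possibly enlarging $R$'' forces $R$ to depend on $A$. Since $A$ was already chosen on $\{|z|\le R\}$ as a function of $R$, the two choices are circular. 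The patch is to interpose the regime $\{|z|\ge R,\ X\le A\}$: there $|q_z|^2\le C_0X^{1-1/n}\le C_0A^{1-1/n}$ and $(X+A)^2\ge A^2$ give $\Delta u_i^+=O(A^{-1/n})$, while substituting $Y=X/A\in[0,1]$ into your exact formula for $F_1/b=(X+A)^{-5/6}\bigl[(X+A)^{5/4}(X+2A)^{-1/4}-X\bigr]$ (and a mean-value estimate for $F_2$) yields $F_i\ge cA^{1/6}$, so the inequality holds for large $A$. On $\{X\ge A\}$, the strict positivity you already proved upgrades to a uniform lower bound $F_i\ge c_0 bAX^{-5/6}$ for $\epsilon=A/X\in(0,1]$, and the comparison with $C_1AX^{-1-1/n}$ becomes $A$-independent. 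With this extra case the circularity dissolves.

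The paper avoids region-splitting entirely: it clears denominators to form functions $f_i(z)$, observes that $f_i\to+\infty$ as $|z|\to\infty$ so that a global minimum at some $z_i(A)$ exists, and then proves $f_i(z_i(A))\to+\infty$ via a case analysis on whether $|q(z_i(A))|^2$ is bounded, sublinear, linear, or superlinear in $A$. This treats the $A$-dependence directly rather than through a fixed radius.
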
 

\begin{proof}
A direct calculation shows 
$\Delta u_1^+ = \frac{5}{12} \frac{ A|q_z|^2}{ (|q|^2 + A)^2} $
and 
 $\Delta u_2^+ = \frac{1}{6} \frac{ A|q_z|^2}{ (|q|^2 + 2A)^2}. $

Then evaluating $F_i$, we find 
\begin{align}
F_1(u_1^+, u_2^+ ) &=b\,( |q|^2+A\,)^{5/12}(\, |q|^2+2A \,)^{-1/4}- b(\, |q|^2+A \, )^{-5/6} \, |q|^2\\
F_2(u_1^+, u_2^+) &= -b (\, |q|^2+A\, )^{5/12}(\, |q|^2+2A \, )^{-1/4} + b(\, |q|^2+2A \,)^{1/6} 
\end{align}
The equations for $u$ to be a super-solution, $ \Delta u_1^+ \leq F_1(u_1^+, u_2^+) $ and $ \Delta u_2^+ \leq F_2(u_1^+, u_2^+)$, are:
\begin{align}
	\label{Supersolution_1} 	\frac{5}{12} A|q_z|^2 &\leq b( |q|^2+A)^{29/12}(|q|^2+2A)^{-1/4}-b(|q|^2+A)^{7/6}|q|^2 \\
	\label{Supersolution_2} 	\frac{1}{6} A|q_z|^2 &\leq -b(|q|^2+A)^{5/12}(|q|^2+2A)^{7/4} +b(|q|^2+2A)^{13/6} .
\end{align}
For $A \in \R_{>0}$ fixed, we are led to define the functions
\begin{align*} 
	 f_1(z) &= b(|q|^2+A)^{29/12}(|q|^2+2A)^{-1/4}- b( |q|^2+A)^{7/6}|q|^2 - \frac{5}{12}A |q_z|^2  \\
	f_2(z) &= -b(|q|^2+A)^{5/12}(|q|^2+2A)^{7/4} +b(|q|^2+2A)^{13/6} -\frac{1}{6}A |q_z|^2 .
\end{align*} 
By definition, \eqref{Supersolution_1}, \eqref{Supersolution_2} hold if and only if $f_1, f_2 \geq 0$.
Hence, it suffices to prove that for $A$ sufficiently large, $f_1, f_2 \geq 0$ for all $z$. We prove this by showing that
$f_1, f_2 \rightarrow +\infty$ uniformly as $A \rightarrow \infty$. 

We now take the asymptotic expansion of $f_i$, for $A$ fixed. The (a priori) highest order terms $\asymp (|q|^2)^{13/6}$ cancel for $f_1, f_2$
and we find the asymptotics
\begin{align}
	\label{asymptotic_f}	f_1(z) &= \frac{3}{4} b \, A \, ( |q|^2)^{7/6}  + \, O( \; (|q|^2)^{1/6} \; ) \\
	\label{asymptotic_g} f_2(z) &= \frac{5}{12} b\, A \, (|q|^2)^{7/6} + \, O( \; (|q|^2)^{1/6} \; ).
\end{align}

The asymptotic expressions \eqref{asymptotic_f}, \eqref{asymptotic_g} imply that each $f_i$ has a global minimum for $A$ fixed, using here that
 $|q| \rightarrow +\infty$ as $|z| \rightarrow +\infty$. We define 
$z_i(A) \in \C$ as the location of the global minimum of $f_i$. To finish the proof, we now show $f_i( z_i(A)) \rightarrow +\infty$ 
as $A \rightarrow +\infty$.

\textbf{Case 1.} Suppose that $ z_1(A) $ remains bounded in a ball $B$ as $A \rightarrow \infty$. 
Then set $M = \sup_{z \in B} |q(z)|^2$ and $M' = \sup_{w \in B} |q_z(w)|^2$. Observe that
$$ f_1( z_1(A)) \geq b \frac{ A^{29/12} }{( M+ A)^{1/4}} - b(M+A)^{7/6}M - \frac{5}{6}A M'  \asymp A^\frac{13}{6}.$$
Thus, $f_1( z_1(A)) \rightarrow \infty$ in this case. The argument is very similar for $f_2$ if $z_2(A)$ remains
bounded.\\

\textbf{Case 2.} Otherwise, $|z_1(A)|, |z_2(A) |$ are unbounded as $A \rightarrow +\infty$. We can disinguish three subcases
here depending on the asymptotics of $|\, q(z_1(A) )\, |^2$ and $|\,q(z_2(A)) \,|^2$.

	\emph{Case 2(i)}. First, suppose that $|\, q(z_1(A) )\, |^2$ is superlinear in $A$. Then a similar asymptotic expansion of $f_1(z_1(A))$, similar to \eqref{asymptotic_f}, 
	applies. In particular, one finds $A^{7/6} \in o(f_1(z_1(A)))$. Thus, $f_1(z_1(A)) \rightarrow +\infty$ in this case. 
	The argument is similar for $f_2$. 
	
	\emph{Case 2(ii).} Suppose that $|\, q(z_1(A) )\, |^2$ is sublinear in $A$. Using $|q_z| \in O( \, |q| \,)$, we expand 
	$$ f_1(z_1(A)) = b( \, o(A) + A)^{29/12}( o(A) + 2A)^{-1/4} - b( o(A) + A)^{7/6}o(A) - o(A^2) \asymp A^{13/6}.$$
	Hence, $ f_1(z_1(A)) \rightarrow +\infty$ again. Similarly, when $|\, q(z_2(A) )\, |^2 $ is sublinear, 
	$f_2(z_2(A)) \asymp A^{13/6}$. 
	
	\emph{Case 2(iii).} In the final case, we have roughly linear growth. More precisely, by excluding the previous cases, we have 
	$(A_n)_{n=1}^{\infty}$ such that $A_n \rightarrow \infty$ and $|\, q(z_1(A_n) ) \, |^2 = r_n A$ for a sequence $(r_n)$
	with $\alpha < r_n < \beta$ for some fixed constants $0 < \alpha <\beta$. 
	Take subsequences as necessary so that $r_n \rightarrow r > 0$ as $n \rightarrow \infty$.
	Then 
	$$ f_{1}(z_{1}(A_n)) = bA r_n \, (r_n A+A)^{7/6}\, \left( \,  \frac{ (1+ \frac{1}{r_n} \,)^{5/4}}{(1+ \frac{2}{r_n} )^{1/4}} - 1\right) + o( \, A^{13/6} \, )$$
	Using Bernoulli's inequalities, $ (1+ \frac{1}{r_n} \,)^{5/4} \geq \frac{5}{4} r_n \geq (1+ \frac{5}{r_n} \,)^{1/4}$, so again $f_1(z_1(A)) \asymp A^{13/6}$.
	The argument is similar but slightly simpler for $f_2$ in this case. 
\end{proof} 

We can now invoke the aforementioned existence theorem from \cite[Proposition 5.2]{ML20a} since we have produced sub and super-solutions. 
For $\bm{u} = (u_j), \bm{w} = (w_j)$, we write $\bm{u} \leq \bm{w}$ if and only if $u_j \leq w_j$ for all $j$. 

\begin{proposition}\label{PropExistenceTheorem} 
Let $\Delta_g u_k = F(x, u_1, u_2)$ be a system of PDE for $\bm{u} = (u_1, u_2)$ with $\deriv{F_i}{x_j} \leq 0 $ on a non-compact manifold $(M,g)$. If there exist sub and super-solutions $\bm{u}^- < \bm{u}^+$, then there is a smooth solution $\bm{u}$ satisfying $\bm{u}^- \leq \bm{u} \leq \bm{u}^+$. 
\end{proposition}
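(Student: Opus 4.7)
The plan is to follow the classical sub- and super-solution method for elliptic systems, exhausting $M$ by relatively compact domains, solving a Dirichlet problem on each via a shifted monotone iteration, and passing to the limit by elliptic compactness. The off-diagonal sign condition $\partial F_i/\partial u_j \leq 0$ for $i \neq j$ is precisely the structural feature that allows the iteration to preserve the order interval $[\bm{u}^-,\bm{u}^+]$ on both sides simultaneously.

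First I would fix an exhaustion $\Omega_1 \Subset \Omega_2 \Subset \cdots$ of $M$ by open sets with smooth boundary whose union is $M$. On each $\overline{\Omega_n}$, continuity of $\bm{u}^\pm$ and smoothness of $F$ allow me to choose a constant $K = K_n > 0$ large enough that, on the compact range $\{(x,\bm{t}) : x \in \overline{\Omega_n}, \; \bm{u}^-(x) \leq \bm{t} \leq \bm{u}^+(x)\}$, the map $t_i \mapsto K t_i - F_i(x,\bm{t})$ is non-decreasing in $t_i$ for each $i$. Starting from $\bm{v}^{(0)} := \bm{u}^-$, iterate
\begin{equation*}
(\Delta_g - K)\, v_i^{(k+1)} \;=\; F_i(x,\bm{v}^{(k)}) - K v_i^{(k)} \text{ in } \Omega_n, \qquad v_i^{(k+1)}\big|_{\partial\Omega_n} = u_i^-\big|_{\partial\Omega_n}.
\end{equation*}
Each step is a linear elliptic Dirichlet problem with unique smooth solution.

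The central inductive step is to check that
\begin{equation*}
\bm{u}^- = \bm{v}^{(0)} \leq \bm{v}^{(1)} \leq \cdots \leq \bm{v}^{(k)} \leq \bm{v}^{(k+1)} \leq \bm{u}^+.
\end{equation*}
For the monotonicity, one computes $(\Delta_g - K)(v_i^{(k+1)} - v_i^{(k)}) = [F_i(\bm{v}^{(k)}) - F_i(\bm{v}^{(k-1)})] - K(v_i^{(k)} - v_i^{(k-1)})$ and splits the increment of $F_i$ via the mean value theorem into a diagonal part (controlled by $K(v_i^{(k)} - v_i^{(k-1)})$ by the choice of $K$) and an off-diagonal part (of the correct sign by \eqref{Convexity} applied to the inductive hypothesis on the other component). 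The weak maximum principle for $\Delta_g - K$ with $K > 0$ and zero boundary data then delivers $v_i^{(k+1)} \geq v_i^{(k)}$. A symmetric argument against the super-solution defining inequality $\Delta_g u_i^+ \leq F_i(\bm{u}^+)$, combined with the same splitting and the off-diagonal sign, gives $v_i^{(k+1)} \leq u_i^+$. The monotone pointwise limit $\bm{u}^{(n)} := \lim_k \bm{v}^{(k)}$ then lies in $[\bm{u}^-, \bm{u}^+]$ and, by standard elliptic regularity applied to the linear equation satisfied by each iterate, is smooth and solves $\Delta_g \bm{u}^{(n)} = F(x,\bm{u}^{(n)})$ on $\Omega_n$.

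The main obstacle is precisely the interplay between the two components in this induction: because the system is competitive rather than cooperative, one must carefully track the off-diagonal sign from \eqref{Convexity} at both endpoints of the sandwich — naive applications of the maximum principle leave cross-terms of indefinite sign. Once this is handled, passage to the limit is routine: the uniform trap $\bm{u}^- \leq \bm{u}^{(n)} \leq \bm{u}^+$ gives uniform $L^\infty_{\mathrm{loc}}$ bounds, interior Schauder estimates upgrade these to uniform $C^{2,\alpha}_{\mathrm{loc}}$ bounds on any precompact $K \Subset M$ (eventually with $K \Subset \Omega_n$), and a diagonal subsequence converges in $C^2_{\mathrm{loc}}(M)$ to a global smooth solution $\bm{u}$ of $\Delta_g \bm{u} = F(x,\bm{u})$ still satisfying $\bm{u}^- \leq \bm{u} \leq \bm{u}^+$.
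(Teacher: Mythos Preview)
The paper does not prove this proposition; it is quoted as a black box from \cite[Proposition 5.2]{ML20a}. Your sketch is the standard exhaustion-plus-monotone-iteration argument and is correct: the decisive observation is that after the diagonal shift by $K$ the map $\bm{t}\mapsto Kt_i - F_i(x,\bm{t})$ is nondecreasing in \emph{every} component (the off-diagonal monotonicity coming exactly from \eqref{Convexity}), so the iterates from $\bm{u}^-$ increase and stay below $\bm{u}^+$, after which interior Schauder estimates and a diagonal extraction finish the job.
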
 

We can now prove existence of a complete solution.

\begin{theorem}[Existence of Complete Solution] \label{ExistenceTheorem} 
Let $q \in H^0(\K^6_\C)$ be a polynomial. Then equation \eqref{HitEuc} has a smooth solution $\bm{u}$ satisfying $ \bm{u}^- \leq \bm{u} \leq \bm{u}^+$. Moreover, $
\bm{u}$ is complete. 
\end{theorem}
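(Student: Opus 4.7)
The plan is to invoke Proposition \ref{PropExistenceTheorem} with the sub-solution $\bm{u}^-$ from Lemma \ref{SubSolution} and the super-solution $\bm{u}^+$ from Lemma \ref{SuperSolution}, and then extract completeness from the two-sided envelope $\bm{u}^-\leq\bm{u}\leq\bm{u}^+$. The convexity \eqref{Convexity} of the operator $F$ in \eqref{F_system} was already observed, so the structural prerequisite still to be checked is the strict ordering $\bm{u}^- < \bm{u}^+$. First I would dispose of the case that $q$ is a nonzero constant, which is excluded from Lemma \ref{SuperSolution}: in that case $\bm{v}$ of \eqref{ExactSolution} is globally smooth and explicit, and substitution gives $\sigma_1 = \sigma_2 = \sigma_3 = |q|^{1/3}|dz|^2$ (up to constants), a flat complete metric on $\C$.

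For non-constant $q$, the next step is to check $\bm{u}^- < \bm{u}^+$, enlarging the parameter $A$ of $\bm{u}^+$ if necessary (Lemma \ref{SuperSolution} is valid for all sufficiently large $A$). Outside a large disc, $\bm{u}^-$ coincides with $\bm{v}=(\tfrac{5}{6}\log|q|,\tfrac{1}{6}\log|q|)$ while $\bm{u}^+=(\tfrac{5}{12}\log(|q|^2+A),\tfrac{1}{12}\log(|q|^2+2A))$, so the elementary inequalities $\tfrac{5}{12}\log(|q|^2+A) > \tfrac{5}{12}\log|q|^2 = \tfrac{5}{6}\log|q|$ and the analogous one for $u_2$ yield strict componentwise inequality. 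Inside the disc the hyperbolic piece $\bm{w}$ of $\bm{u}^-$ is bounded above (and chosen negative in the proof of Lemma \ref{SubSolution}), whereas $\bm{u}^+$ is positive and grows with $A$; choosing $A$ sufficiently large ensures $\bm{u}^- < \bm{u}^+$ globally. Proposition \ref{PropExistenceTheorem} then produces a smooth $\bm{u}$ solving \eqref{HitEuc_Clean} with $\bm{u}^-\leq\bm{u}\leq\bm{u}^+$, and the substitution $\bm{w}\mapsto \bm{w}+(\log c^{5/6},\log d^{1/6})$ records by the reduction in Section \ref{HitchinsEquationsSubsection} a smooth solution to \eqref{HitEuc}.

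For completeness, the two-sided envelope forces the asymptotics $u_1 = \tfrac{5}{6}\log|q| + O(1)$ and $u_2 = \tfrac{1}{6}\log|q| + O(1)$ as $|z|\to\infty$, since $\log(|q|^2+A) = 2\log|q| + O(|q|^{-2})$ for $|q|$ large. Substituting into the conformal factors yields
\[
\sigma_1 \;=\; e^{u_1-3u_2}|dz|^2,\quad \sigma_2 \;=\; e^{2u_2}|dz|^2,\quad \sigma_3 \;=\; |q|^2 e^{-2u_1}|dz|^2
\]
all asymptotic to $|q|^{1/3}|dz|^2$ at infinity. Since $|q|$ is polynomial of positive degree, $\int_1^\infty |q(re^{i\theta})|^{1/6}\,dr = \infty$ along every ray, so radial escape to $\infty$ requires infinite $\sigma_i$-length, and each $\sigma_i$ is complete in the sense of Definition \ref{CompletenessDefinition}. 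The genuine technical work of the theorem has already been absorbed into Lemmas \ref{SubSolution} and \ref{SuperSolution}; the main point requiring care in this final assembly is the simultaneous choice of $M$ in $\bm{u}^-$ and $A$ in $\bm{u}^+$ so that the global ordering $\bm{u}^- < \bm{u}^+$ holds even near the zeros of $q$, where the two asymptotic models coexist.
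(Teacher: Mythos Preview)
Your proposal is correct and follows the same approach as the paper's proof, which is essentially a two-line assembly: invoke Proposition~\ref{PropExistenceTheorem} to obtain $\bm u$ between the sub- and super-solutions, then read off completeness from the fact that the envelope forces each $\sigma_i$ to be quasi-isometric to $|q|^{1/3}|dz|^2$. You are simply more careful than the paper on points it leaves implicit: you treat the constant-$q$ case separately (necessary since Lemma~\ref{SuperSolution} assumes $q$ non-constant), you verify the global ordering $\bm u^- < \bm u^+$ including near zeros of $q$, and you track the affine change of variables between \eqref{HitEuc} and \eqref{HitEuc_Clean}.
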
 

\begin{proof}
By Proposition \ref{PropExistenceTheorem}, there is a smooth solution $\bm{u}$ satisfying $ \bm{u}^- \leq \bm{u} \leq \bm{u}^+$. 
We need only check completeness. The asymptotics of $\bm{u}^-, \bm{u}^+$ imply that $e^{u_1}$ is quasi-isometric to $ |q|^{5/6}$
and $e^{u_2}$ is quasi-isometric to $ |q|^{1/6}$. It follows that $\sigma_1, \sigma_2, \sigma_3$ of each quasi-isometric to $|q|^{1/3}$
and hence complete. 
\end{proof} 

By Theorem \ref{UniquenessTheorem}, the solution in Theorem \ref{ExistenceTheorem} is the unique complete solution. 

\begin{remark} 
The equivalent sub and super-solutions to \eqref{HitEuc}, rather than \eqref{HitEuc_Clean} are given by\\
$$ \bm{u}^- = \left( \, \frac{5}{6} \log(c\, |q| ), \frac{1}{6}\log(d\, |q|) \, \right), \; \; \bm{u}^+ =  \left( \, \frac{5}{12} \log(c^2(|q|^2+A) ), \frac{1}{12}\log(d^2(|q|^2 +2A)) \, \right).$$
\end{remark} 

\section{Estimates of the Error Term} \label{ErrorEstimates} 

In this section, we prove the precise rate of exponential decay of the error term $\boldsymbol{u} - \boldsymbol{u}_-$ in a set of coordinates called \emph{natural coordinates} arising from the polynomial sextic differential $q$. We also discuss a collection of natural coordinates for $q$ covering a neighborhood of infinity, which will be crucial in the proof
of the main theorem in Section \ref{BoundaryAnnihilatorPolygon}.

\subsection{Standard Half-Planes for Sextic Differentials} \label{NaturalCoordinate} 

We now introduce the key coordinates in which the error will decay exponentially. 

\begin{definition} Given $q \in H^0(\K_\C^6)$ polynomial, a \textbf{natural coordinate} for $q$ on a domain $\Omega$ is a holomorphic function $w: \Omega \rightarrow \C$ such that $dw^6 = q(z)dz^6$. 
We call a pair $(U,w)$ a \textbf{$q$-half-plane}, when $U\subset \C$ is open and $w: U \rightarrow \Ha$ 
is a natural coordinate mapping bijectively onto the upper-half plane $\Ha = \{ z = x+iy \in \C \; | \; y > 0\}$. \end{definition} 

Locally, one can compute a natural coordinate away from the zeros of $q$ simply by setting $w(z)= \int_{z_0}^z q^{\frac{1}{6}}(\zeta) d\zeta $
for a holomorphic sixth root $q^{\frac{1}{6}}$. Later, we shall need a
a collection of natural coordinates that cover $\C$ up to a compact set, where the coordinates have convenient transition maps. 
We describe these coordinates shortly. 
 
It will be useful for us to also define $ R (\theta) := \{ z \in \C \, | \, z= re^{i \theta}, \, r \geq 0 \}$ to be the Euclidean ray at angle $\theta \in [0, 2\pi)$, based at the origin. Moreover, we will need quasi-rays as well. 
 
 \begin{definition}\label{QuasiRay} 
 A path $\gamma: \R_{\geq 0} \rightarrow \C $ of the form $ \gamma(t) = L(t) + \delta(t)$, where $L(t)= te^{i\theta}$ is a unit speed Euclidean ray and $|\delta(t)| \in o(t)$ is called a \textbf{quasi-ray}. We call $\theta$ the \textbf{angle} of the ray $L$. We call any Euclidean ray $L(t)$ such that $\gamma(t) =L(t) +\delta(t)$, where $|\delta(t)| \in o(t)$ is an \textbf{associated ray} of $\gamma$. 
We call $\gamma$ a \textbf{q-ray} when it is a Euclidean ray in a natural coordinate for $q$. 
 \end{definition} 

\begin{remark}\label{angle} 
The angle of a $q$-ray is uniquely prescribed up to $\frac{\pi}{3}$, since any two natural coordinates $w_1, w_2$
satisfy $w_2 = \beta w_1 + C$ on their overlap for $\beta$ a fixed sixth root of unity. Moreover, if $\gamma$ is a quasi-ray in $(w, U)$,
then all associated rays $L$ in $U$ have the same angle. Thus, quasi-rays have unique angles $\mod \frac{\pi}{3}$ as well. 
\end{remark} 

Before discussing the collection of natural coordinates necessary for general monic polynomial $q$, we discuss a model example for reference
and geometric motivation. Suppose $q(z) = z^n dz^6$
and consider the Euclidean rays $\alpha_k(s) = se^{ \frac{2\pi i \,}{n+6} \left(k+ \frac{ 1}{4} \right)}$ at angle $\frac{ \pi}{2(n+6)} \mod \frac{2\pi}{n+6}$. 
We will later compute the projective limit of the almost-complex curve $\nu$ along these rays. Define $ \theta_k := \frac{2\pi \, k}{n+6}$.  Consider the Euclidean sectors $U_k$ centered at angle $ \theta_k + \frac{\pi}{n+6}$ of radius $\frac{3\pi}{n+6}$: 
$$ U_k := \left \{ \; z \in \C : \; \left | \arg(z) - \left( \frac{2 \pi k}{n+6}+ \frac{\pi}{n+6} \right ) \right | < \frac{3\pi}{n+6} \,  \right \} $$ 
 Then we define $\zeta_k: U_k \rightarrow \Ha$ by $ \zeta_k (z) = \,\beta_k \, \frac{6}{n+6} z^{ \frac{n+6}{6} }$, where $\beta_k = e^{-\frac{2\pi i(k-1)}{6} }$ is chosen so that
 under $\zeta_k$, the ray $R\left( \theta_{k-1} \right) $ corresponds to $\R_{\geq 0}$ and $R\left( \theta_{k+2} \right)$ corresponds to $\R_{\leq 0}$.
 One directly computes that $ (d \zeta_k)^6 = z^n dz^6 = q$, so that $\zeta_k$ is a natural coordinate for $q$. By construction, $\zeta_k$ is a 
 biholomorphism. Furthermore, the coordinates satisfy $\zeta_{k+1} = \zeta^{-1} \zeta_k$ for $\zeta = \frac{2\pi i}{6}$. In the $\zeta_k$ coordinate, 
 the rays $\alpha_{k-1}, \alpha_k, \alpha_{k+1}$ are now rays $\gamma_i$ of angles $ \frac{\pi}{12}, \, \frac{5\pi}{12}, \, \frac{9\pi}{12}$, respectively.
 We illustrate the construction below in Figure \ref{Fig:NaturalHalfPlaneRays}. 
 
 \begin{figure}[ht]
 \centering 
 \includegraphics[width=.7\textwidth]{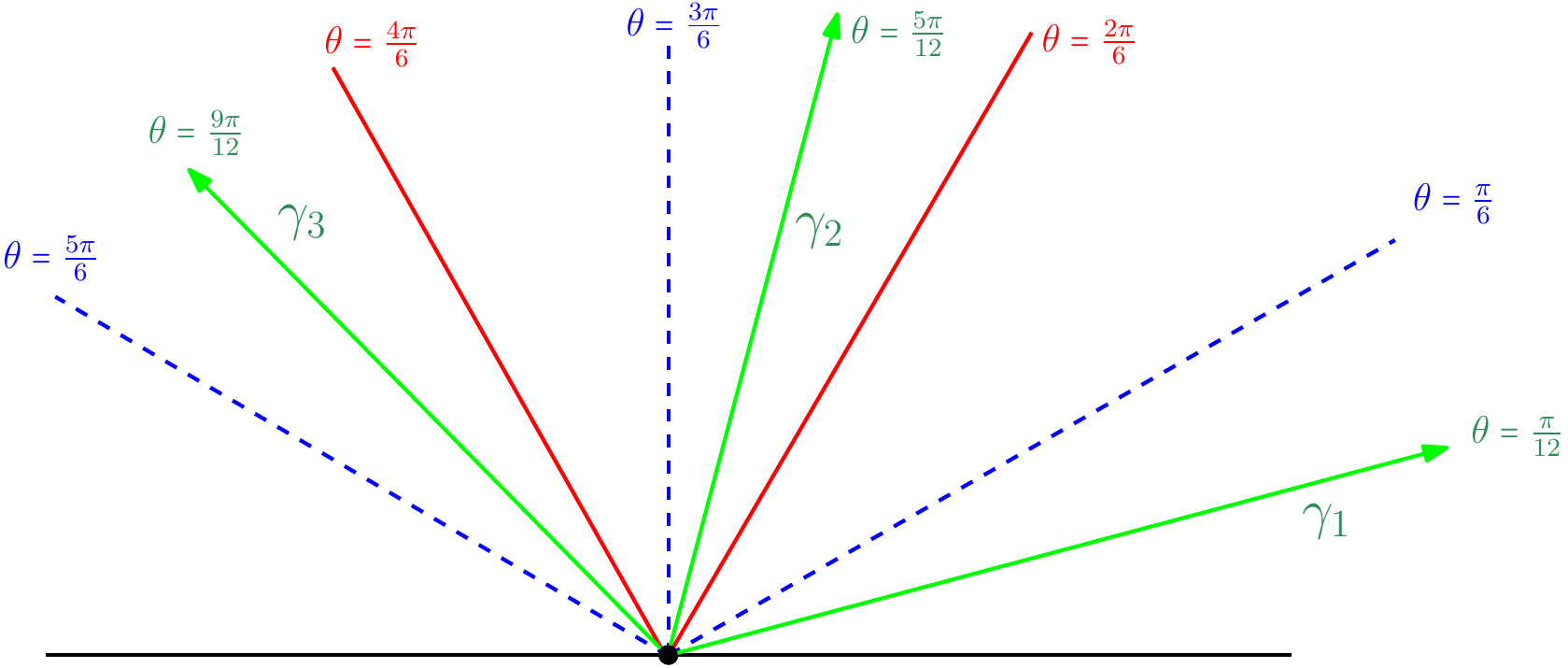} 
 \caption{\emph{\small{The image of a natural coordinate $\zeta_k$ for the monic polynomial differential $q(z) = z^n dz^6$.
 In Section \ref{AsymptoticBoundary}, we show $\lim_{s \rightarrow \infty} [\nu(\gamma(s))] $ is the same for $\gamma$ any $q$-ray of angle $\theta \in S_k:= (\frac{(k-1)\pi}{3}, \frac{k\pi}{3})$. The red lines are critical lines, or `Stokes lines', in the sense that the projective limit of $\nu$ along $q$-rays changes as we cross them. The rays $\gamma_i$
 serve as representative rays for the sector $S_i$. The dotted blue lines are `Stokes lines' for $\psi \psi_0^{-1}$, but not for $\nu$.} }} 
 \label{Fig:NaturalHalfPlaneRays} 
 \end{figure} 
 
We now note that a version of the construction holds in general for any monic polynomial $q$, as shown by Dumas-Wolf, 
by treating the general case as a deformation of the model case of a monomial \cite{DW15}. The essential difference is that the Euclidean rays $\alpha_k, \alpha_{k-1}, \alpha_{k+1}$ are now only eventually contained in $U_k$ and moreover these Euclidean rays in the $z$ coordinate become quasi-rays in a natural coordinate $\zeta_k$ for $q$. 
This collection of half-planes from \cite[Proposition 3.2]{DW15} will be crucial for the proof of the main theorem, Theorem \ref{THM:AnnihilatorPolygon}. 
 
\begin{lemma}[Standard Half-Planes]\label{StandardHalfPlanes} 
Let $q \in H^0( \K_\C^6)$ be a monic polynomial of degree $n$ and $K$ a compact set containing the zeros of $q$. We can find a compact set $K' \supset K$ 
and a set $\{ (w_k, U_k) \}_{k=1}^{n+6}$ of $q$ half-planes such that  
	\begin{enumerate}
		\item $ \C - \bigcup_{k=1}^{n+6} U_k = K' $. 
		\item The rays $R \left ( \frac{2\pi }{n+6}\left( j + \frac{1}{4} \right) \right)$ are eventually contained in $U_k$ for $j \in \{k-1, k, k+1\}$. 
		\item The rays $R \left( \frac{2\pi (k \pm 2) }{n+6} \right)$ are disjoint from $U_k$. 
		\item Euclidean rays are quasi-rays in each $q$-half-plane $(w_k, U_k)$. Moreover, $R \left ( \frac{2\pi }{n+6}\left( j+ \frac{1}{4} \right) \right)$
		are $q$-quasi-rays of angles $\frac{\pi}{12},\frac{5 \pi}{12}, \frac{9\pi}{12}$ for  $j = k-1, k, k+1$, respectively. 
		\item On overlaps $U_{k} \cap U_{k+1}$, we have $w_{k+1} = \zeta^{-1} w_k + c$ for $\zeta = e^{\frac{2\pi i}{6} \, }$ 
			and $w_{k}, w_{k+1}$ map $U_k \cap U_{k+1}$ onto a Euclidean sector of angle $\frac{2\pi}{3}$.
		\item Every $q$-ray $\gamma: \R_{+} \rightarrow \C$ is eventually contained in some $U_i$. 
	\end{enumerate} 
\end{lemma}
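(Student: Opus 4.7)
The plan is to prove the lemma by first verifying all six assertions for the monomial model $q_0 = z^n\, dz^6$ by direct computation, and then passing to a general monic polynomial $q = z^n + a_{n-1} z^{n-1}+\cdots$ via the perturbative relation $q/q_0 \to 1$ at infinity. In the model, the stated natural coordinates $\zeta_k = \beta_k \tfrac{6}{n+6}\, z^{(n+6)/6}$ on the Euclidean sectors $U_k^0$ of half-width $\tfrac{3\pi}{n+6}$ centered at $\tfrac{(2k+1)\pi}{n+6}$ are biholomorphisms onto $\Ha$, and properties (1)--(6) follow by inspection: e.g.\ (5) reflects that the overlap $U_k^0\cap U_{k+1}^0$ is a Euclidean sector of angular width $\tfrac{4\pi}{n+6}$, which taking $\tfrac{n+6}{6}$-th powers maps to one of angular width $2\pi/3$.

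For general monic $q$, I would choose a compact set $K'$ containing the zeros of $q$ large enough that on each $U_k^0 \setminus K'$ a single-valued holomorphic sixth root $q^{1/6}$ exists, selected so that $q^{1/6}/q_0^{1/6} \to 1$ at infinity. Since $|q-q_0| = O(|z|^{n-1})$, one has $|q^{1/6} - q_0^{1/6}| = O(|z|^{n/6-1})$, and integration gives
\[
w_k(z) \;:=\; \int_{z_{0,k}}^z q^{1/6}\, d\zeta \;+\; C_k \;=\; \beta_k \tfrac{6}{n+6}\, z^{(n+6)/6} \;+\; O\bigl(|z|^{n/6}\bigr) \;+\; C_k,
\]
for a suitable basepoint $z_{0,k}$ and constant $C_k$. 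This asymptotic formula yields property (4) immediately: the image of the Euclidean ray $R(\theta)$ under $w_k$ is a Euclidean ray of angle $\arg \beta_k + \tfrac{n+6}{6}\theta$ plus an error of lower order, hence a quasi-ray in the sense of Definition \ref{QuasiRay}; plugging in $\theta = \tfrac{2\pi}{n+6}(j+\tfrac{1}{4})$ for $j\in\{k-1,k,k+1\}$ gives the three angles $\pi/12, 5\pi/12, 9\pi/12$ as claimed.

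The actual domain $U_k$ is obtained from $U_k^0$ by replacing its two bounding Euclidean rays with the critical horizontal trajectories of $q$ emanating from $\partial K'$ which, by the above asymptotic, are asymptotic to $R(\theta_{k-1})$ and $R(\theta_{k+2})$; taking $K'$ large enough these trajectories are $o(|z|)$-close to the corresponding Euclidean rays, so the three sub-rays of property (2) lie eventually inside $U_k$ while the rays of property (3) lie on the far side of the bounding trajectories, and the $n+6$ sectors $U_k$ cover the complement of some enlarged compact $K'$, giving (1). That $w_k \colon U_k \to \Ha$ is a biholomorphism then follows from: non-vanishing of $q^{1/6}$ on $U_k$ (so $w_k$ is a local biholomorphism), the bounding trajectories being horizontal and hence mapping into $\R = \partial \Ha$, and properness of $|w_k|$ along horizontal trajectories; adjusting $C_k$ and $z_{0,k}$ normalizes the image to $\Ha$ exactly. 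Property (5) reduces to the model case because both $w_k$ and $\zeta^{-1} w_{k+1}$ are natural coordinates for $q$ on the connected overlap $U_k \cap U_{k+1}$ and hence differ by an additive constant, the sixth root of unity $\zeta = e^{2\pi i/6}$ being forced by comparing leading terms to the model. Property (6) follows from (2) and Remark \ref{angle}: any $q$-ray has a well-defined asymptotic angle modulo $\pi/3$ which places it eventually in the interior of some $U_i$.

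The main obstacle is the biholomorphism claim $w_k\colon U_k \to \Ha$, more specifically the geometric analysis of the critical horizontal trajectories bounding $U_k$: one must show they extend to infinity without re-entering $K'$ and remain $o(|z|)$-close to the corresponding model Euclidean rays, which is where the quantitative perturbative estimate on $|q^{1/6} - q_0^{1/6}|$ is essential. Once this boundary analysis is in place, the remaining properties drop out essentially formally from the integral formula for $w_k$ and the explicit computation in the monomial model.
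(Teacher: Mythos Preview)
Your treatment of (1)--(5) is essentially the content of \cite[Proposition 3.2]{DW15}, which the paper simply cites without reproducing; so for those items you are supplying the details the paper omits, and your perturbative sketch (monomial model plus $O(|z|^{n/6})$ correction to the natural coordinate) is the right idea. The paper's own proof of this lemma consists entirely of the sentence ``All of (1)--(5) are explained in the proof from \cite{DW15}'' followed by a direct verification of (6) only.

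The gap is in your argument for (6). You write that it ``follows from (2) and Remark~\ref{angle}: any $q$-ray has a well-defined asymptotic angle modulo $\pi/3$ which places it eventually in the interior of some $U_i$.'' But property (2) concerns specific Euclidean rays in the $z$-coordinate, not arbitrary $q$-rays, and knowing the angle of a $q$-ray modulo $\pi/3$ does not by itself select one of the $n+6$ half-planes, nor does it rule out the ray passing through several $U_k$ before settling. The paper's argument is different and more careful: it first uses completeness of the flat metric $|q|^{1/3}|dz|^2$ to guarantee that any $q$-ray leaves every compact set, hence eventually enters $\bigcup_k U_k$; then it reads off the angle $\theta$ of $\gamma$ in the coordinate $w_i$ of whichever $U_i$ it first lands in, and performs a finite case analysis on $\theta$ (using the transition rule (5), which shifts the angle by $\pi/3$ when passing to an adjacent half-plane) to show that after at most two transitions the ray has angle in $(0,\pi)$ in some $U_j$ and therefore remains there. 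Your one-line reduction skips this termination argument, which is the actual content of (6).
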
 
\begin{proof} 
All of (1) -- (5) are explained in the proof from \cite{DW15}, except (6), which we now check. Let $\gamma: \R_{+} \rightarrow \C$ be a $q$-ray. 
Since $g_q: =|q|^{1/3}$ is complete and $\gamma$ is a $g_q$-geodesic by definition, $\gamma$ leaves compacta. 
Thus, for some $s_0$, we have $\gamma(s) \in \bigcup_{k=1}^{n+6} U_j$ for $s \geq s_0$. 
We claim that for some index $j$, we have $\gamma(s) \in U_j$ for $s \geq s_1 > s_0$. 
To see this, write $\gamma(s_0) \in U_i$.
Then let $\theta \in [0, 2\pi)$ be the angle of $\gamma$ in $U_i$. If $\theta \in (0, \pi)$, we find $\gamma(s) \in U_i $ for $s \geq s_0$. 
If $\theta \in [\pi, \frac{4\pi}{3} )$, we find $ \gamma$ moves into $U_{i+1}$, where it has angle $\theta - \frac{\pi}{3} \in U_{i+1}$
and hence remains in $U_{i+1}$. The argument is similar if $\theta \in ( \frac{5\pi}{3}, 2\pi] $, where $\gamma$ moves into $U_{i-1}$ and remains there. 
In the final case that $\theta \in [\frac{4\pi}{3}, \frac{5\pi}{3}]$, then $\gamma$ travels into an adjacent half-plane $U_{i \pm 1}$ in which it has angle $\theta \pm \frac{\pi}{6}$,
reducing to a previous case. We conclude that (6) holds. \end{proof} 

Denote $\mathcal{Z} := q^{-1}(0)$ to be the zeros of $q$ and we define $t: \C \rightarrow \R_{\geq 0}$ by 
\begin{align}\label{tdistance} 
	t(p) := \mathsf{dist}_{g_q}(p, \mathcal{Z} ).
\end{align} 
Later, we will need $q$-half-planes from \cite[Proposition A.1]{DW15} that have nice asymptotics with respect to $t$. 

\begin{proposition}[Good Asymptotic Half-Planes]\label{t_distance} 
Let $q \in H^0(\K_\C^6)$ be a monic polynomial and $K$ a compact set containing the zeros of $q$. Then there are constants $A, R, a > 0 $ such that
for any $p \in \C$ with $ t(p) > R$, there is a $q$-half-plane $(w, U)$ with $U \cap K = \emptyset$ and 
$$ \Im(w(p)) > t(p) - A. $$
Moreover, for $x \in \partial U$ in the boundary of this half-plane, we have 
$$ t(x) > a| \Re( w(x)) |\; \; \text{as} \; \; x \rightarrow \infty. $$
\end{proposition}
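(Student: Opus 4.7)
The plan is to adapt the argument of \cite[Proposition A.1]{DW15} using the collection of standard half-planes from Lemma \ref{StandardHalfPlanes}. The central principle is that a natural coordinate $w$ is a local isometry between $(\C \setminus \mathcal{Z}, g_q)$ and $(\C, |dw|^2)$, so $g_q$-geodesics become Euclidean straight lines in $w$-coordinates and $g_q$-distances become Euclidean distances.

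First, given $p$ with $t(p) > R$ for a large $R$ to be chosen, I would select a zero $\alpha \in \mathcal{Z}$ realizing the distance $t(p) = d_{g_q}(p, \alpha)$, and let $\gamma \colon [0, t(p)] \to \C$ be a shortest $g_q$-geodesic from $\alpha$ to $p$. Since $\mathcal{Z} \subset K$ is finite and contained in a compact set, once $R$ is large enough the geodesic $\gamma$ eventually enters and remains in some standard half-plane $U_k$, where it becomes a straight Euclidean segment in $w_k$-coordinates. By post-composing $w_k$ with the map $\zeta \mapsto \zeta^j w_k - c$, where $\zeta^j$ ranges over the six sixth roots of unity (each preserving the condition $dw^6 = q \, dz^6$ up to replacing branches) and $c \in \R$ is a real translation, I would arrange that the image of $\gamma$ lies along the positive imaginary axis with $w(\alpha) = 0$ and $w(p) = i t(p)$. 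This gives a natural coordinate $w$ on an initial domain containing $\gamma$.

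Next I would argue that $w$ extends to a $q$-half-plane $(w, U)$ with $U \cap K = \emptyset$. Analytic continuation of $w$ in $\C \setminus \mathcal{Z}$ is obstructed only by encountering other zeros, all of which lie in the compact set $K$. Comparing the extension with the standard half-plane $w_k$, which already maps biholomorphically onto $\mathbb{H}$ and avoids $K$ for $U_k$ sufficiently far from $K'$, I would show that $w$ likewise extends onto all of $\mathbb{H}$ and that $U$ is disjoint from $K$, up to an additive shift bounded by $A := \mathrm{diam}_{g_q}(K')$. With the normalization $w(\alpha) = 0$, we get $\Im w(p) = |w(p)| = t(p)$ exactly; allowing for the bounded adjustment needed when the geodesic $\gamma$ initially passes near $K'$ before entering the model half-plane, we obtain $\Im w(p) > t(p) - A$.

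For the asymptotic boundary bound, observe that $\partial U$ is a $g_q$-geodesic line in $w$-coordinates (the real axis), and contains the zero $\alpha$ mapped to $0$. Points $x \in \partial U$ with $|\Re w(x)|$ large are Euclidean-far from every zero $\alpha' \in \mathcal{Z}$ mapped to $\partial U$, since the images $w(\alpha')$ form a finite set confined to a bounded portion of $\R$ (by the boundedness of $K$ in $g_q$-distance from $\alpha$, via \eqref{MetricAsymptoticGrowth} type estimates and the explicit geometry of $\partial U_k$). Hence $t(x) \geq \min_{\alpha' \in \mathcal{Z}} |\Re w(x) - w(\alpha')| \geq a |\Re w(x)|$ for $|\Re w(x)|$ sufficiently large, for some $a > 0$ depending only on $q$.

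The main obstacle is verifying that the natural coordinate constructed from the geodesic $\gamma$ genuinely extends to a biholomorphism $U \to \mathbb{H}$ avoiding $K$, rather than terminating at some other zero of $q$. This is handled by the alignment with the standard half-planes: once $\gamma$ enters $U_k$, the extension of $w$ is forced to agree with the appropriate branch of $w_k$ up to a sixth-root-of-unity rotation and real translation, and the standard half-planes are already maximally extended in Lemma \ref{StandardHalfPlanes}.
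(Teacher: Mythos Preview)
The paper does not prove this proposition; it simply cites \cite[Proposition A.1]{DW15}. Your plan to adapt that argument is therefore the right instinct, and your overall architecture (realize the minimizing geodesic from $\mathcal{Z}$ to $p$ as a vertical segment in a natural coordinate, then read off $\Im w(p)$ as arc-length) matches the source.

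However, your write-up contains a genuine conceptual slip that propagates through both parts of the argument. You repeatedly evaluate $w$ at zeros of $q$: you set ``$w(\alpha)=0$'' and later speak of ``the zero $\alpha$ mapped to $0$'' on $\partial U$, and of ``the images $w(\alpha')$'' for $\alpha'\in\mathcal{Z}$. None of these expressions make sense: a natural coordinate $w$ satisfies $dw^6=q\,dz^6$ and is defined only on open sets avoiding $\mathcal{Z}$, and indeed the hypothesis $U\cap K=\emptyset$ forces $\mathcal{Z}\cap\overline{U}=\emptyset$. The geodesic $\gamma$ from $\alpha$ to $p$ is not a single Euclidean segment in any natural coordinate; only its tail inside $U_k$ is. What you should do is normalize so that the \emph{entry point} of $\gamma$ into $U_k$ lies on the real axis; then $\Im w(p)$ equals the $g_q$-length of that tail, which is $t(p)$ minus a quantity bounded by $\mathrm{diam}_{g_q}(K')$. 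This gives $\Im w(p)>t(p)-A$ cleanly. Note also that post-composing $w_k$ with a sixth root of unity rotates the image half-plane, so you must then restrict to a genuine upper half-plane inside the rotated image; this costs another bounded amount absorbed into $A$.

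For the boundary estimate your argument collapses once you cannot write $w(\alpha')$. The correct reasoning is metric: $\partial U$ is a $g_q$-geodesic, parametrized by arc-length via $\Re w$, and the zeros lie in the compact set $K$ disjoint from $U$. Fixing a base point $x_0\in\partial U$, the reverse triangle inequality gives $t(x)\geq d_{g_q}(x,x_0)-d_{g_q}(x_0,\mathcal{Z})=|\Re w(x)-\Re w(x_0)|-t(x_0)$, which yields $t(x)>a|\Re w(x)|$ for large $|\Re w(x)|$. Alternatively, one can invoke the explicit description of $\partial U_k$ as (asymptotically) a Euclidean ray in the $z$-plane from Lemma~\ref{StandardHalfPlanes} together with the growth $t\asymp |z|^{(n+6)/6}$.
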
 

 \subsection{Asymptotics of Decay Term} \label{AsymptoticDecay}

In this subsection, we prove the precise a precise asymptotic for the error term $\bm{\tilde{u}} := \bm{u} - \bm{u}^- $ in a natural coordinate for $q$,
where $\bm{u}$ solves \eqref{HitEuc}. 

We begin with first estimate for the error term $\tilde{\bm{u}}:= \bm{u} - \bm{u}^-$, which will be crucial for proving 
stronger estimates on the the precise rate of decay of $\tilde{\bm{u}}$.

\begin{corollary}[A First Estimate]\label{G2PDE_Bound}
Let $q \in H^0(\K_\C^6)$ be a monic polynomial of degree $n$ and $t: \C \rightarrow \R_{\geq 0}$ be the $|q|^{1/3}$-distance to the zero set of $q$. 
Then there are constants $C, R, \beta > 0$ such that $t(p) > R$, then the 
complete solution $\bm{u}= (u_1, u_2)$ to equation \eqref{HitEuc} satisfies 
	\begin{align}\label{FirstBound} 
		\begin{cases} 
		\; \; 0 \leq &u_1 - \frac{5}{6}\log(\; c |q| \;) < Ct^{-\beta} \\
		\; \; 0 \leq &u_2 - \frac{1}{6} \log(\; d | q|\; )  < Ct^{-\beta} ,
		\end{cases} 
	\end{align} 
where $ \beta = \frac{12\,n}{n+6} > 1$.
\end{corollary}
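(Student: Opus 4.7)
The plan is to sandwich the error term between $0$ and $\bm{u}^+ - \bm{u}^-$ using Theorem \ref{ExistenceTheorem}, and then estimate this gap between the super and sub-solutions in terms of $t$. The lower bound $\tilde{u}_i \geq 0$ is immediate from $\bm{u} \geq \bm{u}^-$, so the task reduces to an upper bound on $u_i^+ - u_i^-$.

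First, I would compute $\bm{u}^+ - \bm{u}^-$ explicitly using the formulas recorded at the end of Section \ref{Existence}. The $c$ and $d$ factors cancel inside the logarithms, leaving
\begin{align*}
u_1^+ - u_1^- &= \tfrac{5}{12}\log\bigl(1 + A\, |q|^{-2}\bigr), \\
u_2^+ - u_2^- &= \tfrac{1}{12}\log\bigl(1 + 2A\, |q|^{-2}\bigr).
\end{align*}
For $|q|$ bounded below, the elementary inequality $\log(1+x) \leq x$ yields $u_i^+ - u_i^- \leq C_1 |q|^{-2}$ for a uniform constant $C_1$.

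Second, I would convert $|q|^{-2}$ decay into $t^{-\beta}$ decay via the two length scales. Fix $R_0$ large enough that all zeros of $q$ lie in $B_{R_0}(0)$. Since $q$ is monic of degree $n$, we have $|q(z)| \geq \tfrac{1}{2}|z|^n$ for $|z| > R_0'$ sufficiently large. The line element of $|q|^{1/3}$ is $|q|^{1/6}|dz| \asymp |z|^{n/6}|dz|$ at infinity, so integrating along a radial path from $\partial B_{R_0'}(0)$ to $p$ gives $t(p) \leq C_2 |p|^{(n+6)/6}$. Conversely, $t$ is bounded on $B_{R_0'}(0)$, so taking $R$ large enough forces $t(p) > R$ to imply $|p| > R_0'$. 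Inverting the power relation yields $|p| \geq (t(p)/C_2)^{6/(n+6)}$, and hence
\[
|q(p)|^{-2} \leq c_3\, t(p)^{-12n/(n+6)} \;=\; c_3\, t(p)^{-\beta}.
\]
Combining with the first step gives the desired bound.

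The proof is essentially a direct computation, and the main (mild) obstacle is making the two-sided comparison between the Euclidean radius $|p|$ and the $|q|^{1/3}$-distance $t(p)$ precise: one must verify that by choosing $R$ large enough, the hypothesis $t(p) > R$ forces $p$ outside a Euclidean ball containing the zeros of $q$, so that the monic asymptotic $|q| \asymp |z|^n$ applies and the geodesics realizing $t(p)$ stay in the regime where the metric is comparable to $|z|^{n/3}|dz|^2$. This is handled by a routine compactness argument using that $|q|^{1/3}$ is a smooth (possibly singular) conformal metric whose conformal factor has the stated asymptotic at infinity.
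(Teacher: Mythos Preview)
Your proposal is correct and follows essentially the same approach as the paper: sandwich $\bm{u}-\bm{u}^-$ between $0$ and $\bm{u}^+-\bm{u}^-$ via Theorem~\ref{ExistenceTheorem}, bound the latter by $C_1|q|^{-2}$ using $\log(1+x)\le x$, and then convert $|q|^{-2}$ into $t^{-\beta}$ through the comparisons $|q|\asymp|z|^n$ and $t\lesssim|z|^{(n+6)/6}$ at infinity. The only cosmetic difference is that the paper obtains the metric comparison by invoking the standard half-planes of Lemma~\ref{StandardHalfPlanes} for the model differential $z^n\,dz^6$, whereas you integrate the line element radially; both yield the same inequality $t(p)\le C_4|p|^{(n+6)/6}$.
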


\begin{proof}
Let $p_0: = $ the origin in $\C$ and denote $d_n(p) := \mathsf{dist}_{g_n}(p,p_0)$, where $g_n := |z^\frac{n}{6} | |dz|^2$ is the flat metric associated to $q_n = z^n dz^6$.

Using the family of natural $q_n$-half-planes from Proposition \ref{StandardHalfPlanes} of the form $(\zeta_k, U_k)$ with
$\zeta_k = \alpha_k \frac{n+6}{6} z^{\frac{n+6}{6}} $, one finds that $d_n$ satisfies $ d_n \asymp \, |p|^{\frac{n+6}{6}} $. Since 
$q$ and $q_n$ are comparable away from the zeros of $q$, for $|z|$ sufficiently large: 
\begin{align}\label{Comparable_q} 
	C_2 |z|^n < |q(z) | < C_3 |z|^n. 
\end{align} 
Similarly, we have $t, d_n$ comparable for $|z| $ sufficiently large. In particular, we have 
\begin{align}\label{metric_bound} 
	t(p) < C_4 |z|^{\frac{n+6}{6}}. 
\end{align} 
Observe that $ 0 \leq \bm{u} - \bm{u}^- \leq \bm{u}^+ -\bm{u}^-$ by Theorem \ref{ExistenceTheorem}. Expanding the right-hand side, we find that for $D_1 = \frac{5}{12}, D_2 = \frac{1}{12},$ 
\begin{align}\label{Super-Sub}
	u^+_i - u_i^- = D_i \log \left( 1+ \frac{A}{|q|^2} \right) \leq D_i \frac{A}{|q|^2}. 
\end{align} 
Combining \eqref{Comparable_q}, \eqref{metric_bound}, we find that for some constants $C',C''$, 
$$ |q|^2 \geq C' |z^{2n} | \geq C'' \, t^{\frac{12n}{n+6}}. $$ 
Applying this inequality in \eqref{Super-Sub} gives the desired inequalities \eqref{FirstBound}. 
\end{proof}

We now write out the new system that the error $\bm{\tilde{u}} $ solves in a natural coordinate $w$ for $q$. 
Recall that $\tilde{u}_1 = u_1 - \frac{5}{6} \log( c|q|) $ and $\tilde{u}_2 = u_2 - \frac{1}{6}\log( d |q|)$. 
In $w$ coordinates, $\tilde{u}_1(w) = u_1(w) - \log( c^{5/6} )$ and $ \tilde{u}_2(w) = u_2(w) - \log( d^{1/6} )$. 
Since $ \Delta \equiv \Delta_{g_q}$ in $w$-coordinates, for $b = 3 \,d^{1/3} $ from earlier, we find $\bm{\tilde{u}}$ solves
	 \begin{align}\label{PDE_NaturalCoordinate}
 	\begin{cases} 
		 \Delta  \tilde{u}_1 & = b \left( \, e^{\tilde{u}_1-3\tilde{u}_2} -  \, e^{-2\tilde{u}_1}\right)  \\
		\Delta  \tilde{u}_2 &= b \left(   e^{2\tilde{u}_2 } - e^{\tilde{u}_1-3\tilde{u}_2 } \right). \end{cases} 
 	\end{align} 
 The linearization of the system \eqref{PDE_NaturalCoordinate} at zero is  
 \begin{align}
 	\begin{cases} \label{PDE_LinearizedNaturalCoordinate} 
 		\Delta \tu_1 &= b ( 3 \tu_1 - 3\tu_2)  \\
		\Delta \tu_2 &= b ( 5 \tu_2-\tu_1  ) 
	 \end{cases}
\end{align} 
 
Set $x_1 = \tu_1 + \tu_2$ and $x_2 = \tu_1 - 3\tu_2$. Then up to first order, $x_1, x_2$ are eigenvectors of the linearized system \eqref{PDE_LinearizedNaturalCoordinate},
with eigenvalues $2b, 6b$, respectively.  We are led to recall some results from \cite[Lemma 5.8]{DW15} on the growth rate of solutions to 
$\Delta u = k u$ on the upper half-plane $\Ha$ with prescribed boundary values. 
 
 \begin{lemma}[Classical Estimates]\label{FundamentalEstimate} 
 Let $0 \leq g \in C^{0}(\R) \cap L^1(\R)$ and $k> 0$. Then the boundary value problem 
 $$ \begin{cases} \Delta h&= kh \\ h|_{\R} &= g \end{cases} $$ 
 has a solution $h \in C^{\infty}(\Ha) \cap C^0(\R)$ with $ 0 \leq h \leq \sup g$ globally. Moreover, in the standard $z = x +i y$ coordinate,
 $$ h \in  O \left( \frac{e^{ -2\sqrt{k} y} }{y} \, \right) \;\; \text{as} \;\; y \rightarrow \infty. $$ 
 \end{lemma}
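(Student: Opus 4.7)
The plan is to construct $h$ explicitly via convolution of $g$ with a Poisson-type kernel for the modified Helmholtz equation, and then read off both the bounds and the decay rate from the kernel's asymptotics. Since the paper's convention is $\Delta = \partial_{\bar z}\partial_z = \tfrac{1}{4}(\partial_x^2+\partial_y^2)$, the equation $\Delta h = kh$ becomes $(\partial_x^2+\partial_y^2)h = 4kh$, the modified Helmholtz equation with parameter $\mu := 2\sqrt{k}$. The natural half-plane kernel is characterized on the Fourier side by $\widehat{P_\mu}(\xi,y) = \exp(-y\sqrt{\xi^2+\mu^2})$, and admits the explicit form
\[
P_\mu(x,y) \;=\; \frac{\mu y}{\pi\, r}\, K_1(\mu r), \qquad r = \sqrt{x^2+y^2},
\]
where $K_1$ is the modified Bessel function of the second kind. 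Define $h(x,y) := \int_{\mathbb{R}} P_\mu(x-t,y)\,g(t)\,dt$; this is the candidate solution.

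\textbf{Step 1 (Existence and regularity).} The integral defining $h$ is absolutely convergent for every $(x,y) \in \Ha$, since $g \in L^1(\mathbb{R})$ and $P_\mu(\cdot,y)$ is bounded for each fixed $y>0$. Because $P_\mu$ is smooth on $\Ha$ and satisfies $\Delta P_\mu = k P_\mu$ pointwise (by construction of its Fourier transform), differentiation under the integral yields $h \in C^\infty(\Ha)$ with $\Delta h = kh$. The boundary trace $h|_{\mathbb{R}} = g$ follows because $\{P_\mu(\cdot,y)\}_{y>0}$ is an approximate identity on $\mathbb{R}$ as $y\to 0^+$ (evident from $\widehat{P_\mu}(\xi,0) = 1$ and dominated convergence applied at continuity points of $g$).

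\textbf{Step 2 (Pointwise bounds).} Positivity $h\ge 0$ is immediate from $P_\mu\ge 0$ and $g\ge 0$. For the upper bound I would observe that
\[
\int_{\mathbb{R}} P_\mu(s,y)\,ds \;=\; \widehat{P_\mu}(0,y) \;=\; e^{-\mu y}\;\leq\;1,
\]
whence $h(x,y) \leq \|g\|_\infty \, e^{-\mu y} \leq \sup g$. Alternatively, the strong maximum principle for the uniformly elliptic operator $\Delta - k$ (with $k>0$) combined with the decay from Step 3 rules out any interior maximum exceeding $\sup g$.

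\textbf{Step 3 (Decay).} The large-argument asymptotic $K_1(z) \sim \sqrt{\pi/(2z)}\,e^{-z}$ yields $P_\mu(x,y) \lesssim \mu^{1/2} y\, r^{-3/2} e^{-\mu r}$ for $\mu r$ bounded below. Writing $r = \sqrt{(x-t)^2+y^2} \geq y$ gives the pointwise bound
\[
h(x,y) \;\lesssim\; \frac{e^{-\mu y}}{\sqrt{y}} \int_{\mathbb{R}} e^{-\mu((x-t)^2/(2y))}\,g(t)\,dt
\]
valid for $y$ large, after Taylor-expanding $r = y + (x-t)^2/(2y) + O(y^{-3})$ in the exponent. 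The remaining integral is bounded by $\|g\|_{L^1}$, producing decay of order $e^{-\mu y}/\sqrt y$; to sharpen this to the stated $e^{-\mu y}/y$ one splits the $t$-integration at $|x-t| \leq \sqrt{y}$ and $|x-t| > \sqrt{y}$ and uses the $L^1$ hypothesis to absorb the outer piece, while extracting an additional factor of $y^{-1/2}$ from Laplace-type analysis of the Gaussian convolution in the inner piece, using absolute continuity of the integral of $g$.

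\textbf{Main obstacle.} Steps 1 and 2 are routine once the kernel is in hand. The real work is isolating the precise polynomial factor in Step 3: the pointwise kernel asymptotic alone only gives $e^{-\mu y}/\sqrt{y}$, and extracting the extra $y^{-1/2}$ claimed in the statement requires exploiting the $L^1$ integrability of $g$ together with stationary-phase/Laplace-method control on the Gaussian concentration of $e^{-\mu r}$ about $t=x$ as $y\to\infty$. This is the step most sensitive to the hypothesis $g \in L^1$; the $C^0$ hypothesis only ensures continuity up to the boundary.
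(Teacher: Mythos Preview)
The paper does not give its own proof of this lemma; it is quoted from \cite[Lemma~5.8]{DW15}. Your explicit-kernel construction via the half-plane Poisson kernel $P_\mu(x,y)=\frac{\mu y}{\pi r}K_1(\mu r)$ for the modified Helmholtz equation is the standard route and is essentially the argument in the cited reference, so there is no meaningful divergence of approach to report. Steps~1 and~2 are correct as written.

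The one substantive point is your Step~3. You correctly identify that the pointwise Bessel asymptotic together with $g\in L^1$ yields only $O(e^{-\mu y}/\sqrt{y})$, and that sharpening to $O(e^{-\mu y}/y)$ is where the difficulty lies. However, your sketch for the extra $y^{-1/2}$ is not convincing: splitting at $|x-t|=\sqrt{y}$ and invoking ``absolute continuity of the integral of $g$'' does not produce a rate uniform in $x$, because as $y\to\infty$ the Gaussian window \emph{widens}, and for any given $y$ one may choose $x$ so that the window sits over the bulk of $g$. Indeed, bounding $\int e^{-\mu(x-t)^2/(2y)}g(t)\,dt$ by $\|g\|_{L^1}$ is already sharp in general under the stated hypotheses. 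It is worth noting that the paper's downstream applications (see the proof of Lemma~\ref{SuperSolutionForKeyAsomptotic}, where the conclusion invoked is $h\in O(e^{-2\sqrt{3}\alpha t}/\sqrt{t})$) only ever use the $O(e^{-\mu y}/\sqrt{y})$ bound, so the discrepancy between $1/y$ and $1/\sqrt{y}$ is immaterial for the rest of the argument. Your proof of the $1/\sqrt{y}$ rate is complete and suffices for everything the paper needs.
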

 
Let us recall a result from \cite[Lemma 3.13]{TW25} on super-solutions of a quadratic version of the same equation.
 
 \begin{lemma}[Quadratic Super-solution]\label{QuadraticSuperSolution}
 Let $ g \in C^0(\R) \cap L^1(\R)$ with a global bound $0 \leq g \leq \frac{1}{4k'}$ and $k > 0$. Then there is a smooth super-solution
 $v \in C^{\infty}(\Ha) \cap C^0(\R)$ satisfying, for $z = x + i y$ on $\Ha$, 
 $$ \begin{cases} \Delta v \leq k v - k k'v^2 \\  v|_{\R} \geq g \end{cases}$$ 
as well as 
 $$ v \in O \left ( \frac{e^{ -2 \sqrt{k} y} }{\sqrt{y} } \right) \; \; \text{as} \; \; y \rightarrow \infty .$$
 \end{lemma}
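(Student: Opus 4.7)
The plan is to take $v := 2h - 4k' h^2$, where $h$ is the linear solution supplied by Lemma \ref{FundamentalEstimate}.

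First, I would apply Lemma \ref{FundamentalEstimate} with boundary data $g$ to obtain $h \in C^{\infty}(\Ha) \cap C^0(\R)$ satisfying $\Delta h = kh$, $h|_{\R} = g$, $0 \leq h \leq \sup g \leq \tfrac{1}{4k'}$, and $h \in O(e^{-2\sqrt{k}y}/y)$. Note that $h$ itself is only a \emph{sub}-solution of the target equation, since $\Delta h - kh + kk'h^2 = kk'h^2 \geq 0$, so a nonlinear modification of $h$ is required.

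Define $v(x,y) := 2h(x,y) - 4k'\,h(x,y)^2$. Smoothness on $\Ha$ and continuity up to $\R$ are inherited from $h$. The boundary inequality is $v|_{\R} = g(2 - 4k'g) \geq g$, which holds because $4k'g \leq 1$. The required decay $v \in O(e^{-2\sqrt{k}y}/\sqrt{y})$ follows from $v \sim 2h$ as $y \to \infty$ (because $h^2$ decays twice as fast as $h$), together with $h \in O(e^{-2\sqrt{k}y}/y) \subset O(e^{-2\sqrt{k}y}/\sqrt{y})$.

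The super-solution inequality is a short chain-rule computation. Using $\Delta h = kh$ and $\Delta(h^2) = 2h\,\Delta h + 2 h_z h_{\bar z} = 2kh^2 + 2|h_z|^2$, I obtain
\begin{align*}
\Delta v &= 2kh - 8k' k h^2 - 8k'|h_z|^2, \\
kv - kk'v^2 &= 2kh - 8kk'h^2 + 16k(k')^2 h^3 - 16k(k')^3 h^4,
\end{align*}
so that
$$\Delta v - kv + kk'v^2 \;=\; -8k' |h_z|^2 \;+\; 16k(k')^2 h^3\bigl(k'h - 1\bigr).$$
Both terms on the right are nonpositive because $k'h \leq \tfrac{1}{4} < 1$, yielding $\Delta v \leq kv - kk'v^2$ as desired.

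The main obstacle is conceptual: identifying the functional form $F(t) = 2t - 4k't^2$, which on the range $[0,\tfrac{1}{4k'}]$ of $h$ simultaneously satisfies $F(t) \geq t$ (ensuring the boundary inequality is preserved) and exhibits exactly the cancellation between $\Delta F(h)$ and $kF(h) - kk'F(h)^2$ needed to absorb the nonlinear defect. The hypothesis $\sup g \leq \tfrac{1}{4k'}$ is essential: $\tfrac{1}{4k'}$ is the fixed point of $F$, and this upper bound forces $k'h \leq \tfrac{1}{4}$ in the final step.
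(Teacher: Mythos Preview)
Your proof is correct. The paper itself does not supply a proof of this lemma; it simply recalls the statement from \cite[Lemma 3.13]{TW25}, so there is no in-paper argument to compare against. Your construction $v = 2h - 4k'h^2$ is clean and the computations are right: the boundary inequality uses exactly the hypothesis $g \leq \tfrac{1}{4k'}$, the super-solution inequality reduces to $-8k'|h_z|^2 + 16k(k')^2 h^3(k'h-1) \leq 0$ via $k'h \leq \tfrac14$, and the decay is immediate from that of $h$. It is worth noting that the paper does carry out an analogous construction in the proof of Lemma~\ref{SuperSolutionForKeyAsomptotic}, where the super-solution is built as $u = 2h - h^\beta$ for a suitable exponent $\beta$; your quadratic correction $2h - 4k'h^2$ is exactly the same idea specialized to the quadratic nonlinearity, so your approach is very much in the spirit of the paper.
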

 
 We can now show the precise asymptotics of the error terms $x_1, x_2$, using both Lemmas \ref{FundamentalEstimate} and \ref{QuadraticSuperSolution}. 
 It is essential to the proof that the error $\tilde{\bm{u}}$ is a priori integrable; for this fact we need Proposition \ref{G2PDE_Bound}. 
 
 \begin{lemma}[Fundamental Asymptotics]\label{FundamentalAsymptotic} 
Let $q \in H^0(\K_\C^6)$ be a polynomial and $t: \C \rightarrow \R_{\geq 0}$ be the distance from the zeros of $q$. Then for $ \alpha = 2^{1/3}3^{1/4}5^{1/3} $, as $t \rightarrow \infty$, 
$$ x_1 \in O \left( \,\frac{e^{-2\alpha \, t}}{\sqrt{t}} \, \right) \; \; \; \text{and} \; \;  \; x_2 \in O \left ( \, \frac{e^{-2 \alpha \sqrt{3} \, t }}{\sqrt{t}} \, \right) .$$
Moreover, $\Delta x_i \in O(x_i)$ satisfies the same asymptotics as $x_i$. 
 \end{lemma}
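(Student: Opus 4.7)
The strategy is to view \eqref{PDE_NaturalCoordinate} as a semilinear perturbation of its linearization \eqref{PDE_LinearizedNaturalCoordinate}, which is already diagonalized by $x_1, x_2$ with eigenvalues $2b, 6b$. A Taylor expansion of the right-hand side in $(\tilde u_1, \tilde u_2)$ produces quadratic corrections: in the $x_1$-equation these take the form $-b\, x_1^2 - b\, x_1 x_2$, and in the $x_2$-equation they are $-\tfrac{3b}{2}(x_1^2 - x_2^2)$, with all cubic and higher terms negligible near equilibrium. The rates $2\alpha$ and $2\sqrt{3}\,\alpha$ are precisely the $2\sqrt{\lambda_i}$ produced by Lemmas \ref{FundamentalEstimate}--\ref{QuadraticSuperSolution}, where $\alpha = \sqrt{2b}$.

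The plan is to run the analysis on a standard $q$-half-plane. I would fix $(w_j, U_j)$ from Proposition \ref{t_distance} with inradius to the zeros of $q$ large enough that the a priori bound $|\tilde{\bm u}| \in O(t^{-\beta})$ of Corollary \ref{G2PDE_Bound} is pointwise at most $\tfrac{1}{4k'}$ on $U_j$. Identifying $w_j(U_j) = \Ha$, the inequality $t(x) \geq a|\Re w_j(x)|$ on $\partial U_j$ forces each boundary trace $x_i|_{\R}$ to be continuous and in $L^1(\R)$, so the hypotheses of Lemma \ref{QuadraticSuperSolution} are satisfied. The bootstrap will then proceed in the order $x_1, x_2$: for $x_1$, the self-quadratic $-b x_1^2$ has the correct sign, the cross term $-b x_1 x_2$ is absorbed into a perturbed coefficient since $x_2 \to 0$, and Lemma \ref{QuadraticSuperSolution} produces a super-solution $v_1 \in O(y^{-1/2} e^{-2\sqrt{2b}\, y})$ dominating $x_1|_{\R}$, so a maximum-principle comparison on $\Ha$ yields $x_1 \leq v_1$. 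Substituting this into the source $-\tfrac{3b}{2} x_1^2$ of the $x_2$-equation yields a perturbation of size $O(y^{-1} e^{-4\sqrt{2b}\, y})$, which the same lemma absorbs to give $x_2 \in O(y^{-1/2} e^{-2\sqrt{6b}\, y})$; converting $y = \Im w_j$ to $t$ via Proposition \ref{t_distance} completes the exponential estimate.

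The main obstacle is the cross-quadratic $-\tfrac{3b}{2} x_1^2$ in the $x_2$-equation, which a priori threatens to inject a slower-decaying source into the equation for the faster-decaying variable. Its defeat rests on the numerical inequality $4 > 2\sqrt{3}$, which guarantees that $x_1^2$ is strictly subcritical with respect to the linear decay rate of $x_2$; this inequality also dictates the bootstrap order. The final Laplacian statement follows upon substituting the derived asymptotics back into \eqref{PDE_NaturalCoordinate}: every nonlinear contribution is $O(|\tilde{\bm u}|^2)$, hence of strictly faster exponential decay than $x_i$ itself, so $\Delta x_i$ inherits the rate of the leading linear term and is $O(x_i)$.
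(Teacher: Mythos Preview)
Your strategy matches the paper's: reduce to a $q$-half-plane via Proposition~\ref{t_distance}, use Corollary~\ref{G2PDE_Bound} for integrable boundary data, diagonalize the linearization in $x_1,x_2$, and bootstrap $x_1\to x_2$ using the inequality $4>2\sqrt{3}$. Two points are handled too loosely, however, and the paper addresses each explicitly.

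First, you treat $x_2$ as if it were nonnegative, but $x_2=\tilde u_1-3\tilde u_2$ has no sign. The maximum-principle comparison you invoke needs this. The paper works with $x_2':=|x_2|$ and derives the differential inequality $\Delta x_2'\ge 6b\,x_2'-18b\,x_1^2$ by a case split on $\operatorname{sgn}(x_2)$, using the exact formula $\Delta x_2=4be^{x_2}-be^{-2\tilde u_1}-3be^{2\tilde u_2}$ together with convexity bounds on the exponentials (and $\tilde u_i\ge0$). Relatedly, for $x_1$ the paper avoids your cross term $-bx_1x_2$ entirely by computing $\Delta x_1=b(e^{2\tilde u_2}-e^{-2\tilde u_1})$ and using $e^{2\tilde u_2}\ge 1+2\tilde u_2+2\tilde u_2^2$, $e^{-2\tilde u_1}\le 1-2\tilde u_1+2\tilde u_1^2$ to get the clean inequality $\Delta x_1\ge 2bx_1-2bx_1^2$; absorbing $-bx_1x_2$ ``into a perturbed coefficient'' would instead degrade the linear rate to $2b-b\epsilon$ and not directly yield $2\alpha$. (Your approach is salvageable via $|x_2|\le 3x_1$, which follows from $\tilde u_i\ge0$, but you should say so.)

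Second, your claim that ``the same lemma absorbs'' the source $x_1^2$ in the $x_2$-equation is not supported: Lemma~\ref{QuadraticSuperSolution} produces super-solutions for $\Delta v\le kv-kk'v^2$, not for $\Delta v\le kv-f$ with an exogenous $f$. The paper supplies a separate construction (Lemma~\ref{SuperSolutionForKeyAsomptotic}): take $h$ solving $\Delta h=6bh$ with boundary $g_2=x_2'|_\R$ from Lemma~\ref{FundamentalEstimate}, and set $u=2h-h^{\beta}$ with $1<\beta<2/\sqrt{3}$; then $\Delta u-6bu+f\le 6b(1-\beta)h^{\beta}+f$, and since $f\in O(e^{-4\alpha t}/t)=o(h^{\beta})$ by the choice of $\beta$, this is eventually negative. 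That is precisely where the inequality $4>2\sqrt{3}$ enters quantitatively. Your final remark on $\Delta x_i$ is correct and agrees with the paper.
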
 
 
 \begin{proof}
Recall by Corollary \ref{G2PDE_Bound}, that $\tilde{u}_i$ are uniformly bounded away from the zeros of $q$. Thus, we select a compact set $K$ containing the zeros of $q$ such that
\begin{align} \label{GlobalBound} 
	0 \leq \tu_i \leq \frac{1}{16} \; \text{on} \; \C - K. 
\end{align} 
By iteratively applying Proposition \ref{t_distance}, we may take a collection $\{ (w_k, U_k)\}_{k=1}^N$
of $q$-half-planes covering $ \{p \in \C \; : \; t(p) > R \}$, for $R$ sufficiently large, such that the $t$-asymptotics of Proposition \ref{t_distance} hold. Thus, it suffices to prove the asymptotics of the lemma in each half-plane $(w_k, U_k)$. Fix a $q$-half-plane $(w_k, U_k)$ and identify $\Ha = U_k$ and $\R = \partial \Ha = \partial U_k$. We write $w = w_k$ for notational simplicity and set $w = x +iy$. In these coordinates, $\tu_1, \tu_2$ satisfy the system \eqref{PDE_NaturalCoordinate}
and $\tilde{u}_i \geq 0$.

We first show the asymptotic for $x_1 = \tu_1 + \tu_2$. By direct calculation, $x_1$ satisfies
\begin{align}
	\Delta x_1= b e^{2\tu_2} - b e^{-2\tu_1}.
\end{align} 
Since $\tu_i \geq 0$, we have $ e^{2 \tu_2} \geq 1 + 2\tu_2 + 2\tu_2^2 $ and $ e^{-2\tu_1} \leq 1 - 2\tu_1 + 2\tu_1^2$. 
Combining these inequalities, 
\begin{align}
	\Delta x_1 \geq 2b\, x_1 - 2b \,(\tu_1^2 - \tu_2^2) \geq 2b \, x_1 - 2b \, x_1^2.
\end{align} 

Since $x_1 \geq 0$, we see $g_1 := x_1|_{\R}$ is non-negative and continuous too. By Corollary \ref{G2PDE_Bound}, $x_1 \in O(t^{-\beta}) $ 
for $\beta > 1$, for $p$ sufficiently large. Also, for $ p \in \R$ sufficiently large,
 $t(p) > a | \Re( w(p)) |$, for some $a > 0$, by Proposition \ref{t_distance}. We conclude
that $g_1 \in L^1(\R)$ is integrable. The bounds \eqref{GlobalBound} on $\tu_i$ imply $ g_1 \leq \frac{1}{4}$. 
We note here that $\sqrt{2b} = \sqrt{6d^{1/3}} = \alpha $. Hence, we may apply
Lemma \ref{QuadraticSuperSolution} with $k = 2b $ and $ k' = 1$ to get a function $v$ satisfying $ v \in O \left ( \frac{ e^{-2\alpha \, y} }{\sqrt{y}} \right) $ as well as 
\begin{align}\label{vBounds}
	 \begin{cases} 
		\Delta v \leq 2b \, v - 2 b \, v^2 \\
		v|_{\R} \geq g_1 .
	\end{cases}
\end{align} 

Next, recall the uniform inequality $t \leq y + A$ from Proposition \ref{t_distance} part (ii), for $t$ sufficiently large, for some $A > 0$. This inequality implies 
$v \in O \left ( \frac{ e^{-2\alpha t} }{\sqrt{t}} \right) $. Thus, it suffices to show $ \eta = x_1 - v \leq 0$.

By \eqref{vBounds}, $\eta \leq 0$ on $\R$ and $\eta \rightarrow 0$ as $ |w| \rightarrow \infty$. 
Suppose $\eta > 0$ somewhere. Then for some $\epsilon > 0$, there is a $q \in \Ha$ with $ \eta(q) > \epsilon $. Since $\eta \rightarrow 0$, we find 
$ Q = \eta^{-1}( \, [\epsilon, \infty) \, )$ is a compact region. Thus, $\eta$ attains a maximum at some $p \in Q$.
Hence, $ \Delta \eta(p) \leq 0 $. On the other hand, 
\begin{align} \label{inequality} 
\Delta \eta = \Delta x_1 - \Delta v = 2b \, x_1 - 2b\, x_1^2 - 2b\, v + 2 b v^2 
= 2b\, \eta - 2b\, \eta (x_1 +v ).
\end{align} 
Now, $\eta(p) > \epsilon > 0$ implies $ v(p) < x_1(p) $. Hence, $ v(p) + x_1(p) < \frac{1}{2} $ by \eqref{GlobalBound}. Evaluating at $p$,
	$$ 0 \geq \Delta \eta(p) \geq  b \, \eta(p) > 0,$$
a contradiction. We conclude that $\eta \leq 0$ as desired, proving the asymptotic for $x_1$.

We must be more careful with $x_2 = \tilde{u}_1 -3\tilde{u}_2$, as it may not be non-negative, but we can now leverage the asymptotic for $x_1$.
 We work in the same $q$-half-plane $(w,U)$ as before.
Set $x_2' := |x_2| $. Then the global bounds \eqref{GlobalBound} now give $ 0 \leq x_2' \leq \frac{1}{2}$ on $\Ha$. By direct calculation with \eqref{PDE_NaturalCoordinate},
$x_2$ satisfies 
\begin{align}\label{Deltax2}
	\Delta x_2 = 4b\, e^{x_2} - b\, e^{-2\tu_1} - 3 b\, e^{2\tu_2}.
\end{align} 
We now proceed in cases by sign. Suppose 
$x_2(p) \geq 0$ at some point $p \in \Ha$. Then we apply the following inequalities on $\R_{\geq 0}$: (i) $e^{x_2} \geq  1 + x_2 $ \; 
(ii) $e^{-2\tu_1} \leq 1 - 2\tu_1 + 2\tu_1^2 $ as well as (iii) $ e^{2 \tu_2} \leq 1 + 2\tu_2 + 4\tu_2^2$, which holds
for $ t $ sufficiently large since $\tu_2 \rightarrow 0$. Applying (i) - (iii) in \eqref{Deltax2}, if is $t$ sufficiently large and $x_2(p) \geq 0$,
\begin{align}\label{x2Positive}
	\Delta x_2 \geq 6b \, x_2 - 2b\, \tu_1^2 - 12b \, \tu_2^2 \geq 6b  \,x_2 - 12b \, x_1^2.
\end{align}

Suppose instead $ x_2(p) < 0$. Then $e^{x_2(p)} \leq 1 + x_2(p) + \frac{1}{2}x_2^2(p) $. We can
also apply the (global) inequalities $e^{-2\tu_1} \geq 1 - 2\tu_1$ and $e^{2\tu_2} \geq 1 + 2\tu_2$
in \eqref{Deltax2}. Finally, with $x_2^2 \leq 9 x_1^2$, we find that for $x_2(p) < 0$, 
\begin{align}\label{x2Negative}
	\Delta (-x_2) \geq 6b \, (-x_2) - 2b \, x_2^2 \geq 6b \, (-x_2) - 18b \, x_1^2.
\end{align} 

Combining \eqref{x2Positive}, \eqref{x2Negative}, we have $\Delta x_2' \geq 6b \, x_2' - 18b \, x_1^2$ for $t$ large. The first part of the proof implies $ x_1^2 \in O \left ( \frac{ e^{-4 \alpha \, t} }{t} \right )$.
Set $ g_2 := x_2' |_{\R} $ and $ 0 \leq g_2 \leq \frac{1}{2}$ from $\tilde{u}_i \leq \frac{1}{8}$. As before, $g_2$ is integrable from Corollary \ref{G2PDE_Bound} and Proposition \ref{t_distance}. 
We observe that $x_2'$ is a sub-solution to the boundary value problem \eqref{BVP_x2} on the complement of a compact set.
\begin{align}\label{BVP_x2}
	\begin{cases}
		\Delta h = 6b \, h - 18b\, x_1^2 \\
		h|_{\R} = g_2. 
	\end{cases}
\end{align}

To finish the proof, we must show $x_2' \in O \left ( \, \frac{e^{-2 \alpha \sqrt{3} \, t }}{\sqrt{t}} \right)$. To this end, we construct a super-solution $h \in C^{\infty}(\Ha-K)$ in Lemma \ref{SuperSolutionForKeyAsomptotic} to \eqref{BVP_x2} that satisfies the desired asymptotic. The maximum principle shows $ x_2' \leq h$ holds on the domain of $h$, as earlier
in the proof with $\eta$, which proves the asymptotic for $x_2'$. 

For the final claim on $\Delta x_i$, we recall that $x_i$ satisfy 
$\Delta x_i = C_i x_i + O(x_i^2)$. We conclude that $\Delta x_i$ satisfies the same $t$-asymptotics as $x_i$. 
 \end{proof} 

Here is the lemma that completes the proof of the fundamental asymptotic. 
 
 \begin{lemma}\label{SuperSolutionForKeyAsomptotic} 
 Let $g \in C^0(\R) \cap L^1(\R) $ with $0 \leq g \leq 1$. Let $t$ denote distance to the origin in $\Ha$ and take $f \in O \left( \frac{ e^{-4\alpha t } }{t }\right) $ as $t \rightarrow \infty$. Then there is a compact set $K \subset \Ha \cup \R$ and a super-solution $u \in C^{\infty}(\Ha) \cap C^0(\R)$ satisfying
$\Delta u \leq 6b \, u - f$ on $ \Ha - K$ as well as $u |_{\R} \geq g$. 
Moreover, as $ t \rightarrow \infty,$ for $ \alpha = 2^{1/3}3^{1/4}5^{1/3} $, we have
 $$u \in O \left ( \frac{ e^{-2\sqrt{3}\alpha \, t} }{\sqrt{t} } \right) .$$
 \end{lemma}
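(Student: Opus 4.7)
The plan is to decompose $u = u_1 + u_2$, where $u_1$ absorbs the boundary datum $g$ and $u_2$ is a particular solution to the inhomogeneous equation driven by $-f$. By replacing $f$ with a smooth nonnegative function $\tilde{f}$ still in $O(e^{-4\alpha t}/t)$ and satisfying $\tilde{f} \geq f$ outside a large compact set $K \subset \Ha \cup \R$, a super-solution of $\Delta u \leq 6bu - \tilde{f}$ on $\Ha - K$ is automatically a super-solution of $\Delta u \leq 6bu - f$ there. So we may assume $f \geq 0$ is bounded and smooth, globally dominated by a multiple of $e^{-4\alpha t}/(t+1)$.

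For $u_1$, apply Lemma \ref{FundamentalEstimate} with $k = 6b$ and boundary data $g$ to obtain $u_1 \in C^{\infty}(\Ha) \cap C^0(\R)$ with $\Delta u_1 = 6b u_1$, $u_1|_{\R} = g$, $u_1 \geq 0$, and the vertical decay $u_1 \in O(e^{-2\sqrt{3}\alpha y}/y)$, using $\sqrt{6b} = \sqrt{3}\alpha$. A direct estimate of the Poisson kernel for the operator $-\Delta + 6b$ on $\Ha$---whose explicit form involves the modified Bessel function $K_1$ and behaves like $y\, e^{-2\sqrt{3}\alpha |z|}/|z|^{3/2}$---upgrades the vertical bound to the radial bound $u_1 \in O(e^{-2\sqrt{3}\alpha t}/\sqrt{t})$ as $t = |z| \to \infty$.

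For $u_2$, let $G_0(\zeta) = \tfrac{2}{\pi}\, K_0(2\sqrt{6b}\,|\zeta|)$ be the fundamental solution on $\R^2$ for the operator $-\Delta + 6b$, and form the Dirichlet Green's function on $\Ha$ by reflection, $G(z,w) = G_0(z - w) - G_0(z - \bar{w})$. Since $K_0 > 0$ is decreasing and $|z - \bar{w}| \geq |z - w|$ for $z, w \in \Ha$, we have $G \geq 0$. Define
\[
u_2(z) \;=\; \int_{\Ha} G(z,w)\, f(w)\, dA(w),
\]
which converges since $f$ is bounded and decays exponentially. A standard distributional computation gives $\Delta u_2 = 6b u_2 - f$ on $\Ha$, with $u_2|_{\R} = 0$ and $u_2 \geq 0$. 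Then $u = u_1 + u_2$ satisfies $\Delta u = 6b u - f$ on $\Ha$ and $u|_{\R} = g$, fulfilling the super-solution inequality as an equality.

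The main technical step is the radial decay $u_2 \in O(e^{-2\sqrt{3}\alpha t}/\sqrt{t})$, which I would prove by splitting the convolution at $|w| \leq |z|/2$ and $|w| > |z|/2$ and using the asymptotic $G_0(\zeta) \sim C\, e^{-2\sqrt{3}\alpha |\zeta|}/\sqrt{|\zeta|}$. On the first region, $|z - w| \geq |z|/2$, and the bound $f(w) \leq C' e^{-4\alpha |w|}/(|w|+1)$ combined with the strict inequality $4\alpha > 2\sqrt{3}\alpha$ yields a convergent integral in $|w|$ contributing $O(e^{-2\sqrt{3}\alpha|z|}/\sqrt{|z|})$. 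On the second region, the triangle inequality $|z-w| + |w| \geq |z|$ yields
\[
2\sqrt{3}\alpha\, |z-w| + 4\alpha\, |w| \;\geq\; 2\sqrt{3}\alpha\, |z| + (4\alpha - 2\sqrt{3}\alpha)|w|,
\]
and since $|w| \geq |z|/2$ there, this region contributes $O(e^{-(2\alpha + \sqrt{3}\alpha)|z|})$, which is strictly subleading. The main obstacle is precisely this convolution estimate together with the radial upgrade for $u_1$: Lemma \ref{FundamentalEstimate} as stated furnishes only vertical decay, so direct estimation of the relevant kernel away from the imaginary axis is unavoidable.
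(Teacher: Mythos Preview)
Your approach via linear superposition and the half-plane Green's function for $-\Delta + 6b$ is sound and classical, but it is a genuinely different route from the paper's. The paper does not build a particular solution at all: it takes $h$ from Lemma~\ref{FundamentalEstimate} with $k = 6b$ and sets $u = 2h - h^{\beta}$ for any $1 < \beta < 2/\sqrt{3}$. Since $0 \le g \le 1$ and $\beta > 1$, one has $u|_{\R} = 2g - g^{\beta} \ge g$. The key inequality is $\Delta(h^{\beta}) \ge 6b\,\beta\, h^{\beta}$ (from $\Delta h = 6b\,h$ and $h \ge 0$), which yields $\Delta u - 6b\,u + f \le 6b(1-\beta)h^{\beta} + f$. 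The choice $\beta < 2/\sqrt{3}$ forces $2\sqrt{3}\alpha\,\beta < 4\alpha$, so $f \in o(h^{\beta})$ and the right-hand side is negative for $t$ large. This two-line nonlinear ansatz completely bypasses the Bessel kernel and convolution estimates you sketched.

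One cautionary remark on your ``radial upgrade'' for $u_1$: taken literally with $t = |z|$, this would require $u_1(x,0^{+}) = g(x)$ to decay like $e^{-2\sqrt{3}\alpha |x|}/\sqrt{|x|}$, which is not implied by $g \in L^{1} \cap C^{0} \cap [0,1]$. In fact the paper's own proof does not establish radial decay either; it invokes the substitution $y \mapsto t$ via the inequality $t \le y + A$ from Proposition~\ref{t_distance}, which is the relevant relationship in the application inside Lemma~\ref{FundamentalAsymptotic} (where $t$ is the $|q|^{1/3}$-distance to the zero set, not the Euclidean distance to the origin). So only the vertical decay furnished directly by Lemma~\ref{FundamentalEstimate} is needed, and the Poisson-kernel estimate you flagged as the main obstacle is unnecessary. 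With that reading, your Green's-function construction also works, but the paper's $u = 2h - h^{\beta}$ trick is considerably shorter.
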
 
 
 \begin{proof}
First, select $h$ from Lemma \ref{FundamentalEstimate} with respect to $k = 6b$ and the same $g$ as its boundary values.
Then $h \in O \left ( \frac{ e^{-2\sqrt{3}\alpha \, t} }{\sqrt{t} } \right) $ since $ 2\sqrt{6b} = 2\sqrt{3}\alpha$, using again the 
argument from Lemma \ref{FundamentalAsymptotic} to replace $y$ with $t$. 
We will construct our desired super-solution by $u = 2h - h^\beta$ for $ 1 < \beta < \frac{2}{\sqrt{3}}$. 
First, note that since $g \leq 1$ and $ \beta > 1$, we have $ u|_\R = 2g - g^\beta \geq g $. We need only show
$\Delta u - 6b \, u + f \leq 0$ to finish the proof.

 By an elementary calculation, $\Delta (h^\beta) = \beta (\beta -1) h^{\beta-2} (h_{z}^2 + h_{\zbar}^2)+ \beta h^{\beta-1} \Delta h$. 
Since $\Delta h = 6b \, h $ by construction of $h$, we get the inequality 
\begin{align}\label{Inequality} 
\Delta( h^\beta) \geq 6b \beta \, h^\beta.
\end{align} 
By definition of $u$, \eqref{Inequality} implies the inequality $ \Delta u \leq 12b \, h - 6 b \beta \, h^{\beta} .$
It follows that
$$ \Delta u - 6b \,u + f \leq 6bh^\beta \, (1 - \beta ) + f .$$ 
Since $ h \in O \left ( \frac{ e^{-2\sqrt{3} t} }{\sqrt{t} } \right ) $, we see that $f \in o( h^\beta)$ for chosen $\beta$. 
Set $C = 6b (1-\beta) < 0$. 
Then for $R$ sufficiently large, $t > R$ implies $ f < -\frac{C}{2} h^\beta$ and hence $ \Delta u - 6b \,u + f < 0$.
 \end{proof} 
 
  We note as a corollary that $\del x_i$ has the same asymptotics as $x_i$. 
 
 \begin{corollary}\label{DerivativeAsymptotics} 
Under the same hypotheses as Lemma \ref{FundamentalAsymptotic}, we have 
$$ \del x_1 \in O \left( \,\frac{e^{-2\alpha \, t}}{\sqrt{t}} \, \right) \; \; \; \text{and} \; \;  \; \del x_2 \in O \left ( \, \frac{e^{-2 \alpha \sqrt{3} \, t }}{\sqrt{t}} \, \right) .$$
 \end{corollary}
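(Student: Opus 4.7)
The plan is to bootstrap the pointwise asymptotics for $x_i$ and $\Delta x_i$ proven in Lemma \ref{FundamentalAsymptotic} to the first derivatives $\partial x_i$ by a standard interior elliptic estimate. Fix a point $p \in \C$ with $t(p)$ large, and work in one of the $q$-half-planes $(w_k, U_k)$ guaranteed by Proposition \ref{t_distance} with $\Im(w_k(p)) > t(p) - A$. Because the Laplacian $\Delta$ acting on functions is conformally flat in the $w_k$-coordinate, the relevant elliptic theory is just the Euclidean theory on $\Ha$.

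First, I would pick a radius $r > 0$ small enough (independent of $p$) so that the ball $B_r(w_k(p)) \subset \Ha$ is compactly contained in the image of the half-plane and so that $t$ varies by at most a uniform additive constant on this ball; this is possible because $|q|^{1/3}$ is quasi-isometric to the Euclidean metric on compact subsets of a $q$-half-plane. Consequently, the asymptotics $x_i \in O\!\left(\frac{e^{-2\alpha t}}{\sqrt{t}}\right)$ and $x_2 \in O\!\left(\frac{e^{-2\sqrt{3}\alpha t}}{\sqrt{t}}\right)$, together with the matching bounds on $\Delta x_i$ from Lemma \ref{FundamentalAsymptotic}, transfer to uniform bounds of the same order on all of $B_r(w_k(p))$.

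Next, I would apply the standard interior gradient estimate for the Poisson equation: for a $C^2$ function $u$ on $B_r \subset \R^2$,
\[
\sup_{B_{r/2}} |\nabla u| \;\leq\; C(r) \left( \sup_{B_r} |u| \;+\; \sup_{B_r} |\Delta u| \right),
\]
which follows from Schauder theory or directly from the representation formula using the Newtonian potential and harmonic extension. Applying this with $u = x_i$ on $B_r(w_k(p))$ and evaluating at $w_k(p)$, the right-hand side is controlled by the asymptotic of $x_i$ plus the asymptotic of $\Delta x_i$, both of which have the same $t$-decay rate for each index $i$. Since $\partial_z$ differs from $\nabla$ only by a fixed linear combination (recall $\partial_z = \tfrac{1}{2}(\partial_x - \sqrt{-1}\partial_y)$), the bound on $|\nabla x_i|(p)$ directly yields the stated bound on $|\partial x_i|(p)$.

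The only point requiring care is coordinate invariance: since natural coordinates are defined only up to the finite group of sixth roots of unity and translation, and since the collection $\{(w_k,U_k)\}$ from Lemma \ref{StandardHalfPlanes} covers $\C$ minus a compact set, the uniform radius $r$ and constant $C(r)$ can be chosen independent of which half-plane contains $p$. This completes the argument; no novel obstacle appears since both ingredients (the pointwise asymptotic of $x_i$ and $\Delta x_i$, and the interior gradient estimate) are already in hand.
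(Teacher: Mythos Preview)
Your proposal is correct and follows essentially the same approach as the paper: both arguments apply the interior gradient estimate for Poisson's equation (the paper cites \cite[Theorem 3.9]{GT01}) on a fixed-radius ball in a natural coordinate, using that Lemma \ref{FundamentalAsymptotic} already controls both $x_i$ and $\Delta x_i$ by the same asymptotic. Your write-up is more careful about the uniformity of the radius and the variation of $t$ on the ball, but the underlying mechanism is identical.
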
 
 
 \begin{proof}
We first note that that $\Delta(x_i) \in O(x_i) $ by the fact that $x_i$ are eigenvectors of the linearized system and $x_i$ decays exponentially. 
Applying the interior estimate for solutions to Poisson's equation \cite[Theorem 3.9]{GT01} to $x_i$ on a ball of radius 1 around a point $p_0$ of distance $t$ from the zeros of $q$, along with Lemma \ref{FundamentalAsymptotic}, shows that $\partial x_i \in O(x_i)$.
 \end{proof} 

\section{Asymptotic Parallel Transport \& Unipotent Transitions}\label{ParallelTransport}

In this section, we use the precise rate of exponential decay of the error term $\bm{\tu} = \bm{u} - \bm{u}^- $ to compute the asymptotic parallel 
translation $\mathcal{P}_z: \V_z \rightarrow \V_0$  in the bundle $\V$. We calculate the limit of $\mathcal{P}_z$ in a natural coordinate for $q$ along quasi-rays.
In particular, we find that the limits $\mathcal{P}_{\theta} := \lim_{t \rightarrow \infty} \mathcal{P}_{\gamma_{\theta}(t)} $ of the parallel translation 
along Euclidean rays (or quasi-rays) $\gamma_{\theta}(t) = te^{i\theta}$ of angle $\theta$ are constant for angles $\theta \in I_k$ for certain intervals $I_k$. 
We call these limits $\mathcal{P}_{\theta}$, for $ \theta \in I_k$, \emph{stable limits}. 
Moreover, when crossing certain angles $\theta \in \mathcal{U}$ for some finite set $\mathcal{U}$, the stable limits change.
We call these angles $\theta \in \mathcal{U}$ \emph{unstable}. Finally, we relate the neighboring stable limits $ \mathcal{P}_{\theta - \epsilon},  \mathcal{P}_{\theta + \epsilon}$
reached by crossing a ray in the direction of an unstable angle $\theta \in \mathcal{U}$. In particular, the transition between the stable limits is given by a unipotent matrix, 
computed in Lemma \ref{Unipotents}.
 
\alignL \textbf{The ODE for Parallel Translation.} We now write the ODE for the matrix representative of $\nabla$-parallel translation. Fix the origin $p_0$ as a distinguished basepoint in $\C$. 
Formally, we view a frame $\mathcal{F}$ as a section $\mathcal{F} \in \Gamma(\C, \; \Hom_{\C}( \uline{\C}^7, \V) \,)$. 
We equip $\Hom_{\C}( \uline{\C}^7, \V)$ with the product connection, where $\V$ is equipped with $\nabla$ and $ \uline{\C}^7$ with $d$.  
We write $(\textbf{e}_i)_{i=3}^{-3}$ as the standard basis of $\uline{\C}^7$, indexed to match the powers of $\K$ in $\V$. 
We define the \emph{standard frame} $ \mathcal{F} = (f_i)_{i=3}^{-3} $ by 
\begin{align} \label{StandardFrame} 
	f_i := \textbf{e}_i \mapsto u_i
\end{align} 
for $\mathcal{B} = (u_i)_{i=3}^{-3}$ from \eqref{BaragliaBasis}.

Define $\widetilde{\mathcal{F}}_z$ as the $\nabla$-parallel translation of $ \mathcal{F}_{p_0}$ to the fiber $\V_z$. 
We search for the map $\psi:\C \rightarrow \GL_7\C$ satisfying $\widetilde{\mathcal{F}} \psi = \mathcal{F}$, i.e., expressing the change of basis from $\widetilde{\mathcal{F}} $ to $\mathcal{F}$.
Equivalently, $\psi(z)$ is the matrix representation of parallel translation $ [\mathcal{P}_z: \V_z \rightarrow \V_{p_0} ]$ in the standard frame $\mathcal{F}$. 
The value of $\psi$ along a Euclidean ray is found as the solution to an ODE along a Euclidean ray $\gamma(t) = te^{i \theta}$ for $t \geq 0$. Going forward, we use the shorthand $\mathcal{F}(t) := \mathcal{F}(\gamma(t)) $. \alignLend

On the one hand, the product rule and the fact that $\widetilde{\mathcal{F}}$ is $\nabla$-parallel yields
$$  \nabla_{\der{t}} \mathcal{F}(t) = \nabla_{\der{t}}(\widetilde{\mathcal{F}} \psi )(t) = \left ( \widetilde{\mathcal{F}} \frac{ d\psi}{dt} \right)(t) .$$ 
Conversely, since $\mathcal{F} = \mathsf{id}$ in the basis $\mathcal{B}$, 
$$ \nabla_{\der{t}} \mathcal{F}(t) =  \mathcal{F} \mathcal{A}( e^{i\theta})= \widetilde{\mathcal{F}} \psi(t) ( \, e^{i\theta}  (\varphi + D_H)+ e^{-i\theta} \varphi^* \, ).$$
Combining these equations, we deduce that $\psi(t)$ solves the following initial value problem. 
\begin{align}\label{Psi_ODE} 
	\begin{cases}
	\psi^{-1} \frac{d \psi}{dt} &= \, (\varphi + D_H) e^{i\theta} + \varphi^*e^{-i\theta} \\
	\psi(0) &= \mathsf{id},
	\end{cases} 
\end{align} 
where $D_H = \nabla_{\delbar, h}^{1,0} = H^{-1} \del H$. This means the almost-complex curve $\hat{\nu}: \C \rightarrow \quadric$ depends on both 
the PDE \eqref{HitchinsEquations_rs} for the harmonic metric $h$ as well as ODE \eqref{Psi_ODE} governing the parallel translation. Indeed, as $u_0 = i$, we have $\hat{\nu}(z) = \Psi(z) \mathbf{e}_0$, in the basis \eqref{BaragliaBasis} for $\imoct^\C$. 

\subsection{Parallel Translation when $q = 1$} \label{ModelParallelTranslation} 

In this section, we compute the $\nabla$-parallel translation endomorphism $\psi_0$ explicitly when $q_0 \equiv 1$. In the next section, we show the parallel translation endomorphism
$\psi$ for general monic polynomial $q$ is asymptotic to $\psi_0$ in a natural coordinate for $q$. 

Observe that when $q _0= dz^6$, the solution to Hitchin's equations \eqref{HitchinsEquations_rs} is a pair of constants, determined by 
$\log(r^2s) = \log(c^{5/3}), \log(s) = \log(d^{1/3})$, by the equation \eqref{ExactSolution}, which is now a global solution. Using the relations \eqref{cdSystem}, we find 
\begin{align}
	r_0 &= \frac{6}{5} d^{2/3} = \left (\frac{2}{5} \right)^{1/3}\\
	s_0 &= d^{1/3} = \frac{1}{\sqrt{3}} \left(\frac{5}{2} \right)^{1/3}.
\end{align}

Thus, the matrix representative of the harmonic metric $h_0$ is given by

\begin{align}\label{HitchinConstantMetric} 
	H_0 = \mathsf{diag} \,\left (\, \sqrt{3}, \, \left( \frac{5}{2} \right)^{1/3}, \, \sqrt{3} \left( \frac{2}{5} \right)^{1/3},\, 1,\, \frac{1}{\sqrt{3}} \left( \frac{5}{2} \right)^{1/3},\, \left( \frac{2}{5} \right)^{1/3}, \frac{1}{\sqrt{3} } \right) . 
\end{align} 

We now solve explicitly for the parallel translation matrix $\psi_0$ from \eqref{Psi_ODE}. The ODE simplifies considerably in this case as $\varphi$ and $\varphi^{*h}$ commute. Since the Higgs field $\varphi_0$ and its adjoint $\varphi_0^{*}$ are constant matrices
in the standard coordinates, we find $\psi_0(z) = e^{z \varphi_0 +\zbar \varphi^*_0} $ solves the initial value problem \eqref{Psi_ODE}.

Let us simultaneously diagonalize $\varphi_0, \varphi^*_0$. 
The mutual eigenvalues are recorded in the diagonal matrix 
\begin{align}\label{HiggsEigenvalues} 
	D = \diag( \alpha \xi^1, \alpha \xi^3, \alpha \xi^5, \alpha \xi^7, \alpha \xi^9, \alpha \xi^{11} , 0 ),
\end{align}
where  $\xi = e^{\frac{ i \pi }{6}}$ and 
\begin{align} \label{Alpha} 
	\alpha = 5^{1/6}3^{1/4}2^{1/3}.
\end{align} 
The corresponding eigenvectors recorded as a matrix $E_0$. We also obtain
\begin{align}\label{ModelHiggs} 
	 \varphi_0    &= E_0 D E_0^{-1} \\
	 \varphi_0^* &= E_0 \overline{D} E_0^{-1}.
\end{align} 
Denoting $\Re, \Im$ as real and imaginary parts now, the identity $e^{gGg^{-1}} = ge^{G} g^{-1}$ shows 
$$\psi_0  =E_0 e^{ z D + \overline{ z D} } E_0^{-1}= E_0 e^{2 \, \Re(z D) } E_0^{-1}.$$
We can explicitly describe the eigenvector matrix $E_0$ by factoring it. To this end, set

 \begin{align}\label{ConstantMetric}
 	(H_0)^{-1/2}  &=  \mathsf{diag} \left ( \; 3^{-\frac{1}{4} }, \; \frac{2^{1/6} }{5^{1/6}}, \; \frac{5^{1/6}}{3^{\frac{1}{4}} 2^{1/6} },  \;1, \; \frac{3^{1/4} 2^{1/6} }{5^{1/6}}, \; \frac{5^{1/6}}{2^{1/6}} \; 3^{\frac{1}{4} } \right) \\
	\label{HiggsEigenbasisInOrder}  S &= \frac{1}{\sqrt{6}} \begin{pmatrix}   \frac{1}{\sqrt{2}} & \frac{1}{\sqrt{2}} & \frac{1}{\sqrt{2}}& \frac{1}{\sqrt{2}}& \frac{1}{\sqrt{2}}& \frac{1}{\sqrt{2}}& \sqrt{-3} \\
				\xi^{11} & \xi^9& \xi^7& \xi^{5} &\xi^3 & \xi & 0 \\
  				 \xi^{10} & -1& \xi^2 & \xi^{10} &-1 &\xi^2& 0\\
  				 -1& 1& -1& 1& -1& 1& 0 \\
				  \xi^{2}& -1& \xi^{10} & \xi^2 & -1 & \xi^{10}& 0 \\
  			 	\xi & \xi^{3} & \xi^5 & \xi^7 & \xi^9 & \xi^{11} & 0\\
				\frac{1}{\sqrt{2}}& \frac{1}{\sqrt{2}}& \frac{1}{\sqrt{2}}& \frac{1}{\sqrt{2}}& \frac{1}{\sqrt{2}}& \frac{1}{\sqrt{2}}& -\sqrt{-3} \end{pmatrix}.
  \end{align} 
A (computer-assisted) calculation shows the following matrix $E_0$ is an eigenvector matrix for $\varphi_0$ satisfying \eqref{ModelHiggs}: 
\begin{align}\label{HiggsEigenbasis} 
	E_0 = H_0^{-1/2} S.
\end{align} 
Switching to polar coordinates $z = re^{i\theta}$,  define $\mathcal{D} = e^{2 \, \Re(zD)  }$, so that 
\begin{align}\label{ModelParallelTranslation} 
	\psi_0 = E_0 \mathcal{D} E_0^{-1}.
\end{align}
Here, 
\begin{align} \label{ModelDiagonalTiteca} 
	\mathcal{D} = \mathsf{diag} ( e^{ 2\alpha r\, \cos(\theta + \frac{\pi}{6} ) },  e^{ 2\alpha r\, \cos(\theta + \frac{3\pi}{6})}, e^{ 2\alpha r\, \cos(\theta + \frac{5\pi}{6})}, 
	e^{ 2\alpha r\, \cos(\theta + \frac{7\pi}{6}) }, e^{ 2\alpha r\,\cos(\theta + \frac{9\pi}{6}) }, e^{ 2\alpha r\,\cos(\theta + \frac{11\pi}{6}) }, 1 ). 
\end{align} 
We also write 
\begin{align}\label{DExponential} 
	\mathcal{D} = e^{2\alpha r \, \mathscr{D} }, 
\end{align} 
where  $\mathscr{D} = \mathsf{diag}(d_i)_{i=1}^7$ is given by $d_i = \frac{1}{|z|} \, \Re(\xi^{2i+1} z)$ for $i \in \{1,2,3,4,5,6\}$ and $d_7 =0$: 
\begin{align}\label{DiagonalCombinatorics} 
 \mathscr{D} = \mathsf{diag} \left ( \cos \left(\theta + \frac{\pi}{6} \right), \; \cos \left(\theta + \frac{3\pi}{6} \right),\; \cos \left ( \theta + \frac{5\pi}{6} \right), \; \cos \left ( \theta + \frac{7\pi}{6} \right), \; \cos \left ( \theta + \frac{9\pi}{6} \right) ,
	\cos \left( \theta + \frac{11 \pi}{6} \right) , \; 0 \right).
\end{align} 

The global bounds of $d_i$ on the intervals $I_k := ( \frac{(k-1)\pi}{6}, \frac{k \pi}{6}) \subset \R$  
will be crucial for the asymptotics of $\nabla$-parallel translation along rays in the general case. We will use $d_{j+3} = -d_j$ repeatedly. Note the following bounds:
\begin{align}\label{CombinatorialEstimate} 
	\; \; \; \; \; \; \; \text{for} \; \theta \in I_k, \; \;   | d_i(\theta) - d_j(\theta) | <  \begin{cases}  1 & i,j \leq 6, \, | i-j | \in \{1,5\} \mod 6 \\
														  \sqrt{3} & i, j \leq 6, \, | i-j | \in \{2,4\} \mod 6 \\
														 1 & i = 7\; \text{or} \; j = 7\end{cases} 
\end{align} 

The case of $i =7 $ or $j =7$ is clear since $d_7 = 0$. The remaining cases are 
handled by elementary calculus. 
The asymptotic geometry of the model surface $\nu_0$ associated to $q_0 \equiv 1$ is discussed in Section \ref{ModelSurface},
using the calculations from this current section. 

\subsection{Asymptotic Parallel Translation} \label{ComputeAsymptoticParallelTranslation} 
Now, we return to our parallel translation endomorphism $\psi$ solving \eqref{Psi_ODE} and satisfying $ \widetilde{\mathcal{F}}\psi = \mathcal{F} $. 
We will show the solution $\psi$ with respect to monic polynomial $q$ is asymptotic to $\psi_0$, the solution when $q \equiv 1$, along certain sectors of a natural coordinate
$(w,U)$ for $q$. The asymptotics for $\psi$ are a consequence of the asymptotics of the error term from Section \ref{ErrorEstimates}. \\

We now derive a new ODE for $B := \psi \psi_0^{-1}$ along a ray $\gamma(s) = se^{i\theta} $ in a $q$-half-plane $(w,U)$. 
We aim to show the limit of $B$ exists along \emph{stable rays}, as defined shortly. 
We now recall that that $\psi(s) = \psi(\gamma(s))$
and $\psi_0(s) = \psi_0(\gamma(s))$ satisfy the following initial value problems for some $A, A_0 \in \Gtwo$: 

$$\begin{cases} 
		& \psi^{-1} \frac{d \psi}{ ds} = e^{i\theta}(D_H + \varphi) + e^{-i\theta} \varphi^{*} \\
		&\psi(s_0) = A
	\end{cases}  \; \;\text{and} \; \; \begin{cases} 
	& \psi_0^{-1} \frac{d \psi_0}{ds} = e^{i\theta} \varphi_0+ e^{-i\theta} \varphi^{*}_0 \\
	&\psi_0(s_0) = A_0 \; .
	\end{cases} $$ 
By an elementary calculation, $ B(t) := \psi \psi_0^{-1}(\gamma(t))$ is a solution to the initial value problem 
$$ \begin{cases}
	B^{-1} dB =  \psi_0 \, R \, \psi_0^{-1}  \\
	 B(0) = A A_0^{-1} \end{cases} ,$$
where the error term $R$ is given by 
\begin{align}\label{Rmatrix} 
R  = \psi^{-1} d\psi - \psi_0^{-1} d\psi_0.
\end{align} 
 Since $\varphi \equiv \varphi_0$ in a natural coordinate for $q$, we conclude that 
 \begin{align}
 B^{-1} \frac{dB}{ds} = \psi_0 \, \left (e^{i\theta}D_H + e^{-i\theta} (\varphi^{*} - \varphi^*_0) \, \right ) \, \psi_0^{-1}.
 \end{align}
We denote 
$$\Theta:=  B^{-1} dB =  \psi_0 \, R \, \psi_0^{-1} .$$ 
The asymptotic analysis of $\Theta$ going forward involves a comparison of the exponential growth of $\psi_0$ in these coordinates
to the exponential decay of the matrix $R$. 

Split the error matrix $R = e^{i\theta} D_H + e^{-i\theta} (\varphi^{*} - \varphi^*_0)$ into two terms as follows: 
\begin{align} \label{Rpieces1}
	R' &= e^{i\theta} D_H\\
	R'' &= e^{-i\theta} (\varphi^{*}-\varphi^*_0).
\end{align}
Recalling the expression \eqref{ModelParallelTranslation} for $\psi_0$, re-write $\Theta$ once more as
\begin{align}\label{ThetaRewriten} 
	 \Theta = E_0 (\mathcal{D} (R_1 + R_2) \mathcal{D}^{-1}) E_0^{-1},
\end{align} 
where 
\begin{align}\label{Rpieces2}
	R_1 :&= E_0^{-1} R' E_0 = S^{-1} R'S\\
	 R_2 :&= E_0^{-1} R''E_0.
\end{align} 
Here, we recall $S$ from \eqref{HiggsEigenbasisInOrder}. The first equation follows from the expression $E_0 = H_0^{-1/2}S$ for $E_0$ and the fact that $H_0^{1/2}$ and $R'$ commute. 
Since $E_0$ is a constant-coefficient matrix, the asymptotics of $\Theta$ depend only on $\Ad_{\mathcal{D}}(R_i)$. 
Using the expression \eqref{DExponential} for $\mathcal{D}$, we observe that as we conjugate the error matrix $R_k$ by $\mathcal{D}$, the $(i,j)$ entry of $R_k$ is multiplied by $ e^{2\alpha \, s(d_i - d_j) }$.
By the bounds \eqref{CombinatorialEstimate}, we have upper bounds on the exponent of $e^{2\alpha \, s(d_i - d_j)(\theta) }$ for $\theta \in I_k
= ( \frac{(k-1)\pi}{6}, \frac{\pi}{6})$. We now determine the precise rate of decay of $(R_k)_{ij}$.

Recall that the components of the harmonic metric $h$ are given by $r = e^{ u_1 - u_2 }, s = e^{2u_2}$.\footnote{We note the abuse of notation that the component $s$ of the metric $h$ appears here, instead of $s$ used to denote a parameter of the curve $\gamma$.} Since $u_i, \tilde{u}_i$ differ only by constants in a natural half-plane, we find $R', R''$ are given by 
	\begin{align}
		R'&= 	 \begin{pmatrix} - \del ( \tilde{u}_1 + \tilde{u}_2) &  & & & & &\\
				      			 & - \del( \tilde{u}_1 - \tilde{u}_2) & & & & & \\
			  	     			 & & -2 \, \del \tilde{u}_2 & & & &\\
			 	     			 & && 0& & &\\
				     			 & & & &  2 \, \del \tilde{u}_2 &  &\\
				     			 & & & && \del( \tilde{u}_1 - \tilde{u}_2) & \\
				     			 & & & & & &  \del ( \tilde{u}_1 + \tilde{u}_2) \\
				     \end{pmatrix} 	\\
		    R'' &= 	 \begin{pmatrix} 0& \sqrt{3}\, (s -s_0 )& & & & &\\
				       & 0& \sqrt{5} (\frac{r}{s} - \frac{r_0}{s_0})& & & & \\
			  	      & & 0 &\sqrt{-6}(s - s_0)  & & &\\
			 	      & && 0&\sqrt{-6}(s-s_0) & &\\
				      & & & & 0& \sqrt{5} (\frac{r}{s} -\frac{r_0}{s_0})&\\
				      \frac{1}{ r^2s }- \frac{1}{r_0^2s_0}& & & &&0& \sqrt{3} \,(s-s_0) \\
				      & \frac{ 1}{ r^2s } -\frac{1}{r_0^2s_0}& & & & & 0\\
				     \end{pmatrix} .		\label{R''} 
	\end{align}

Now, by (computer-assisted) matrix multiplication, we find that the entries of $R_1 = E_0^{-1}R'E_0$,\; $ R_2= E_0^{-1}R''E_0$ are scalar multiples of $x_1 = \tu_1 + \tu_2, \, x_2 = \tu_1 - 3\tu_2$
and their derivatives. In particular, we have a combinatorial description of the entries of $R_1$ as follows: for some 
constants $c_{ij} \in \C^*$: 
\begin{align}\label{R1Asymptotics} 
	(R_1)_{ij} = \begin{cases}  c_{ij} ( \del x_1) & i, j \leq 6, \, | i-j | \in \{1,5\} \mod 6 \\  c_{ij} ( \del x_2)&  i, j \leq 6, \, | i-j | \in \{2,4\} \mod 6\\  0 &  i, j \leq 6, \,  | i-j | \in \{0,3 \}\mod 6  \\
		c_{ij} ( \,(1-\delta_{ij})  \del x_1) & i = 7 \; \text{or} \; j =7.\end{cases}
\end{align}

One must work a little harder for the analogous description of $R_2$. Using the 
asymptotics from \eqref{FundamentalAsymptotic}, note the following:
	\begin{align}  \label{R''AlgebraFirst}
		s-s_0 &= d^{1/3}(e^{2\tu_2}-1) = 2 d^{1/3} \tu_2 + O( x_1^2)  \\
		 \frac{1}{ r^2s }- \frac{1}{r_0^2s_0} &= c^{-5/3} (e^{-2\tu_1}-1) = -2c^{-5/3}\tu_1 + O( x_1^2) \\
		\frac{r}{s} - \frac{r_0}{s_0} &= c^{5/6}d^{-1/2}(e^{\tu_1 - 3\tu_2} - 1) = c^{5/6}d^{-1/2} x_2 + O(x_2^2). \label{R''AlgebraLast} 
	\end{align}
Combining the asymptotics \eqref{R''AlgebraFirst} -- \eqref{R''AlgebraLast} with equation \eqref{R''}, a (computer-assisted) matrix multiplication shows
$R_2 = E_0^{-1}R''E_0$ admits a similar asymptotic description to that of $R_1$. Indeed, 
\begin{align}\label{R2Asymptotics} 
	(R_2)_{ij} = \begin{cases}  \asymp( x_1) &  i, j \leq 6,\,  | i-j | \in \{1,5\} \mod 6 \\  \asymp( x_2)& i, j \leq 6, \, | i-j | \in \{2,4\} \mod 6\\  0 &  i, j \leq 6, \, | i-j | \in \{0,3\} \mod 6 \\
		\asymp( (1-\delta_{ij})\, x_1) & i =7 \; \text{or} \; j =7. \end{cases}
\end{align}

We can now prove the existence of the limit $\psi \psi_0^{-1}$ along \emph{stable} rays or quasi-rays in a natural coordinate.
We discuss the appropriate notion of stability here. 

\begin{definition} 
Let $\gamma(t) = te^{i\theta}$ be a ray in a standard $q$-half-plane. We call $\gamma$ \textbf{stable} when $\theta \notin (k \frac{\pi}{6} )_{k=0}^6$. We call a quasi-ray $\gamma'$ stable if and only if an associated ray of $\gamma'$ is stable. If $\gamma: \R_{+} \rightarrow \C$ is a $q$-ray, then we call
$\gamma$ stable if it is stable in any half-plane $(w,U)$, which is independent of this choice by Remark \ref{angle}. 
\end{definition} 

As seen in the following lemma, stability of $\gamma$ tells us that the exponential decay of $\mathcal{R}$ is faster than the exponential growth of $\psi_0$ along $\gamma$. 
We now show stability guarantees us the existence of a limit of $B$ in the sectors the sectors 
$S_{k} = \{ z \; | \; \mathsf{arg}(z) \in ( \frac{(k-1) \pi}{6}, \frac{ k \pi}{6} ) \; \}$ for $k \in \{1,2, \dots, 6\}$.
 
\begin{lemma}[Asymptotic Parallel Translation in Stable Sectors] \label{AsymptoticParallelTranslation} 
Let $\gamma$ be a stable ray or quasi-ray (eventually contained) in a standard $q$-half plane $(w,U)$. Then $\lim_{s \rightarrow \infty} \psi \psi_0^{-1}(s)$ exists.
Moreover, if $\psi$ is eventually contained in the sector $S_k$, then $\lim_{s \rightarrow \infty} \psi \psi_0^{-1}(\gamma(s)) = L_{k} $ for some $L_k \in \Gtwo$.
\end{lemma}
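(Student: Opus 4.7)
The plan is to show that the logarithmic derivative $\Theta = B^{-1}dB$ along the stable (quasi-)ray $\gamma$ is absolutely integrable, from which the limit existence follows by the standard ODE fact that $\int_0^\infty \lVert \Theta(s) \rVert \, ds < \infty$ implies $B(s)$ converges. Since $B = \psi \psi_0^{-1}$ is a product of $\Gtwo$-valued maps (both $\nabla$ and $\nabla_0$ preserve the $\Gtwo$ structure), the limit automatically lies in $\Gtwo$.

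To bound $\Theta$, I would use the factorization $\Theta = E_0 \, \mathcal{D}(R_1 + R_2)\mathcal{D}^{-1} \, E_0^{-1}$ from \eqref{ThetaRewriten}. Since $E_0$ is a constant matrix, it suffices to estimate the entries of $\mathcal{D}(R_k)\mathcal{D}^{-1}$. Recalling $\mathcal{D} = e^{2\alpha s \mathscr{D}}$, the $(i,j)$ entry of $\mathcal{D} R_k \mathcal{D}^{-1}$ equals $e^{2\alpha s(d_i - d_j)(\theta)}(R_k)_{ij}$. Combining the combinatorial bound \eqref{CombinatorialEstimate}, the structural description of $R_1, R_2$ from \eqref{R1Asymptotics}--\eqref{R2Asymptotics}, and the fundamental asymptotics $x_1 \in O(e^{-2\alpha t}/\sqrt{t})$, $x_2 \in O(e^{-2\alpha\sqrt{3}\,t}/\sqrt{t})$ of Lemma \ref{FundamentalAsymptotic} (together with the derivative analog of Corollary \ref{DerivativeAsymptotics}), each surviving entry satisfies a pointwise estimate of the form
\begin{equation*}
\bigl|(\mathcal{D}(R_1+R_2)\mathcal{D}^{-1})_{ij}(s)\bigr| \; \leq \; C \, \frac{\exp(-2\alpha s \cdot \delta_{ij}(\theta))}{\sqrt{s}},
\end{equation*}
where $\delta_{ij}(\theta) = 1 - (d_i - d_j)(\theta) > 0$ when $|i-j| \in \{1,5\} \pmod 6$ (or involves index $7$), and $\delta_{ij}(\theta) = \sqrt{3} - (d_i - d_j)(\theta) > 0$ when $|i-j| \in \{2,4\} \pmod 6$. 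The entries with $|i-j| \in \{0,3\} \pmod 6$ vanish. Stability of $\gamma$ ensures $\theta$ lies strictly inside some $I_k$, making each $\delta_{ij}(\theta)$ strictly positive; hence $\int_0^\infty \lVert \Theta(s) \rVert\, ds < \infty$, and $B(s)$ has a limit $L \in \Gtwo$.

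For quasi-rays $\gamma(t) = L(t) + \delta(t)$ with $|\delta(t)| \in o(t)$, the associated Euclidean ray $L(t)$ has angle in some $I_k$, so eventually $\gamma(t)$ lies in a closed subsector $\overline{S_k'} \subset S_k$ on which the strict inequalities in \eqref{CombinatorialEstimate} hold uniformly. The same exponential estimate then applies along $\gamma$, differing only by the sub-exponential perturbation $|\delta(t)|$; integrability is preserved, giving the limit. To see the limit depends only on the sector $S_k$ and not on the particular quasi-ray, I would fix two stable (quasi-)rays $\gamma_1, \gamma_2$ eventually contained in $S_k$ and connect $\gamma_1(s)$ to $\gamma_2(s)$ by a circular arc $\eta_s$ at Euclidean distance $\approx s$ lying in a closed subsector of $S_k$. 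The same uniform exponential bound gives $\int_{\eta_s} \lVert \Theta \rVert \to 0$ as $s \to \infty$, so $B(\gamma_1(s))$ and $B(\gamma_2(s))$ share the same limit $L_k$.

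The principal obstacle is bookkeeping: one must verify that the strict combinatorial inequalities from \eqref{CombinatorialEstimate} are uniform over a closed subsector (away from the unstable angles $\frac{k\pi}{6}$), and ensure that the sub-exponential perturbation from the $o(t)$ term in the quasi-ray does not spoil the exponential dominance. The saving grace is that both $x_1$ and $x_2$ decay at exactly the critical rates $2\alpha$ and $2\alpha\sqrt{3}$ matched to the extreme values of $d_i - d_j$ on the boundary of $I_k$, so strict interiority of $\theta$ translates directly into a strictly positive exponential rate of decay for $\Theta$.
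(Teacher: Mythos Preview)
Your proposal is correct and follows essentially the same strategy as the paper's proof: show $\int_0^\infty \|\Theta\|\,ds < \infty$ along a stable ray by combining the combinatorial bounds \eqref{CombinatorialEstimate} with the decay rates of Lemma~\ref{FundamentalAsymptotic}, invoke the standard ODE lemma to get convergence, and then argue sector-independence by connecting two rays within $S_k$ and showing the transport along the connecting path tends to the identity. The only cosmetic differences are that the paper uses straight-line segments $\eta_s(t) = t\gamma_1(s) + (1-t)\gamma_2(s)$ rather than circular arcs, and handles quasi-rays by homotoping to an associated Euclidean ray rather than arguing directly via a closed subsector; both variants work for the same reason.
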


\begin{proof}
First, we suppose $\gamma$ is a ray and show the limit $\lim_{s \rightarrow \infty}\psi \psi_{0}^{-1}(\gamma(s))$ exists.

Set $B(s) = \psi \psi_0^{-1}(\gamma(s))$ for $\gamma(s) = se^{i\theta}$ a stable ray of angle $\theta$. Then $B$ solves 
	$$ \frac{dB}{ds} = B \Theta(s) ,$$
where $\Theta = E_0\mathcal{D} (R_1 + R_2) \mathcal{D}^{-1} E_0^{-1}$, with some appropriate initial value $B(0) = B_0 \in \Gtwo$.
Combining the asymptotics \eqref{R1Asymptotics}, \eqref{R2Asymptotics} with Lemma \ref{FundamentalAsymptotic}, 
and Corollary \ref{DerivativeAsymptotics}, the matrix $\mathcal{R} = R_{1} + R_{2}$ satisfies 
\begin{align}\label{R_Asymptotic} 
	 \mathcal{R}_{ij} = \begin{cases}  O( \frac{e^{-2\alpha s} }{\sqrt{s}} ) & i, j \leq 6, \, | i-j | \in \{1,5\} \mod 6 \\  O(  \frac{e^{-2\alpha \, \sqrt{3} s} }{\sqrt{s}}  )& i, j \leq 6,\, | i-j | \in \{2,4\} \mod 6\\  0 &   i, j \leq 6, \, | i-j | \in \{ 0,3\} \mod 6 \\
	 O( \, (1-\delta_{ij})\,  \frac{e^{-2\alpha s} }{\sqrt{s}} \, ) &  i = 7 \; \text{ or} \; j=7.	\end{cases}
\end{align} 
Recall $\mathcal{D} = e^{2 \alpha s\, \mathscr{D} }$, where $\mathscr{D} = (d_i)_{i=1}^7$ is the matrix \eqref{DiagonalCombinatorics}. 
Now, conjugation by $\mathcal{D}$ changes the $(i,j)$ entry of a matrix by 
the factor $\lambda_{i,j} =  e^{ 2\alpha s\, (d_i-d_j)(\theta) }$. Stability of $\gamma$ allows us
to apply the strict inequalities from \eqref{CombinatorialEstimate}. Combined with the asymptotics \eqref{R_Asymptotic}, we find 
\begin{align}\label{KeyDecay} 
	\mathcal{D} \mathcal{R}_{ij} \mathcal{D}^{-1} \in O( \lambda_{i,j} \mathcal{R}_{ij} ) = O \left( \frac{ e^{-c_{ij} s} }{\sqrt{s}} \right),
\end{align} 
for some $ c_{i,j} > 0$. Since the matrix $E_0$ is constant, we conclude $\int_{a_0}^{\infty} || \Theta(\gamma(t)||_{\infty} dt < \infty$. 

By Standard ODE results, \cite[Lemma B.1]{DW15}, we conclude that the limit $\lim_{s \rightarrow \infty} B(s)$ exists. Since $B(s) \in \Gtwo$ for all $s$, the limit $L_k \in \Gtwo$ as well as $\Gtwo$ is a closed subgroup of $\mathsf{GL}_7\C$. 

We now show the limit is independent of which stable ray we take within a stable sector $S_i$. Let $\gamma_1 = se^{i\theta_1}, \; \gamma_{2} = se^{i\theta_2}$ be two stable rays
with $\theta_1, \theta_2 \in I_k =  (\frac{(k-1)\pi}{6}, \frac{k\pi}{6} )$. Then define $G_i(s) := \psi \psi_0^{-1}(\gamma_i(s) ) $.  We show that $\lim_{s \rightarrow \infty} G_1(s) = \lim_{s \rightarrow \infty} G_2(s)$. 
Define $\eta_s(t) := \gamma_1(s) + (1-t) \gamma_2(s)$ for $t \in [0,1]$ and consider $g_s(t) := G_1^{-1}(s)\, \psi \psi_0^{-1}(\eta_s(t))$ for $ t \in [0,1]$. Observe the following:
$$ \begin{cases}
	g_s(0) &= \mathsf{id} \\
	g_s^{-1} g_s'(t) &= \Theta(\eta_s(t)) \eta_s'(t) \\
	g_s(1) &= G_1^{-1} (s)G_2(s). 
\end{cases} $$
We show that $g_s(1) \rightarrow \mathsf{id}$ as $s \rightarrow \infty$ to prove the claim. However, by the same argument from earlier, $\Theta_{ij}$ satisfies 
the asymptotics of \eqref{KeyDecay} for all $t \in [0,1]$ since $\eta_{s}(t)$ stays within the sector $S_k$. Hence, $\Theta(\eta_s(t)) \eta_s'(t) \in O(e^{-c_{ij} s} \sqrt{s} )$ since $|\eta_s'(t)| \in O(s)$. Again applying \cite[Lemma B.1]{DW15}, we find that
$ g_s(1) \rightarrow \mathsf{id}$ as $s \rightarrow \infty$. \\

The argument is nearly identical in the case of a quasi-ray eventually contained in $S_k$. In this case, set $\gamma_1$ to be a quasi-ray and $\gamma_2$ to be an associated 
stable ray (cf. Definition \ref{QuasiRay}). Then the straight-line homotopy $\eta_{s}(t)$ from $\gamma_1(s)$ to $\gamma_2(s)$ by hypothesis satisfies $|\eta_{s}'(t)| \in o(s) $ and moreover
the same asymptotics \eqref{KeyDecay} apply since we do not cross an unstable ray along $\eta_s(t)$. 
\end{proof} 

\subsection{Unipotent Transitions Between Stable Limits} 

We can now find unipotent matrices relating the stable limits separated by unstable rays.  

\begin{lemma}[Unipotents Relating Limits of Model Surface] \label{Unipotents} 
Let $L_k$ be the limits from Lemma \ref{AsymptoticParallelTranslation}. Then $L_k^{-1}L_{k+1} = E_0U_{k} E_0^{-1}$ for a unipotent matrix $U_k \in \Gtwo$,
where $E_0$ is from \eqref{HiggsEigenbasis}.
\end{lemma}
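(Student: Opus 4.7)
The plan is to prove unipotency by a Laplace-method localization combined with a triangularity observation. First, conjugate: set \(\hat{B} := E_0^{-1} B E_0\), so that \(\hat{B}^{-1}\,d\hat{B} = \mathcal{D}(R_1+R_2)\mathcal{D}^{-1}\) and \(\hat{L}_k := E_0^{-1} L_k E_0 = \lim_{s\to\infty}\hat{B}(\gamma_k(s))\). It suffices to show \(U_k := \hat{L}_k^{-1}\hat{L}_{k+1}\) is unipotent. Following the homotopy argument from the proof of Lemma \ref{AsymptoticParallelTranslation}, \(U_k\) is the \(s\to\infty\) limit of \(g_s(1)\), where \(g_s\) solves the matrix ODE
\begin{equation*}
g_s^{-1}\frac{dg_s}{dt} = \mathcal{D}\mathcal{R}\mathcal{D}^{-1}(\eta_s(t))\,\eta_s'(t), \qquad g_s(0) = I,
\end{equation*}
along a constant-radius arc \(\eta_s(t) = s\,e^{i\phi(t)}\) with \(\phi\) interpolating linearly between a stable angle in \(S_k\) and a stable angle in \(S_{k+1}\), crossing the unique unstable angle \(\theta^* = k\pi/6\) at some \(t^*\in(0,1)\).

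The \((i,j)\)-entry of \(\mathcal{D}\mathcal{R}\mathcal{D}^{-1}\) is \(e^{2\alpha s(d_i - d_j)(\theta)}\mathcal{R}_{ij}\), and by \eqref{R1Asymptotics}, \eqref{R2Asymptotics}, Lemma \ref{FundamentalAsymptotic}, and Corollary \ref{DerivativeAsymptotics}, one has \(\mathcal{R}_{ij} = O(e^{-2\alpha s M_{ij}}/\sqrt{s})\), where \(M_{ij}\) is the global maximum of \(d_i - d_j\) (the decay rate exactly matches \(2\alpha M_{ij}\) due to the matching between \eqref{CombinatorialEstimate} and \eqref{R_Asymptotic}). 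Call a pair \((i,j)\) \emph{marginal at \(\theta^*\)} if \((d_i - d_j)(\theta^*) = M_{ij}\); for any non-marginal pair, \(\lambda_{ij}\mathcal{R}_{ij}\) decays exponentially uniformly on the arc. For a marginal pair, Taylor expanding \((d_i - d_j)(\theta) = M_{ij} - \tfrac12 C_{ij}(\theta - \theta^*)^2 + O((\theta-\theta^*)^4)\) with \(C_{ij}>0\) and substituting \(u = \sqrt{s}(\phi(t)-\theta^*)\) produces a finite Gaussian contribution to the limit, on the order of \(\int_\R C'_{ij} e^{-\alpha C_{ij} u^2}\,du\). Applied term-by-term to the Dyson expansion of \(g_s(1)\), this gives a candidate limit \(U_k = I + N_k^{(1)} + N_k^{(2)} + \cdots\).

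The crucial structural step is the following upper-triangularity observation. Every marginal pair \((i,j)\) at \(\theta^*\) satisfies \(d_i(\theta^*) > d_j(\theta^*)\), since \((d_i - d_j)(\theta^*) = M_{ij} > 0\). Reorder the index set \(\{1,\dots,7\}\) by a permutation \(\sigma\) with \(d_{\sigma(1)}(\theta^*) \geq \cdots \geq d_{\sigma(7)}(\theta^*)\); every marginal index pair then lies strictly above the \(\sigma\)-permuted diagonal. Combined with \(\mathcal{R}_{ii} = 0\) from \eqref{R1Asymptotics} and \eqref{R2Asymptotics}, the limiting integrand is strictly upper triangular in the \(\sigma\)-reordered basis, and so is every iterated product appearing in the Dyson series. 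Hence the Dyson series terminates by nilpotency, \(U_k - I\) is nilpotent, and \(U_k\) is unipotent. Conjugating back yields \(L_k^{-1}L_{k+1} = E_0 U_k E_0^{-1}\) as required.

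The main obstacle I foresee is the uniform-in-\(s\) Dyson bookkeeping: I need to show that iterated integrals involving at least one non-marginal factor vanish in the limit (rather than producing spurious non-upper-triangular terms), and that the purely marginal iterated integrals converge to time-ordered Gaussian integrals in all orders. The ingredients are the Gaussian change of variables \(u_\ell = \sqrt{s}(\phi(t_\ell)-\theta^*)\) and the uniform exponential decay of non-marginal factors, but packaging these cleanly as a uniform limit of the path-ordered exponential requires care. The assertion \(U_k \in \Gtwo\) is automatic from the closedness of \(\Gtwo\) in \(\GL_7\C\), together with \(E_0 \in \Gtwo\) (which follows from Proposition \ref{PropBaragliaBasis}(5) applied to the diagonal \(H_0^{-1/2}\), and a direct verification that the matrix \(S\) of \eqref{HiggsEigenbasisInOrder} preserves the cross-product on \(\imoct^{\C}\)).
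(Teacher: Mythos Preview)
Your approach is correct and shares the paper's core framework: the same arc-homotopy crossing the unstable angle $\theta^* = k\pi/6$, and the same Gaussian (Laplace-method) localization on the entries where the exponential growth of $\lambda_{ij}$ exactly matches the decay of $\mathcal{R}_{ij}$. The execution differs in two places. For unipotency, the paper explicitly enumerates the saturated index pairs at each $\theta^*$ (for instance, at $\theta^*=\pi/6$ only $(1,2),(5,4),(6,7),(7,3)$ survive) and writes each $U_k$ as the exponential of a specific nilpotent sum of elementary matrices; your reordering-by-$d_i(\theta^*)$ argument is more conceptual and works uniformly in $k$, but the paper's explicit form is what is actually used downstream---Lemmas \ref{BoundaryVertices} and \ref{BoundaryEdges} rely on facts like $U_1\mathbf{e}_6=\mathbf{e}_6$ and $U_2\mathbf{e}_6=\mathbf{e}_6$, which require knowing exactly which columns of $U_k-I$ vanish, not merely that $U_k-I$ is nilpotent. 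For the convergence itself, the paper bypasses your Dyson-series bookkeeping concern entirely: it defines an explicit comparison map $Q_s(t)=E_0\exp\bigl(\sum_{(i,j)\text{ marginal}}c_s^{ij}(t)\,E_{ij}\bigr)E_0^{-1}$ and invokes \cite[Lemma~B.2]{DW15} to bound $\|g_s-Q_s\|$ by the uniformly exponentially small remainder $\|M_s^0\|$, which is cleaner than controlling mixed iterated integrals order by order.
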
 

\begin{proof}
Along an unstable ray $\gamma$ of angle $\theta$, some of the bounds from \eqref{CombinatorialEstimate} are saturated. Determining for which indices $(i,j)$
the bounds on $d_i -d_j(\theta)$ are saturated will tell us for which indices the exponential growth of $\lambda_{ij}$ matches the exponential decay of $\mathcal{R}_{ij}$. 
We will need this information later.

We first examine $\theta = \frac{\pi}{3}$. 
Recall that $d_{j+3} = -d_j$.  At this $\theta$, we find the only saturated bounds are $ (d_6 - d_2)(\theta) = (d_5- d_3)(\theta)$. 
Translating, $(d_{6-j} - d_{2-j} )(\theta) = d_{5-j} - d_{3-j} (\theta)= \sqrt{3}$ are the saturated bounds for $\theta = \frac{ j\pi}{3}$,
with indices mod 6. 

We now examine multiples of $\theta = \frac{\pi}{6}$. We find $ (d_5 - d_4)(\theta) = (d_1- d_2)(\theta) = 1 $ as well as 
 $(d_6 - d_7)(\theta)   =d_6(\theta)  = -d_3(\theta)  = (d_7 - d_3)(\theta) $ are the only saturated bounds for this $\theta$.
Translating, one finds at $\theta = \frac{\pi}{6} + \frac{j \pi}{3}$ the saturated bounds are $d_{5-j} - d_{4-j}, \; d_{1-j} -d_{2-j}, d_{6-j} - d_7, d_{7} -d_{3-j}$,
indices mod 6 again.

We describe the unipotents transitions across the unstable rays at angles $\frac{\pi}{6}, \; \frac{\pi}{3}$. 
Let $E_{ij}$ be the elementary matrix with entry 1 at index $(i,j)$. Then for constants $c^{ij} \in \C$ to be defined, we show
	\begin{align}
		 U_1 &= \mathsf{exp} \left( \, c^{67} \, E_{67} + c^{73}\, E_{73} + c^{12}\,E_{12} + c^{54}\,E_{54}  \right).\label{UnipotentsEquation1}  \;\\
		U_2 &= \mathsf{exp} \left( \, c^{53} \, E_{53} + c^{62} \,E_{62} \right) .\label{UnipotentsEquation2} 
	\end{align} 
More generally, for indices $k$ mod 6, we have 
	\begin{align}
		U_{1+2k} &=  \mathsf{exp} \left ( c^{6-k, 7} \, E_{6-k,7} +  c^{7, 3-k} \, E_{7, 3-k} + c^{1-k, 2-k}\, E_{1-k,2-k} + c^{5-k, 4-k}\, E_{5-k,4-k} \, \right) \\
		U_{2+2k} &= \mathsf{exp} \left(  c^{5-k,3-k}\, E_{5-k,3-k} + c^{6-k,2-k}\,  E_{6-k,2-k} \; \right) .
	\end{align} 	
We handle the case of $U_1$ as the argument is similar for $U_2$. Fix the stable rays $\gamma_{1}(s) = e^{i \frac{\pi}{12}} s$
and $\gamma_2(s) = e^{i \frac{\pi}{4}}s$. By Lemma \ref{AsymptoticParallelTranslation}, we know that $G_i(s) = \psi\psi_0^{-1}(\gamma_i(s)) \rightarrow L_i$
as $s \rightarrow \infty$. 
Form the arc $\eta_{s}(t) = se^{i (t + \frac{\pi}{12}) \,}$ for $ t \in \left[ 0, \frac{\pi}{6} \right]$ between $\gamma_1(s), \gamma_2(s)$. Define 
$g_s(t) = G_{1}(s)^{-1}\; \psi \psi_0^{-1}(\eta_s(t))$. Just as in the proof of Lemma \ref{AsymptoticParallelTranslation}, we see that $g_s(t)$ satisfies 
$$ \begin{cases} g_s(0) = \mathsf{id} \\ 
			g_s( \frac{\pi}{6} ) = G_1^{-1}(s) G_2(s) \\ 
			(g_s^{-1}g_s')(t) = \Theta(\eta_s(t)) \, \eta'_s(t) \end{cases} ,$$
where $\Theta = B^{-1}dB$ is from \eqref{ThetaRewriten}. We follow the global notation from this section going forwards. 

Recall $\Theta = E_0 \mathcal{D} \mathcal{R} \mathcal{D}^{-1} E_0^{-1}$.
We re-write $\Theta = E_0 A E_0^{-1}$, where $A = \mathcal{D} \mathcal{R} \mathcal{D}^{-1}$, has indices $A_{ij} = \lambda_{ij} \mathcal{R}_{ij}$ and $\lambda_{ij}= e^{ 2\alpha s\, (d_i-d_j)}$. 
Since $\gamma$ is not stable, $A(\eta_s(t))$ is not exponentially small in $s$ for all $t$. Recalling the asymptotic \eqref{R_Asymptotic} for $\mathcal{R}_{ij}$ and \eqref{CombinatorialEstimate} for $d_i-d_j$, along with the
saturation of bounds for $d_i - d_j$ from the beginning of the proof, we find $A( \eta_s(\frac{\pi}{12} ) ) \in O(\frac{1}{\sqrt{s}} ) $, with saturation \emph{only} at the entries
$E_{67}, E_{73}, E_{12}, E_{54}$, where the exponential growth of $\lambda_{ij}$ matches the exponential decay of $\mathcal{R}_{ij}$. 
The remaining entries $A_{ij}$ satisfy $A_{ij} \in O( \frac{e^{-b_{ij} s}}{ \sqrt{s} } \, )$ for some constant $b_{ij} >0$.

We denote $ M_s(t) := g_s^{-1}g_s'(t) $. We separate the terms of $M_s(t)$ that decay exponentially from the remaining terms as follows: 
 \begin{align} \label{ThetaPieces} 
	M_s(t) = M_s^0(t) +  E_0 \,\left [ \mu^{12}_s(t) E_{12} +\mu^{54}_s(t)  E_{54} + \mu^{67}_s(t) E_{67} + \mu^{73}_s(t) E_{73} \right ] \,E_0^{-1},
\end{align} 
where 
$$\mu^{ij}_{s}(t) = \eta_{s}'(t) \, A_{ij}(\eta_{s}(t)) $$
 and $M_s^0(t) \in O\left( \,\sqrt{s} \, e^{-2b\, s} \right) $ for $b = \min_{ij} b_{ij}$, since $|\eta_s'(t) | \in O(s)$.
Next, we note a crucial upper bound on $\lambda_{ij}$. 
 
 By \eqref{R1Asymptotics}, \eqref{R2Asymptotics}, for indices $(i,j) \in \{ \,(1,2), \, (5,4), (6,7), (7,3) \}$,
we have $(R_{1})_{ij} \asymp (\del x_1)$ and $(R_2)_{ij} \asymp x_1$. Hence, $A_{ij} = \lambda_{ij}((R_1)_{ij} + (R_2)_{ij})\in O( \lambda_{ij}x_1)$ by Corollary \ref{DerivativeAsymptotics}. Recall $d_i = \frac{1}{|z|} \, \Re(\xi^{2i-1}z)$, where $\xi = e^{i \pi/6}$. Using that $\xi^{1} -\xi^3 = \xi^{11}$, we find $\lambda_{12} = \lambda_{67}$.
Since $d_7 =0, \, d_j= -d_{j+3}$, we find $ \lambda_{54} = \lambda_{12}$ and $ \lambda_{67} = \lambda_{73} $. Hence, we may define 
$a:= \lambda_{54} = \lambda_{12} =  \lambda_{67} = \lambda_{73}$. 
Note that $\arg( \eta_{s}(t) )= t+\frac{\pi}{12}$ and recall $d_6(\theta) = \cos(\theta + \frac{11 \pi}{6} ) = \cos(\theta -\frac{\pi}{6})$. Thus,
$$ a(\eta_{s}t) = \lambda_{67}(\eta_{s}t) = e^{2\alpha s \, d_6(t+\frac{\pi}{12} ) } = \mathsf{exp}\left( \,2\alpha s \, \cos\left(t - \frac{\pi}{12} \right) \; \right). $$
Using $\cos(x-\frac{\pi}{12}) \leq 1- \frac{1}{4}(x-\frac{\pi}{12})^2$ for $ x \in [0, \frac{\pi}{6}]$, we find the inequality
\begin{align} \label{a}  
 a(\eta_s(t)) \leq \exp \left( \, 2\alpha s \, - \frac{\alpha}{2} s\left( t- \frac{\pi}{12} \right)^2 \, \right).
\end{align} 
for $t \in [0, \frac{\pi}{6} ]$. By Lemma \ref{FundamentalAsymptotic}, we get the following bound for $\mu^{ij}$: 

$$| \mu_{s}^{ij}(t)| \in O( \, |\eta_s'(t) | \, a\,x_1) = O \left(  \sqrt{s} \, \mathsf{exp}\left( - \frac{\alpha}{2} \, s \left( t - \frac{\pi}{12} \right)^2 \, \right) \right).$$
However, this upper bound is an integrable function in $t$, with $L^1$ norm independent of $s$, as it is a (normalized) Gaussian function. 

Set $c^{ij}_s(t) := \int_{0}^{t}  \mu_s^{ij}(\tau)  d\tau$ and define
$$ Q_{s}(t) := E_0  \, \mathsf{exp} \left ( \, c^{12}_s(t) \, E_{12} + c^{54}_s(t) \, E_{54} + c^{67}_s(t) \, E_{67} + c^{73}_s(t)\, E_{73} \right) \, E_0^{-1}. $$ 
Observe that $g^{-1}_s g'_s(t) - Q^{-1}_s(t) Q'_s(t) = M_s^0(t)$. 
Then by the same reasoning as \cite[Lemma B.2]{DW15}, we find that $|| g_s(t) - Q_s(t) ||$ is uniformly bounded 
in terms $||M_s^0||$ for all $t$. However, we know that $||M_s^0|| \rightarrow 0$ as $s \rightarrow \infty$. 
On the other hand, $G_1(s)^{-1}G_2(s) = g_s \left( \frac{\pi}{6} \right)\rightarrow L_1^{-1}L_2 $. 
Thus, $\lim_{s \rightarrow \infty} Q_s(\frac{\pi}{6})$ exists and 
$ \lim_{s \rightarrow \infty} Q_s(\frac{\pi}{6}) = L_1^{-1}L_2 $. Of course, this means $\lim_{s \rightarrow \infty} E_0^{-1} Q_{s}(\frac{\pi}{6}) E_0 $ exists too. 
Thus, $c^{ij} := \lim_{s \rightarrow \infty} c^{ij}_s(\frac{\pi}{6}) $ exists and is finite. We conclude that
 that $U_1$ from \eqref{UnipotentsEquation1} satisfies $L_1^{-1}L_2 = \lim_{s \rightarrow \infty}Q_s(\frac{\pi}{6})= E_0 U_1 E_0^{-1}$.  

The argument for $U_2$ is nearly the same; in this case, one uses $\lambda_{53} = \lambda_{62}$
along with $(R_1)_{ij} \asymp \del x_2, \; (R_{2})_{ij} \asymp x_2$ for $(i,j) \in \{ (5,3), (6,2)\}$.
\end{proof}

\section{Polynomial Sextic Differentials to Annihilator Polygons}\label{AsymptoticBoundary} 

We now discuss the boundary of $\Stwofour$ in $\mathbb{P} \imoct$. Here, we regard $\Stwofour = \{ \, [x] \in \mathbb{P} \imoct \; | \; q(x) > 0 \}$. 
Then the topological closure $(\overline{\Stwofour} \subset \mathbb{P} \imoct) = \Stwofour \sqcup \Eintwothree$, where 
$$\Eintwothree := \mathbb{P} Q_0(\imoct) = \{ \, [x] \in \mathbb{P}\imoct \; | \; x \neq 0, \, q(x) = 0\}$$ is the space of null lines in $\imoct $. 
In Section \ref{BoundaryAnnihilatorPolygon}, we show the \emph{asymptotic boundary} $\Delta := \partial_{\infty}(\nu_q)$ of an almost-complex curve $\nu_q$ associated to a 
solution to equation \eqref{HitEuc}, with $q$ a monic polynomial, is a polygon in $\Eintwothree$ with $\deg q + 6$ vertices. 

Moreover, we describe the $\imoct$ properties of these polygons, showing they have the \emph{annihilator} property, which is in turn related to
a discrete metric $d_3 $ on $\Eintwothree$. Recall that in the $\Gtwo$ preliminaries, we discussed \emph{annihilators}. 
In Section \ref{AnnihilatorPolygons}, define the metric $d_3$ and relate it to annihilators. Then we show $\Gtwosplit = \Isom(d_3) \cap \Diff(\Eintwothree)$, characterizing the group $\Gtwosplit$ as the diffeomorphisms of $\Eintwothree$ that also preserve the metric $d_3$
in Section \ref{d3}. We then prepare for the main results. Section \ref{Sec:RealCrossProductBasis} discusses real cross-product bases for $\imoct$ and introduce a real cross-product frame for $\V$ that we use later. Section \ref{ModelSurface} discusses the asymptotic boundary of $\nu_0$, the model surface when $q \equiv 1$. 

\subsection{Annihilator Polygons in $\Eintwothree$} \label{AnnihilatorPolygons}

In this section, we discuss \emph{annihilator polygons}, which are piecewise linear curves in $\Eintwothree$, with constraints on the edges 
related to the geometry of $\Eintwothree$. First, we recall some definitions. 
Again, we denote the null-cone in $\imoct$ as $Q_0(\imoct) := \{x \in \imoct \; | \; q(x) = 0\}$. 
The notion of annihilators is as follows: 

\begin{definition}\label{Defn:Annihilator} 
Given $u \in \imoct$, we define the \textbf{annihilator} of $u$ as $$\Ann(u) := \ker \mathcal{C}_u = \{ v \in \imoct \; | \; u \times v = 0 \}.$$
\end{definition} 
Note that $\Ann(u) = \Ann(c \, u) $ for $c \in \R^*$. Hence, $\Ann( [u] )$ makes sense. We may abuse notation and 
conflate $\Ann( [u] ) $ with $ \mathbb{P} \Ann([u])$, so that $\Ann([u]) \subset \mathbb{P}(\imoct)$. 

Now, there is another Stiefel triplet model for $\Gtwosplit$ that we shall need going forwards. Consider the following Stiefel manifold of null triplets: 
\begin{align} 
\mathcal{N} := \{ (u,v,w) \in (\imoct)^3 \; | \; q(u) =q(v) = q(w) = 0, \; u\cdot v= v\cdot w = u\cdot w= 0, \; u \cdot (v \times w) = +1 \} 
\end{align} 

The action of a $\Gtwosplit$-transformation is precisely determined by its action on a single null triple, as proven in \cite[Theorem 13]{BH14}. 
 
\begin{lemma}[Null Triplet Model]\label{Lemma:NullTriple} 
$\Gtwosplit$ acts simply transitively on $\mathcal{N}$. 
\end{lemma}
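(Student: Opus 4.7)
The plan mirrors the proof of Proposition \ref{StiefelTripletModel}: given a null triple, I extend it canonically to a 7-frame for $\imoct$ whose multiplication table under $\times$ depends only on the input triple, and then use this frame to produce and rigidify the required $\Gtwosplit$-transformation. The action is well-defined because $\Gtwosplit$ preserves $q$, $\cdot$, and $\times$, hence every defining condition of $\mathcal{N}$. A dimension count $\dim \mathcal{N} = 6 + 5 + 3 = 14 = \dim \Gtwosplit$ is consistent with the claim: the null cone is $6$-dimensional, intersecting with $u^\perp$ trims to $5$, and requiring $w$ to be null, orthogonal to $u$ and $v$, and to satisfy $u \cdot (v \times w) = 1$ trims to $7 - 3 - 1 = 3$.

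For transitivity, given $(u, v, w) \in \mathcal{N}$ I define the dual null vectors
\begin{equation*}
	u^* := v \times w, \qquad v^* := w \times u, \qquad w^* := u \times v.
\end{equation*}
Using the normalization \eqref{CrossProductNormalization}, its polarization (which yields identities of the form $(a \times b) \cdot (c \times d) + (a \times d) \cdot (c \times b) = 2(a \cdot c)(b \cdot d) - (a \cdot b)(c \cdot d) - (a \cdot d)(b \cdot c)$), orthogonality of $\times$, and cyclic symmetry of the triple product $\Omega$, I verify that each of $u^*, v^*, w^*$ is null, that $u \cdot u^* = v \cdot v^* = w \cdot w^* = 1$, and that every other pairing among $\{u, v, w, u^*, v^*, w^*\}$ vanishes. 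These six null vectors then span a $(3,3)$-signature subspace $V_6 \subset \imoct$, and $V_6^\perp \subset \imoct$ is a negative-definite line. I pick the unique unit vector $\mathbf{e} \in V_6^\perp$ with sign pinned down by the calibration, e.g.\ by requiring $\Omega(u, u^*, \mathbf{e}) > 0$. The resulting 7-tuple $\mathcal{F}_{(u,v,w)} := (u, v, w, u^*, v^*, w^*, \mathbf{e})$ is a basis of $\imoct$ canonically determined by the input triple alone.

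Fixing a reference triple $(u_0, v_0, w_0) \in \mathcal{N}$ with its canonical 7-frame $\mathcal{F}_0$, I define $\varphi \in \GL(\imoct)$ as the linear map sending $\mathcal{F}_0$ term-wise to $\mathcal{F}_{(u,v,w)}$. The hard step is verifying $\varphi \in \Gtwosplit$, i.e.\ that $\varphi(x \times y) = \varphi(x) \times \varphi(y)$ on basis pairs. The strategy is to show that every entry of the $\times$-multiplication table on $\mathcal{F}_{(u,v,w)}$ is forced by universal algebraic identities --- the double and generalized double cross-product identities \eqref{DCP}, \eqref{GDCP}, the Moufang identities \eqref{Moufang}, orthogonality of $\times$, and the normalization \eqref{CrossProductNormalization} --- applied to the relations defining $\mathcal{N}$. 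Since those relations are the same for $(u_0, v_0, w_0)$ and $(u, v, w)$, the two multiplication tables coincide, so $\varphi$ intertwines $\times$ and lies in $\Gtwosplit$. An efficient way to organize this, as in Proposition \ref{StiefelTripletModel}, is to rescale the hyperbolic pair $(u, u^*)$ to a $(+,-)$-orthonormal pair, extract a split-quaternion subalgebra from their span together with $\mathbf{e}$, and apply a split version of Lemma A.8 of \cite{HL82} to recover the full $\Oct'$-multiplication. Simplicity is then immediate: any $\psi \in \Stab(u, v, w)$ preserves $\times$, hence fixes $u^*, v^*, w^*$; it preserves $\Omega$ and $V_6$, so it also fixes $\mathbf{e}$; thus $\psi$ fixes a basis of $\imoct$ and equals the identity.
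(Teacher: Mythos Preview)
The paper does not give its own proof of this lemma; it merely cites \cite[Theorem 13]{BH14} and moves on. Your outline is therefore more than the paper provides, and it is in fact the standard argument that Baez--Huerta use: build the dual triple $(u^*,v^*,w^*) = (v\times w,\, w\times u,\, u\times v)$, check the pairings, adjoin the orthogonal timelike line, and then verify that the cross-product table on the resulting seven-frame is forced by the octonion identities alone. Your sign-fixing of $\mathbf{e}$ via $\Omega(u,u^*,\mathbf{e})>0$ is legitimate, since $u\times u^*$ is a nonzero multiple of the generator of $V_6^\perp$ (compare the real cross-product basis in the paper, where $x_3\times x_{-3}\in\R\{x_0\}$).

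The one place where your write-up remains a sketch is the ``hard step'': you assert that the $\times$-table is determined by the identities but do not carry out the case check. This is exactly where the work lies in \cite{BH14}, and it is not automatic --- for instance, showing $u\times v^* = c\,w$ for the correct constant $c$ requires a careful application of the generalized double cross-product identity and the Moufang relations. Your appeal to a split analogue of \cite[Lemma A.8]{HL82} is a reasonable shortcut, but you would still need to exhibit the split-quaternion subalgebra explicitly and verify that the orthogonal element plays the role of $l$. As written, the proposal is a correct and well-organized plan rather than a complete proof, which is appropriate given that the paper itself only cites the result.
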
 

As a corollary, one finds that $\Gtwosplit$ acts transitively on $Q_0(\imoct^\F)$. One then easily checks that $\dim \Ann( u) = 3$ for any $u \in Q_0(\imoct^\F)$ by
the transitivity. For example, if $(x_i)_{i=3}^{-3}$ is any $\R$-cross-product basis, as in Definition \ref{Defn:RealCrossProductBasis}, then $\Ann(x_3) = \spann_{\R} \langle x_3, x_2, x_1 \rangle$.

\begin{proposition}\label{Prop:Annihilators}
$\Gtwosplit$ acts transitively on $Q_0(\imoct).$ Also, if $u \in Q_0(\imoct)$, then $\Ann(u) \subset Q_0(\imoct)$ and $\dim_\R \Ann(u) =3$.\footnote{If $q(w) \neq 0$, then $\Ann(w) = \R \{w\}$ is uninteresting.}  \end{proposition}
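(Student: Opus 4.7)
The plan is to leverage the simply-transitive action of $\Gtwosplit$ on the null-triplet manifold $\mathcal{N}$ from Lemma~\ref{Lemma:NullTriple}, together with a direct multiplication-table computation in a real cross-product basis.

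For the transitivity of $\Gtwosplit$ on $Q_0(\imoct) \setminus \{0\}$, I would first show that every null $u \in Q_0(\imoct) \setminus \{0\}$ extends to a null triplet $(u,v,w) \in \mathcal{N}$. This is a standard argument exploiting that the Witt index of $q$ equals $3$: nondegeneracy of $q$ yields some $y$ with $u \cdot y = 1$, and $u^{*} := y - \tfrac{q(y)}{2} u$ is then a null dual satisfying $u \cdot u^{*} = 1$; the orthogonal complement of $\spann\{u,u^{*}\}$ has signature $(2,3)$ and contains null vectors, which gives a null $v$ orthogonal to $u$; continuing in the orthogonal complement of $\spann\{u,u^{*},v,v^{*}\}$ (signature $(1,1)$), one finds null $w$ orthogonal to both $u,v$, after which one rescales so that $u \cdot (v \times w) = 1$. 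The projection $\mathcal{N} \to Q_0(\imoct) \setminus \{0\}$ sending $(u,v,w) \mapsto u$ is then surjective, and the simple transitivity on $\mathcal{N}$ (Lemma~\ref{Lemma:NullTriple}) descends to transitivity on $Q_0(\imoct) \setminus \{0\}$.

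For $\Ann(u) \subset Q_0(\imoct)$ and $\dim_{\R} \Ann(u) = 3$, transitivity reduces the problem to a single reference null vector. I would take $u = x_3$ for $(x_i)_{i=3}^{-3}$ a real cross-product basis as in Definition~\ref{Defn:RealCrossProductBasis}, which by construction satisfies the multiplication rules $x_i \times x_j = C_{ij}\, x_{i+j}$ (with $x_k := 0$ for $|k| > 3$) in analogy with Proposition~\ref{PropBaragliaBasis}(3). A direct inspection of the table shows $x_3 \times x_j = 0$ precisely for $j \in \{1,2,3\}$: for these indices the subscript sum $j+3$ exceeds $3$; while for $j \in \{-3,-2,-1,0\}$, the products $x_3 \times x_j$ are nonzero multiples of $x_0, x_1, x_2, x_3$ respectively, as one verifies using the cross-product normalization $q(x_3 \times x_j) = q(x_3) q(x_j) - q(x_3, x_j)^2$ together with the inner-product relations of the basis. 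Hence $\Ann(x_3) = \spann_{\R}\{x_1, x_2, x_3\}$, which is $3$-dimensional. Since $i + j \neq 0$ for any $i,j \in \{1,2,3\}$, the pairwise inner products of $x_1, x_2, x_3$ vanish and each $x_i$ is null, so $\Ann(x_3)$ is a totally null $3$-plane contained in $Q_0(\imoct)$.

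The main subtlety lies in the normalization $u \cdot (v \times w) = 1$ in the null-triplet extension: the totally null $3$-plane $\spann\{u,v,w\}$ cannot coincide with $\Ann(u)$, on which $\Omega$ vanishes identically (indeed $\Omega(x_3, x_2, x_1) = (x_3 \times x_2) \cdot x_1 = 0$ since $x_3 \times x_2 = 0$). This is precisely why $v, w$ are selected via the hyperbolic-plane complement rather than naively within $\Ann(u)$: the resulting triplet lives in a \emph{different} maximal totally null $3$-plane through $u$ (mirroring the complex model where $(u_3, u_{-1}, u_{-2})$ rather than $(u_3, u_2, u_1)$ is a valid null triplet), on which $\Omega$ is nonzero by direct evaluation, making the rescaling step legitimate.
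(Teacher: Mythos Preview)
Your strategy matches the paper's one-line treatment: deduce transitivity on $Q_0(\imoct)$ from Lemma~\ref{Lemma:NullTriple}, then reduce the annihilator claims to a single model vector. There is, however, a genuine gap in your null-triplet extension. The Witt-index construction yields pairwise $q$-orthogonal null $u,v,w$, but the rescaling step requires $\Omega(u,v,w)\neq 0$, which your procedure does not enforce. Concretely, with $u=x_3$ and $u^*\in\R\{x_{-3}\}$ one may legitimately choose $v=x_2$: it is null and lies in $\{u,u^*\}^\perp$. Yet $x_3\times x_2=0$, so $\Omega(x_3,x_2,w)=0$ for \emph{every} $w$. Thus the hyperbolic-complement procedure does not automatically place $v$ outside $\Ann(u)$ as your final paragraph asserts; one can land in $\Ann(u)\cap\{u,u^*\}^\perp$. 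The fix is to explicitly require $v\notin\Ann(u)$---possible because the null cone of the signature-$(2,3)$ space $\{u,u^*\}^\perp$ spans that space and hence is not contained in any proper linear subspace---and then to choose $w\in\{u,u^*,v,v^*\}^\perp$ outside $(u\times v)^\perp$ (one checks $u\times v\notin\spann\{u,u^*,v,v^*\}$ whenever $u\times v\neq 0$, so such $w$ exists). Alternatively, invoke \cite[Theorem~13]{BH14} directly, where this extension is carried out.

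A secondary issue: for $j\in\{-1,-2\}$ the normalization $q(x_3\times x_j)=q(x_3)q(x_j)-q(x_3,x_j)^2$ reads $0=0$ and therefore does not establish $c_{3,j}\neq 0$. You need a separate argument here, for instance via the $\Z_3$-grading of Lemma~\ref{Z3CrossProductGrading} (as in the proof of Lemma~\ref{Lem:RealCrossProductBasis}), or a direct computation in an explicit model such as $\tilde u=i+li,\ \tilde v=j-lj,\ \tilde w=k-lk$.
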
 

In \cite{BH14}, Baez and Huerta give a beautiful proof that the action of $\Gtwosplit$ on $\Eintwothree \times \Eintwothree$ decomposes into four orbits. 
In fact, the result is a corollary to the proof Lemma \ref{Lemma:NullTriple}, but the \cite{BH14} proof reveals even more, as we shall soon see. 
Moreover, the obvious $\Gtwosplit$-invariants, dot product and cross-product, classify the orbits. We denote these orbits as:
\begin{align} \label{NullOrbits}
	\begin{cases}
		N_0 &=  \{ \, ( \, [u] ,\, [u] \, ) \in \Eintwothree \times \Eintwothree \; | \; u \in Q_0\imoct \}. \\
		 N_1 &= \{ \, ( \, [u] ,\, [v] \, ) \in \Eintwothree \times \Eintwothree\; | \; u \times v = 0, \, u \neq v\}. \\
		 N_2 &= \{ \, ( \, [u], \, [v] \, ) \in \Eintwothree \times \Eintwothree\; | \; u \cdot v = 0, \; u \times v \neq 0 \}. \\
		 N_3 &= \{ \,( \, [u] ,\, [v] \, )\in \Eintwothree \times \Eintwothree \; | \; u \cdot v \neq 0 \}. \\
	\end{cases} 
\end{align} 
In particular, if $u, v \in Q_0(\imoct)$ are null and $u \cdot v \neq 0$, then $u \times v \neq 0$. 
Remarkably, these orbits these define a $\Gtwosplit$-invariant discrete metric $d_{3}$.

\begin{definition} 
Define $d_3: \Eintwothree \times \Eintwothree \rightarrow \{0,1,2,3\}$ by $d_{3}(x, y) := i$ when $ (x,y) \in N_i$.
\end{definition} 

The fact that $d_3$ defines a metric is never explicitly mentioned in \cite{BH14}, though it is a corollary of one of their main results,
as we now show. 

\begin{lemma}[Discrete Metric on $\Eintwothree$.] 
The map $d_3$ defines $\Gtwosplit$-invariant metric on $\Eintwothree$. 
\end{lemma}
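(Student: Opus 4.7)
The plan is to verify the four metric axioms (non-negativity, definiteness, symmetry, triangle inequality) together with $\Gtwosplit$-invariance. The $\Gtwosplit$-invariance is immediate from the definition: since the $N_i$ are the $\Gtwosplit$-orbits in $\Eintwothree \times \Eintwothree$, we have $(x,y) \in N_i \iff (\psi x, \psi y) \in N_i$ for all $\psi \in \Gtwosplit$, so $d_3(\psi x, \psi y) = d_3(x,y)$. Non-negativity is built into the codomain $\{0,1,2,3\}$, and definiteness ($d_3(x,y) = 0 \iff x = y$) is literally the definition of $N_0$. Symmetry reduces to checking that each defining condition for $N_i$ is symmetric in $(x,y)$: $\cdot$ is symmetric, and the vanishing of $x \times y$ is symmetric because $\times$ is skew.

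The only real content is the triangle inequality $d_3(x,z) \leq d_3(x,y) + d_3(y,z)$. Since $d_3 \leq 3$ globally, the inequality is automatic whenever the right-hand side is at least $3$, and the cases where one of the summands is $0$ are trivial. So I only need to handle the single case $d_3(x,y) = d_3(y,z) = 1$, where I must show $d_3(x,z) \leq 2$, i.e., (unpacking $N_0 \cup N_1 \cup N_2$) that $x \cdot z = 0$.

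The key observation, and where the proof has its one substantive step, is that for $y \in Q_0(\imoct)$ the annihilator $\Ann(y)$ is an \emph{isotropic} subspace of $\imoct$. This follows from Proposition \ref{Prop:Annihilators}: $\Ann(y)$ is a $3$-dimensional linear subspace contained in $Q_0(\imoct)$. Since $\Ann(y)$ is linear, for any $u,v \in \Ann(y)$ we have $u + v \in \Ann(y)$, hence $q(u+v) = 0 = q(u) = q(v)$, and polarization gives $u \cdot v = \tfrac{1}{2}(q(u+v) - q(u) - q(v)) = 0$. Now the hypothesis $d_3(x,y) = 1$ means $x \times y = 0$ and $x \neq y$, i.e., $x \in \Ann(y)$; likewise $d_3(y,z) = 1$ gives $z \in \Ann(y)$. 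By isotropy of $\Ann(y)$ we conclude $x \cdot z = 0$, so $d_3(x,z) \leq 2$, completing the triangle inequality.

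I do not anticipate any obstruction; the whole lemma collapses to the elementary fact that annihilators of null vectors in $\imoct$ are maximal isotropic subspaces, which itself is already recorded in Proposition \ref{Prop:Annihilators}.
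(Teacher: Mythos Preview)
Your proof is correct, but it differs from the paper's argument. The paper introduces an auxiliary ``path metric'' $d'$: an \emph{annihilator path} from $x$ to $y$ is a sequence $(x_0,\dots,x_k)$ in $\Eintwothree$ with $x_0=x$, $x_k=y$, and $x_i \odot x_{i+1}=0$, and $d'(x,y)$ is the minimal length of such a path. The paper then invokes \cite[Theorem~16]{BH14} to get $d_3=d'$, after which the triangle inequality is immediate from concatenation of annihilator paths. By contrast, you verify the triangle inequality directly, reducing (via the global bound $d_3\le 3$) to the single nontrivial case $d_3(x,y)=d_3(y,z)=1$ and then using that $\Ann(y)$ is a totally isotropic subspace (Proposition~\ref{Prop:Annihilators}) to force $x\cdot z=0$. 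Your route is more elementary and self-contained, avoiding the external citation; the paper's route has the advantage of exhibiting $d_3$ as a genuine graph metric, a structural picture that is conceptually useful elsewhere (e.g., for the ``midpoint'' remark and the annihilator-path language in Section~\ref{AnnihilatorPolygons}).
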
 

\begin{proof}
The key to the proof is an equivalent characterization of the orbits $N_i$ given in \cite{BH14}. We now define a map $d': \Eintwothree \times \Eintwothree \rightarrow \Z_{\geq 0}$,
which a priori has no relation to $d_3$. Take $x, y \in \Eintwothree$ and define an \emph{annihilator path} (of length $k$) from $x$ to $y$
as a sequence $P := (x_0, x_1, \cdots, x_k) \subseteq \Eintwothree$ such that $x_0 = x, x_{k} = y$, and  $x_i \odot x_{i+1} = 0$ for $ i\in \{0, 1, \dots, k-1\}$. 
Define $\mathsf{length}(P) = k$. Then let $\mathcal{P}_{x,y} $ 
denote the set of all annihilator paths from $x$ to $y$ and we define $d'$ by 
\begin{align}\label{d'Distance} 
	d'(x,y) := \min_{P \in \mathcal{P}_{x,y}} \mathsf{length}(P).
\end{align} 
However, $d_3 = d'$; this is exactly the content of \cite[Theorem 16]{BH14}. Since $(x,y) \in N_i $ if and only if $(y,x) \in N_i$, it is clear that $d_3$ is symmetric. By definition, $d_3(x,y) =0$ 
if and only if $x = y$. 
The triangle-inequality for $d'$ follows immediately from path concatenation of annihilator paths. Thus, $d_3$ is indeed a metric. 
$\Gtwosplit$-invariance of $d_3$ is clear.
\end{proof} 

We now define annihilator polygons. Observe first that if $x, y \in \Eintwothree $ are orthogonal, then the projective line connecting them is also contained in $\Eintwothree$. That is, 
$L = \mathbb{P}(\spann_{\R} \langle x, y \rangle) \subset \Eintwothree$. 

\begin{definition}
Let $P = (p_i)_{i \in \mathbb{Z}_n}$ be a cyclically ordered multiset of points $p_i \in \Eintwothree$, with $p_i \cdot p_{i+1} = 0$ and $p_{i} \neq p_{i+1}$. 
We call a cyclically ordered set $\Delta := (L_i)_{i \in \mathbb{Z}_n}$ of
projective line segments $L_i$ connecting adjacent vertices $p_i, p_{i+1}$ a \textbf{polygon}. 
We say such a polygon $\Delta$ has the \textbf{annihilator property} when $\Ann( p_{i} ) = \spann_{\R}\langle p_{i-1}, p_i, p_{i+1} \rangle$. 

Similarly, given a sequence $S = (p_1, \dots, p_n)$ with $p_i \in \Eintwothree$, we say $S$ is an \textbf{annihilator sequence} when 
$\Ann( p_{i} ) = \spann_{\R} \langle p_{i-1}, p_i, p_{i+1} \rangle $ for $i \in \{2,\dots, n-1\}$.
\end{definition}

In fact, a straightforward argument shows that if $S$ is an annihilator sequence, then any consecutive sextuple of vertices $(p_i, p_{i+1}, \dots, p_{i+5})$ is linearly independent. This linear independence result is tight, in general, 
as one can construct annihilator polygons $\Delta$ with vertices $(p_i)_{i=1}^{2k}$ and $\dim (\spann_\R (p_i)_{i=1}^{2k})  = 6$. \\

We now remark on the connection between null triples and annihilator hexagons. 

\begin{remark}
Let $p:= (u,v,w)$ be a null triple. Then $p$ naturally defines an annihilator hexagon with vertex set 
$ ( \, [u], \, [u \times v], \, [v], \, [v \times w], \, [w], \, [w \times u]) $ \cite[Theorem 12]{BH14}. 
Here, the entire polygon sits in an affine chart for $\Eintwothree$, so the edges are determined uniquely by the vertices. 
\end{remark} 

We state two basic facts about annihilators that follow from Proposition \ref{Prop:Annihilators} and Lemma \ref{Lemma:NullTriple} 
that will allow us to relate $d_3$ to the annihilator property. 

\begin{proposition}[Standardizing Annihilators]\label{StandardizingAnnihilators} 
 Fix $[u] \in \Eintwothree$ and $v, w \in \Ann(u)$.  
 	\begin{enumerate}
		\item $\{u,v,w\}$ is linearly independent if and only if $v \times w = c \, u$ for some $c \neq 0$. 
		Hence, $\Ann(u) = \spann_{\R} \langle u, v, w \rangle$ if and only if $[v] \times [w] = [u]$. 
		\item If $([v], \, [u] ,\, [w],\, [x] )$ is an annihilator sequence, then $d_3([v],[x]) = 3$. 
	\end{enumerate} 
\end{proposition}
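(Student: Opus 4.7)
The plan is to handle both parts by reducing to an explicit computation in a real cross-product basis $(x_i)_{i=3}^{-3}$ for $\imoct$ (as developed in Section \ref{Sec:RealCrossProductBasis}), which is the real analogue of the basis in Proposition \ref{PropBaragliaBasis}. The key structural facts I will use are: (i) the grading $x_i \times x_j \in \R\, x_{i+j}$, with $x_k := 0$ for $|k|>3$; (ii) the pairing satisfies $x_i \cdot x_j = 0$ unless $i+j=0$, and $x_i \cdot x_{-i} \neq 0$; and (iii) among distinct basis elements, the only nonzero cross-products are, up to sign, $x_2\times x_1$, $x_3\times x_{-1}$, $x_3\times x_{-2}$, and $x_i\times x_{-i}$. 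These immediately give $\Ann(x_3) = \spann_\R\langle x_3, x_2, x_1\rangle$ and $\Ann(x_2) = \spann_\R\langle x_3, x_2, x_{-1}\rangle$.

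For Part (1), by the transitivity of $\Gtwosplit$ on $\Eintwothree$ (Proposition \ref{Prop:Annihilators}) and the $\Gtwosplit$-equivariance of $\times$, I may take $u = x_3$. Writing $v = \sum_{i=1}^{3} a_i x_i$ and $w = \sum_{i=1}^{3} b_i x_i$, a direct expansion using (i) and (iii), together with the fact that $x_i\times x_j = 0$ for all $i,j\in\{1,2,3\}$ except for $x_2\times x_1 = C\,x_3$ with $C\neq 0$, yields
\[
v\times w \;=\; C\,(a_2 b_1 - a_1 b_2)\,x_3 \;=\; C\,(a_2 b_1 - a_1 b_2)\,u.
\]
The scalar $a_2 b_1 - a_1 b_2$ is exactly the relevant minor of the coefficient matrix of $\{u,v,w\}$ in the basis $(x_3, x_2, x_1)$, so it is nonzero iff $\{u,v,w\}$ is linearly independent. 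This proves the first equivalence, and the ``hence'' follows because $\dim_\R \Ann(u) = 3$, so any three linearly independent elements of $\Ann(u)$ span it.

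For Part (2), observe that $(u,w) \in N_1$: indeed $u\times w = 0$ since $w\in\Ann(u)$, while $[u]\neq [w]$ because $\{v,u,w\}$ is linearly independent. By the orbit classification \eqref{NullOrbits} (from \cite{BH14}), $\Gtwosplit$ acts transitively on $N_1$, so after a $\Gtwosplit$-transformation I may assume $u = x_3$ and $w = x_2$. Then $v = a_3 x_3 + a_2 x_2 + a_1 x_1 \in \Ann(u)$ and $x = b_3 x_3 + b_2 x_2 + b_{-1} x_{-1} \in \Ann(w)$. Applying Part (1) to $\Ann(u)$ and $\Ann(w)$ respectively shows that $\{v,u,w\}$ and $\{u,w,x\}$ are each linearly independent, which forces $a_1 \neq 0$ and $b_{-1}\neq 0$. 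Then by (ii) the only surviving term in $v\cdot x$ is
\[
v\cdot x \;=\; a_1 b_{-1}\,(x_1\cdot x_{-1}) \;\neq\; 0,
\]
so $([v],[x])\in N_3$, i.e.\ $d_3([v],[x]) = 3$.

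The main subtlety is confirming the structural properties (i)--(iii) for the real cross-product basis; I expect Section \ref{Sec:RealCrossProductBasis} to establish these as the direct real-form analogues of Proposition \ref{PropBaragliaBasis}. Once they are in hand, both parts collapse to one-line computations after the appropriate $\Gtwosplit$-standardization.
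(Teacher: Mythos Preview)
Your proof is correct and follows essentially the same strategy as the paper's: reduce via $\Gtwosplit$-transitivity to a fixed model and compute explicitly. The paper uses explicit split-octonion elements $\tilde{u}=i+li$, $\tilde{v}=j-lj$, $\tilde{w}=k-lk$ rather than the abstract real cross-product basis of Section~\ref{Sec:RealCrossProductBasis}; this keeps the argument self-contained and avoids the forward reference you flag. For Part~(2), the paper gauges the pair $(v,w)$ via the null-triplet model (Lemma~\ref{Lemma:NullTriple}) rather than the pair $(u,w)$ via transitivity on $N_1$ as you do, but the computations are otherwise parallel. Your standardization is slightly more direct; the paper's choice avoids reliance on later sections.
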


\begin{proof}
(1) By the transitivity of the action of $\Gtwosplit$ on $\Eintwothree$, we need only check (1) for
for a single annihilator. To this end, one can use $ \tilde{u} = i+li, \tilde{v} = j-lj, \tilde{w} = k-lk$. Since $\tilde{u}^2 = \tilde{u}\tilde{v} =\tilde{u}\tilde{w} =0$, we have $\Ann(\tilde{u}) = \spann_{\R} \langle \tilde{u},\tilde{v},\tilde{w} \rangle$ by Proposition \ref{Prop:Annihilators}. 
Moreover, one finds $\tilde{v} \times \tilde{w} = \frac{1}{2} \tilde{u}$. Hence, given $w_1 = a\tilde{v} + b\tilde{w} + c\tilde{u}$ and $w_2 = d\tilde{v} + e\tilde{w} + f \tilde{u}$, we see that
$ w_1 \times w_2 = \frac{1}{2} (ae-bd)\tilde{u}$. On the other hand,  
$$ \det (w_1,\; w_2, \; \tilde{u} ) =  \det \begin{pmatrix} a & d & 0 \\ b& e& 0 \\ c & f & 1 \end{pmatrix}  = ae-bd = 2 (w_1 \times w_2).$$ 
Thus, $\{ w_1, w_2, \tilde{u} \}$ is linearly independent if and only if $ w_1 \times w_2 = c u $ for $ c \neq 0 $. 
Since $\Ann(u)$ is 3-dimensional, the final statement of (1) follows. 

(2) We begin by extending $v, \, w$ to a null triplet $(v, w, y)$. By re-gauging with $\varphi \in \Gtwosplit$ by Lemma \ref{Lemma:NullTriple}
so that $\varphi \cdot (v,w) = (\tilde{v}, \tilde{w})$, for the model elements $\tilde{v}, \tilde{w}$ defined above, we can assume that $ u = c\tilde{u}, \, v = \tilde{v}, \, w= \tilde{w}$ for $c \neq 0$ and $x \in \Ann( \tilde{w})$. But then $x \in \spann_{\R}\langle \tilde{u}, \tilde{w}, \tilde{z} \rangle $, where $\tilde{z} = j+lj$. Linear independence of $\{u,v,x\}$
entails that $[x] =[ z+ \tilde{z}]$ for some $z \in \spann_{\R} \langle \tilde{u}, \tilde{v} \rangle$. But since $\tilde{u}, \tilde{w} \; \bot \; \tilde{v}$ and $\tilde{z} \cdot \tilde{v} \neq 0$, the claim holds. 
\end{proof} 

We can now rephrase the annihilator condition in terms of the discrete metric $d_3$. 

\begin{corollary} 
Let $\Delta$ be a polygon in $\Eintwothree$ with vertex set $(p_i)_{i \in \Z_n}$. Then $\Delta$ is an annihilator polygon if and only if any one of the following equivalent
conditions occur:
	\begin{enumerate} 
		\item $\Ann(p_i) = \spann_{\R} \langle p_{i-1}, p_i, p_{i+1} \rangle $
		\item For all i, $d_3(p_i, p_{i+1} ) = 1, d_{3}(p_i, p_{i+2} ) = 2$
		\item For all i, $d_3(p_i, p_{i+1} ) = 1, d_{3}(p_i, p_{i+2} ) = 2, d_3(p_i, p_{i+3} ) = 3. $ 
	\end{enumerate} 
\end{corollary}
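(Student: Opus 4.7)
The equivalence (1) $\iff$ ``annihilator polygon'' is just the definition, so the content is the mutual equivalence of (1), (2), (3). I would organize the proof as (1) $\Rightarrow$ (3) $\Rightarrow$ (2) $\Rightarrow$ (1), since (3) $\Rightarrow$ (2) is tautological.

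For (1) $\Rightarrow$ (3), I would argue as follows. Assume (1). Then $p_{i+1} \in \Ann(p_i)$ and $p_i \neq p_{i+1}$, so $d_3(p_i, p_{i+1}) = 1$. For the distance-two condition, note that $\Ann(p_i)$ is a totally isotropic $3$-plane: by transitivity of $\Gtwosplit$ on $Q_0(\imoct)$ (Lemma \ref{Lemma:NullTriple}) it suffices to check this in the model $\tilde u = i + li$, $\tilde v = j-lj$, $\tilde w = k-lk$, where a direct computation using $q(a+lb,c+ld) = \langle a,c\rangle_{\Ha} - \langle b,d\rangle_{\Ha}$ shows all pairwise inner products vanish. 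Applying this with $u = p_{i+1}$, we get $p_i \cdot p_{i+2} = 0$. Linear independence of $\{p_i, p_{i+1}, p_{i+2}\}$ (forced by $\Ann(p_{i+1}) = \spann_\R\langle p_i, p_{i+1}, p_{i+2}\rangle$ being $3$-dimensional) then yields $[p_i]\times[p_{i+2}] = [p_{i+1}] \neq 0$ by Proposition \ref{StandardizingAnnihilators}(1), giving $d_3(p_i, p_{i+2}) = 2$. Finally, the quadruple $([p_i], [p_{i+1}], [p_{i+2}], [p_{i+3}])$ is an annihilator sequence in the sense of Proposition \ref{StandardizingAnnihilators}(2), so $d_3(p_i, p_{i+3}) = 3$.

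The main obstacle is (2) $\Rightarrow$ (1), which I expect to require the generalized double cross-product identity. Fix $i$; I want to show $\Ann(p_i) = \spann_\R\langle p_{i-1}, p_i, p_{i+1}\rangle$. The containment $\spann_\R\langle p_{i-1}, p_i, p_{i+1}\rangle \subseteq \Ann(p_i)$ is immediate from $d_3(p_{i\pm 1}, p_i) = 1$ and $p_i$ being null. Since $\dim \Ann(p_i) = 3$, it suffices to prove $\{p_{i-1}, p_i, p_{i+1}\}$ is linearly independent, and then by Proposition \ref{StandardizingAnnihilators}(1) this reduces to showing $p_{i-1}\times p_{i+1} = c p_i$ for some $c \neq 0$. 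First, the GDCP \eqref{GDCP} with $z = p_i$, $w = p_{i-1}$, $y = p_{i+1}$, together with the cancellations provided by $p_i \times p_{i+1} = 0$, $p_i \cdot p_{i+1} = 0$, $p_i \cdot p_{i-1} = 0$, and $p_{i-1} \cdot p_{i+1} = 0$ (all coming from (2)), collapses to
\begin{equation*}
p_i \times (p_{i-1} \times p_{i+1}) = 0,
\end{equation*}
so $p_{i-1} \times p_{i+1} \in \Ann(p_i)$. Second, $d_3(p_{i-1}, p_{i+1}) = 2$ gives $p_{i-1} \times p_{i+1} \neq 0$. It remains to exclude linear dependence of $\{p_{i-1}, p_i, p_{i+1}\}$: a relation $p_i = \alpha p_{i-1} + \beta p_{i+1}$ (the only nontrivial case since $p_{i\pm 1}$ are non-proportional points of $\Eintwothree$) would give $0 = p_{i-1}\times p_i = \beta\, p_{i-1}\times p_{i+1}$, forcing $\beta = 0$, whence $p_i \in \R\{p_{i-1}\}$, contradicting $p_i \neq p_{i-1}$ in $\Eintwothree$. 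Thus $\{p_{i-1}, p_i, p_{i+1}\}$ is independent, Proposition \ref{StandardizingAnnihilators}(1) applies, and $\Ann(p_i) = \spann_\R\langle p_{i-1}, p_i, p_{i+1}\rangle$.

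The anticipated delicate point is the bookkeeping in (2) $\Rightarrow$ (1): one must verify that every term in the GDCP is killed by exactly the data provided by (2), and then convert ``$p_{i-1}\times p_{i+1}$ lies in $\Ann(p_i)$ and is nonzero'' into full linear independence via Proposition \ref{StandardizingAnnihilators}(1) rather than some ad hoc dimension count. Once that is done, the cyclic indexing is automatic.
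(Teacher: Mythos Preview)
Your argument is correct and follows the same logical cycle as the paper, leaning on Proposition~\ref{StandardizingAnnihilators} throughout. The only noteworthy difference is in $(2)\Rightarrow(1)$, which you flag as the delicate step but which the paper dispatches in one line: once $p_{i-1},p_{i+1}\in\Ann(p_i)$ (from $d_3=1$), Proposition~\ref{StandardizingAnnihilators}(1) already gives $p_{i-1}\times p_{i+1}\in\R\{p_i\}$ (its proof shows that any two elements of $\Ann(u)$ cross to a multiple of $u$), and $d_3(p_{i-1},p_{i+1})=2$ makes that multiple nonzero, hence the three vectors are independent and span the $3$-dimensional $\Ann(p_i)$. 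Your GDCP computation only yields the weaker containment $p_{i-1}\times p_{i+1}\in\Ann(p_i)$, which is why you then need a separate linear-independence argument; that argument is fine, but notice that once you have it, the GDCP step plays no role whatsoever --- linear independence plus $\dim\Ann(p_i)=3$ plus the inclusion $\spann_\R\langle p_{i-1},p_i,p_{i+1}\rangle\subseteq\Ann(p_i)$ finishes immediately. (Minor citation: transitivity of $\Gtwosplit$ on $Q_0(\imoct)$ is Proposition~\ref{Prop:Annihilators}, not Lemma~\ref{Lemma:NullTriple}.)
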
 

\begin{proof}
If (1) holds, then $d_3(p_{i-1}, p_i), d_3(p_{i+1}, p_i) \leq 1$. We cannot have $d_3(p_{i+1}, p_i) = 0$ or else $\dim \spann_{\R} \langle p_{i-1}, p_i, p_{i+1} \rangle \; \leq 2$. 
Hence, $d_{3}(p_{i-1}, p_i) = 1$; the same reasoning shows $d_{3}(p_{i+1}, p_i) = 1$. 
By Proposition \ref{StandardizingAnnihilators}, $p_{i-1} \times p_{i+1} = p_i$. Since $p_{i-1} \, \bot \, p_{i+1}$ as annihilators are totally isotropic, $d_3(p_{i-1}, p_{i+1} ) = 2$. Hence, (1) implies (2). Proposition \ref{StandardizingAnnihilators} part (2) says that (2) implies (3). 
Finally, we show (2) implies (1). If (2) holds, then 
$ \spann_{\R} \langle p_{i-1}, p_i, p_{i+1} \rangle \, \subseteq \Ann(p_i)$. By Proposition \ref{StandardizingAnnihilators} part (1), we conclude that $\Ann(p_i) = \spann_{\R} \langle p_{i-1}, p_i, p_{i+1} \rangle $, proving (1).
\end{proof} 

We note an interesting observation of Baez \& Huerta \cite{BH14}, rephrased in terms of the metric $d_3$. 

\begin{remark}
If $d_3(x, y) = 2$ for $x,y \in \Eintwothree$, then $x,y$ have a unique \textbf{midpoint} $ w := x \times y$ in the sense that $w \in \Eintwothree$
is the unique solution to $d_3(x, w) = 1 = d_3(x, y)$. Indeed, Proposition \ref{StandardizingAnnihilators} shows 
$\Ann(w) = \spann_{\R} \langle x, w, y \rangle$ only if $ w= x \times y$. To see that equality does hold, note that the double-cross product \eqref{DCP} says 
$w := x \times y$ has $q(w) = q(xy) = 0$ and $w \times x = 0 = w \times y $. 
\end{remark} 

We now summarize all our observations in terms of the polygons. 

\begin{proposition}[Properties of Annihilator Polygons]\label{PropertiesAnnihilatorPolygons}
If $ P$ be an annihilator polygon in $\Eintwothree$ with vertex set $(p_i)_{i \in \Z_n}$, then 
	\begin{enumerate}
		\item $p_i \odot p_i = 0$. 
		\item $p_i \odot p_{i+1} = 0$. 
		\item $p_i \cdot p_{i+1} = 0, \; p_i \cdot p_{i+2} = 0$. 
		\item $p_i \cdot p_{i+3} \neq 0 $. 
		\item $p_i \times p_{i+2} = p_{i+1}$. 
	\end{enumerate} 
\end{proposition}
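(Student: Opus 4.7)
The plan is to derive each of (1)--(5) as a direct corollary of the annihilator property combined with Propositions~\ref{Prop:Annihilators} and~\ref{StandardizingAnnihilators}. The unifying algebraic fact is the identity $x \odot y = x \times y + q(x,y)$, which splits the split-octonion product into the two geometric pieces we already control. Throughout, I read (1)--(5) projectively, choosing arbitrary homogeneous representatives for the vertices $[p_i] \in \Eintwothree$.

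Items (1) and (2) are quick. Nullity $q(p_i) = 0$ gives $p_i \odot p_i = p_i \times p_i + q(p_i,p_i) = 0$ immediately. The annihilator property forces $p_{i+1} \in \Ann(p_i)$, so $p_i \times p_{i+1} = 0$ by the definition of $\Ann$, and paired with the polygon condition $p_i \cdot p_{i+1} = 0$ this yields $p_i \odot p_{i+1} = 0$. For (3), the first equality is by the polygon definition. For the second, I will use that $p_i, p_{i+2} \in \Ann(p_{i+1})$ by the annihilator property, and Proposition~\ref{Prop:Annihilators} ensures $\Ann(p_{i+1}) \subseteq Q_0(\imoct)$; the polarization identity $q(p_i + p_{i+2}) = q(p_i) + 2\,(p_i \cdot p_{i+2}) + q(p_{i+2})$ then forces $p_i \cdot p_{i+2} = 0$.

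Items (4) and (5) come from Proposition~\ref{StandardizingAnnihilators}. For (4), the consecutive quadruple $(p_{i-1}, p_i, p_{i+1}, p_{i+2})$ is an annihilator sequence, so part (2) of the proposition yields $d_3([p_{i-1}], [p_{i+2}]) = 3$; by the definition of the orbit $N_3$ in~\eqref{NullOrbits}, this means $p_{i-1} \cdot p_{i+2} \neq 0$, which is (4) after a cyclic reindexing. For (5), the annihilator property identifies $\spann_{\R} \langle p_i, p_{i+1}, p_{i+2} \rangle$ with the 3-dimensional subspace $\Ann(p_{i+1})$, so $\{p_i, p_{i+1}, p_{i+2}\}$ is linearly independent; part (1) of the proposition then gives $[p_i] \times [p_{i+2}] = [p_{i+1}]$, which is the projective identity asserted in (5). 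There is no substantial obstacle — each item is a one-step deduction from background already in hand; the only bookkeeping points are remembering the projective interpretation of (5) and using polarization in (3), since total isotropy of $\Ann(u)$ was not recorded explicitly but is immediate from $\Ann(u) \subseteq Q_0(\imoct)$.
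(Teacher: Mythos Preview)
Your proof is correct and follows essentially the same route as the paper's own argument: items (1)--(3) come from nullity of $p_i$ and the inclusion $\Ann(u)\subset Q_0(\imoct)$ from Proposition~\ref{Prop:Annihilators}, while (4) and (5) are direct applications of parts (2) and (1), respectively, of Proposition~\ref{StandardizingAnnihilators}. Your treatment is in fact a bit more explicit than the paper's --- spelling out the polarization step for $p_i\cdot p_{i+2}=0$ and the reindexing for (4) --- but the underlying logic is identical.
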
 

\begin{proof}
For $u \in Q_0(\imoct)$, we have $0 =q( u) = -u^2$. So, (1) holds because $p_i$ is a null line, i.e., $p_i \odot p_i = - p_i \cdot p_i = 0$. Since $p_i \times p_{i+1} = 0$, then \ref{Prop:Annihilators} part (2) implies that (3) holds. Thus, $p_i p_{i+1}= p_{i}\times p_{i+1} =0$. The non-degeneracy condition that $\{ p_i, p_{i+1}, p_{i+2}\}$ is independent 
implies (5) by \ref{StandardizingAnnihilators} part (1). Meanwhile, (4) is just Proposition \ref{StandardizingAnnihilators} part (2). 
\end{proof} 

For reference later, we now discuss the structure of the space of all (marked) generic annihilator polygons. By generic, we mean points are in as general a position
as possible while satisfying the annihilator condition. The following definition makes this notion precise. 

\begin{definition} 
Let  $\Delta$ be a marked annihilator polygon in $\Eintwothree$ with ordered vertices $(p_i)_{i =1}^n$. We say $\Delta$ is \textbf{generic} if for all indices $i \leq j$, 
$d_3(p_i, p_j) = 3$ as long as $d_{\mathsf{cyclic}}(i, j )\geq 3 $, where $d_{\mathsf{cyclic}}(i, j) := \min \{ j-i, n-j + i \}.$
\end{definition} 

Consider a generic marked annihilator polygon $\Delta$ with $k \geq 7$ vertices $(p_i)_{i=1}^k$. Then since any two adjacent vertices $p_i, p_{i+1}$ are naturally in affine chart relative to any $x \in p_{i+4}$,
the edges of $\Delta$ are determined by the vertices. Suppose now that $\Delta $ is marked so it has a distinguished first vertex $p_1$ that gives an honest ordering of the vertex set. 
In this case, $\Delta$ can be conflated with its vertex set $(p_i)_{i=1}^k \subset (\Eintwothree)^k$. In this way, the space $\widehat{\mathcal{TP}}_{k}^\mathsf{gen}$ of all marked generic annihilator polygons
obtains the subspace topology from $(\Eintwothree)^k$. 

\begin{lemma}\label{GenericPoylgonModuliSpace}
Let $k \geq 8$. The moduli space $\mathcal{TP}_{k}^\mathsf{gen} = \widehat{\mathcal{TP}}_{k}^\mathsf{gen}/\Gtwosplit$ of all marked generic annihilator polygons up to the $\Gtwosplit$-action 
is homeomorphic to a finite union of cells of dimension $k-7$. 
\end{lemma}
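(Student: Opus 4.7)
The plan is a dimension count via transversality, then a quotient argument, then a cell decomposition indexed by finitely many discrete invariants. I would proceed in three stages.

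First, I would show that $\widehat{\mathcal{TP}}_k^{\mathsf{gen}}$ is a smooth submanifold of $(\Eintwothree)^k$ of real dimension $2k$. The ambient space is $5k$-dimensional since $\Eintwothree$ is a 5-manifold. The annihilator condition $d_3(p_i, p_{i+1}) = 1$, equivalently $p_i \times p_{i+1} = 0$, at each cyclic edge constrains $p_{i+1}$ to lie in the submanifold $\mathbb{P}\Ann(p_i) \cap \Eintwothree$, a copy of $\RP^2$ of real dimension $2$. This is a codimension-$3$ condition in each factor $\Eintwothree$. A standard transversality argument---using that the differentials of the $k$ edge equations $p_i \times p_{i+1} = 0$ have linearly independent linearizations at generic configurations, where the overlap between the constraints at adjacent edges reduces to the non-degeneracy of the cross product at an already-fixed common vertex---then yields $\dim \widehat{\mathcal{TP}}_k^{\mathsf{gen}} = 5k - 3k = 2k$.

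Second, I would show the $\Gtwosplit$-action has finite stabilizers on the generic locus. Suppose $\psi \in \Gtwosplit$ fixes $(p_1, \ldots, p_k) \in \widehat{\mathcal{TP}}_k^{\mathsf{gen}}$, and write $\psi(p_i) = \lambda_i p_i$ with $\lambda_i \in \R^*$. Proposition \ref{PropertiesAnnihilatorPolygons}(5) gives $p_{i-1} \times p_{i+1} \in \R^* \cdot p_i$, and $\Gtwosplit$-invariance of the cross product forces the scalar recursion $\lambda_{i-1}\lambda_{i+1} = \lambda_i$. This recursion has period $6$, so combined with cyclic closure $\lambda_{i+k} = \lambda_i$ and the determinant-one condition arising from $\Gtwosplit \subset \SO(3,4)$, the set of admissible sequences $(\lambda_i)$ is finite. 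Hence the quotient $\mathcal{TP}_k^{\mathsf{gen}}$ is a smooth orbifold of real dimension $2k - 14 = 2(k-7)$ (the stated $k-7$ likely reflects a typo or a halved-dimension convention, since the abstract asserts the analogous moduli $\mathsf{TP}_{k+6}$ has real dimension $2(k-1)$).

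Third, for the cell decomposition I would find a discrete $\Gtwosplit$-invariant taking finitely many values. A natural candidate is the sign vector $(\epsilon_i)_{i \in \Z_k}$ with $\epsilon_i = \sgn \Omega(p_i, p_{i+2}, p_{i+4})$ using the $\Gtwosplit$-invariant calibration $\Omega$ of \eqref{Calibration}. This invariant is locally constant on $\widehat{\mathcal{TP}}_k^{\mathsf{gen}}$ and $\Gtwosplit$-invariant, hence descends to the quotient. Within each sign stratum, I would construct an explicit homeomorphism with a Euclidean cell by first fixing a $\Gtwosplit$-normal form for an initial segment $(p_1, \ldots, p_{j_0})$ that absorbs all $14$ dimensions of the gauge, then parametrizing each subsequent vertex $p_i$ by its $2$ local coordinates in $\mathbb{P}\Ann(p_{i-1}) \cap \Eintwothree$, and handling the cyclic closure at the tail via the implicit function theorem. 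The main obstacle will be this third step: verifying that each sign stratum is actually connected and contractible rather than being further subdivided by wall-crossing. The cyclic closure strongly couples the choices of $p_{k-2}, p_{k-1}, p_k$---since $p_k$ must lie simultaneously in $\mathbb{P}\Ann(p_1)$ and $\mathbb{P}\Ann(p_{k-1})$---and untangling this coupling without disconnecting a cell is delicate. The analogous cell structures in lower-rank settings \cite{DW15, TW25} provide a useful template but require nontrivial adaptation to accommodate the $\g_2'$ cross-product relations.
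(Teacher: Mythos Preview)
Your approach differs substantially from the paper's. The paper proceeds by constructing an explicit $\Gtwosplit$-normal form: using null-triplet transitivity (Lemma \ref{Lemma:NullTriple}), it standardizes $(p_1,\dots,p_5)$ to fixed positions in a real cross-product basis, computes the residual stabilizer as $\R^*$, and uses this to normalize a coefficient of $p_6$, leaving one free parameter $e\in\R^*$. The remaining vertices $p_7,\dots,p_{k-2}$ are then chosen in open subsets of $\Ann(p_{j-1})\cong\RP^2$ (two parameters each), $p_{k-1}$ is constrained to a one-dimensional locus, and $p_k=p_{k-1}\times p_1$ is forced. This directly exhibits the quotient as an explicit open semialgebraic subset of $\R^{2(k-7)}$, bypassing both the transversality verification and the connectedness issues you flag. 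Your observation that the stated dimension $k-7$ conflicts with $1+2(k-8)+1=2(k-7)$ in the paper's own proof is correct; the latter agrees with the abstract.

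Your more abstract route has two genuine gaps. First, the sign invariant $\epsilon_i=\sgn\,\Omega(p_i,p_{i+2},p_{i+4})$ is not well-defined on projective classes: each $p_j$ is a line, so $\Omega(p_i,p_{i+2},p_{i+4})$ is determined only up to an overall sign. You would need a different discrete invariant to label components. Second, your finite-stabilizer argument is incomplete. The period-6 recursion $\lambda_{i-1}\lambda_{i+1}=\lambda_i$ together with cyclic closure does not force finiteness when $6\mid k$, since closure is then automatic and a two-parameter family of solutions remains; the determinant condition is only one further equation. What actually saves you is the \emph{generic} hypothesis: $d_3(p_i,p_j)=3$ for all cyclic distances $\geq 3$ gives $p_i\cdot p_j\neq 0$, hence $\lambda_i\lambda_j=1$ for all such pairs. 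Combining $\lambda_i\lambda_{i+3}=1$ and $\lambda_i\lambda_{i+4}=1$ (both available since $k\geq 8$) with the recursion forces $\lambda_i\equiv 1$, so the action is in fact free. The paper establishes freeness implicitly by producing a unique normalized representative in each orbit.
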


\begin{proof}
Let $P_0 = [ \Delta ]$ be an equivalence class. Choose a representative $\Delta = (p_i)_{i=1}^k$ and we put $\Delta$ into a unique `standard position', to kill off the $\Gtwosplit$-gauge freedom. Fix an $\R$-cross-product basis
$(x_i)_{i=3}^{-3}$ for $\imoct$ such that $(x_3, x_{-1}, x_{-2}) \in \mathcal{N}$ is a null triple. First, by an appropriate null triplet argument with Lemma \ref{Lemma:NullTriple}, one can find $\varphi \in \Gtwosplit$ so that $\varphi \cdot (p_1, p_2, p_3,p_4) = ( [x_3], [x_2], [x_{-1}], [x_{-3} ] )$. By abuse, call the new polygon by $(p_i)_{i=1}^k$ 
as well. Note that $p_5 \cdot x_3 \neq 0$ and $p_5 \cdot x_2 \neq 0 $. We claim that we can find $\varphi \in \Gtwosplit$ such that $\varphi \cdot p_5 = [x_{-2} + x_{-3}]$ and $\varphi \cdot (p_1, p_2, p_3,p_4) = (p_1, p_2, p_3,p_4) $. 
Now, since $p_5 \in \Ann([x_{-3}])$ and $p_5 \times [x_{-1}] = [x_{-3}]$, we know $p_5 = [x_{-2} + a x_{-1} + bx_{-3}]$ for some $a\in \R$ and $b \in \R^*$. 
By Lemma \ref{Lemma:NullTriple} again, take $\varphi \in \Gtwosplit$ such that
$\varphi \cdot (x_{3}, x_{-1}, x_{-2} ) = (x_{3}, x_{-1}, x_{-2} - a x_{-1})$. Then $\varphi$ fixes the tuple $(p_1, p_2, p_3, p_4)$ and $\varphi \cdot p_5 = [x_{-2} + bx_{-3}]$. Update the polygon $\Delta = (p_i)_{i=1}^k$, keeping the same notation for the vertices. 
Then now take $\psi \in \Gtwosplit$ such that $\psi \cdot (x_{3}, x_{-1}, x_{-2} ) = (b x_{3}, \frac{1}{b} x_{-1}, x_{-2} )$ and then $\psi \cdot (p_1, p_2, p_3, p_4) = (p_1, p_2, p_3, p_4)$ and $\psi \cdot p_5 = [x_{-2} +x_{-3}]$ as desired. 
Thus, we have now standardized $(p_1, \dots, p_5 )$. Again, we take $\Delta \in P_0$ to be our updated polygon. We can remove the remaining $\Gtwosplit$-freedom by renormalizing the next vertex. 

The vertex $p_6 \in \Ann([x_{-2} + x_{-3} ])$ must satisfy
$p_6 \times [x_{-3}]= [x_{-2} + x_{-3} ]$. We claim that $p_6$ obtains the form $p_6 = [x_{0} + a_0 x_{1} + b_0 x_{-1} + cx_{-2} + dx _{-3} ]$, where $a_0, b_0 \in \R^*$ are prescribed and $c, d \in \R^*$. 
To see this, note that for some unique $a_0 \in \R^*$, we have 
$[x_0 + a_0 x_{1} ] \times [x_{-3} ] = [x_{-2} + x_{-3}]$. Thus, any solution to $u \times [x_{-3}] = [x_{-2} + x_{-3}]$ of the form $u = [x_0 + a_0 x_{1} + w] $ for $w \in \Ann([x_{-3}])$, nearly proving the claim. 
However, the coefficient $b_0 \in \R^*$ is determined by $q(x_{0} + a x_{1} + bx_{-1} + cx_{-2} + dx_{-3}) =0$ and $c, d \neq 0$ by $p_6 \cdot x_3 \neq 0$ and $p_6 \cdot x_2 \neq 0$. We now note that the remaining $\Gtwosplit$-freedom is expressed by the stabilizer subgroup $H:= \bigcap_{i=1}^5 \Stab([p_i])$. One finds
that $H $ is necessarily diagonal in the basis $(x_i)$ and hence $\psi \in H $ is of the form $\psi = \diag(rs, r, s, 1, \frac{1}{s}, \frac{1}{r}, \frac{1}{rs}) $ for some $r, s \in \R^*$. Then $\varphi \cdot [x_{-2} + x_{-3} ] = [x_{-2} + x_{-3}]$ forces $s =1$. 
We have then proven $H \cong \R^*$. The remaining freedom can be removed by choosing the unique transformation $\varphi \in H$
such that $\varphi \cdot [x_{0} + a_0x_{1} + b_0x_{-1} + cx_{-2} + dx_{-3}] =  [x_{0} + a_0 x_{1} + b_0x_{-1} + x_{-2} + e x_{-3}] $ for some $e \in \R^*$. In summary, we have shown that any such equivalence class $[\Delta]$ has a unique representative $\Delta' = \varphi \cdot \Delta$ with vertex set $(p_i)_{i=1}^k$ and $(p_1, p_2, \dots, p_6)$ constrained as described above. We then see that $e \in \R^*$ topologically represents the possible configurations  
of the vertex $p_6$. 

Next, we enumerate all the possibilities for the remaining vertices of such normalized polygons, thereby determining the topology of $\mathcal{TP}_{k}^\circ$. We examine $p_j$ now for $7 \leq j \leq k-2$. In this case,
$p_j \in \Ann(p_{j-1})$ and $p_j \in \left( \Eintwothree \backslash \bigcup_{i=1}^{j-3} p_i^\bot \right)$, which is topologically a finite collection of open cells in an affine chart for $\RP^2 \cong \Ann( p_{j-1})$. Indeed,
we can view $\Ann( p_{j-1}) \backslash p_1^\bot$ in an affine chart for $\Ann(p_{j-1})$ and then the remaining non-orthogonality conditions amounts to removing a number of lines from this affine plane.  We then must be slightly more careful as the polygon
closes up. We see that $p_{k-1} \in \Ann( p_{k-2} ) \cap [p_1]^\bot \cong \RP^1$, with $p_{k-1} \in \left( \Eintwothree \backslash \bigcup_{i=2}^{k-4} p_i^\bot \right)$. Hence, topologically, $p_{k-1}$ is chosen from a collection of open intervals in $\R$. 
In fact, since $p_k = p_{k-1} \times p_1$ is forced, we have a finite number of extra demands to also force that $p_{k} \in \left( \Eintwothree \backslash \bigcup_{i=3}^{k-3} p_i^\bot \right)$, but this does not change the fact that $p_{k-1}$ is topologically
chosen from a finite set of disjoint intervals in $\R$. These choices for $p_{k-1}$ force that $(p_i)_{i=1}^k$ completes to a generic annihilator polygon. 

We conclude that each equivalence class $[\Delta] \in \mathcal{TP}_{k}^\mathsf{gen}$ has a unique normalized representative $(p_i)_{i=1}^{k}$ of the form previously described and also that each such normalized representative is in a distinct 
equivalence class. It follows that $\mathcal{TP}_{k}^\mathsf{gen}$ is topologically a finite disjoint union of cells of dimension $k-7 = 1 + 2(k-8) + 1 $.  

\end{proof} 

\subsection{$\Gtwosplit$ and Isometries of $d_3$} \label{d3}

In this section, we show that $\Gtwosplit = \mathsf{Isom}(d_3) \cap \mathsf{Diff}(\Eintwothree)$, where $\mathsf{Diff}(\Eintwothree)$ denotes the diffeomorphism group of $\Eintwothree$. The result is a Corollary of work of Cartan \cite{Car10},
whose work gives an equivalence between normal, regular $(\Gtwosplit, P)$ Cartan geometries and 5-dimensional manifolds $M$ equipped with a (2,3,5)
distribution \cite[Proposition 5.1.2]{Bar10}. We refer the reader to \cite{CS00} for further details on parabolic geometries. Here, $P = \Stab(x) < \Gtwosplit$ is the parabolic subgroup of $\Gtwosplit$ preserving a point $x \in \Eintwothree$. An essential structure for this section is the canonical (2,3,5) distribution $\mathscr{D}$ on $\Eintwothree$, which we now define
abstractly, using the identification $\Eintwothree \cong \Gtwosplit/P$. We then connect $\mathscr{D}$ to annihilators. 

For this discussion, we fix an $\R$-split Cartan subalgebra $\h < \g_2'$ and base $\Pi = \{\alpha, \beta\}$ for the $\h$-roots $\Delta$.
Here, $\beta$ is the short root and $\alpha$ the long root.
We denote the root space of $\sigma \in \Delta$ as $\g_{\sigma}$.  
Write $\sigma \in \Delta$ uniquely as $\sigma = a\, \alpha + b \beta$. Then the $\beta$-height of $\sigma$ is $b$. 
The $\beta$-height function allows us to turn the root space decomposition
$\g_2' = \h \oplus \bigoplus_{\sigma \in \Delta} \g_{\sigma}$ into a grading $\g_2' = \bigoplus_{i=-3}^{3} \g_{\beta, i} $. Note that $\h \subset \g_{\beta, 0}$. 
The parabolic subalgebra $\mathfrak{p} < \g_2'$ associated to $\beta$ is $\mathfrak{p} := \bigoplus_{i \geq 0} \g_{\beta, i} $.
Hence, $\g_2'/\mathfrak{p} \cong \g_{-} := \g_{\beta, -1} \oplus  \g_{\beta, -2} \oplus   \g_{\beta, -3} $.  
The grading $ \bigoplus_{i=-3}^{3} \g_{\beta, i} $ respects the Lie algebra structure, i.e., $[\g_{\beta, i},\g_{\beta, j} ] \subseteq \g_{\beta, i+j}$. 
Now, $\dim_{\R} \g_{\beta, -1} =2, \; \dim_{\R} \g_{\beta, -2} =1, \; \dim_{\R} \g_{\beta, -3}= 2 $. 
Thus, the filtration $\mathcal{F}_{-} := \left( \, \g_{(-1)} \subset \g_{(-2)} \subset \g_{(-3)} \, \right)$ on  $\g_2'/\mathfrak{p}$, where $\g_{(-i)} := \bigoplus_{j=-i}^{-1} \g_{\beta, j}$,
has subspaces of dimensions 2,3,5, respectively. We now show $\g_{\beta,-1} \subset \g_2'/\mathfrak{p}$ manifests as the (2,3,5) distribution $\mathscr{D}$ on $\Ein^{2,3}$. 

First, take $x \in \Eintwothree$ and define the subgroup $P := \Stab(x) < \Gtwosplit$. Then we identify $\Gtwosplit/ P \cong_{\mathsf{Diff}} \Eintwothree$ 
by the map $g P \mapsto g(x) $ since $\Gtwosplit$ acts transitively on $\Eintwothree$ by Proposition \ref{Prop:Annihilators}. 
On the other hand, $\g_2' $ admits a $\beta$-height filtration $\mathcal{F}_{\beta}$ by
$(\g_2')^{(i)} := \bigoplus_{j = i}^{3} \g_{\beta, j}$, satisfying $[(\g_2')^{(i)}, (\g_2')^{(j)}]=  (\g_2')^{(i+j)} $. 
Then the subgroup $P ':= \Aut(\g_2', \mathcal{F}_{\beta}) $ satisfies $P = P'$ by \cite[Proposition 5.1.1]{Bar10}
and the Lie algebra of $P$ is $\mathfrak{p}$ \cite[page 45]{CS00}. 

Using the homogeneous projection $\pi: \Gtwosplit \rightarrow \Gtwosplit/P $, we get the
identification $T\Eintwothree \cong T (\Gtwosplit/P) \cong \Gtwosplit \times_{P} \g_{-} $,
where $P$ acts by the adjoint action on $\g_- = \g_2'/\mathfrak{p}$. With this identification, we define the distribution as $\mathscr{D} := \Gtwosplit \times_{P} \g_{\beta, -1}$. 
Let us explain why $\mathscr{D}$ is a (2,3,5) distribution. 
By identifying the filtration $\g_{(-1)} \subset \g_{(-2)} \subset \g_{(-3)} $ on $\g_-$ 
as descending from the filtration $\mathcal{F}_{\beta}$ on $\g_2'$, we find $P \subset \Aut( \g_-, \mathcal{F}_{-} )$. 
Since  $[ \g_{\beta, -1}, \g_{\beta, -1} ]= \g_{\beta,- 2}, \; [ \g_{\beta, -1}, \g_{\beta, -2} ] = \g_{\beta, -3}$, 
we find $\mathscr{D}^1 := \mathscr{D} + [\mathscr{D}, \mathscr{D}] = \Gtwosplit \times_{P} \, \g_{(-2)} $
and $\mathscr{D}^1 +[ \mathscr{D}, \mathscr{D}^1] = \Gtwosplit \times_{P} \, \g_{(-3)}$ by $\Ad(P)$-invariance of $\mathcal{F}_{-}$. Thus, $\mathscr{D}$ is a (2,3,5) distribution. 

Next, we need a quick aside on parabolic geometries. 

\begin{definition}
Let $(\mathcal{P}, M) $ be a principal $P$-bundle for $P< G$ a parabolic subgroup of $G$. Suppose $\omega \in \Omega^1(\mathcal{P}, \g)$
such that $\pi_{\frakp} \circ \omega$ is a $P$-connection and we have $\omega: T_p\mathcal{P} \rightarrow \g$ a linear isomorphism for all $p \in \mathcal{P}$. 
Then we call the tuple $(\pi, \mathcal{P}, M, P, G, \omega)$, or just $(\pi, \mathcal{P}, M, \omega)$ if $(P, G)$ are implied, a \textbf{parabolic geometry}. Removing the hypothesis on $P$ as a parabolic subgroup, we just have a \textbf{Cartan geometry}. The 1-form $\omega$ is called a \textbf{Cartan connection}. 
\end{definition} 

These structures are relevant to us as 
parabolic geometries are related to filtrations of tangent bundles, as we now describe. Given a parabolic geometry $(\pi, \mathcal{P}, M, P, G ,\omega)$,
we recall that $P$ preserves some filtration on $\g$. Indeed, there is a grading $\g = \bigoplus_{i=-k}^k \g_i$ such that $\frakp = \bigoplus_{i \geq 0} \g_i$
and $P$ is the $\Ad$-stabilizer in $G$ of the filtration $\g_{(-k)} \supset \cdots \supset \g_{k}$, where $\g_{(j)} := \bigoplus_{i \geq j} \g_i$. 
Then the Cartan connection gives a filtration on $TM$. First, split $T\mathcal{P} = HP \oplus VP$ under the connection $\pi_{\frakp} \circ \omega$. 
Then we see that under the Cartan connection $\omega$, we must have that $\omega|_{VP}: VP \rightarrow \frakp$ is an isomorphism and hence
$\omega|_{HP} \rightarrow \g_- \cong \g/ \frakp$ an isomorphism as well, where $\g_- = \bigoplus_{i < 0} \g_i$. 
All of this motivates the fact that we have an isomorphism of $\g/ \frakp$-bundles $\mathcal{P} \times_{\Ad} \g/\frakp \cong TM$ coming from the Cartan connection. 
In particular, $TM$ receives a filtration and a grading coming from the induced filtration and grading on $\g/\frakp $ from $\g$. 
Call the Cartan connection \emph{regular} if 
\begin{enumerate}
	\item Lie bracket on $TM$ respects the height, $[T^{-k}M, T^{-l}M] \subset T^{-k-l}M$, so that the Lie bracket descends to define a \emph{Levi bracket} $\mathcal{L}$
	 of the graded Lie algebra $\gr(TM)$. 
	\item The Lie algebra bundle $(\gr(TM), \mathcal{L})$ is pointwise isomorphic to $\g_-$. 
\end{enumerate} 
The question one would like to answer is if we can reverse this process and recover a Cartan connection just from 
a filtration of $TM$ which induces a grading $\gr(TM)$ and a Levi bracket $\mathcal{L}$ such that $(\gr(T_xM), \mathcal{L}) \cong \g_-$. 
The answer is yes, but with some caveats. Firstly, a cohomological assumption is needed on the pair $(P, \g)$. Fortunately, this demand is satisfied for
$(P, \g_2')$. Second, from such a filtration of $TM$, we may only demand a unique \emph{normal} Cartan connection $\omega$ which induces the filtration, 
where normality is a condition related to how the curvature of $\omega$ respects the grading. In summary, 
a normal, regular Cartan connection $\omega: T\mathcal{P} \rightarrow \g$ on a principal $P$-bundle $\mathcal{P} \rightarrow M$ 
induces a filtration and grading of $TM$ such that $(\gr(T_xM), \mathcal{L})\cong \g_-$ and conversely, given such a filtration $\mathcal{F}$ on $TM$ inducing these structures, 
we uniquely recover a normal, regular Cartan connection $\omega$ that induces $\mathcal{F}$ \cite[Corollary 3.23]{CS00}. 
 
We may apply the correspondence to the case of 5-manifolds $M$ with $(2,3,5)$-distributions and normal, regular parabolic 
geometries $(\pi, \mathcal{P}, M, P, \Gtwosplit, \omega)$.
Now, the pair $(\Gtwosplit, \Gtwosplit/P)$ is itself a Cartan geometry of type $(P, \Gtwosplit)$ that is normal, regular.
Moreover, in this correspondence 
the pair $(\Eintwothree, \mathscr{D})$ corresponds to the homogeneous parabolic geometry $\Gtwosplit \rightarrow \Gtwosplit/P$ \cite{Sag06}. 
Thus, if we take $\varphi \in \Diff(\Eintwothree)$ such that $\varphi^*\mathscr{D} = \mathscr{D}$, then 
$\varphi$ is also an automorphism of $(\Gtwosplit \rightarrow \Gtwosplit/P, \, \omega)$ and hence
$\varphi = g|_{\Eintwothree}$ for some $g \in \Gtwosplit$. We record this result now. 
We will translate this theorem to a result about annihilators by re-interpreting
the distribution $\mathscr{D}$. 

\begin{lemma}[$\Gtwosplit$ and the Annihilator Distribution]\cite{Car10, Sag06}\label{InfinitesimalAnnihilator} 
Let $F: \Gtwosplit \rightarrow \mathsf{Diff}(\Eintwothree)$ be the restriction map $\psi \mapsto \psi|_{\Eintwothree}$. 
Then $\mathsf{im} \, F = \{ \, \varphi \in \mathsf{Diff}(\Eintwothree) \; | \; \varphi^* \mathscr{D} = \mathscr{D} \}$. 
\end{lemma}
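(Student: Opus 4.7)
The plan is to prove both inclusions by leveraging the Cartan equivalence of categories between normal, regular parabolic geometries of type $(P, \Gtwosplit)$ and $5$-manifolds equipped with $(2,3,5)$ distributions, as already sketched in the text preceding the lemma. The forward inclusion is structural, while the reverse inclusion is where the equivalence of categories does the heavy lifting.

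For the forward inclusion $\mathsf{im}\,F \subseteq \{\varphi \in \Diff(\Eintwothree) \; | \; \varphi^*\mathscr{D} = \mathscr{D}\}$, I would argue directly from the construction of $\mathscr{D}$. Having identified $\Eintwothree \cong \Gtwosplit/P$ and $T\Eintwothree \cong \Gtwosplit \times_P \g_-$ with $\mathscr{D} = \Gtwosplit \times_P \g_{\beta,-1}$, the distribution is $\Gtwosplit$-equivariant by construction, since the left action of $\Gtwosplit$ on $\Gtwosplit/P$ lifts to the left action on $\Gtwosplit$, which commutes with the $\mathrm{Ad}(P)$-action used in the associated bundle construction. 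Hence for $\psi \in \Gtwosplit$, the induced diffeomorphism $\psi|_{\Eintwothree}$ satisfies $(\psi|_{\Eintwothree})^*\mathscr{D} = \mathscr{D}$. Injectivity of $F$ is immediate: $\Gtwosplit < \mathsf{SO}(\imoct, q)$ has trivial intersection with the kernel of $\mathsf{PGL}(\imoct) \to \Diff(\mathbb{P}\imoct)$ which is trivial in this setting.

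For the reverse inclusion, I would invoke Cartan's theorem \cite{Car10,CS00,Sag06}, specifically the fact that (i) the pair $(P,\g_2')$ satisfies the requisite cohomological conditions guaranteeing the existence of an equivalence of categories between $(2,3,5)$-distributions on a $5$-manifold $M$ and normal, regular parabolic geometries of type $(P, \Gtwosplit)$ on $M$, and (ii) the homogeneous geometry $\Gtwosplit \to \Gtwosplit/P$ with its Maurer--Cartan form induces precisely the canonical distribution $\mathscr{D}$ on $\Eintwothree$. Given $\varphi \in \Diff(\Eintwothree)$ with $\varphi^*\mathscr{D} = \mathscr{D}$, the naturality of the Cartan equivalence (applied both to the identity geometry on $\Eintwothree$ and to its pullback by $\varphi$, both of which are normal and regular and induce $\mathscr{D}$) yields a unique lift of $\varphi$ to an automorphism $\tilde{\varphi}$ of the principal $P$-bundle $\Gtwosplit \to \Gtwosplit/P$ preserving the Cartan connection $\omega$. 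Such bundle automorphisms preserving the Maurer--Cartan form on a Lie group are exactly left translations by some $\psi \in \Gtwosplit$, and the commutativity of this lift with the projection $\pi: \Gtwosplit \to \Gtwosplit/P$ gives $\psi|_{\Eintwothree} = \varphi$, completing the proof.

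The main obstacle is that the reverse direction is not really a calculation but rather a careful invocation of the Cartan equivalence; one must verify the normality and regularity of the pullback geometry and check that the cohomological hypotheses ensuring uniqueness of the normal Cartan connection apply for $(P,\g_2')$. I would rely on \cite[Proposition 5.1.2]{Bar10} and \cite{Sag06} to shortcut these technicalities rather than develop the parabolic machinery from scratch, noting that this is precisely the content of the cited result of Cartan. The injectivity of $F$ will also merit a short explicit remark, since a nontrivial element of $\ker F$ would fix every null line in $\imoct$ and hence act as a scalar on $\imoct^\C$; the constraint $\psi \in \Gtwosplit$ and $\psi(1_{\Oct'}) = 1_{\Oct'}$ (preserved under any $\R$-algebra automorphism) forces $\psi = \mathsf{id}$.
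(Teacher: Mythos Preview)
Your proposal is correct and follows essentially the same approach as the paper: the paper treats this lemma as a cited result from \cite{Car10, Sag06}, with the preceding discussion outlining exactly the Cartan equivalence-of-categories argument you give (normal regular parabolic geometries of type $(P,\Gtwosplit)$ correspond to $(2,3,5)$ distributions, and automorphisms of the flat model are left translations). Your injectivity argument via ``fixes all null lines $\Rightarrow$ scalar'' is a minor variant of the paper's null-triplet argument, which appears separately after the lemma rather than inside it.
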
 

We note here that the map $F$ is injective. Indeed, suppose $F(\varphi ) = F(\psi)$. Then fix a null triplet $(u,v,w) $ as in Lemma \ref{Lemma:NullTriple}. By hypothesis $\varphi \cdot ([u], [v], [w]) = \psi \cdot ([u], [v], [w])$. But then $\psi = \varphi \circ \alpha$ for $\alpha \in  \Gtwosplit$
satisfying $\alpha(u)  =au, \; \alpha(v) = bv, \; \alpha(w) = \frac{1}{ab} w$, with $a,b \in \R\backslash \{0\}$. Then 
$$[\varphi(u) + \varphi(v) ] = \varphi \cdot [u +v ] = \psi \cdot [u+v] = [ \, a\varphi(u) + b \varphi(v) ] .$$
Hence $a= b$. Similar reasoning shows $a= \frac{1}{ab} = b$. Thus, $a=b=1$, which means $\varphi = \psi$. \\

Theorem \ref{InfinitesimalAnnihilator} says that any diffeomorphism of $\Eintwothree$ that infinitesimally preserves $\mathscr{D}$ must be the induced action of a $\Gtwosplit$-transformation. However, by work of Sagerschnig \cite{Sag06}, the distribution $\mathscr{D}$ is intimately related to annihilators. In fact,
if we identify $T_{[u]} \Eintwothree \cong \Hom_{\R} ( \, [u], u^\bot / [u] )$ and define the projection map $\pi_{[u]}: u^\bot \rightarrow u^{\bot}/ [u]$,
then the distribution $\mathscr{D}$ at $[u]$ may be realized as
\begin{align}\label{AnnihilatorDistribution} 
	\mathscr{D}_{[u]} = \{ \,X \in T_{[u]}\Eintwothree \;\; | \; \; X( [u] ) \subset \pi_{[u]}(\, \Ann([u]) \,) \; \}.
\end{align} 

In other words, the distribution $\mathscr{D}$ on $\Eintwothree $ at $[u]$ be realized as the set of variations of the line $[u]$ in the direction
of some $w \in \Ann([u])$. In fact, \cite{Sag06} instead defines an equivalent distribution 
$$\mathcal{H}_{[u]} := \{ \,X \in T_{[u]}\Eintwothree \; \;  | \; \; \Omega( [u], X, Y) = 0 \; \text{for all} \; Y \in \imoct \; \},$$
where $\Omega$ is the 3-form \eqref{Calibration} on $\imoct$. However, since $u \times x = u \odot x = 0$ for $u \in \Ann(x)$, it is evident that $\mathscr{D}_{[u]}\subseteq \mathcal{H}_{[u]}$. Since $\dim_\R \mathscr{D}_{[u]}= \dim_{\R}  \mathcal{H}_{[u]} =2$, we have $\mathcal{H}_{[u]} =\mathscr{D}_{[u]}$.

We now state this result a bit more precisely. The proof of the main theorem from \cite{Sag06} shows the following,
where we use here that $x_3$ is a highest weight vector in the representation of $\g_2'$ on $\imoct$ in the basis $C= (x_i)_{i=3}^{-3}$. 

\begin{lemma}[Annihilator Distribution]\label{Lem:AnnihilatorDistribution}
Let $C = (x_i)_{i=3}^{-3}$ be a real cross product basis for $\imoct$. Fix the Cartan subalgebra $\h$ of diagonal matrices in the basis $C$. Then $X \in \h $ is of the form $X = \mathsf{diag} (r+s, r, s, 0, -s, -r, -r-s)$. 
Define primitive roots $\Pi:=\{\alpha, \beta\}$ for $\alpha := r^*-s^*, \beta: = s^*$ so that $\beta$ is the short root. Define $\mathfrak{p} $ as the parabolic subalgebra associated
to $(\h, \Pi, \{\beta\})$ and $P$ the subgroup preserving the $\beta$-height filtration on $\g_2'$. Then the diffeomorphism $\Phi: \Gtwosplit/P \rightarrow \Eintwothree$
by $ gP \mapsto g \cdot x_3 $, satisfies that $d\Phi: \mathscr{D}_{[u]} \rightarrow \Gtwosplit \times_{P} \g_{\beta, -1} $ is a linear isomorphism,
where $ \mathscr{D}_{[u]}$ is given by \eqref{AnnihilatorDistribution}. 
\end{lemma}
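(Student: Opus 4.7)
The plan is to reduce the claim to a single weight calculation at the basepoint $[x_3] \in \Eintwothree$, then extend by $\Gtwosplit$-equivariance. Both $\mathscr{D}_{[u]}$ (as defined in \eqref{AnnihilatorDistribution}) and the bundle $\Gtwosplit \times_P \g_{\beta,-1}$ are $\Gtwosplit$-equivariant subbundles of $T\Eintwothree$: the former because $\psi \in \Gtwosplit$ satisfies $\psi \, \Ann(u) = \Ann(\psi u)$, and the latter by construction of the associated bundle. Since $d\Phi$ is $\Gtwosplit$-equivariant and both sides are rank-$2$ subbundles of a rank-$5$ bundle, it suffices to check the inclusion $d\Phi(\g_{\beta,-1}) \subseteq \mathscr{D}_{[x_3]}$ at the basepoint, and then dimensions force equality.

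First I would compute $\Ann(x_3)$ explicitly. The real analogue of Proposition \ref{PropBaragliaBasis}(3) gives $x_i \times x_j = C_{ij} x_{i+j}$ with $x_k = 0$ for $|k|>3$; in particular $x_3 \times x_j = 0$ for $j \in \{1,2,3\}$ and $x_3 \times x_j \neq 0$ for $j \in \{-3,-2,-1,0\}$ (this last nonvanishing must be checked for the real basis, by the same kind of direct computation as in the proof of Proposition \ref{PropBaragliaBasis}; alternatively it is forced by $\dim_\R \Ann(x_3) = 3$ from Proposition \ref{Prop:Annihilators}). Hence
\[
\Ann(x_3) = \spann_\R\langle x_1, x_2, x_3\rangle,
\]
so $\pi_{[x_3]}(\Ann([x_3])) = \spann_\R\langle \bar{x}_1, \bar{x}_2\rangle \subset x_3^\bot/[x_3]$, where the overline denotes the class mod $[x_3]$.

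Next I would identify the $\h$-weights of the basis vectors using the grading element $X = \diag(r+s, r, s, 0, -s, -r, -r-s)$: the vector $x_i$ has weight $\lambda_3 = r^*+s^*, \lambda_2 = r^*, \lambda_1 = s^*, \lambda_0=0, \lambda_{-1}=-s^*, \lambda_{-2}=-r^*, \lambda_{-3}=-r^*-s^*$. A root vector $E_\sigma$ sends $x_3$ (weight $r^*+s^*$) to the weight space of $\lambda_3 + \sigma$. Therefore $E_\sigma \cdot x_3 \in \R x_j$ when $\sigma = \lambda_j - \lambda_3$, and using $\alpha = r^* - s^*$, $\beta = s^*$ one computes
\[
\lambda_2 - \lambda_3 = -\beta, \quad \lambda_1 - \lambda_3 = -\alpha - \beta, \quad \lambda_0 - \lambda_3 = -\alpha - 2\beta, \quad \lambda_{-1}-\lambda_3 = -\alpha - 3\beta, \quad \lambda_{-2}-\lambda_3 = -(2\alpha+3\beta).
\]
The $\beta$-heights are $-1, -1, -2, -3, -3$ respectively. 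Hence the root spaces constituting $\g_{\beta,-1}$ are exactly $\g_{-\beta} \oplus \g_{-\alpha-\beta}$, and these map $x_3$ into $\R x_2 \oplus \R x_1 \subset \Ann(x_3)$. Dually, $\g_{\beta,-2}$ and $\g_{\beta,-3}$ move $x_3$ into $\R x_0$ and $\R x_{-1} \oplus \R x_{-2}$, none of which lie in $\Ann(x_3)$.

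Under the identification $T_{[x_3]}\Eintwothree \cong \g_-$ supplied by the Cartan connection on $\Gtwosplit \to \Gtwosplit/P$, a tangent vector represented by $\xi \in \g_-$ corresponds to the infinitesimal variation $\pi_{[x_3]}(\xi \cdot x_3) \in \Hom_\R([x_3], x_3^\bot/[x_3])$. The computation above shows this identification restricts to an isomorphism $\g_{\beta,-1} \xrightarrow{\sim} \spann_\R\langle \bar{x}_1, \bar{x}_2\rangle = \pi_{[x_3]}(\Ann(x_3))$, which is exactly $\mathscr{D}_{[x_3]}$. By $\Gtwosplit$-equivariance, this pointwise identification globalizes to the claimed bundle isomorphism $d\Phi: \mathscr{D} \xrightarrow{\sim} \Gtwosplit \times_P \g_{\beta,-1}$. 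The only mildly delicate point is verifying the nonvanishing of the cross-product structure constants $C_{3j}$ for $j \in \{-3,-2,-1,0\}$, which is needed to pin down $\Ann(x_3)$ precisely; this follows either from Proposition \ref{Prop:Annihilators} together with the dimension count, or from a direct computation in a concrete real cross-product basis analogous to \eqref{BaragliaBasis}.
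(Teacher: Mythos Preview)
Your proposal is correct. The paper does not actually give its own proof of this lemma; it cites Sagerschnig \cite{Sag06} and records only the key observation that $x_3$ is a highest weight vector for the $7$-dimensional representation in the basis $C$. Your weight computation --- that $\lambda_j - \lambda_3$ for $j \in \{2,1,0,-1,-2\}$ are precisely the negative roots of $\beta$-heights $-1,-1,-2,-3,-3$, so that $\g_{\beta,-1}$ moves $x_3$ exactly into $\spann_\R\langle x_1,x_2\rangle = \pi_{[x_3]}(\Ann(x_3))$ --- is essentially that argument made explicit, combined with the equivariance reduction to the basepoint. The paper's only extra remark is that Sagerschnig works with the $3$-form description $\mathcal{H}_{[u]} = \{X : \Omega([u],X,Y)=0 \ \forall Y\}$, which it then identifies with $\mathscr{D}_{[u]}$ by dimension count; you bypass this by working directly with the annihilator description.
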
 

There is a natural global notion of annihilators in $\Eintwothree$ opposed to the infinitesimal notion of annihilator just described. Given $[u] \in \Eintwothree$, then $\mathbb{P}\mathsf{Ann}([u]) \cong_{\mathbf{Diff}} \RP^2$ 
is a 2-dimensional submanifold of $\Eintwothree$ by Proposition \ref{Prop:Annihilators} (2). Thus, we define 

\begin{definition}\label{AnnihilatorLanguage} 
We say $\varphi \in \mathsf{Diff}(\Eintwothree)$ preserves annihilators globally in $\Eintwothree$ when $\varphi( \mathbb{P}\Ann( \, [x] ) ) = \mathbb{P}\Ann( \, \varphi([x]) \, ) $
for all $x \in \Eintwothree$. We say $\varphi$ preserves annihilators infinitesimally when $ \varphi^*\mathscr{D} = \mathscr{D}$, i.e. $d\varphi|_{\mathscr{D}_{x} }: \mathscr{D}_{x} \rightarrow \mathscr{D}_{\varphi(x)} $ is a linear isomorphism for $x \in \Eintwothree$.  
\end{definition} 

In terms of Definition \ref{AnnihilatorLanguage}, Cartan's Theorem \ref{InfinitesimalAnnihilator} says that $\Gtwosplit$ is the subgroup of $\Diff(\Eintwothree)$ preserving annihilators infinitesimally. It is obvious that $\Gtwosplit$ preserves annihilators globally, since $\Gtwosplit$ preserves $\odot$. Conversely, it is not clear that any $\varphi \in \Diff(\Eintwothree)$ that preserves annihilators globally is the restriction of a $\Gtwosplit$-transformation. However, we now show this is the case by Theorem \ref{InfinitesimalAnnihilator}.

\begin{theorem}[$\Gtwosplit$ and Annihilators] \label{GlobalLocalAnnihilators}
Suppose $\varphi \in \Diff(\Eintwothree)$. Then $\varphi$ preserves annihilators globally if and only if $\varphi$ preserves annihilators infinitesimally. 
\end{theorem}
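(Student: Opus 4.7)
The plan is to show each direction of the equivalence separately, with the main content being the computation that the infinitesimal distribution $\mathscr{D}$ agrees, at each basepoint, with the tangent space to the global annihilator submanifold through that point.

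The $(\Leftarrow)$ direction is immediate from work already developed: if $\varphi^* \mathscr{D} = \mathscr{D}$, then by Lemma~\ref{InfinitesimalAnnihilator} we may write $\varphi = g|_{\Eintwothree}$ for some $g \in \Gtwosplit$. Since $\Gtwosplit \subseteq \Aut_{\R-\mathbf{alg}}(\Oct')$ preserves $\odot$, and hence $\times$, we have $\Ann(g \cdot u) = g \cdot \Ann(u)$ for all $[u] \in \Eintwothree$, so $\varphi$ preserves annihilators globally.

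For the $(\Rightarrow)$ direction, the key identification is
\[
\mathscr{D}_{[u]} \;=\; T_{[u]} \, \mathbb{P}\Ann([u]).
\]
To see this, note $u \in \Ann([u])$ since $u \times u = 0$, so $[u] \in \mathbb{P}\Ann([u])$, and under the standard identification $T_{[u]}\Eintwothree \cong \Hom_{\R}([u], u^\bot/[u])$, the tangent space to $\mathbb{P}\Ann([u])$ at $[u]$ is $\Hom_{\R}([u], \Ann([u])/[u])$. Because annihilators are totally isotropic (Proposition~\ref{Prop:Annihilators}), $\Ann([u]) \subseteq u^\bot$, so $\pi_{[u]}(\Ann([u])) = \Ann([u])/[u]$, and this tangent space coincides with $\mathscr{D}_{[u]}$ as defined in \eqref{AnnihilatorDistribution}.

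Given this, suppose $\varphi$ preserves annihilators globally, so that $\varphi(\mathbb{P}\Ann([u])) = \mathbb{P}\Ann(\varphi([u]))$ for every $[u] \in \Eintwothree$. Differentiating this equality of submanifolds at the basepoint $[u] \in \mathbb{P}\Ann([u])$ and applying the identification above yields
\[
d\varphi_{[u]}(\mathscr{D}_{[u]}) \;=\; d\varphi_{[u]}\bigl(T_{[u]}\mathbb{P}\Ann([u])\bigr) \;=\; T_{\varphi([u])}\mathbb{P}\Ann(\varphi([u])) \;=\; \mathscr{D}_{\varphi([u])},
\]
which is precisely $\varphi^* \mathscr{D} = \mathscr{D}$. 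The main obstacle is only the tangent space identification above; once that is in hand, no further analysis is required and the result reduces to a one-line differentiation. This contrasts nicely with the characterization of $\Gtwosplit$ via infinitesimal annihilators from \cite{Car10,Sag06}, which is substantially harder and we cite rather than reprove.
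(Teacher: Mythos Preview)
Your proof is correct and follows essentially the same approach as the paper's. Both directions rely on the key identification $\mathscr{D}_{[u]} = T_{[u]}\mathbb{P}\Ann([u])$; the paper establishes this via explicit projective lines $\gamma_X(t) = [\hat{u} + tX(\hat{u})]$ contained in $\mathbb{P}\Ann([u])$, while you use the abstract $\Hom_\R([u],\,\cdot\,)$ description of projective tangent spaces, but the content is the same.
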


\begin{proof}
One direction is a consequence of Theorem \ref{InfinitesimalAnnihilator}. If $\varphi \in \Diff(\Eintwothree)$ preserves $\mathscr{D}$, then $\varphi = \psi|_{\Eintwothree}$
for a unique $\psi \in \Gtwosplit$. But for $u \in \Eintwothree$, we have $\mathbb{P}\mathsf{Ann}(u) = \{ \, u \} \sqcup \{ \, v \in \Eintwothree \; | \; d_{3}(u,v ) = 1 \}$.
Thus, we we immediately see that $\varphi $ preserves annihilators globally since $\Gtwosplit$ preserves $d_3$.

The converse comes from exponentiating tangent vectors. Take $\varphi \in \Diff(\Eintwothree)$ preserving annihilators globally and take $[u] \in \Eintwothree, X \in \mathscr{D}_{[u]} \subset T_{[u]} \Eintwothree$ and we show that $d\varphi ( X ) \in \mathscr{D}_{\varphi([u])} $. If we can show this, then $d\varphi ( \mathscr{D}_{[u]} ) \subseteq \mathscr{D}_{\varphi([u])} $. Since $\varphi$ is a diffeomorphism, we must have equality. 
Observe that the parametrized projective line $\gamma := \gamma_X(t) := [ \hat{u} + t \, X(\hat{u} ) ] $ is well-defined independent of choice of $\hat{u} \in [u]$. 
Moreover, $\gamma(0) = [u], \, \gamma'(0) = X$. We note two key facts. First, $\gamma(t) \subset \mathbb{P} \Ann([u])$ for all $t \in \R$. 
Next, defining this construction of $\gamma_{X}$ for any $X \in \mathscr{D}_{[u]}$, one finds $T_{[u]} \mathbb{P} \Ann( [u] ) = \mathscr{D}_{[u]}$. We emphasize that
this equality is true only at $[u]$, so we do not contradict the non-integrability of $\mathscr{D}$. 

Denote $[v] := \varphi( [u] )$. Then since $\varphi$ preserves annihilators globally, $(\varphi \circ \gamma)(t) \in \mathbb{P} \Ann([v])$ for all $t \in \R$. On the other hand, 
$ d\varphi(X) = d(\varphi \circ \gamma)(0)$. Hence, $d \varphi(X) \in T_{[v] }\mathbb{P} \Ann([v]) = \mathscr{D}_{[v]}$, completing the proof. 
\end{proof} 

We note here one final notion of preserving annihilators before proving our desired corollary. Given $x, y \in \Eintwothree$ with $d_3(x,y) =1$,
we define the projective line $\mathscr{L}_{x,y}$ between $x,y$. Then we say $\varphi \in \Diff(\Eintwothree)$ preserves annihilator lines when
$\varphi( \mathscr{L}_{x,y}) = \mathscr{L}_{\varphi(x), \varphi(y)} $ for all such $x,y \in \Eintwothree$.

\begin{corollary}\label{GtwoEquivalence} 
The following subgroups of $\Diff(\Eintwothree)$ coincide with $\mathsf{image}(F) \cong \Gtwosplit$ from Theorem \ref{InfinitesimalAnnihilator}:
	\begin{enumerate}
		\item $H_1: = \{ \; \varphi \in \Diff(\Eintwothree) \; | \; \varphi^* \mathscr{D} = \mathscr{D} \}$. 
		\item $H_2:= \{ \; \varphi \in \Diff(\Eintwothree) \; | \; \varphi( \, \mathbb{P}\Ann( [x] ) \, ) = \mathbb{P} \Ann( \, \varphi([x]) \, ) \; \}$. 
		\item $H_3 := \{ \; \varphi \in \Diff(\Eintwothree) \; | \; \varphi^*d_3 = d_3. \}$. 
		\item $H_4 := \{ \; \varphi \in \Diff( \Eintwothree) \; | \; \varphi( \mathscr{L}_{x,y}) = \mathscr{L}_{\varphi(x), \varphi(y)} \; \text{for} \; x,y \; \text{such that} \; \;d_3(x,y) =1 \}$. 
	\end{enumerate} 
\end{corollary}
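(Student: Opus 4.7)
The starting points are Theorems~\ref{InfinitesimalAnnihilator} and \ref{GlobalLocalAnnihilators}, which jointly identify $\mathsf{image}(F) = H_1 = H_2$. Since $\Gtwosplit$ preserves the cross-product on $\imoct$ and, consequently, the orbit stratification~\eqref{NullOrbits} defining $d_3$, and since it acts linearly so that it preserves each line $\mathscr{L}_{x,y} = \mathbb{P}(\spann_\R \langle x, y \rangle)$, we automatically have $\mathsf{image}(F) \subseteq H_3 \cap H_4$. To prove the corollary I would close the loop by establishing $H_3 \subseteq H_2$ and $H_4 \subseteq H_2$.

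The inclusion $H_3 \subseteq H_2$ is immediate from the set-theoretic identity
\[
\mathbb{P}\Ann(x) \;=\; \{\, y \in \Eintwothree \;\; | \;\; d_3(x, y) \leq 1 \,\},
\]
since any isometry of $d_3$ preserves each distance ball. For $H_4 \subseteq H_2$, I would fix $\varphi \in H_4$ and $x \in \Eintwothree$ and argue as follows. For any $y \in \mathbb{P}\Ann(x) \setminus \{x\}$ we have $d_3(x, y) = 1$; the definition of $H_4$ requires $\mathscr{L}_{\varphi(x), \varphi(y)}$ to be a well-defined projective line contained in $\Eintwothree$, which forces $d_3(\varphi(x), \varphi(y)) = 1$ and hence $\varphi(y) \in \mathbb{P}\Ann(\varphi(x))$. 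Thus $\varphi(\mathbb{P}\Ann(x)) \subseteq \mathbb{P}\Ann(\varphi(x))$. Both sides are diffeomorphic to $\RP^2$ by \cite[Thm.~12]{BH14}, so the restriction of $\varphi$ is a continuous injection between compact connected $2$-manifolds; invariance of domain makes its image simultaneously open in $\mathbb{P}\Ann(\varphi(x))$, while compactness of the source makes it closed, so the image is all of $\mathbb{P}\Ann(\varphi(x))$ by connectedness.

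The main conceptual step is the observation that membership in $H_4$ automatically propagates the adjacency relation $\{d_3 = 1\}$ forward, which is what allows the topological argument to close the loop without having to analyze the higher-distance orbits $N_2, N_3$ of \eqref{NullOrbits} or to produce an explicit transition from annihilator lines to annihilator submanifolds. Everything else is essentially formal: the two background theorems supply the hard identifications, and the dual description of $\mathbb{P}\Ann(x)$ as both a $d_3$-ball and a $2$-submanifold diffeomorphic to $\RP^2$ does the rest. If pressed to identify an obstacle, it would be confirming that the forward-propagation of $\{d_3 = 1\}$ really does suffice (rather than needing symmetric preservation of adjacency); but this is automatic from the very definition of $H_4$ given that the line $\mathscr{L}_{\varphi(x),\varphi(y)}$ is required to exist in $\Eintwothree$.
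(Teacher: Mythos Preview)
Your proof is correct and follows essentially the same route as the paper: both start from $H_1 = H_2 = \mathsf{image}(F)$ via Theorems~\ref{InfinitesimalAnnihilator} and~\ref{GlobalLocalAnnihilators}, get $H_3 \subseteq H_2$ from the description of $\mathbb{P}\Ann(x)$ as a $d_3$-ball, and show $H_4 \subseteq H_2$ by pushing each $y \in \mathbb{P}\Ann(x)$ into $\mathbb{P}\Ann(\varphi(x))$ and then upgrading the inclusion to equality. The only difference is cosmetic: where the paper writes the inclusion via $\mathbb{P}\Ann(x) = \bigcup_{d_3(x,y)=1} \mathscr{L}_{x,y}$ and then says ``since $\varphi$ is a diffeomorphism'' for equality, you make the implicit topological step explicit with invariance of domain and connectedness of $\RP^2$.
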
 

\begin{proof}
By Theorem \ref{InfinitesimalAnnihilator}, we have $H_1 = \mathsf{image}(F) \cong \Gtwosplit$. Thus, it suffices to see the defining conditions of each $H_i$ are equivalent.

By Theorem \ref{GlobalLocalAnnihilators}, we have that $H_1 = H_2$. It is clear that $H_3 \subseteq H_2$. But since $H_2 = \mathsf{image}(F)$ and $\mathsf{image}(F)$ preserves $d_3$,
we have $H_2 \subseteq H_3$. Moreover, $H_1 = \mathsf{image}(F)$ implies $H_1 \subseteq H_4$, since $\mathsf{image}(F)$ preserves annihilator lines. On the other hand, we note that $\mathbb{P}(\Ann(x)) = \bigcup_{y \in \{d_3(x,y) = 1\} } \mathscr{L}_{x,y}$. Hence, $\varphi \in H_4 $ satisfies $\varphi( \mathbb{P}\Ann(x)) \subseteq \mathbb{P} \Ann(\varphi(x))$.
Since $\varphi$ is a diffeomorphism, $\varphi( \mathbb{P}\Ann(x)) = \mathbb{P} \Ann(\varphi(x))$. Thus, $H_4 \subseteq H_2$.
\end{proof} 

In particular, Corollary \ref{GtwoEquivalence} shows that a diffeomorphism $\varphi $ of $\Eintwothree $ that preserves the projective lines $\mathscr{L}_{x,y}$
when $d_3(x,y) =1$ (note: these are \emph{not} all the projective lines) lifts to a linear map $\hat{\varphi} \in \GL(\imoct)$ such that $\varphi = \hat{\varphi}|_{\Eintwothree}$. 

\begin{remark} 
We note here that if $y \in \Ann(x)$, then $ax + by \in \Ann(cx +dy)$ for any $a, b, c, d \in \R$. Hence, one finds that the projective lines $\mathscr{L}_{x,y}$
are integral curves of $\mathscr{D}$, realized as \eqref{AnnihilatorDistribution}. That is, $\gamma(t) = [tx + (1-t) y] $ has $\gamma'(t) \in \mathscr{D}_{\gamma(t)}$ for all $t$.
Conversely, an integral curve $\gamma$ of $\mathscr{D}$ that is also a projective line is necessarily of the form $\gamma = \mathscr{L}_{x,y}$ for some $x,y \in \Eintwothree$ with $x \times y = 0$.  
\end{remark}

 \subsection{Real Cross-Product Bases for $\imoct$ }\label{Sec:RealCrossProductBasis}
 
 We now discuss the notion of a real cross-product basis, analogous to the complex cross-product basis from Proposition \ref{PropBaragliaBasis}. 
 We then define a frame $C$ for $\V$ that is a real cross-product basis. 
It will be the correct basis to use to show that the boundary at infinity of $\nu_q$ is an annihilator polygon.

\begin{definition}\label{Defn:RealCrossProductBasis} 
Let $X = (x_i)_{i=3}^{-3}$ be a basis for $\imoct$ or $\imoct^\C$. 
\begin{itemize} 
	\item We call $X$ \textbf{standardized} when 
 	the eigenspaces of $\mathcal{C}_{x_0} = (w \mapsto x_0 \times w) $ are given by 
	$$\mathbb{E}_{+1}(\mathcal{C}_{x_0}) = \spann_{\F} \langle x_3, x_{-1}, x_{-2} \rangle,  \; \; \mathbb{E}_{-1}(\mathcal{C}_{x_0}) =\spann_{\F} \langle x_{-3}, x_{1}, x_{2} \rangle$$
	and the dot product satisfies $q_{\imoct}(x_i, x_j) = a_{i}\delta_{i, -j}$ for some $a_i \neq 0$.
	\item We call $X$ a \textbf{real cross-product basis} when $ x_i \times x_j = c_{ij} x_{i+j}$ for $c_{ij} \in \R$.
\end{itemize}
\end{definition} 

Next, we recall the algebraic structure of a certain $\Z_3$-grading on the cross-product. 

\begin{lemma}[$\Z_3$-Cross-Product Grading]\label{Z3CrossProductGrading}
Let $z \in Q_- \imoct$. Then define $V^1 := \mathbb{E}_{+1}(\mathcal{C}_z)$ and $V^2 := \mathbb{E}_{-1}(\mathcal{C}_z)$
and $V^0 = \R\{z\}$. Then $V^1, V^2$ are isotropic subspaces and $q: V^1 \times V^2 \rightarrow \R$ is a non-degenerate pairing. 
We also have $V^i \times V^j \subseteq V^{i+j}$ for $i, j \in \mathbb{Z}_3$ and, in fact, the inclusion is an equality if $i \neq j$. 
\end{lemma}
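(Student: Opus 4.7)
The plan is to extract everything from two ingredients: the formula $\mathcal{C}_z^2 = \id$ on $z^\perp$ (from \eqref{DCP} and $q(z)=-1$), and the generalized double cross-product identity \eqref{GDCP}.

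First I would set up the eigenspace decomposition. Applying \eqref{DCP} gives $z\times(z\times w)=(z\cdot w)z + w$ for any $w$, so $\mathcal{C}_z^2=\id$ on $z^\perp$, yielding $\imoct = V^0\oplus V^1\oplus V^2$ as eigenspaces of $\mathcal{C}_z$ with eigenvalues $0,+1,-1$. Next, I would observe that $\mathcal{C}_z$ is skew-adjoint with respect to $q$: $(z\times u)\cdot v = \Omega(z,u,v) = -\Omega(z,v,u) = -(z\times v)\cdot u$. For $u,v$ in the same eigenspace $V^\lambda$ with $\lambda\in\{\pm1,0\}$ the skew-adjointness yields $2\lambda\,u\cdot v = 0$, so $V^1, V^2$ are totally isotropic; and $V^0\perp V^1, V^2$ since $1+0\neq 0$. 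Because $q|_{z^\perp}$ has signature $(3,3)$, maximal isotropic subspaces have dimension $\leq 3$, and $\dim V^1+\dim V^2=6$ forces $\dim V^1=\dim V^2=3$. Non-degeneracy of the pairing $V^1\times V^2\to\R$ then follows from non-degeneracy of $q$ on $\imoct$: any $u\in V^1$ orthogonal to $V^2$ is orthogonal also to $V^0$ and (by isotropy) to $V^1$, hence in $\mathrm{rad}(q)=0$.

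For the grading $V^i\times V^j\subseteq V^{i+j}$, the case $i=0$ is the definition of eigenspace. The case $i=j=1$ (and symmetrically $i=j=2$) is a direct application of \eqref{GDCP} to $(z,u,v)$ with $u,v\in V^1$: using $z\cdot u=z\cdot v=u\cdot v=0$, one finds $z\times(u\times v) = -u\times(z\times v) = -(u\times v)$, placing $u\times v$ in $V^2$. The delicate case is $i=1,j=2$: for $u\in V^1,v\in V^2$, \eqref{GDCP} yields $z\times(u\times v)=(u\times v)+(u\cdot v)z$. Decomposing $u\times v = \alpha z+\beta_1+\beta_2$ with $\beta_i\in V^i$ and comparing with $\mathcal{C}_z(u\times v)=\beta_1-\beta_2$ forces $\beta_2=0$ and $\alpha=-u\cdot v$, so $u\times v = -(u\cdot v)z+\beta_1$ with $\beta_1\in V^1$. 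To kill $\beta_1$, I would pair with $y\in V^2$: $\beta_1\cdot y = ((u\times v)+(u\cdot v)z)\cdot y = \Omega(u,v,y) = \Omega(v,y,u) = (v\times y)\cdot u$, and since $v\times y\in V^1$ by the previously established case $V^2\times V^2\subseteq V^1$, isotropy of $V^1$ gives $(v\times y)\cdot u=0$. Non-degeneracy of the pairing then forces $\beta_1=0$, so $u\times v=-(u\cdot v)z\in V^0$.

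For equality when $i\neq j$, the cleanest route is to use simple transitivity of $\Gtwosplit$ on the pseudosphere $Q_-\imoct$ (Proposition \ref{StiefelTripletModel}) to reduce to the standardized model where $z=x_0$ and a real cross-product basis $(x_i)_{i=3}^{-3}$ is available. There the products $x_i\times x_j=c_{ij}x_{i+j}$ with $c_{ij}\in\R^*$ exhibit surjectivity on each basis vector: e.g.\ $x_3\times x_{-1},\,x_3\times x_{-2},\,x_{-1}\times x_{-2}$ are nonzero multiples of $x_2,x_1,x_{-3}$ respectively, spanning $V^2$, and similarly for $V^2\times V^2=V^1$ and $V^1\times V^2=V^0$. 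Alternatively, surjectivity of $V^1\times V^2\to V^0$ is already visible from $u\times v=-(u\cdot v)z$ combined with non-degeneracy of the pairing. The main obstacle, as indicated, is the mixed case $V^1\times V^2\subseteq V^0$, where a single application of \eqref{GDCP} leaves a residual component in $V^1$ that must be eliminated by a separate $\Omega$-calibration argument leveraging the already-established inclusion $V^2\times V^2\subseteq V^1$.
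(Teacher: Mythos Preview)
Your proof is correct and takes a genuinely different, more intrinsic route than the paper. The paper's proof is a one-liner: it invokes transitivity of $\Gtwosplit$ on $Q_-(\imoct)$ (via the Stiefel model) to reduce to a single model point $z=l$, writes down an explicit standardized basis $\mathcal{B}=(k+lk,\,j-lj,\,i-li,\,l,\,i+li,\,j+lj,\,k-lk)$, and leaves all the claims (isotropy, pairing, grading, equality) to direct computation in that basis. You, by contrast, derive the isotropy of $V^1,V^2$ and the non-degeneracy of the pairing from skew-adjointness of $\mathcal{C}_z$ with respect to $q$, and you establish all three grading inclusions $V^i\times V^j\subseteq V^{i+j}$ from the identities \eqref{DCP}, \eqref{GDCP}, and the calibration $\Omega$ alone, with no model computation. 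The mixed case $V^1\times V^2\subseteq V^0$ is particularly nice: your $\Omega$-calibration argument that bootstraps from the already-proved $V^2\times V^2\subseteq V^1$ to kill the residual $\beta_1$ is exactly the kind of structural reasoning the paper bypasses. Only for the surjectivity $V^1\times V^1=V^2$ (and its mirror) do you fall back on the paper's strategy of passing to a model; note that your assertion $c_{ij}\in\R^*$ for the relevant indices is not part of the definition of a real cross-product basis and must be checked in the explicit model, just as the paper does. Two minor points: the action on $Q_-(\imoct)$ is transitive but not simply transitive, and transitivity on $Q_-$ is a corollary of Proposition~\ref{StiefelTripletModel} rather than its statement (one completes $z$ to a Stiefel triplet). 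What your approach buys is conceptual clarity about \emph{why} the grading holds; what the paper's approach buys is brevity and a concrete basis that is reused later in Section~\ref{Sec:RealCrossProductBasis}.
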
 

\begin{proof}
By the Stiefel model from Proposition \ref{StiefelTripletModel}, $\Gtwosplit$ acts transitively on $Q_-(\imoct)$. Thus, we need only prove
the statement for a model point. But this is addressed by the existence of a standardized $\R$-cross-product basis. 
For example, the following basis will suffice: 
$$\mathcal{B} = (k+lk, \, j-lj, \, i-li, \, l, \, i+li, \, j+lj, \, k-lk). $$
\end{proof}

Some remarks are in order on the definitions. Recall that we say the dot product $q$ is \emph{anti-diagonal} in the basis $X = (x_i)_{i=3}^{-3}$ when 
$q(x_i, x_j) = a_{i}\delta_{i, -j}$ for some constants $a_i \neq 0$. 
We now show that if $X$ is a real cross-product basis for $\imoct$, then $q$ is anti-diagonal in the basis $X$. 

\begin{proposition}\label{Anti-Diagonal}
Let $X = (x_i)_{i=3}^{-3}$ be a $\R$-cross-product basis for $\imoct$. Then $x_i \cdot x_j = c_{i,j}\delta_{i, -j}$, with $c_{i,-i}$ nonzero. 
\end{proposition}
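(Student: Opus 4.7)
The plan is to exploit the grading $x_i \times x_j = c_{ij}x_{i+j}$ together with the double cross-product identities \eqref{DCP} and \eqref{GDCP} to force the dot product to respect the same $\Z$-grading. The main tool will be the operator $\mathcal{C}_{x_0}$, and the computation splits naturally according to how many of the indices $i,j$ are zero.

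First I would handle the case where one index is $0$. Since $\times$ is skew, $x_0 \times x_0 = 0$, so $c_{00}=0$. For $i \neq 0$, the basis relation gives $\mathcal{C}_{x_0}(x_i) = c_{0i}x_i$, i.e.\ $x_i$ is an eigenvector of $\mathcal{C}_{x_0}$. Applying \eqref{DCP} to the pair $(x_0, x_i)$ yields
\begin{align*}
 c_{0i}^{\,2}\, x_i \;=\; x_0 \times (x_0 \times x_i) \;=\; (x_0 \cdot x_i)\, x_0 \;-\; q(x_0)\, x_i,
\end{align*}
and linear independence of $\{x_0, x_i\}$ forces $x_0 \cdot x_i = 0$ and $c_{0i}^{\,2} = -q(x_0)$. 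A short separate argument, using the fact that $u \mapsto u \times v$ cannot be identically zero for $u \neq 0$ (again via \eqref{DCP}), shows $q(x_0) \neq 0$; combined with reality of $c_{0i}$, this gives $q(x_0) < 0$ and $c_{0i} = \pm\alpha$ for $\alpha = \sqrt{-q(x_0)} > 0$.

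For the main case $i,j \neq 0$ with $i+j \neq 0$, I would feed $z = x_0$, $w = x_i$, $y = x_j$ into the generalized double cross-product identity \eqref{GDCP}. Because $x_0 \cdot x_i = x_0 \cdot x_j = 0$ by the previous step, the identity collapses to
\begin{align*}
 x_0 \times (x_i \times x_j) \;+\; x_i \times (x_0 \times x_j) \;=\; (x_i \cdot x_j)\, x_0.
\end{align*}
The left-hand side is a scalar multiple of $x_{i+j}$ (interpreting $x_k = 0$ for $|k|>3$), while the right-hand side is a multiple of $x_0$; since $i+j \neq 0$, the two are linearly independent (or the LHS vanishes outright when $|i+j|>3$, with the one surviving term $c_{0j}\, x_i \times x_j = 0$ forcing the same conclusion), and we read off $x_i \cdot x_j = 0$. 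This closes the anti-diagonality claim.

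Finally, for non-vanishing of $c_{i,-i}$, I would argue by contradiction: if $x_i \cdot x_{-i} = 0$ for some $i \in \{1,2,3\}$, then $x_{-i}$ is orthogonal to every basis vector, contradicting non-degeneracy of $q$ on $\imoct$. The main obstacle, if any, is bookkeeping in the mixed case of Step~4 when $|i+j| > 3$ so that $x_{i+j}$ is zero by convention; one has to be slightly careful that the identity still yields the desired orthogonality from the surviving term $x_i \times (x_0 \times x_j)$, rather than becoming vacuous. Everything else is a direct consequence of the two cross-product identities and the cross-product grading.
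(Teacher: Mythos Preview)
Your argument is correct and takes a genuinely different route from the paper. You work directly with the identities \eqref{DCP} and \eqref{GDCP}: first \eqref{DCP} applied to $(x_0,x_i)$ gives $x_0\cdot x_i=0$ and $q(x_0)<0$, then \eqref{GDCP} with $z=x_0$ reduces the general case $i+j\neq 0$ to comparing a multiple of $x_{i+j}$ against a multiple of $x_0$, and non-degeneracy of $q$ handles the nonvanishing of $c_{i,-i}$. The paper instead proceeds structurally: it first identifies the $\pm 1$ eigenspaces $V^1,V^2$ of $\mathcal{C}_{\hat x_0}$, invokes the $\Z_3$ cross-product grading (Lemma~\ref{Z3CrossProductGrading}) and annihilator dimensions to place each $x_i$ in $V^1$ or $V^2$, and then uses isotropy of $V^1,V^2$ together with a $3$-form argument via $\Omega$ to obtain the orthogonality relations. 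Your approach is shorter and more self-contained for the statement at hand; the paper's approach yields as a by-product the eigenspace placement $x_3,x_{-1},x_{-2}\in V^1$ and $x_{-3},x_1,x_2\in V^2$ (up to a global sign), which is exactly the content of the remark following the proposition and feeds into Lemma~\ref{Lem:RealCrossProductBasis}. One small presentational point: your nonvanishing paragraph only treats $i\in\{1,2,3\}$, but $c_{0,0}=q(x_0)\neq 0$ was already established in your first step, so the proof is complete.
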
 

\begin{proof}
The key point is that we have $\mathcal{C}_{x_0}: \imoct \rightarrow \imoct$ diagonalizable by our hypothesis. This is only possible if $q(x_0) < 0$ by
\eqref{DCP}. Indeed, if $q(x_0)>0$, then zero is its only real eigenvalue. If $q(x_0) = 0$, then \eqref{DCP} says zero is the only eigenvalue of $\mathcal{C}_u$ and
$\dim \ker \mathcal{C}_u = 3$ says $\mathcal{C}_u$ is not diagonalizable. Thus, setting $\hat{x}_0 := \frac{x_0}{|q(x_0)|^{1/2}}$, we have $q(\hat{x}_0) = -1$. 
Then for $i \neq 0$, each $x_i$ is a $\pm 1$ eigenvector of $\mathcal{C}_{\hat{x}_0}$. 
Now, note that $\langle x_3, x_2, x_1 \rangle \subset \Ann(x_3)$ and $\langle x_{-3}, x_{-2}, x_{-1} \rangle \subset \Ann(x_{-3})$ from the cross-product
grading. The inclusions are equality by dimension count. Thus, $x_{3} \times x_{-3} = c_{3, -3} x_0$ with $c_{3,-3} \neq 0$. 
Now, applying Corollary \ref{Z3CrossProductGrading} (whose proof is independent of the current result), we conclude that $x_3, x_{-3}$
must be in opposite eigenspaces of $\mathcal{C}_{\hat{x}_0}$. Without loss of generality, suppose that $x_3 \in \mathbb{E}_{+1}(\mathcal{C}_{\hat{x}_0})$.
Define $V^0 = \R \{x_0\}, V^1 :=  \mathbb{E}_{+1}(\mathcal{C}_{\hat{x}_0}), V^2:=  \mathbb{E}_{-1}(\mathcal{C}_{\hat{x}_0})$. 
By Corollary \ref{Z3CrossProductGrading}, we have $V^2 \times_{\imoct} V^2 = V^1$. However, from the cross-product grading, the only possible
$u, v \in \spann (x_i)_{i=-3}^{2} $ with $u \times v \in \R \{x_3\}$ is for $ u, v \in \langle x_2, x_1 \rangle $. We conclude that $x_1 \times x_2 = c_{1,2} x_3$
for some $c_{1,2} \neq 0$. This also means $x_1, x_2 \in V^2$. Similarly, we must have $x_{-1} \times x_{-2} = c_{-1,-2} x_{-3}$ for $c_{-1,-2} \neq 0$
and hence $x_{-1},x_{-2} \in V^1$ since $x_{-3} \in V^2$. Next, we claim the following, denoted by $(\star)$ -- elements $x \in V^1$ and $y \in V^2$ are orthogonal if and only if 
$x \times y =0$ by Corollary \ref{Z3CrossProductGrading} again. This holds by a 3-form argument with $\Omega$. We know $x \times y \in \R\{x_0\}$. Thus, $x \times y = 0 \iff (x \times y) \cdot x_0 = 0
\iff \Omega(x,y,x_0) = 0 \iff \Omega(x, x_0, y) = 0 \iff x \cdot y = 0$. Fix an index $i$ and suppose for simplicity that $x_i \in V^1$. Then
$x_i \cdot V^1 = 0$ since $V^1$ is isotropic. The only elements $y \in V^2$ with $x_i \cdot y \neq 0$ are of the form $y \in \R \{ x_{-i} \}$ by $(\star)$. 
Hence, $x_i \cdot x_j =0$ unless $j = -i$, in which case their dot-product is nonzero. \end{proof}

\begin{remark} 
In fact, the proof of that proposition shows that the standardized condition 
very nearly holds automatically as well. Indeed, if $X$ is a real-cross-product basis, then $q(x_0) < 0$ is forced. 
Define $\hat{x}_0 := \frac{x_0}{|q(x_0)|^{1/2}}$ and then 
either $\mathbb{E}_{+1}(\hat{x}_0) = \spann_\R \langle x_3, x_{-1}, x_{-2} \rangle$ or $\mathbb{E}_{+1}(\hat{x}_0)  = \spann_{\R} \langle x_{-3}, x_{1}, x_{2} \rangle$. 
\end{remark}

Lemma \ref{Z3CrossProductGrading}, which produces a $\Z_3$-cross-product grading for $\imoct$ by taking 
$x \in Q_-\imoct = \{x \in \imoct \; | \; q(x) = -1\}$ and setting $V_0 := \R \{x\} , V_1 := \mathbb{E}_{+1}(\mathcal{C}_{x}), V_{2} := \mathbb{E}_{-1}(\mathcal{C}_{x})$,
such that $V^i \times V^j \subset V^{i+j}$ (with equality if $i \neq j$), will allow us to prove that a standardized basis is automatically
a real cross-product basis. We now show this is the case and discuss further structure of standardized real cross-product bases 
(compare to Proposition \ref{PropBaragliaBasis}). 

\begin{lemma}[Standardized Bases]\label{Lem:RealCrossProductBasis} 
Let $C = (v_i)_{i=3}^{-3}$ be a standardized basis. Then $C$ is a real cross-product basis and 
\begin{enumerate} 
	\item the 3-form $\Omega$ is given by $\Omega = \sum_{i+ j + k = 0} \; c_{ijk}\, v^{i, j, k} $ for some $c_{ijk} \in \R$. In particular, $ D = \mathsf{diag}(a_i)_{i=3}^{-3} \in \Gtwosplit$ if $a_ia_ja_k =1$ for $ i  + j+ k = 0$. 
	\item If $v_i \in \mathbb{E}_{\varepsilon}(\mathcal{C}_{v_0})$ for $\varepsilon \in \{+1, -1\}$, then $\Ann(v_i )= \C\{v_i\} \oplus (v_i^\bot \cap \mathbb{E}_{-\varepsilon}(\mathcal{C}_{v_0}) \,)$.
\end{enumerate} 
\end{lemma}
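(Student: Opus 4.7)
The plan rests on two inputs: the $\Z_3$-grading of the cross-product provided by Lemma \ref{Z3CrossProductGrading}, and the anti-diagonal orthogonality data baked into the definition of a standardized basis. First I would verify that $q(v_0) = -1$: since the double cross product \eqref{DCP} gives $\mathcal{C}_{v_0}^2 = -q(v_0)\,\Id$ on $v_0^\bot$, and $\mathcal{C}_{v_0}$ is diagonalizable with eigenvalues $\pm 1$ there, the identity forces $q(v_0) = -1$. Lemma \ref{Z3CrossProductGrading} then applies with $V^0 = \F\{v_0\}$, $V^1 = \mathbb{E}_{+1}(\mathcal{C}_{v_0}) = \spann_\F\langle v_3, v_{-1}, v_{-2}\rangle$, $V^2 = \mathbb{E}_{-1}(\mathcal{C}_{v_0}) = \spann_\F\langle v_{-3}, v_1, v_2\rangle$, yielding $V^a \times V^b \subseteq V^{a+b}$ modulo $3$.

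Next I would establish $v_i \times v_j = c_{ij} v_{i+j}$ (with the convention $v_{i+j} := 0$ if $|i+j| > 3$) by case analysis, exploiting that $v_i \times v_j$ is orthogonal to both $v_i$ and $v_j$. When $v_i, v_j$ lie in the same piece $V^1$ or $V^2$, the cross-product lies in the opposite piece; the anti-diagonal hypothesis $v_i \cdot v_j = a_i \delta_{i,-j}$ collapses the three-dimensional graded subspace to the one-dimensional line spanned by $v_{i+j}$, e.g.\ $v_3 \times v_{-1} \in V^2 \cap v_3^\bot \cap v_{-1}^\bot = \spann_\F\langle v_2\rangle$. When $v_i \in V^1$ and $v_j \in V^2$, the product lands in $V^0 = \F\{v_0\}$, and its coefficient is read off from the 3-form identity $(v_i \times v_j) \cdot v_0 = (v_0 \times v_i) \cdot v_j = v_i \cdot v_j$, which is nonzero iff $j = -i$; this simultaneously kills the ``too-large'' cases such as $v_3 \times v_1 = 0$ and produces $v_i \times v_{-i}$ as a nonzero multiple of $v_0$. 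The case $j = 0$ reduces to the eigenspace hypothesis $v_0 \times v_i = \pm v_i$. All structure constants are scalars in $\F$, proving $C$ is a real cross-product basis.

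For (1), I would directly compute $\Omega(v_i, v_j, v_k) = (v_i \times v_j) \cdot v_k = c_{ij}\, a_{i+j}\, \delta_{i+j,-k}$, so the only possibly nonzero components of $\Omega$ occur along $\{i+j+k=0\}$; expanding in the dual basis gives the claimed form. The corollary follows from the characterization $\Gtwosplit = \{\psi \in \GL(\imoct) : \psi^*\Omega = \Omega\}$ in Section \ref{Gtwosplit}: a diagonal $D = \diag(a_i)$ scales each decomposable tensor $v_i^* \wedge v_j^* \wedge v_k^*$ by $a_i a_j a_k$, so the condition $a_i a_j a_k = 1$ on the support forces $D^*\Omega = \Omega$. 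For (2), without loss of generality take $v_i \in V^1$ and decompose $w = w_0 + w_1 + w_2$ along the grading. Then $v_i \times w_0 \in V^1$, $v_i \times w_1 \in V^2$, and $v_i \times w_2 \in V^0$, so $v_i \times w = 0$ is equivalent to the vanishing of each summand. The identity $v_i \times v_0 = -v_i$ forces $w_0 = 0$; the map $\mathcal{C}_{v_i}|_{V^1} : V^1 \to V^2$ has kernel exactly $\F\{v_i\}$ (visible from the explicit cross-products computed above, e.g.\ $v_3 \times v_{-1}, v_3 \times v_{-2}$ are both nonzero), forcing $w_1 \in \F\{v_i\}$; and the $V^0$-part vanishes iff $v_i \cdot w_2 = 0$ via the same 3-form identity. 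Collecting, $\Ann(v_i) = \F\{v_i\} \oplus (v_i^\bot \cap V^{-\varepsilon})$. The only technical obstacle is the organized casework in paragraph two, but the combination of the $\Z_3$-grading with the anti-diagonal pairing makes every case one-dimensional, so no genuine difficulty arises.
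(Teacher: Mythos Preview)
Your approach is essentially the same as the paper's: both rest on the $\Z_3$-grading from Lemma \ref{Z3CrossProductGrading} together with the anti-diagonal orthogonality, and both pin down $v_i \times v_j$ inside the correct graded piece by using the 3-form $\Omega$ (equivalently, the orthogonality $v_i \times v_j \perp v_i, v_j$). The paper orders things differently---it proves the annihilator statement (2) first and feeds it into the $V^1 \times V^2$ case---but your direct use of the identity $(v_i \times v_j)\cdot v_0 = (v_0 \times v_i)\cdot v_j = \pm v_i \cdot v_j$ accomplishes the same thing more cleanly.

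There is one small gap in your argument for (2). You assert that $\ker(\mathcal{C}_{v_i}|_{V^1}) = \F\{v_i\}$ because $v_3 \times v_{-1}$ and $v_3 \times v_{-2}$ are ``both nonzero,'' but your earlier computation only placed these products on the correct lines; it did not verify nonvanishing. The cheapest repair is to bypass the kernel computation entirely: you have already shown $\F\{v_i\} \oplus (v_i^\bot \cap V^{-\varepsilon}) \subseteq \Ann(v_i)$, and both sides have dimension $3$ by Proposition \ref{Prop:Annihilators}, so equality follows. (Alternatively, if say $v_3 \times v_{-1} = 0$, then $\Ann(v_3)$ would contain $v_3, v_{-1}, v_1, v_2$, contradicting $\dim\Ann(v_3)=3$.)
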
 

\begin{proof} We prove (2) before showing $C$ is a real cross-product basis. Denote $V_1 := \mathbb{E}_{+1}(\mathcal{C}_{v_0}), \;V_2 := \mathbb{E}_{-1}(\mathcal{C}_{v_0})$ and $V_0 = \R\{x_0\}$. We show that 
$$\Ann(v_3) = \spann_{\R} \langle v_3, v_2, v_1 \rangle \; = \C \{v_3\} \oplus (v_3^\bot \cap \mathbb{E}_{-1}(\mathcal{C}_{v_0}) \,). \; (*)$$
The same argument given here works for any other $\Ann(v_i)$. 
Now, note that for $i \in \{1,2\}$, we have $v_{3} \times v_i \in V_0$ by Lemma \ref{Z3CrossProductGrading}. Hence, $ v_3 \times v_i = c_{i} \, v_0$ for some $c_i \in \C$. On the other hand, $v_3 \cdot v_i = 0$ since $q$ is anti-diagonal in the basis $C$. Thus, by the multiplicativity of $q$, we have 
$$c_{i}^2 q(v_0) = q(c_{i} \, v_0) = q(v_3 \times v_i ) = q(v_3v_i) = q(v_3)q(v_i) = 0 .$$
Hence, $v_3 \times v_i = 0$. Of course, $v_3 \in \Ann(v_3)$.  
We conclude $\spann_{\R} \langle v_3, v_2, v_1 \rangle = \Ann(v_3)$ by Lemma \ref{Prop:Annihilators}. Since $v_3^\bot \cap \mathbb{E}_{-1}(\mathcal{C}_{v_0})  = \spann_\C\langle v_1, v_2 \rangle$,
this proves the claim. 

To show $C$ is a real cross-product basis, we need to verify relations of the form $v_i \times v_j = c_{i,j} v_{i+j}$. Write $v_i \in V_{k}, v_j \in V_l$.
We distinguish cases based on the pair $(k,l)$. Note that if $k =0$ or $l=0$, then we are done by hypothesis of $C$ being standardized. For the case $k =1, l =2$,
we note that $ v_i \times v_j =0$ unless $j= -i$ by the annihilator statement (2). But in this case $v_i \times v_{-i} = c_{i,-i} v_0 \in V_0$ by the $\Z_3$-grading of $(V_0, V_1, V_2)$. 
Finally, we check the cases $k=l=1$ and $k= l =2$. The argument is nearly the same, so we handle the former. The key is to note that $v_{-i} = \beta_ i v_i^*$ for some $\beta_i \neq 0$ with respect to $q$. 
We show $v_{3} \times v_{-1} = c_{3,-1} \,v_2$, as the same argument works for the pair $(v_3, v_{-2})$. Now, $ v_3 \times v_{-1} = av_{-3} + bv_{1} + cv_{2} \in V_2$ by the $\Z_3$-grading. Thus, if $(v_{3} \times v_{-1} ) \cdot v_{3} = 0 = (v_{3} \times v_{-1} ) \cdot v_{-1}$, then it follows that $v_{3} \times v_{-1} \in \R\{v_{-2} \}$. 
Now, using the 3-form $\Omega$ from \eqref{Calibration}, we find
$$(v_{3} \times v_{-1}) \cdot v_3 = \Omega( v_{3}, v_{-1}, v_3) = 0 = \Omega(v_3, v_{-1}, v_{-1} ) = (v_{3} \times v_{-1} )\cdot v_{-1}.$$
We conclude that $C$ is a real cross-product basis. 

(1) Note that $x_i \times x_j = c_{ij}x_k$ and $x_k \cdot x_l = 0$ unless $k =-l$. This proves (1).
\end{proof} 

We now define an $h$-unitary, $\hat{\tau}_{\V}$-real cross-product basis in $\V$.
Set $\xi = e^{i \pi/6}$ and define $C$ in terms of a unitary matrix $W$ as follows: 
\begin{align}
	C &:= H^{-1/2} W  \label{RealCrossProductBasis} \\
	W &:= \frac{1}{\sqrt{6}} \begin{pmatrix}   \frac{1}{\sqrt{2}} & \frac{1}{\sqrt{2}} & \frac{1}{\sqrt{2}}&\sqrt{-3} & \frac{1}{\sqrt{2}}& \frac{1}{\sqrt{2}}& \frac{1}{\sqrt{2}}\\
				\xi^{9} & \xi^7& \xi^{11} & 0& \xi^{5} &\xi^1 & \xi^3  \\
  				 -1 & \xi^{2} & \xi^{10} &0& \xi^{10} & \xi^{2} & -1\\
  				1& -1& -1& 0& 1& 1& -1 \\
				  -1 & \xi^{10} & \xi^2 &0 & \xi^2 & \xi^{10} & -1 \\
  			 	\xi^3 & \xi^{5} & \xi^1 & 0& \xi^7 & \xi^{11} & \xi^{9} \\
				\frac{1}{\sqrt{2}}& \frac{1}{\sqrt{2}}& \frac{1}{\sqrt{2}}& -\sqrt{-3}  &\frac{1}{\sqrt{2}}& \frac{1}{\sqrt{2}}& \frac{1}{\sqrt{2}}\end{pmatrix}. \label{ComplicatedRealCrossProduct} 
\end{align} 
The matrix $W$ is a re-ordering of $S$ from \eqref{HiggsEigenbasisInOrder}, a matrix used earlier to factorize the Higgs field eigenbasis. 
Using the expression from Proposition \ref{PropBaragliaBasis} for $q_{\Oct'}^\C$ in the basis $\mathcal{B}$, a (computer-assisted) calculation shows that in the basis $W$, the
dot product $q_{\Oct'}^\C$ is represented by the matrix $-Q $, where $Q$ is from \eqref{Qmatrix}. For reasons soon to be apparent, write 
$$W =: (w_3, \; w_5, \; w_1, \; w_0, \; w_7, \; w_{11}, \; w_{9} ),$$
in column-vector form. 

A calculation shows $W$ is unitary and hence $C$ is $h$-unitary. We now show that $C$ is also $\hat{\tau}$-real. 
Observe the symmetry $ Q\overline{W} = W$. By direct calculation, $QHQ^{-1} = H^{-1}$. Hence, 
$$\hat{\tau}( C ) = H^{-1}Q (H^{-1/2} \overline{W} ) = Q H H^{-1/2} \overline{W} = Q H^{1/2} \overline{W} = H^{-1/2}Q\overline{W} = H^{-1/2} W = C .$$
Thus, $C$ is $h$-unitary and $\hat{\tau}$-real. We now build up to showing $C$ is a real cross-product basis.

Let us compute the cross-product endomorphism $\mathcal{C}_{w_0}: \imoct^\C \rightarrow \imoct^\C$.
Using all the cross product relations 
from Proposition \ref{PropBaragliaBasis}, a tedious calculation shows that for $y = \sum_{i=3}^{-3} y_i u_i $ in the in the basis $\mathcal{B}$, we have 
\begin{align}\label{x7CrossProduct}
	 w_0 \times y= \left ( \frac{1}{\sqrt{2} } \, y_0, \, \xi^3 \, y_{-1}, \, \xi^3 \, y_{-2}, \frac{1}{\sqrt{2}} \, y_{-3} + \frac{1}{\sqrt{2}} y_{+3}, -\xi^3 \, y_{2}, \, -\xi^3 \, y_{1} \,
\frac{1}{\sqrt{2}} y_0  \right ).
\end{align}

Consider the following transformation, which will be crucial going forwards:
\begin{align}
	R_{\xi}:= \mathsf{diag}( 1, \xi^{-2}, \xi^{-4}, -1, \xi^4, \xi^{2}, 1),
\end{align} 
in the basis $\mathcal{B}$ from Proposition \ref{PropBaragliaBasis}. Write $R_{\xi} = (r_i)_{i=3}^{-3}$. Examining the coefficients, we find $-r_{i}r_j r_{k} = 1$ for $i + j + k = 0$. By Proposition \ref{PropBaragliaBasis} part (5), we see that
$-R_{\xi} \in \Gtwo.$ Here is the key fact we will repeatedly apply: $w_{k+2} = R_{\xi} w_{k}$. This equation can be directly verified. 

We now claim that 
	\begin{align} \label{CrossProductEigenspaces} 
		\begin{cases} 
			W_1 := \spann_\C \langle w_3, w_7, w_{11} \rangle \; = \; \mathbb{E}_{+1}(\mathcal{C}_{w_0}) \\
			W_2 := \spann_\C \langle w_1, w_5, w_{9} \rangle \; = \; \mathbb{E}_{-1}(\mathcal{C}_{w_0})
		\end{cases}
	\end{align} 
By a calculation with equation \eqref{x7CrossProduct}, one finds $w_0 \times w_1 = -w_1$.
Applying $-R_{\xi}$ to the relation $w_0 \times w_k = a_k \, w_k$, we inductively find $w_0 \times w_{k+2} = - a_k w_{k+2}$, with the signs of the eigenvectors
alternating. Hence, $W_1 \subseteq \; \mathbb{E}_{-1}(\mathcal{C}_{w_0}), \; W_2 \subseteq \mathbb{E}_{+1}(\mathcal{C}_{w_0})$. 
Since $W_1 \oplus W_2 = w_{0}^\bot$ and $w_0 \times w_0  = 0$, the equality \eqref{CrossProductEigenspaces} must hold.
 
We can now apply Lemma \ref{RealCrossProductBasis}. 
If we re-label the column vectors of $W$ as $W = (a_3, \, a_2, \, a_1, \, a_0, \, a_{-1}, \, a_{-2}, \, a_{-3} )$, then $(a_i)_{i=3}^{-3}$ is standardized by our 
previous observations. Hence, $W$ is a real cross-product basis by Lemma \ref{RealCrossProductBasis}. 
Furthermore, the lemma shows the annihilators of $w_i$ are found by (indices mod 12)
$$	\Ann(w_{2k+1} ) \; = \spann_{\R} \langle w_{2k-1}, w_{2k+1}, w_{2k+3} \rangle.$$
Of course, since $C = H^{-1/2}W$, with $H^{-1/2} \in \Gtwo$, the basis $C: = (x_3, \, x_5, \, x_1, \, x_0,\,  x_{7}, \, x_{11}, \, x_9) $ is also a real cross-product basis (if indexed correctly) 
with
\begin{align}\label{EigenbasisAnnihilator}
	 \Ann(x_{2k+1} ) \; = \spann_{\R} \langle x_{2k+1}, x_{2k-1}, x_{2k+3} \rangle.
\end{align} 
Moreover, since $C$ is a $\tau_\V$-real and $H$-unitary frame for $\V$, the subspace $\spann_\R (x_i)_{i=3}^{-3}$ is a fiber-wise copy of $\imoct $.

\subsection{Boundary Values of the Model Surface $\nu_0$} \label{ModelSurface} 

In this section, we directly compute the coordinate expression for the almost-complex curve $\nu_0$ associated to $q \equiv 1$ by solving \eqref{HitEuc}. 
We call $\nu_0$ the \emph{model surface}.

For later purposes, we compute the change of basis from the frame $\Mh = H^{-1/2} \mathcal{M}$ to the $\tau$-real frame $E$ defined below.
Here, we recall $S$ is from \eqref{HiggsEigenbasisInOrder}, given by $S = (w_1, \, w_3, \, w_5, \, w_7, \, w_9, \, w_{11}, \,w_0)$ in our new notation. 
\begin{align}\label{ReorderedRealBasis} 
	E := H^{-1/2} S = (x_1, \, x_3, \, x_5, \, x_7, \, x_9, \,x_{11}, \,x_{0}).
\end{align}

Note that $E$ is a re-ordering of the columns of $C$. While $E$ is not a real cross-product basis
in this ordering, the basis $E$, as opposed to $C$, will nevertheless be more convenient to work with as we see in Lemma \ref{BoundaryVertices}.

To compute the change of basis from $\Mh$ to $E$, it suffices to compute the change of basis from $\M$ to $S$. 
A (computer-assisted) calculation finds the change of basis matrix $S^{-1} \mathcal{M} = E^{-1} \Mh$.

\begin{proposition}[Change of Basis $\mathcal{M}_h$ to $E$] \label{ChangeOfBasis} 
The following linear relations hold: 
 \begin{itemize}
	\item $i = \frac{1}{\sqrt{6}} \left(\; ( -w_1+w_7) + \, (w_3 -w_9) \, + (-w_5+ w_{11}) \, \right)$
	\item $l = \frac{1}{2\sqrt{3}} \left( \;(w_1+w_7) - 2 (w_3 +w_9) + (w_5 +w_{11}) \; \right) $
	\item $k = \frac{1}{2\sqrt{3}} (  \,(w_1 - w_7) + \, 2(w_3-w_9) + \,(w_5-w_{11}) ) $.
	\item $li = \frac{1}{2} \left( \, -( w_1 + w_7) + (w_5 + w_{11}) \, \right) $.
	\item $j = \frac{1}{2} \left( \, (w_1 - w_7) + (-w_5 +w_{11} ) \right) $.
	\item $lj = -\frac{1}{\sqrt{6}} \left( (w_1+w_7) + (w_3+ w_9) + (w_5+w_{11}) \right) $
	\item $lk = w_{0} $.
\end{itemize}
\end{proposition}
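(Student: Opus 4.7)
My plan is to reduce the proposition to a change-of-basis computation inside $\imoct^\C$ expressed in the complex cross-product basis $\mathcal{B} = (u_i)_{i=3}^{-3}$ of \eqref{BaragliaBasis}. Because both $E$ and $\mathcal{M}_h$ are obtained from $S$ and $\mathcal{M}$ respectively by the common invertible change of frame $H^{-1/2}$, any $\C$-linear identity between the columns of $\mathcal{M}$ and those of $S$ descends verbatim to one between $\mathcal{M}_h$ and $E$. Hence it suffices to verify the seven claimed identities with each $w_k$ reinterpreted as the corresponding column of $S$ sitting in $\imoct^\C$.

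The mechanics are then routine. First, using \eqref{StandardMultiplicationFrame}, I would write each of $i, j, k, l, li, lj, lk$ as an explicit $\C$-linear combination of $(u_i)_{i=3}^{-3}$. Second, from \eqref{HiggsEigenbasisInOrder} I read off each $w_k$ for $k \in \{1,3,5,7,9,11\}$ as $\tfrac{1}{\sqrt{6}}\sum_j \xi^{\alpha_{k,j}} u_j$ with $\xi = e^{i\pi/6}$, while $w_0$ is supported only on $u_{\pm 3}$. Third, I invert the relationship to express each $m_i$ as a combination of the $w_k$. The inversion is made painless by the observation that the columns of $S$ in $\mathcal{B}$-coordinates form an orthonormal basis of $\C^7$ for the standard Hermitian inner product: each column has unit norm by direct count, while orthogonality of distinct columns follows from the identity $\sum_{k=0}^{5} \xi^{2km} = 0$ for $m \not\equiv 0 \pmod 6$, together with a direct cancellation of the $u_{\pm 3}$-contributions against the $w_0$-column. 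Consequently $S^{-1} = \overline{S}^T$, and the coefficient of $m_i$ on $w_k$ is simply the Hermitian pairing $\overline{w_k}^T m_i$ computed in $\mathcal{B}$-coordinates.

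The only real obstacle is bookkeeping: matching the non-standard indexing $(1,3,5,7,9,11,0)$ to the columns of $S$, tracking the phases $\xi^{\alpha}$ through each pairing, and repeatedly simplifying with $\xi^{12} = 1$ and $\xi^6 = -1$. Once this is carried out, each of the seven Hermitian pairings collapses after collecting three or six sixth roots of unity (plus a $w_0$-contribution in the cases of $i$, $l$, $lj$, and $lk$) to one of the advertised constants $\tfrac{1}{\sqrt{6}}$, $\tfrac{1}{2\sqrt{3}}$, $\tfrac{1}{2}$, $1$, completing the proposition. This verification is elementary, routine, and computer-assisted, as noted.
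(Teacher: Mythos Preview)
Your proposal is correct and follows essentially the same route as the paper: both reduce the identity from $(\mathcal{M}_h, E)$ to $(\mathcal{M}, S)$ via the common factor $H^{-1/2}$, and both treat the resulting computation of $S^{-1}\mathcal{M}$ as a direct (computer-assisted) verification. Your additional remark that $S$ is unitary so $S^{-1} = \overline{S}^T$ is a helpful bookkeeping observation the paper also records nearby, and it does not depart from the paper's approach.
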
 

Now, let $q_0 \equiv 1$ be the constant sextic differential. Then we can express the tautological section $\sigma$ corresponding to $\nu_0$ 
by $\sigma(z) = i = \frac{1}{\sqrt{6}} (-1, +1, -1, +1, -1, +1, 0)^T$ in the basis $E = H_0^{-1/2}W$, where $H_0$ solves equation \eqref{HitEuc} for $q_0$.
Define $\bm{v} := \frac{1}{\sqrt{6}} (-1, +1, -1, +1, -1, +1, 0)^T \in \C^7$.
Recall that the $\nabla$-parallel
translation is given by \eqref{ModelParallelTranslation} in the standard frame $\mathcal{F} = (\mathbf{e}_i \mapsto u_i)$ for $u_i$ from \eqref{BaragliaBasis}. Hence, $\nabla$-parallel translation $\mathcal{P}: (\V_\R)_z \rightarrow (\V_\R)_{p_0}$, where $p_0$ is the origin, is represented by the matrix $\mathcal{D}$ from \eqref{ModelDiagonalTiteca} in the basis $\tau$-real basis $E_0$ from \eqref{HiggsEigenbasis}. 
We conclude that $\nu_0(z)$ is given in $E_0$-coordinates by 
	\begin{align}\label{ModelSurfaceCoordinateExpression} 
		\nu_0 = \frac{1}{\sqrt{6}} \sum_{k=1}^{6} (-1)^k e^{ 2\alpha \, \mathsf{Re}(z \xi^{2k-1} )} x_{2k-1}  = \mathcal{D} \,\bm{v}.
	\end{align} 

It will be convenient for us to have the following expressions for $c_k := 2\alpha \, \mathsf{Re}(z \xi^{2k-1} )$. 
	\begin{align} \label{Coefficients} 
		\begin{cases}
			c_1 &= 2 \alpha \mathsf{Re}(z\xi^1) =  \alpha (\sqrt{3}x-y)  \\
			c_2 &= 2 \alpha \mathsf{Re}(z\xi^3) = -2 \alpha \, y. \\
			c_3 &= 2 \alpha \mathsf{Re}(z\xi^5) =  \alpha( - \sqrt{3}x-y) \\
			c_4 &= 2 \alpha \mathsf{Re}(z\xi^7)  =  \alpha(-\sqrt{3}x+y)  \\
			c_5 &= 2 \alpha \mathsf{Re}(z\xi^9) = 2\alpha \, y  \\
			c_6 &= 2 \alpha \mathsf{Re}(z\xi^{11} ) =  \alpha(\sqrt{3}x+y) .
		\end{cases}
	\end{align} 
We can now compute the projective limits of the model surface $\nu_0$ along Euclidean rays in the standard $z$ coordinate. 
Here, $[u]$ denotes the projective class of $u \in \imoct \cong \R^{3,4}$. 

\begin{proposition}[Limits of the Model Surface] \label{ModelSurfaceLimits} 
Let $\nu_0$ be the model surface \eqref{ModelSurfaceCoordinateExpression} with respect to $q \equiv 1$. 
Then the projective limits of $\nu_0$ along Euclidean rays are given below. 
In particular, $\partial_{\infty} \nu_0 =(\, [x_1] , \, [x_3],\, [x_5], \, [x_7], \, [x_9], \, [x_{11}] \, ) $ is an annihilator polygon with 6 vertices. 
			\begin{center}
			\begin{tabular}{||c c c||} 
				 \hline
			\textbf{Type of path} $\gamma$ & \textbf{Direction} $\theta$ & \textbf{Projective limit} $p_{\gamma}$ of $\nu_0$ \\ [0.5ex] 
			 \hline\hline
			 Quasi-ray & $ \theta \in ( 0, \frac{\pi}{3} )$ & $ p_{\gamma} = [x_{11} ]$ \\ 
	 			Ray $\gamma(t) = te^{i\theta}+iy$\;& $\theta =\frac{\pi}{3} $  & $p_{\gamma} = [x_{11} -e^{\alpha \, y} x_{9} ]$ \\ 
			 \hline
		 	Quasi-ray & $ \theta \in ( \frac{\pi}{3}, \frac{2\pi}{3} )$ & $ p_{\gamma} = [-x_{9} ]$ \\
			Ray $\gamma(t) = te^{i\theta}+iy$\; & $\theta =\frac{2\pi}{3} $  & $p_{\gamma} = [-x_{9} + e^{-\alpha \, y} \, x_{7}]$\\
			 \hline
			 Quasi-ray & $ \theta \in ( \frac{2\pi}{3}, \pi )$ & $ p_{\gamma} = [x_{7} ]$ \\
			 Ray $\gamma(t) = te^{i\theta}+iy$\;& $\theta = \pi $  & $p_{\gamma} = [x_7 -e^{-2\alpha \, y}x_{5} ]$\\
 			\hline
 			Quasi-ray & $ \theta \in ( \pi, \frac{4\pi}{3} )$ & $ p_{\gamma} = [-x_{5} ]$ \\
 				Ray $ \gamma(t) = te^{i\theta}+iy$\; & $\theta =\frac{4\pi}{3} $  & $p_{\gamma} = [-x_{5} +e^{-\alpha \, y} \, x_{3}]$\\
 				\hline
 				Quasi-ray & $ \theta \in (\frac{4\pi}{3} , \frac{5\pi}{3} )$ & $ p_{\gamma} = [x_{3} ]$ \\
 			Ray $ \gamma(t) = te^{i\theta}+iy$\; & $\theta =\frac{5\pi}{3} $  & $p_{\gamma} = [x_3 -e^{\alpha \, y} x_{1}]$\\
 			\hline
 	 Quasi-ray & $ \theta \in ( \frac{5\pi}{3} , 2\pi)$ & $ p_{\gamma} = [-x_{1} ]$ \\
 			Ray $ \gamma(t) = te^{i\theta}+iy$\; & $\theta = 0$  & $p_{\gamma} = [-x_{1} +e^{2\alpha\, y} \, x_{11}]$ \\
			 \hline
		\end{tabular}
	\end{center}
\end{proposition}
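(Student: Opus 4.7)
\textbf{Proof proposal for Proposition \ref{ModelSurfaceLimits}.} The plan is to treat the limit computation as a straightforward exponential-dominance analysis applied to the explicit formula \eqref{ModelSurfaceCoordinateExpression}, and then to verify the annihilator polygon property using the algebraic structure of the real cross-product basis $C$ recorded in \eqref{EigenbasisAnnihilator}.

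First, for a general point $z = x + \sqrt{-1}y$, write
$$\nu_0(z) \;=\; \frac{1}{\sqrt{6}}\sum_{k=1}^{6} (-1)^k e^{c_k(z)}\, x_{2k-1},$$
where $c_k(z) = 2\alpha\,\mathsf{Re}(z\xi^{2k-1})$ is the linear function listed in \eqref{Coefficients}. Since projective classes are unchanged under positive rescaling, the limit of $[\nu_0(\gamma(t))]$ along a path $\gamma$ is controlled by the index $k^*(\theta)$ that maximizes $\cos(\theta + \tfrac{(2k-1)\pi}{6})$. The first step is to observe that this maximum is uniquely attained whenever $\theta \notin \{\tfrac{j\pi}{3}\}_{j=0}^{5}$; more precisely, for $\theta$ in the open sector $(\tfrac{(2j-1)\pi}{3},\tfrac{(2j+1)\pi}{3})$ the unique maximizer gives the term $(-1)^{k^*} x_{2k^*-1}$ with the indices dictated by the table.

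Second, for a Euclidean ray $\gamma(t) = te^{i\theta}$ with $\theta$ in an open stable sector, divide $\nu_0(\gamma(t))$ by $e^{c_{k^*}(\gamma(t))}$; every other coefficient becomes $e^{(c_k-c_{k^*})(\gamma(t))}$ and $(c_k-c_{k^*})(\gamma(t)) = 2\alpha t\bigl(\cos(\theta+\tfrac{(2k-1)\pi}{6})-\cos(\theta+\tfrac{(2k^*-1)\pi}{6})\bigr) \to -\infty$. This yields the six ``sector'' rows of the table. For a quasi-ray $\gamma(t) = L(t) + \delta(t)$ with $|\delta(t)| \in o(t)$, the same argument applies because $c_k(\gamma(t))-c_{k^*}(\gamma(t)) = 2\alpha t\bigl(\cos(\theta+\tfrac{(2k-1)\pi}{6})-\cos(\theta+\tfrac{(2k^*-1)\pi}{6})\bigr) + O(|\delta(t)|)$, and the linear term dominates since $\cos(\theta+\tfrac{(2k-1)\pi}{6}) < \cos(\theta+\tfrac{(2k^*-1)\pi}{6})$ strictly.

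Third, for each critical angle $\theta = \tfrac{j\pi}{3}$ two of the cosines tie, and along the ray $\gamma(t) = te^{i\theta}+\sqrt{-1}y$ the two dominant exponents differ by a $t$-independent constant determined by $y$. Substituting the formulas from \eqref{Coefficients} into $c_{k^*}$ and the tied index and dividing through reads off the six critical-angle rows; for example at $\theta = \pi/3$ one computes $c_6(\gamma(t)) = \alpha(\sqrt{3}t+y)$ and $c_5(\gamma(t)) = \alpha(\sqrt{3}t+2y)$, all other exponents strictly smaller in $t$, and dividing by $e^{c_6}$ gives the limit $[x_{11} - e^{\alpha y} x_9]$. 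The remaining rows are perfectly analogous. Finally, to see $\partial_\infty\nu_0$ is an annihilator hexagon, observe that the vertex set is exactly $([x_{2k-1}])_{k=1}^{6}$, that consecutive pairs $x_{2k-1}, x_{2k+1}$ satisfy $x_{2k-1}\times x_{2k+1} = 0$ by \eqref{EigenbasisAnnihilator}, and that $\Ann(x_{2k+1}) = \spann_\R\langle x_{2k-1}, x_{2k+1}, x_{2k+3}\rangle$, which is the defining annihilator condition on the vertex set. The main (and only) obstacle is the bookkeeping of indices and signs in matching each open sector and each critical angle to the correct $x_{2k-1}$; this is purely computational and follows directly from the $c_k$ formulas in \eqref{Coefficients}.
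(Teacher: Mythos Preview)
Your proposal is correct and follows exactly the approach of the paper, which simply states that the computations are straightforward from the explicit expression \eqref{ModelSurfaceCoordinateExpression} for $\nu_0$ together with the coefficient formulas \eqref{Coefficients}. You have merely supplied the details of the exponential-dominance argument and the sample computation at $\theta=\pi/3$ that the paper leaves to the reader; the verification of the annihilator property via \eqref{EigenbasisAnnihilator} is likewise implicit in the paper's setup.
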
 

\begin{proof}
These calculations are all straightforward, using the expressions \eqref{Coefficients} for $c_k$ along with the expression \eqref{ModelSurfaceCoordinateExpression} for $\nu_0$. 
\end{proof} 

\begin{remark}[Global Polar]\label{GlobalPolar} 
We note here $N := x_0$ identifies as a global orthogonal vector to $\mathsf{image}(\nu_0)$. Hence, $\nu_0$ is not linearly full (compare to Proposition \ref{LinearlyFull}). We note further the algebraic relationship between $N$ and the vertices $(p_i)_{i=1}^6$ of the asymptotic boundary $\Delta$: the vertices $p_{i}$ alternate between $\mathbb{E}_{+1}(\mathcal{C}_N)$ and $\mathbb{E}_{-1}(\mathcal{C}_N)$. 
\end{remark} 

\subsection{$\partial_{\infty} \nu_q$ is an Annihilator polygon} \label{BoundaryAnnihilatorPolygon} 

We now show the asymptotic boundary $\Delta:= \partial_{\infty}( \, \nu_q)$ of the almost-complex curve $\nu_q$ associated to monic polynomial sextic differential 
$q \in H^0(\K_\C^6)$
is an annihilator polygon in $\Eintwothree$. The idea is to compare $\nu_q$ to appropriately re-normalized versions of the model surface $\nu_0$ from \eqref{ModelSurfaceCoordinateExpression}, in a natural coordinate $w$ for $q$. 

\begin{definition}
Let $ \nu_q: \C \rightarrow \quadric$ be an almost-complex curve associated to polynomial $q \in H^0(\K^6_\C)$. Then we define the \emph{asymptotic boundary}
$\partial_{\infty}\nu_q := \{ \,\lim_{s \rightarrow \infty} [\nu_q(\gamma(s))] \; | \; \gamma \; \text{is a } \; q-\text{ray} \}$ (recall Definition \ref{QuasiRay}).
\end{definition}

We follow the approach of \cite{DW15} for polynomial affine spheres and split the proof into three pieces:
	\begin{itemize}
		\item \emph{Step 1}(Local vertex structure) In each $q$ half-plane $(w_k, U_k)$, we detect 3 stable limits $p_{1,k}, p_{2,k}, p_{3,k}$ of $\nu_q$.
		Moreover, we verify the annihilator property locally $ \Ann(p_{2,k}) = \spann_{\R} \langle p_{1,k}, p_{2,k}, p_{3,k} \rangle $ in this half-plane.
		\item \emph{Step 2}(Local edge structure) In the $w_k$-half-plane, we find linear edges $e_{i,k}$ between $p_{i,k}, p_{i+1, k}$ in $\Eintwothree$
		with $e_{i, k} \subset \partial_{\infty} \nu_q$.
		\item \emph{Step 3} Using the family of $q$-half-planes from Lemma \ref{StandardHalfPlanes}, we show the local pictures 
		give a global description of $\partial_{\infty}\nu_q$ as an annihilator polygon in $\Eintwothree$. 
	\end{itemize} 
	
We now complete Step 1 of the argument. We follow the notation from Section \ref{ParallelTransport}. In particular, $S_i := \, \{ \frac{(i-1) \pi}{6} < \arg(z) < \frac{i \,\pi}{6} \}$
denotes a sector in an upper-half plane $(w,U)$, where $U$ is identified with the upper half-plane $\Ha$. 

\begin{lemma}[Boundary Vertices]\label{BoundaryVertices} 
Let $q \in H^0(\K^6_\C)$ be a monic polynomial sextic differential. Let $(w, U)$ be a natural coordinate for $q$. 
Then the projective limits of $\nu_q$ along stable quasi-rays in $U$ are shown below, where $M_k = (E_{p_0})^{-1}L_k E_0$ and $L_k$ is from Lemma \ref{AsymptoticParallelTranslation}.
	 \begin{center}
			\begin{tabular}{||c c c||} 
				 \hline
			\textbf{Type of path} $\gamma$ & \textbf{Direction} $\theta$ & \textbf{Projective limit} of $\nu_q$ \\ [0.5ex] 
			 \hline\hline
			 Quasi-ray & $ \theta \in ( 0, \frac{\pi}{3} ) - \{\frac{\pi}{6} \}$ & $ p_{1}: = M_3[x_{11} ]$ \\ 
			 \hline
		 	Quasi-ray & $ \theta \in ( \frac{\pi}{3}, \frac{2\pi}{3} ) - \{\frac{4\pi}{6} \}$ & $ p_{2} := M_3 [x_{9} ]$ \\
			 \hline
			 Quasi-ray & $ \theta \in ( \frac{2\pi}{3}, \pi ) - \{\frac{5\pi}{6} \}$ & $ p_{3} := M_3[x_{7} ]$ \\
 			\hline 
		\end{tabular}
	\end{center}
Moreover, we have $\Ann(p_2) = \spann_{\R} \langle p_1, p_2, p_3 \rangle$.
\end{lemma}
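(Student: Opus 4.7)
The plan is to bootstrap from the model surface $\nu_0$ via the parallel-transport comparison $B := \psi \psi_0^{-1} \to L_k$ of Lemma \ref{AsymptoticParallelTranslation}, and then to exploit the index structure of the transition unipotents of Lemma \ref{Unipotents} to show that the projective limit of $\nu_q$ is constant across the sub-sector transitions inside each of the three broad intervals of the table.

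First I would write $\nu_q(z) = \psi(z)\,e_0$ in $\mathcal{B}$-coordinates of $\V_{p_0}$ (with $e_0 = u_0$) and factor $\psi = B \psi_0$. Using $\psi_0 = E_0 \mathcal{D} E_0^{-1}$ together with the identity $e_0 = E_0 \bm{v}$ implicit in the derivation of \eqref{ModelSurfaceCoordinateExpression}, this gives $\nu_q(z) = B(z)\,E_0\,\mathcal{D}(z)\,\bm{v}$. In a natural coordinate for $q$ the matrix $\mathcal{D}$ retains the explicit form \eqref{ModelDiagonalTiteca}, and the dominant-exponent analysis of the $c_k$ already carried out for the model (Proposition \ref{ModelSurfaceLimits}) identifies the dominant $E_0$-coordinate of $\mathcal{D}\bm{v}$ as $e_6 \leftrightarrow x_{11}$ on $S_1 \cup S_2 \subset (0,\pi/3)$, $e_5 \leftrightarrow x_9$ on $S_3 \cup S_4 \subset (\pi/3,2\pi/3)$, and $e_4 \leftrightarrow x_7$ on $S_5 \cup S_6 \subset (2\pi/3,\pi)$; the extension from stable Euclidean rays to stable quasi-rays follows from the straight-line homotopy argument already used at the end of the proof of Lemma \ref{AsymptoticParallelTranslation}. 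Feeding $B \to L_j$ then gives the projective limit of $\nu_q$ on $S_j$ as $[L_j\,E_0\,e_{\mathrm{dom}(j)}]$, which after change of basis to $E_{p_0}$-coordinates reads as the symbol $M_j[x_\bullet]$ of the table.

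The heart of the argument is then to verify that the labels $M_1,M_2,M_3$ (respectively $M_3,M_4$ and $M_3,M_5,M_6$) produce a common projective image on the relevant dominant vector, so that the uniform symbol $M_3$ may be used throughout. This reduces to the elementary-matrix bookkeeping $E_{ij}e_k = \delta_{jk}e_i$ familiar from the proof of Lemma \ref{Unipotents}: every generator of $U_1 = \exp(c^{67}E_{67}+c^{73}E_{73}+c^{12}E_{12}+c^{54}E_{54})$ annihilates $e_6$, so $U_1 e_6 = e_6$ and $[L_1 E_0 e_6] = [L_2 E_0 e_6]$; likewise the two generators of $U_2 = \exp(c^{53}E_{53}+c^{62}E_{62})$ annihilate $e_6$, yielding $[L_2 E_0 e_6] = [L_3 E_0 e_6]$. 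The analogous checks on $e_5$ across $\pi/2$ (and across the boundary transitions at $\pi/3, 2\pi/3$) and on $e_4$ across $5\pi/6$ and $2\pi/3$ follow verbatim from the index-shifted form of $U_{2k+1}$ and $U_{2k}$ recorded in Lemma \ref{Unipotents}.

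For the annihilator property, $L_3 \in \Gtwo$ since $\Gtwo$ is closed and $B(s) \in \Gtwo$ along the entire ray, and $E_{p_0}^{-1}E_0 = S^{-1}(H(p_0)^{1/2}H_0^{-1/2})S$ is conjugate to a diagonal matrix whose reciprocal triples sum to zero and hence sits in $\Gtwocomplex$ by Proposition \ref{PropBaragliaBasis}(5); consequently $M_3 \in \Gtwocomplex$, and a short $\hat{\tau}_{\V}$-equivariance check using that $B(s)$ intertwines the real structure shows that $M_3$ is the $\mathcal{B}$-representative of a genuine $\Gtwosplit$-transformation of $\imoct$. Applying $M_3$ to the model annihilator relation \eqref{EigenbasisAnnihilator}, namely $\Ann(x_9) = \spann_\R\langle x_{11}, x_9, x_7\rangle$, then produces $\Ann(p_2) = \spann_\R\langle p_1, p_2, p_3\rangle$. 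The principal obstacle I expect is the middle step: the systematic enumeration, at each intermediate unstable angle, of which $U_k$ from Lemma \ref{Unipotents} governs the transition and the verification that its defining $E_{ij}$-pattern fixes the appropriate dominant basis vector; a secondary obstacle is confirming that $M_3$ really descends to $\Gtwosplit$ rather than remaining only in $\Gtwocomplex$, so that the annihilator statement — a real identity — can be transported from the model.
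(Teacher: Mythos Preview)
Your proposal is correct and follows the same architecture as the paper: factor through the model surface via $B=\psi\psi_0^{-1}\to L_k$, identify the dominant $E_0$-coordinate of $\mathcal{D}\bm v$ on each stable sector, and then use the index structure of the unipotents $U_k$ from Lemma~\ref{Unipotents} to collapse the six sector-limits $M_1,\dots,M_6$ to the three values $M_3[x_{11}],M_3[x_9],M_3[x_7]$. Your checks $U_1\bm e_6=\bm e_6$, $U_2\bm e_6=\bm e_6$, etc., are exactly the paper's.

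There is one genuine simplification in your route compared to the paper. You work directly with $\nu_q=\psi\,u_0=B\,E_0\,\mathcal{D}\,\bm v$ in $\mathcal{B}$-coordinates, which lets you invoke Lemma~\ref{AsymptoticParallelTranslation} immediately. The paper instead passes through the $z$-dependent real frame $E_z=H_q(z)^{-1/2}S$ and writes $[\mathcal{P}_z]_E=(E_{z_0})^{-1}(\psi H_q^{-1/2}H_0^{1/2}\psi_0^{-1})(E_0\mathcal{D})$, which forces an extra step: showing $\psi_0(H_q^{-1/2}H_0^{1/2})\psi_0^{-1}\to\mathsf{id}$ along stable rays via the same $\lambda_{ij}\mathcal{R}_{ij}$ bookkeeping as Lemma~\ref{AsymptoticParallelTranslation}. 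Your route avoids this because $H_q^{1/2}u_0=u_0$ kills the metric factor at the level of the section; the paper's route has the advantage of keeping all objects manifestly in the $\hat\tau$-real frame, which pays off slightly in the edge computation of Lemma~\ref{BoundaryEdges}, but for the present lemma your factorization is cleaner and loses nothing.

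Your ``secondary obstacle'' is not an obstacle. The paper never claims or uses $M_3\in\Gtwosplit$; it only uses $M_3\in\Gtwo^{\C}$. The real statement $\Ann(p_2)=\spann_\R\langle p_1,p_2,p_3\rangle$ follows because the $p_i$ are a priori real (they are projective limits of the $\imoct$-valued map $\nu_q$), so the complex identity $\Ann_\C(p_2)=\spann_\C\langle p_1,p_2,p_3\rangle$ transported by $M_3$ intersects down to the real one. You need not verify any $\hat\tau_\V$-equivariance of $M_3$.
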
 

\begin{proof}
We first express $\sigma(z) = u_0 \in \V_z$ in terms $\tau$-real frame $E$ from \eqref{ReorderedRealBasis}. 
By Proposition \ref{ChangeOfBasis}, we have $u_0 = \frac{1}{\sqrt{6}} (-1, +1, -1, +1, -1, +1, 0)^T =: \bm{v}$ in the frame $E$. Recall the endomorphism $\psi$ expressing $\nabla$-parallel translation in the standard basis. Let $z_0 := 0 \in \C$, which we treat as a distinguished basepoint. Since $\widetilde{E} := \widetilde{\mathcal{F}} E_{z_0}$ is the $\nabla$-parallel translation of $E_{z_0}$, one find the $\nabla$-parallel translation $\mathcal{P}_{z}: (\V^\R)_z \rightarrow (\V^\R)_{z_0}$ is expressed in the frame $E$ by the matrix 
\begin{align}
	[\mathcal{P}_z]_E = (E_{z_0})^{-1} \psi E_z,
\end{align} 
where $E_{z_0}$ denotes the frame $E$ at the origin and $E_z$ the frame at $z$.\footnote{Here, we once again apologize to the reader for the poor choices in notation. 
We emphasize that $E_0 = H_0^{-1/2} S$ is \emph{not} the same thing as $E_{z_0} = (H_q)_{z_0}^{-1/2}S$.} 
We suggestively re-write  
\begin{align}\label{ParallelTransportRewrite1}
 	 [\mathcal{P}_z]_E = (E_{z_0})^{-1} \, (\psi H_q^{-1/2} H_0^{1/2}\psi_0^{-1} ) ( \psi_0 H_0^{-1/2} S) ,
 \end{align} 
 where $H_q $ is the hermitian metric with respect to $q$ and $H_0$ is \eqref{ConstantMetric}, the hermitian metric with respect to $q_0 \equiv 1$. 
 Thus, in the basis $E_{z_0}$ for $\V_{z_0}$, we find $\nu(z)$ is given by 
$\nu(z) =  [\mathcal{P}_z]_E \bm{v}$. 

 Now, suppose that $\gamma: \R \rightarrow \C$ is a stable quasi-ray eventually contained in a stable
 sector $S_{k} \subset \, U$ of a standard half-plane $(U,w)$. We claim that 
 \begin{align}\label{KeyEq}
   \lim_{s \rightarrow \infty}  \left [ \psi H_q^{-1/2} H_0^{1/2}\psi_0^{-1} \right ](\gamma(s)) = \lim_{s \rightarrow \infty} \left [ (\psi \psi_0^{-1}) \psi_0\, (H_q^{-1/2} H_0^{1/2}) \, \psi_0^{-1} \right ](\gamma(s))  = L_k.
  \end{align} 
By Lemma \ref{AsymptoticParallelTranslation}, it suffices to show 
$$ \lim_{s \rightarrow \infty} \left[\psi_0 ( H_q^{-1/2} H_0^{1/2}) \psi_0^{-1} \right ] (\gamma(s)) = \mathsf{id} .$$ 

For the remainder of the proof we conflate $\nu$ with $\nu \circ w^{-1}: \Ha \rightarrow \quadric$ and work in the $w$-coordinate. 
Recall the error term $\tilde{\bm{u}}:= \bm{u} - \bm{u}^-$ as well as the expressions $u_1 = \tilde{u}_1 + \log(c^{5/6}) $ and $u_2 = \tilde{u}_2 + \log(d^{1/6} )$ 
in a natural coordinate as well as $r = e^{u_1 - u_2}$,
$s = e^{ 2u_2}$. We now set $ \tilde{r} = e^{\tilde{u}_1 - \tilde{u}_2}$ and $\tilde{s} = e^{ 2\tilde{u}_2}$. Then one finds 
$$ H_q \, H_0^{-1} = \diag \left ( \frac{1}{\tilde{r} \tilde{s}},  \; \frac{1}{\tilde{r} }, \; \frac{1}{ \tilde{s}}, \; 1,\;  \tilde{s}, \; \tilde{r}, \; \tilde{r} \tilde{s}  \right).$$
Now, in terms of $x_1 = \tilde{u}_1+ \tilde{u}_2$ and $ x_2 = \tilde{u}_1 - 3\tilde{u}_2$, taking an asymptotic approximation, which amounts to taking 
the linear terms, gives us 
\begin{align}\label{Rtilde} 
 (H_qH_0^{-1})^{-1/2} - \mathsf{id} \asymp \widetilde{R} := \diag \left( - \frac{1}{2}x_1 , \; -\frac{1}{4}(x_1- x_2), \; -\frac{1}{4}(x_1+x_2), \; 1, \; \frac{1}{4}(x_1+x_2), \; 
\frac{1}{4}(x_1- x_2),\;  \frac{1}{2} x_1 \right)
\end{align} 

We show that $\lim_{s \rightarrow \infty}  \psi_0 \, \widetilde{R} \, \psi_0^{-1}(\gamma(s)) = 0$ to finish proving \eqref{KeyEq}. 
Recall the expression $\psi_0 = E_0\mathcal{D}E_0^{-1}$ and then write
$$  \psi_0 \, \widetilde{R} \, \psi_0^{-1} =  E_0\mathcal{D}E_0^{-1} \widetilde{R}  E_0 \mathcal{D}^{-1} E_0^{-1}. $$ 
Then a tedious matrix multiplication shows $A:= E_0^{-1} \widetilde{R}  E_0 $ has the following description, for some $\alpha_{ij} \in \C^*$: 
\begin{align}\label{R2Asymptotics_new} 
	A_{ij} = \begin{cases}  \alpha_{ij} \,  x_1 &  i, j \leq 6, \, | i-j | \in \{1,5\} \mod 6 \\  \alpha_{ij} \, x_2&  i, j \leq 6, \, | i-j | \in \{2,4 \} \mod 6\\  0 & i, j \leq 6, \,  | i-j |  \in \{ 0,3 \} \mod 6 \\
		\alpha_{ij} (1-\delta_{ij})\, x_1) & i =7 \; \text{or} \; j =7 \end{cases}.
\end{align}

Recall $\lambda_{ij} = e^{2\alpha s \,(d_i-d_j)}$ as in Lemma \ref{AsymptoticParallelTranslation}.
Then each entry $ \mathcal{D} A_{ij} \mathcal{D}^{-1} = \lambda_{ij}  A_{ij} $ exponentially decays by the stability of the quasi-ray $\gamma$. 
Since $E_0$ is constant, it follows that $\lim_{s \rightarrow \infty} \psi_0 \, \tilde{R} \, \psi_0^{-1}(\gamma(s)) = 0$.

We now relate the projective limit $\lim_{s \rightarrow \infty} [ \nu_q(\gamma(s))]$ to $\lim_{s \rightarrow \infty} [ \nu_0(\gamma(s))]$. 
First, we note that $\psi_0 H_0^{1/2}S = \psi_0 E_0 = E_0 \mathcal{D}$. By equation \eqref{ModelSurfaceCoordinateExpression}, $[\nu_0] =  \mathcal{D} [\bm{v}]$
in the coordinates $E_0$. Hence, combining all the previous steps, we find 
\begin{align}
	\lim_{s \rightarrow \infty} [ \nu(\gamma(s))] &= \lim_{s \rightarrow \infty} [\mathcal{P}_z]_E \, \cdot [\bm{v}](\gamma(s)) \\
		&= \lim_{s \rightarrow \infty} \left [ \, (E_{z_0})^{-1} \, (\psi H_q^{-1/2} H_0^{1/2}\psi_0^{-1} ) ( \psi_0 H_0^{-1/2} S)  \cdot [\bm{v}] \, \right ](\gamma(s)) \\ 
		&= \left[ \, \lim_{s \rightarrow \infty} (E_{z_0})^{-1} \psi H_{q}^{-1/2} H_0^{1/2} \psi_0^{-1}(\gamma(s)) \, \right] \cdot  \left( E_0 \lim_{s \rightarrow \infty} \mathcal{D}[\bm{v}](\gamma(s)) \right) \\
		&= (E_{z_0})^{-1} \, L_k \, E_0 \,\lim_{s \rightarrow \infty} \mathcal{D}[\bm{v}](\gamma(s))  =(E_{z_0})^{-1} \, L_k \, E_0 \,\lim_{s \rightarrow \infty} [\nu_0](\gamma(s)) .
\end{align}
We set 
$$M_k = (E_{z_0})^{-1}L_k E_0 .$$
We now change notation and write $E_0 = (e_i)_{i=1}^7$, as this indexing is simpler going forwards. We write (sloppily) $\bm{e}_i \in \C^7$ for the $i^{th}$ standard basis vector. By Proposition \ref{ModelSurfaceLimits}, the limits $X_k$ of $[\nu]$ along stable quasi-rays in $S_k$, in $E_0$ coordinates, are given by: 
\begin{align}
	\begin{cases}
		X_1 &= M_1[\bm{e}_6], \; \;  X_2 = M_2 [\bm{e}_6 ] \\ 
		X_3 &= M_3[\bm{e}_5 ] , \; \, \; X_4 = M_4 [\bm{e}_5] \\ 
		X_5 &= M_5[\bm{e}_4 ],  \; \, \; X_6 = M_6 [\bm{e}_4] 
	\end{cases} 
\end{align} 
We claim that, in fact, $X_1 = X_2, \, X_3= X_4, \, X_5 = X_6$. These claims are verified by using the unipotents from Lemma \ref{Unipotents}.
For example, $L_2 = L_1 E_0U_1 E_0^{-1}$. Thus, $M_2 = (E_{z_0})^{-1}L_1 E_0 U_1 = M_1 U_1$. Since $U_1(\bm{e}_6) = \bm{e}_6$, we have $X_2 = X_1$. 
More generally, for $k > j$, one inductively finds $M_k = M_{j} \prod_{i=j+1}^{k} U_i$. 
The remaining equalities are proved similarly. To finish the proof of the Lemma, we must show $M_3[\bm{e}_6] = M_2[\bm{e}_6] = X_1$
and $M_3[\bm{e}_4] = M_5[\bm{e}_4] = X_5$. 

Applying $U_2 \mathbf{e}_6 = \mathbf{e}_6$ shows that $X_1 = [M_3 \mathbf{e}_6]$.
Similarly, using that $U_3, U_4 \cdot \bm{e}_4 = \bm{e}_4$, we find $X_5 = [M_5 \textbf{e}_4] = [M_3 U_3U_4 \mathbf{e}_4] = [M_3 \mathbf{e}_4]$.
 
 Thus, the 3 unique limits along stable quasi-rays eventually contained in a sector $S_k$ are given by $p_1 = M_3 [x_{11}], \; p_2= M_3[x_9], \; p_3= M_3 [x_7]$,
 since $\bm{e}_6,  \bm{e}_5,  \bm{e}_4$ correspond, respectively, to $ x_{11},  x_{9},  x_{7}$ under our identifications (recall the column vectors in \eqref{ReorderedRealBasis}). Now, as $M_3 \in \Gtwo$ and $\Ann(x_9) = \spann_{\R} \langle x_{7}, x_{9}, x_{11} \rangle$ by \eqref{EigenbasisAnnihilator} we conclude that $\Ann(p_2) = \spann_{\R} \langle p_1, p_2, p_3 \rangle$.
  \end{proof}

We now detect the edges between the stable limits $p_i$ found in Lemma \ref{BoundaryVertices}. 
The argument here is a combination of the arguments from Lemma \ref{Unipotents} and from Lemma \ref{BoundaryVertices}. 

\begin{lemma}[Boundary Edges]\label{BoundaryEdges} 
Let $q \in H^0(\K^6_\C)$ be a monic polynomial sextic differential. Let $(w, U)$ be a natural coordinate for $q$. 
Let $p_1, p_2, p_3$ be the stable limits of $\nu_q$ in $(w,U)$ as in Lemma \ref{BoundaryVertices}. 
Denote $e_{k}$ for the projective line $\spann \langle p_k, p_{k+1} \rangle$ and $e_k^\circ$ for the interior of $e_k$. Then the following limits hold. In particular,
there is a projective line segment $L_i$ with endpoints $p_i, p_{i+1}$ such that $L_i \subset \partial_{\infty} \nu_q$. 
 \begin{center}
			\begin{tabular}{||c c c||} 
				 \hline
			\textbf{Type of path} $\gamma$ & \textbf{Direction} $\theta$ & \textbf{Projective limit} of $\nu_q$ \\ [0.5ex] 
			 \hline\hline
			 Ray $\gamma(s) = se^{i \theta} + iy_0$ & $ \theta = \frac{\pi}{3}$ & $ p_{\gamma}: = M_3 [x_{11}-e^{\alpha \, y_0} x_9] \in e_1^{\circ} $ \\ 
			 \hline
		 	Ray $\gamma(s) = se^{i \theta} + iy_0$ & $ \theta  = \frac{2\pi}{3}$ & $ p_{\gamma} := M_3 [-x_{9} + e^{-\alpha \, y_0} x_7]  \in e_2^{\circ} $ \\
			 \hline
		\end{tabular}
	\end{center}

\end{lemma}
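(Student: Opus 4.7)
My plan is to mirror the parallel-transport analysis of Lemma \ref{BoundaryVertices}, retaining the identity
\[
\nu_q(z) \;=\; (E_{z_0})^{-1}N(z)\,E_0\,\mathcal{D}(z)\mathbf{v},\qquad N:=\psi H_q^{-1/2}H_0^{1/2}\psi_0^{-1},
\]
and extracting the extra information carried by the critical angle. I handle $\theta=\pi/3$; the case $\theta=2\pi/3$ is symmetric. At $z=\gamma(s)=se^{i\pi/3}+iy_0$, the formulas \eqref{Coefficients} give $c_5-c_6=\alpha y_0$ (constant in $s$) while $c_k-c_6\to-\infty$ for $k\notin\{5,6\}$. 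Dividing by $e^{c_6}$ yields
\[
\tfrac{1}{e^{c_6}}\,\mathcal{D}(\gamma(s))\mathbf v \;\longrightarrow\; \tfrac{1}{\sqrt6}\,\mathbf w_0,\qquad \mathbf w_0:=\mathbf e_6-e^{\alpha y_0}\mathbf e_5,
\]
and since the 5th and 6th columns of $E_0$ are $x_9$ and $x_{11}$, we have $E_0\mathbf w_0=x_{11}-e^{\alpha y_0}x_9$. The remaining task is to show that $N(\gamma(s))\cdot E_0\mathbf w_0$ converges, and to identify the limit as $L_3\cdot E_0\mathbf w_0$.

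Unlike in the stable case, I do not expect $N(\gamma(s))$ itself to converge along the critical ray: the combinatorial estimate \eqref{CombinatorialEstimate} is saturated at $\arg z=\pi/3$ precisely at positions $(5,3),(6,2)$, so the corresponding entries of the ODE for $N$ (in the $E_0$-basis) fail to decay. These are exactly the positions supporting the unipotent $U_2=\exp(c^{53}E_{53}+c^{62}E_{62})$ of Lemma \ref{Unipotents}. The saving observation is that $U_2$ fixes $\mathbf w_0$: both $E_{53}$ and $E_{62}$ read off the (vanishing) 3rd and 2nd entries of $\mathbf w_0$. Hence the two one-sided limits $L_2\cdot E_0\mathbf w_0$ and $L_3\cdot E_0\mathbf w_0$ already coincide, so it suffices to show that $N\cdot E_0\mathbf w_0$ along $\gamma$ tends to this common value.

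To upgrade this observation to a genuine statement along $\gamma$, I would apply the homotopy argument of Lemma \ref{Unipotents}: set $\eta_s(\tau):=\gamma(s)+i\tau\epsilon s$ for $\tau\in[0,1]$ and a fixed small $\epsilon>0$, so that $\eta_s(1)$ lies on a quasi-ray of angle $\arctan(\sqrt3+2\epsilon)\in S_3$. Then $N(\eta_s(1))\to L_3$ by Lemma \ref{AsymptoticParallelTranslation}, and
\[
N(\gamma(s))E_0\mathbf w_0 \;-\; N(\eta_s(1))E_0\mathbf w_0 \;=\; -\int_0^1\!\tfrac{d}{d\tau}\!\bigl[N(\eta_s(\tau))\bigr]\,E_0\mathbf w_0\,d\tau.
\]
The estimates in the proof of Lemma \ref{Unipotents} bound the integrand by a Gaussian-type contribution at positions $(5,3),(6,2)$ (which in the $s\to\infty$ limit accumulates to the unipotent $U_2$) plus exponentially decaying error terms; when applied to $E_0\mathbf w_0$, the Gaussian piece contributes zero by $U_2\mathbf w_0=\mathbf w_0$, and the error terms tend to zero. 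Therefore $[\nu_q(\gamma(s))]\to M_3[x_{11}-e^{\alpha y_0}x_9]$. As $y_0$ ranges over $\R$, this point sweeps out the open interior $e_1^\circ$ of the projective segment with endpoints $p_1=M_3[x_{11}]$ (as $y_0\to-\infty$) and $p_2=M_3[x_9]$ (as $y_0\to+\infty$); the closed segment $L_1:=e_1$ lies in $\Eintwothree$ since $x_9\cdot x_{11}=0$. The case $\theta=2\pi/3$ is entirely analogous with $\mathbf w_0=e^{-\alpha y_0}\mathbf e_4-\mathbf e_5$, $E_0\mathbf w_0=e^{-\alpha y_0}x_7-x_9$, and the unipotents $U_3,U_4$ both fixing $\mathbf w_0$. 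The chief obstacle will be making the Gaussian/error decomposition precise exactly on the critical line, where pointwise decay of the integrand fails and only the integrated Gaussian width provides control.
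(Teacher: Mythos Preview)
Your proposal is correct and rests on the same pivotal observation the paper uses: the unipotent $U_2=\exp(c^{53}E_{53}+c^{62}E_{62})$ associated to the critical angle $\pi/3$ fixes $\mathbf e_5,\mathbf e_6$, hence the dominant vector $\mathbf w_0=\mathbf e_6-e^{\alpha y_0}\mathbf e_5$. The difference is organizational. The paper does not work with the composite $N=\psi H_q^{-1/2}H_0^{1/2}\psi_0^{-1}$ directly; instead it factors
\[
[\mathcal P_z]_E \;=\; (E_{z_0})^{-1}\,(\psi\psi_0^{-1})\,(\psi_0 H_q^{-1/2}H_0^{1/2}\psi_0^{-1})\,E_0\mathcal D,
\]
handles the metric correction $\psi_0 H_q^{-1/2}H_0^{1/2}\psi_0^{-1}=\mathrm{id}+O(s^{-1/2})$ separately, and writes $\psi\psi_0^{-1}(\gamma(s))=G_1(s)\,g_s(\tfrac{\pi}{12})$ via a circular arc from a nearby stable ray. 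Lemma~\ref{Unipotents} then gives an \emph{explicit} approximation $g_s(t)\approx E_0\,\mathcal U(s,t)\,E_0^{-1}$ where $\mathcal U(s,t)$ is a bounded running unipotent supported on $E_{53},E_{62}$ and therefore fixes $\mathbf e_5,\mathbf e_6$ for \emph{every} $s,t$, not only in the limit. This makes the step you flag as the ``chief obstacle'' immediate: the bounded-but-non-convergent factor acts trivially on the dominant projective direction, so no delicate Gaussian-integral cancellation is needed. Your integral identity for $N$ would work, but requires first establishing boundedness of $N$ along the critical homotopy and disentangling the extra $M^{-1}dM$ term coming from the metric correction inside $N^{-1}dN$; factoring as the paper does removes both issues. (A small citation fix: the fact $N\to L_k$ along stable quasi-rays is established inside the proof of Lemma~\ref{BoundaryVertices}, not in Lemma~\ref{AsymptoticParallelTranslation}, which concerns $\psi\psi_0^{-1}$.)
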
 

\begin{proof} 
Again, we work with the local parametrization $\nu := \nu \circ w^{-1}: \Ha \rightarrow \quadric$. 
We handle the argument for $e_{1}$ between $p_1, p_2$. Thus, we take an unstable ray $\gamma:= \gamma_{y_0}(s) = se^{i \frac{\pi}{3} } + iy_0$ of angle $\frac{\pi}{3}$ in $U$.
We show that for $M_1 = (E_{p_0})^{-1} L_1 E_0$ from Lemma \ref{BoundaryVertices}, 
$$  \lim_{s \rightarrow \infty} [ \nu(\gamma(s)) ] = M_1 [ \bm{e}_6 - e^{\alpha \, y_0} \bm{e}_5] .$$ 
Perturb $\gamma $ to form the stable rays $\gamma_1 $ and $\gamma_2$
by $\gamma_1(s) =s e^{\frac{3\pi i}{12} } + i y_0$ and $\gamma_2(s) = se^{ \frac{5\pi i}{12} } + i y_0$. As in Lemma \ref{Unipotents}, we study $\nu_q$ along the 
circular arc $\eta_s(t) = se^{i ( \frac{3 \pi }{12} + t)} + i y_0$ for $ t \in [0,\frac{\pi}{6}]$, between $\gamma_1(s)$ and $\gamma_2(s)$. We use the same notation
 from the proof of Lemma \ref{Unipotents}. 
Define $G_i(s) := \psi \psi_0^{-1} (\gamma_i(s))$. 
Again, we find that the same construction of the (re-normalized) map $g_s(t) : = G_{1}(s)^{-1} \, \psi\psi_0^{-1}(\eta_s(t))$ for $0 \leq t \leq \frac{\pi}{6}$ satisfies
\begin{align}\label{Asymptotic1}
  g_s(t) = E_0 \, \mathcal{U}_1(s,t) E_0^{-1} + A_1(s), 
  \end{align}  
where $A_1(s) \in O(\, || M_s^0 || \, ) = O(e^{-b \, s})$ for some $b >0$ as long as $s$ is sufficiently large, and 
$$ \mathcal{U}_{1}(s,t) = \exp \left[ \, c^{12}_s(t) \, E_{12} + c^{54}_s(t) \, E_{54} + c^{67}_s(t) \, E_{67} + c^{73}_s(t)\, E_{73}  \right] .$$
We recall that $c^{ij}_s(t) = \int_{0}^{t} \mu^{ij}_s(\tau) d\tau $, where $\mu^{ij}_s(t) \in O( \sqrt{s} e^{ -\frac{\alpha}{2} s(t - \frac{\pi}{12}) } )$ are each integrable functions of $t$,
the integral independent of $s$. 
In particular, $c^{ij}_s( \frac{\pi}{12} ) \in O(1)$. Now, by construction $\eta_{s}(\frac{\pi}{12} ) = \gamma(s)$. We will apply two more asymptotic approximations,
after first recalling the parallel transport matrix of interest.

By re-writing the matrix $[\mathcal{P}_z]_E$ from \eqref{ParallelTransportRewrite1}, we obtain the following expression in preparation for our asymptotic approximation:
\begin{align}\label{ParallelTranslationExpression} 
	[ \mathcal{P}_z]_E =(E_{p_0})^{-1} \, (\psi \psi_0^{-1} ) \, (\psi_0 H_q^{-1/2} H_0^{1/2}\psi_0^{-1} ) (E_0 \mathcal{D} ). 
\end{align} 

Now, $\psi \psi_0^{-1}(\gamma(s)) = G_{1}(s) g_{s}(\frac{\pi}{12} ) $ and by the stability of $\gamma_1$, we have 
$G_1(s) \rightarrow L_1$ as $s \rightarrow \infty$ from Lemma \ref{AsymptoticParallelTranslation}. 
Thus, to compute $\lim_{s \rightarrow \infty} [\nu(\gamma(s))] =\lim_{s \rightarrow \infty} [\mathcal{P}_z]_E \,[\bm{v}]$,
it suffices to compute
\begin{align}\label{EssentialLimit}
	\widetilde{L}_1:= \lim_{s \rightarrow \infty} \left [ g_{s}\left( \frac{\pi}{12} \right) (\psi_0 H_q^{-1/2} H_0^{1/2}\psi_0^{-1} ) (E_0 \mathcal{D} ) [\bm{v} ] \, \right](\gamma(s)).
\end{align} 

The above limit is a product of 3 terms. We previously handled the first term and now we approximate the middle term. 

We claim that 
\begin{align} \label{Asymptotic2} 
	(\psi_0 H_q^{-1/2} H_0^{1/2}\psi_0^{-1} )(\gamma(s)) \asymp \mathsf{id} + A_2, 
\end{align}
where $A_2 \in O(\frac{1}{\sqrt{s}})$.
Start with the same error matrix $\widetilde{R}$ from \eqref{Rtilde} such that $H_q^{-1/2} H_0^{1/2} = \mathsf{id} + \widetilde{R}$.
Then $A_2 = \psi_0\widetilde{R} \psi_0^{-1}$. Recall $\psi = E_0 \mathcal{D} E_0^{-1}$. We find that matrix $A_3 = E_0^{-1}  \widetilde{R} E_0$ has 
$( \mathcal{D} E_0^{-1} \widetilde{R} E_0 \mathcal{D}^{-1} )_{ij} = \lambda_{ij} (A_{3})_{ij} \in O(\frac{1}{\sqrt{s}})$ from the instability of $\gamma$. Since $E_0$ is constant,
it follows that 
$$A_2 = E_0 \mathcal{D} A_3 \mathcal{D}^{-1} E_0^{-1} \in O \left ( \frac{1}{\sqrt{s}} \right).$$

Next, we revisit the model surface $\nu_0(z) =\mathcal{D} \, \bm{v}$, where $\bm{v} =\frac{1}{\sqrt{6}}(-1, +1, -1, +1, -1, +1, 0)^T$, expressed in the basis
$(E_{p_0})$. In particular, by the coordinate expression \eqref{ModelSurfaceCoordinateExpression} and \eqref{Coefficients}, we see that 
$$ \nu_0(\gamma(s)) \asymp e^{2\alpha \sqrt{3} \, s }  \left ( \bm{e}_6 - e^{\alpha \, y_0} \bm{e}_5 \right) + o(e^{2\alpha \sqrt{3}\, s }) \bm{w},$$
where again $\gamma(s) = se^{ i\frac{\pi}{3}} + iy_0$. 
The essential observation here is that $U_1(s,t)$ fixes $\bm{e}_5, \bm{e}_6$ for all $s,t$. Since $\bm{e}_5, \bm{e}_6$ contain the asymptotically
dominant terms of $\mathcal{D} \bm{v} $, the remaining action of the bounded matrix $U_1(s,t) \in O(1)$ on $[\nu] = [\mathcal{D} \bm{v} ]$ is irrelevant once we projectivize $\nu$.
Hence, we can finally compute the limit \eqref{EssentialLimit},
using \eqref{Asymptotic1}, \eqref{Asymptotic2} and $A_1, \, A_2 \in o(1)$. We remark that we do \emph{not} need to compute the limit of $A(s) := [ g_{s}\left( \frac{\pi}{12} \right) (\psi_0 H_q^{-1/2} H_0^{1/2}\psi_0^{-1}  ) (E_0 \mathcal{D} ) ](\gamma(s))$ as $s \rightarrow \infty$; we only need the projective limit of $A(s) \cdot [\bm{v}]$ as $s \rightarrow \infty$. 
Now, observe that 
\begin{align}
	\widetilde{L}_1 &= \lim_{s \rightarrow \infty}  \left [ g_{s}\left( \frac{\pi}{12} \right) (\psi_0 H_q^{-1/2} H_0^{1/2}\psi_0^{-1} ) (E_0 \mathcal{D} ) [\bm{v} ] \, \right](\gamma(s)) \\
	 &= \lim_{s \rightarrow \infty} \left ( \, \left(\, E_0 \, \mathcal{U}_1\left(s, \frac{\pi}{12} \right) E_0^{-1} + A_1 \, \right)( \mathsf{id} + A_2 ) (E_0 )\left[ e^{2\alpha \sqrt{3} s } ( \bm{e}_6 - e^{\alpha \, y_0} \bm{e}_5  ) + o(e^{2\alpha \sqrt{3} \, s}) \bm{w} \right]  \, \right) \\
		&= \lim_{s \rightarrow \infty} \left ( \, \left (E_0 \, \mathcal{U}_1\left(s, \frac{\pi}{12} \right) + o(1) \right) \left[ ( \bm{e}_6 - e^{\alpha \, y_0} \bm{e}_5  ) + o(1) \bm{w} \right]
		   \, \right ) = E_0 [ \bm{e}_6 - e^{\alpha \, y_0} \bm{e}_5].
	\end{align}  
Since $\lim_{s \rightarrow \infty} [\nu(\gamma(s))] = (E_{p_0})^{-1} L_1 \widetilde{L}_1$, we then immediately find 
$$  \lim_{s \rightarrow \infty} [ \nu(\gamma(s)) ] = M_1 [ \bm{e}_6 - e^{\alpha \, y_0} \bm{e}_5] .$$ 
Hence, as $y_0$ varies, the limits $p(y_0) := \lim_{s \rightarrow \infty} [\nu(\gamma_{y_0}(s)) ]$ saturate the interior $e_{1}^{\circ}$ of a projective line segment between $p_1$ and $p_2$.
The argument for the edge $e_2$ is very similar, so we omit it. However, we note that the same proof shows that for the unstable curve
$\gamma_{y_0, 2}(s) = se^{\frac{2\pi i}{3}} + i y_0$, we have $\lim_{s \rightarrow \infty} [ \,\nu(\gamma_{y_0, 2}(s)) \, ] = M_2 [- \bm{e}_5 +e^{-\alpha \, y_0} \bm{e}_4]$ 
by Proposition \ref{ModelSurfaceLimits}. 

To complete the proof, we show that we can replace $M_2, M_1$ in the unstable limits of angles $\frac{2\pi}{3}, \frac{\pi}{3}$,
respectively, with $M_3$. Recall $M_j = M_i \, \prod_{l=i+1}^j U_l$ for $j > i$. We again recall that the unipotents $U_1, U_2$ both fix $\bm{e}_5, \bm{e}_6$, so that $ M_1[ \bm{e}_6 - e^{\alpha \, y_0} \bm{e}_5] = M_3[ \bm{e}_6 - e^{\alpha \, y_0} \bm{e}_5] $. Similarly, $U_2$ fixes $\bm{e}_4$. Hence, the table of limits in the statement of the lemma hold
after recalling that $ (\bm{e}_i)_{i=1}^7 $ corresponds to $E_{p_0} = (x_1, \, x_3, \, x_5, \, x_7, \,x_9, \, x_{11}, \, x_0)$. 
  \end{proof} 

We retain the hypotheses and notation of the proof.
We now explain that as a corollary to the proof, the projective limit of $\nu$ does \emph{not} change when we cross
the unstable angles $\theta \in \{ \frac{\pi}{6}, \frac{3\pi}{6}, \frac{5\pi}{6} \}$, despite the fact that the limit of $\psi \psi_0^{-1}$ does change. 
Let $(w, U)$ be a natural coordinate for $q$. Define $\gamma_{\theta}(s) : = se^{i\theta}$ in $\Ha$, identified with $U$.
We explain the case of $\frac{\pi}{6}$ as a model. By Lemma \ref{Unipotents}, the unipotent $U_1$ at this angle satisfies $U_1 \cdot \bm{e}_6 = \bm{e}_6$. 
Then we can apply the same asymptotics from the Lemma \ref{BoundaryEdges}, except now the model surface
satisfies $[\nu_0(\gamma_{\theta}(s))] \asymp [ \bm{e}_6+ o(1) \bm{w} ]$ along the ray $\gamma$ at height $\frac{\pi}{6}$. The same argument then shows that
$\lim_{s\rightarrow \infty} [ \, \nu( \gamma_{\theta}(s))  \, ]  = p_1 $. 
In fact, the same argument can be applied to the unstable curves $\gamma_{\theta}(s) : = se^{i\frac{\pi}{6}} + iy_0$ for any $y_0 \in \R$.
Applying this argument to the other unstable angles $\theta \in \{ \frac{\pi}{6}, \frac{3\pi}{6}, \frac{5\pi}{6} \}$, we find that 
$ \lim_{s\rightarrow \infty} [ \, \nu( \gamma_{\frac{3\pi}{6}}(s))  \, ] = p_2$ and $ \lim_{s\rightarrow \infty} [ \, \nu( \gamma_{\frac{5\pi}{6}}(s))  \, ]= p_3$
for any $q$-rays of angle $\frac{3\pi}{6}$ and $\frac{5\pi}{6}$, respectively. \\

Thus, we have computed the projective limit of $\nu$ along all $q$-rays eventually
contained in half-plane, stable or unstable, with the exception of angle $0$ or $\pi$, which we handle below. We now prove the main theorem. 

\begin{theorem}[Annihilator Polygon]\label{THM:AnnihilatorPolygon}
Let $q \in H^0(\K_\C^6)$ be a polynomial and $\nu_q: \C \rightarrow \quadric$ be its associated almost-complex curve. Then 
$\partial_{\infty} \nu_q$ is an annihilator polygon with $\deg q + 6$ vertices. 
\end{theorem}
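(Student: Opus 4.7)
The plan is to assemble the polygon from local data on each standard $q$-half-plane and then use the combinatorics of the half-plane cover to count vertices and verify the annihilator property globally. Let $n = \deg q$. Invoke Lemma \ref{StandardHalfPlanes} to fix a cover $\{(w_k, U_k)\}_{k=1}^{n+6}$ of $\C \setminus K'$ by $q$-half-planes, where $K'$ is a compact set containing the zeros of $q$. In each $U_k$, Lemma \ref{BoundaryVertices} produces three stable limits $p_{1,k}, p_{2,k}, p_{3,k}$ of $\nu_q$ along stable quasi-rays in the three large sectors $(0, \tfrac{\pi}{3})$, $(\tfrac{\pi}{3}, \tfrac{2\pi}{3})$, $(\tfrac{2\pi}{3}, \pi)$ of $w_k$-coordinates, together with the local annihilator relation $\mathsf{Ann}(p_{2,k}) = \spann_\R \langle p_{1,k}, p_{2,k}, p_{3,k}\rangle$. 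Lemma \ref{BoundaryEdges} shows that the unstable rays in $U_k$ at angles $\tfrac{\pi}{3}$ and $\tfrac{2\pi}{3}$ (as those rays vary vertically) sweep out a projective line segment from $p_{1,k}$ to $p_{2,k}$ and from $p_{2,k}$ to $p_{3,k}$ respectively, each lying entirely in $\partial_\infty \nu_q$.

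The second step is to glue these local data across overlaps. Property (5) of Lemma \ref{StandardHalfPlanes} says that on $U_k \cap U_{k+1}$ the transition is $w_{k+1} = \zeta^{-1}w_k + c$ with $\zeta = e^{2\pi i/6}$, i.e.\ a rigid rotation by $-\tfrac{\pi}{3}$. Thus the model quasi-ray of angle $\tfrac{5\pi}{12}$ in $w_k$ (giving $p_{2,k}$) coincides with the quasi-ray of angle $\tfrac{\pi}{12}$ in $w_{k+1}$ (giving $p_{1,k+1}$), so $p_{2,k} = p_{1,k+1}$; similarly $p_{3,k} = p_{2,k+1}$. Hence if we enumerate $q_j := p_{1,j}$ for $j \in \Z_{n+6}$, each index $j$ appears as the first, second, and third vertex of the half-planes $U_j, U_{j-1}, U_{j-2}$ respectively, and in particular the sequence $(q_j)_{j \in \Z_{n+6}}$ is well-defined and cyclically closes up, yielding exactly $n+6$ vertices. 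The projective edges from Lemma \ref{BoundaryEdges} concatenate consistently into a piecewise linear curve $\Delta$ with these vertices.

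Third, I will verify that $\Delta = \partial_\infty \nu_q$. Part (6) of Lemma \ref{StandardHalfPlanes} guarantees that every $q$-ray is eventually contained in some $U_k$, so its projective limit under $\nu_q$ is computed by the results of Section \ref{ParallelTransport} inside that half-plane. The discussion following Lemma \ref{BoundaryEdges} on limits at the intermediate unstable angles $\tfrac{\pi}{6}, \tfrac{\pi}{2}, \tfrac{5\pi}{6}$ (where the unipotents $U_i$ fix the relevant basis vector $\mathbf{e}_\bullet$) shows that rays in those directions still limit to one of $p_{1,k}, p_{2,k}, p_{3,k}$, so no additional boundary points appear. The local annihilator relation from Lemma \ref{BoundaryVertices} gives $\mathsf{Ann}(q_j) = \spann_\R \langle q_{j-1}, q_j, q_{j+1}\rangle$ at every vertex; this is exactly the annihilator property globally.

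The main obstacle is step two: verifying unambiguously that the vertices from different half-planes glue into a cyclic polygon with the correct count. Concretely, one must check that the identification $p_{2,k} = p_{1,k+1}$ is forced by the coordinate-independence of the projective limit of $\nu_q$ along a fixed quasi-ray, and then perform the combinatorial bookkeeping that shows going once around the cover adds exactly one new vertex per half-plane, yielding $n+6$. Once this identification is in hand, the local $\mathsf{Ann}$ relations from Lemma \ref{BoundaryVertices}, valid in each $U_k$, automatically become global relations on consecutive triples of $(q_j)_{j \in \Z_{n+6}}$, completing the proof.
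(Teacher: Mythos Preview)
Your proposal is correct and follows essentially the same approach as the paper: assemble the local vertex and edge data from Lemmas \ref{BoundaryVertices} and \ref{BoundaryEdges} in each half-plane of the cover from Lemma \ref{StandardHalfPlanes}, then glue across overlaps to obtain a cyclic annihilator polygon with $n+6$ vertices. The only cosmetic difference is that the paper verifies the identifications $p_{2,k-1}=p_{1,k}$ and $p_{3,k}=p_{2,k+1}$ by tracking the explicit Euclidean rays $\alpha_k(s)=se^{\frac{2\pi i}{n+6}(k+\frac{1}{4})}$ in the $z$-coordinate (properties (2) and (4) of Lemma \ref{StandardHalfPlanes}), whereas you argue directly via the transition map in property (5); both are equivalent.
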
 

\begin{proof}
Set $n:= \deg q$ and we use the family $\{ (U_k, w_k) \, \}_{k=1}^{n+ 6}$ of $q$-half-planes from Lemma \ref{StandardHalfPlanes}, where $\bigcup_{i=1}^{n+6} U_i$
covers $\C$ up to a compact set. Let $\gamma: \R_{+} \rightarrow \C$ be a $q$-ray. 
For $s_0$ sufficiently large, we have $\gamma(s) \in U_j$ for some $j$ also by Lemma \ref{StandardHalfPlanes}.

Next, we discuss the compatibility of the local pictures. To find $\partial_{\infty} \nu_q$, it suffices to compute the 
limit of $\nu := \nu_q$ along $q$-rays eventually contained a half-plane $(w_k, U_k)$. On the other hand, we have already done this, except for $q$-rays
of angle 0 or $\pi$. However, in these cases, such $q$-rays are also (eventually) contained in the half-planes $U_{k-1}$ or $U_{k+1}$, respectively, 
where they now have angle $\theta \in (0, \pi)$. We now first discuss the limits along stable $q$-rays. 
By Lemma \ref{BoundaryVertices}, we have exactly three stable limits $(p_{1, k}, p_{2,k}, p_{3,k} )$ in the $q$-half-plane $(w_k, U_k)$. These 
 limits can be found by taking the limit of any stable ray or quasi-ray of angle $\frac{\pi}{12}, \frac{5\pi}{12}, \frac{11\pi}{12}$, respectively, in $U_k$. 
We are led to define the Euclidean rays $\alpha_k$ in the standard $z$ coordinate for $\C$ by $\alpha_{k}(s) :=se^{ \frac{2\pi i \,}{n+6} \left(k + \frac{1}{4} \right) } $. 
By Lemma \ref{StandardHalfPlanes}, we have that $\alpha_{k-1}, \alpha_{k}, \alpha_{k+1}$ are stable quasi-rays eventually contained in $U_k$, each of respective angles $\frac{\pi}{12}, \frac{5\pi}{12}, \frac{11\pi}{12}$ in the $w_k$-coordinate. 
We define $ \gamma_{1,k} := w_k \circ \alpha_{k-1}, \;  \gamma_{2,k} := w_k \circ \alpha_{k}, \; \gamma_{3,k} := w_k \circ \alpha_{k+1}$
and write $\nu_k := \nu \circ w_k^{-1} : \Ha \rightarrow \quadric$ for the local parametrization of $\nu$ in the $w_k$-coordinate. 
In the $w_k$-coordinate, we see the picture from Figure \ref{Fig:NaturalHalfPlaneRays}, except each $\gamma_i$ is now a quasi-ray. 
Hence, for $i \in \{1,2,3\}$, we have 
\begin{align}
	 \lim_{s \rightarrow \infty} [\nu_k( \gamma_{i, k}(s)) ] = p_{i, k }. 
\end{align} 
On the other hand, we then see for $s$ large enough that $\alpha_{k-1}(s) \in U_k \cap U_{k-1}$, we have 
$$ \nu_{k-1}(\gamma_{2,k-1}(s)) = \nu(\alpha_{k-1}(s)) = \nu_{k}(\gamma_{1, k}(s)).$$
 Hence, $ p_{2,k-1} = p_{1, k}$. Similarly, one finds 
that $p_{3, k} = p_{2, k+1}$. We now denote $p_i := \lim_{s \rightarrow \infty} [ \nu(\alpha_i(s)) ] = p_{2, i}$. Then the 
multiset of $q$-stable limits is given by $L = \{ p_i \}_{i=1}^{n+6}$. Since $ p_{i+1} = p_{3, i}, \; p_{i-1} = p_{2,i}$, with cyclic indices mod $n$,
we have that $\Ann(p_i) = \spann_{\R} \langle p_{i-1}, p_{i}, p_{i+1} \rangle$ by Lemma \ref{BoundaryVertices}. 

If $\gamma$ is an unstable $q$-ray eventually contained in $U_k$, then 
$\lim_{s \rightarrow \infty} [\nu(\gamma(s))] \in L_{i,k}$, where 
 $L_{i,k}$ is the projective line segment with endpoints $p_{i,k}, p_{i+1,k}$ described in Lemma \ref{BoundaryEdges}.
Define $\Delta $ to be the union of the projective line segments $L_{i,k}$. 
Then we have shown $\Delta =\partial_{\infty}\nu$. The polygon $\Delta$ is an annihilator polygon with $n +6$ vertices, with vertex set
$(p_{2,k })_{k=1}^{n+6}$. 
 \end{proof} 

\section{Future Directions} 

We now discuss how Theorem \ref{THM:AnnihilatorPolygon} allows us to define a map of moduli spaces. 
Define $\mathsf{TP}_{k} $ as the moduli space of \emph{marked} annihilator polygons with $k$ vertices up to the $\Gtwosplit$-action, where 
$(\Delta, p_0) \sim (\Delta', q_0)$ when there exists $\varphi \in \Gtwosplit$ such that $ \varphi(\Delta) = \Delta'$ and $\varphi(p_0) = q_0$.
We may also define $\mathsf{TS}_{k} := $ the vector space of monic, centered polynomial sextic differentials of degree $k$. 
Here, a polynomial $q(z) = \sum_{i=0}^N c_i z^i$ is centered when $c_1 = 0$, i.e., the sum of the roots of $q$ is zero. 
Then Theorem \ref{THM:AnnihilatorPolygon} allows us to define a map $\alpha: \mathsf{TS}_{k} \rightarrow \mathsf{TP}_{k+6}$ by
$\alpha( q) = ( \partial_{\infty} \nu_q, p_0 )$, where $ p_0 := \lim_{s \rightarrow \infty} \nu_q( se^{i\theta_0})$ and $\theta_0 = \frac{2 \pi}{k+6}\left(1 + \frac{1}{4} \right) $.
Here, we use that $\gamma(s) = se^{i \theta_0}$ is a stable $q$-quasi-ray for \emph{any} monic polynomial sextic differential $q$ by Lemma \ref{StandardHalfPlanes}.

A straightforward argument shows that $\mathsf{TS}_{k} \cong_{\mathbf{Top}} \R^{2(k-1)}$. 
Next, one counts that the \emph{generic} locus in $\mathsf{TP}_{k+6}$ is also of dimension $2(k-1)$. 
Here, by \emph{generic} we mean that $d_3(p_i, p_{l}) = 3$ if the induced distance between $p_i, p_l$ in the graph structure
of the polygon is $\geq 3$. Intuitively, this means the points are in as general condition as possible while still satisfying the annihilator condition.
 We denote the moduli space of all such marked polygons, up to the $\Gtwosplit$-action, by 
$\mathsf{TP}^{\mathsf{gen}}_{k+6}$. As proved in Lemma \ref{GenericPoylgonModuliSpace}, the space $\mathsf{TP}^{\mathsf{gen}}_{k+6}$ consists of a (finite) collection of disjoint cells of real dimension $2(k-1)$. 
Without this demand of non-degeneracy, there are lower dimensional strata in the moduli space $\mathsf{TP}_{k+6}$. 
In particular, this dimension count motivates the question of
what the image of $\alpha$ is and furthermore whether $\alpha$ is a homeomorphism onto
one of the $2(k-1)$-cells of $\mathsf{TP}^{\mathsf{gen}}_{k+6}$. A problem here is that the techniques we have used seem to only describe the local structure of the polygon,
yet the generic condition is a global one. 

The question of describing the image of $\alpha$ is further motivated by the related work in \cite{DW15, TW25, Tam19}, where 
similar (geometric) homeomorphisms were realized between moduli spaces of holomorphic differentials and
moduli spaces of polygons by taking the asymptotic boundary $\Delta := \partial_{\infty}(\phi_q)$ of geometric objects $\phi_q$ associated to $q$.
The role played by $\Gtwosplit$ was instead filled by other rank two semi-simple real Lie groups: $\mathsf{SL}_3\R, \, \mathsf{SO}_0(2,3), \, \mathsf{PSL}_2\R \times \mathsf{PSL}_2\R$ in each respective paper. These results lead us to conjecture: 

\begin{conjecture} 
The map $\alpha:  \mathsf{TS}_{k} \rightarrow \mathsf{TP}_{k+6}$ has image in $\mathsf{TP}^{\mathsf{gen}}_{k+6}$ and is a homeomorphism onto its image.
\end{conjecture}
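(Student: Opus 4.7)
The plan is to establish the conjecture by verifying three properties of $\alpha$ in turn -- continuity, injectivity, and properness -- from which invariance of domain will yield that $\alpha$ is a homeomorphism onto its image; genericity of that image will be addressed separately and is where I expect the most difficulty.

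For continuity, I would exploit continuous dependence in the $\g_2$ affine Toda equations. The sub and super-solutions $\bm{u}^\pm_q$ constructed in Lemmas \ref{SubSolution} and \ref{SuperSolution} depend continuously on $q \in \mathsf{TS}_k$ (they are explicit formulas in $|q|$ and $|q_z|$), so by the uniqueness of complete solutions (Theorem \ref{UniquenessTheorem}) together with standard elliptic regularity for the system \eqref{HitEuc}, the map $q \mapsto \bm{u}_q$ is continuous in $C^\infty_{\mathrm{loc}}$. The $\Gtwosplit$-normalized Toda frame $\hat{\mathcal{F}}_{\nu_q}$ then varies continuously with $q$, and hence so does $\nu_q$. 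The asymptotic estimates of Section \ref{ErrorEstimates} and the parallel-transport analysis of Section \ref{ParallelTransport} are uniform in $q$ over compact families, so the vertex map $q \mapsto (p_1(q), \ldots, p_{k+6}(q)) \in (\Eintwothree)^{k+6}$ is continuous, yielding continuity of $\alpha$.

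For injectivity, I would invoke the recovery formula $q = C\,(\nu_z \times \nu_{zz}) \cdot \nu_{zzz}$ established in Section \ref{HarmonicMapSequence}. If $\alpha(q_1) = \alpha(q_2)$, after applying the unique $\Gtwosplit$-gauge from the marking we may assume $\partial_\infty \nu_{q_1} = \partial_\infty \nu_{q_2}$ as ordered configurations, with matching basepoints. The task is then to upgrade asymptotic equality to global equality $\nu_{q_1} = \nu_{q_2}$. Here I would use the harmonic lift $G_{\nu} : \C \to \mathscr{Y}$ of Theorem \ref{THM:GeometricCorrespondence}: the asymptotic boundary in each stable sector determines the stable limit $L_k \in \Gtwosplit$ of $\psi\psi_0^{-1}$ (Lemma \ref{AsymptoticParallelTranslation}), and the edge segments together with the exponential decay rates from Lemma \ref{FundamentalAsymptotic} recover the leading Toda data. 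Combined with the uniqueness of Section \ref{Uniqueness}, this should force $\hat{\mathcal{F}}_{\nu_{q_1}} = \hat{\mathcal{F}}_{\nu_{q_2}}$ globally, whence $\nu_{q_1} = \nu_{q_2}$ and the recovery formula gives $q_1 = q_2$.

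For properness, suppose $q_n \to \infty$ in $\mathsf{TS}_k$ while $\alpha(q_n)$ converges in $\mathsf{TP}_{k+6}$. The centering condition fixes translations and prevents the zeros of $q_n$ from drifting off together; divergence must therefore come from either one zero escaping to infinity or two zeros colliding. In the first case, the collection of standard half-planes of Lemma \ref{StandardHalfPlanes} separating the vertices gets stretched, and the angular data governing the unipotent transitions of Lemma \ref{Unipotents} degenerates, forcing two consecutive vertices of $\alpha(q_n)$ to coalesce. In the second case, two neighboring standard half-planes merge, again producing vertex coalescence or loss of the annihilator property in the limit. Either outcome contradicts convergence in $\mathsf{TP}_{k+6}$. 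Combined with continuity and injectivity, properness gives by invariance of domain that $\alpha$ is a homeomorphism onto its image; a dimension count then shows that image lies in the top stratum.

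The main obstacle will be showing $\mathrm{image}(\alpha) \subseteq \mathsf{TP}^{\mathsf{gen}}_{k+6}$, since the annihilator property we established in Theorem \ref{THM:AnnihilatorPolygon} is a purely local condition on consecutive vertex triples, whereas genericity demands the global non-degeneracy $d_3(p_i, p_j) = 3$ for $d_{\mathsf{cyclic}}(i,j) \geq 3$. A natural strategy is to translate $d_3(p_i, p_j) < 3$ into the existence of a globally defined null flag in $\V^\R$ -- obtained by $\nabla$-parallel transport of the vertices back to the origin -- whose existence would contradict the linear fullness of $\nu_q$ for non-constant $q$ (Proposition \ref{LinearlyFull}). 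However, the asymptotic methods of Sections \ref{ErrorEstimates}--\ref{ParallelTransport} only control vertices within a fixed half-plane; comparing distant vertices requires understanding $\nabla$-parallel transport along paths that traverse many unstable rays, and controlling the cumulative effect of the unipotent transitions of Lemma \ref{Unipotents} appears to require a genuinely global argument beyond the tools developed here, which is why we must presently leave this as a conjecture.
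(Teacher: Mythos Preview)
The statement you are attempting to prove is not a theorem in the paper but a \emph{conjecture}: the paper explicitly leaves it open, motivated by a dimension count (Lemma \ref{GenericPoylgonModuliSpace}) and by analogy with the lower-rank cases \cite{DW15, Tam19, TW25}. There is therefore no proof in the paper to compare your proposal against. You appear to recognise this yourself in your final paragraph, where you concede that the genericity assertion ``must presently [be left] as a conjecture''; but your earlier paragraphs on continuity, injectivity, and properness are written as if they establish those parts, and they do not.

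The continuity sketch is reasonable in outline but would require genuine work to make uniform: the super-solution constant $A$ in Lemma \ref{SuperSolution} depends on $q$, and the half-plane decomposition of Lemma \ref{StandardHalfPlanes} changes discretely as zeros of $q$ move, so uniform control of the asymptotic limits over compact families of $q$ is not automatic. Your injectivity argument has a real gap: knowing $\partial_\infty \nu_{q_1} = \partial_\infty \nu_{q_2}$ gives you equality of the \emph{projectivised} stable limits $M_k[x_j]$, not of the group elements $L_k \in \Gtwo$ themselves, and the unipotent transitions of Lemma \ref{Unipotents} depend on the integrals $c^{ij}$ which are not visibly recoverable from the polygon alone. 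The step ``this should force $\hat{\mathcal{F}}_{\nu_{q_1}} = \hat{\mathcal{F}}_{\nu_{q_2}}$ globally'' is precisely the hard inverse problem that the analogous papers \cite{DW15, TW25} devote substantial effort to, and it is not addressed by the recovery formula $q = C(\nu_z \times \nu_{zz})\cdot\nu_{zzz}$, which presupposes you already have $\nu$. Your properness sketch is likewise heuristic: the claim that diverging zeros force vertex coalescence is plausible but unproven here, and in the related settings this step typically requires a compactness theorem for the geometric objects. Finally, your own diagnosis of the genericity obstruction --- that the methods of Sections \ref{ErrorEstimates}--\ref{ParallelTransport} are local to a half-plane and cannot compare vertices separated by many unstable rays --- is exactly the difficulty the paper flags when it says ``the techniques we have used seem to only describe the local structure of the polygon, yet the generic condition is a global one.''
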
 

\appendices
\section{$\g_2$ Lie Theory} 

\subsection{Root Space Decomposition} \label{RootSpace} 

We fix the Cartan subalgebra (CSA) $\mathfrak{h}$ of $\g_2$ given by the set of diagonal matrices in the basis \eqref{BaragliaBasis}. Each element $X \in \mathfrak{h}$ is of the form 
\begin{align}\label{G2_CSA} 
	A = \mathsf{diag}(r+s, r, s, 0, -s, -r, -r - s) \; \; \text{for} \; \; r, s \in \C.
\end{align} 
We can see \eqref{G2_CSA} holds by Proposition \ref{PropBaragliaBasis} part (4).
Infinitesimally, then $ A=\mathsf{diag}(a_i)_{i=3}^{-3}  \in \End(\C^7)$ stabilizes $\Omega$ iff $ a_i + a_j + a_k = 0 $ for pairwise distinct triplets $(i,j,k)$ with $i + j + k = 0$. 
Let $\Delta$ denote the roots of $\mathfrak{h}$. We have a root space decomposition 
$$ \g_2 = \h \oplus \left( \bigoplus_{\delta \in \Delta} \g_{\delta} \right) .$$ 
Here, we fix positive, primitive roots $ \Pi := \{\alpha, \beta\}$ to be 
\begin{align}
		\alpha &:= r^*-s^* \\
		\beta &:= s^*,
\end{align} 
where $*$ indicates the dual map.
The root space of $\g_2$ is shown in Figure \ref{G2Roots}. The highest root $\gamma = 2\alpha + 3 \beta
$ is given by $\gamma = 2r^* + s^*$. We will need that our principal co-roots are 
\begin{align} \label{principal_coroots} 
	t_{\beta} = \mathsf{diag}(1,-1, 2, 0, -2, 1, -1) \; \mathrm{and} \; t_{\alpha} = \mathsf{diag}(0, 1, -1,0, 1, -1, 0 ) .
\end{align} 
Note that $\alpha$ is the long root and $\beta$ the short root. \\

\begin{figure}[ht]
	\centering
	\includegraphics[width=.45\textwidth]{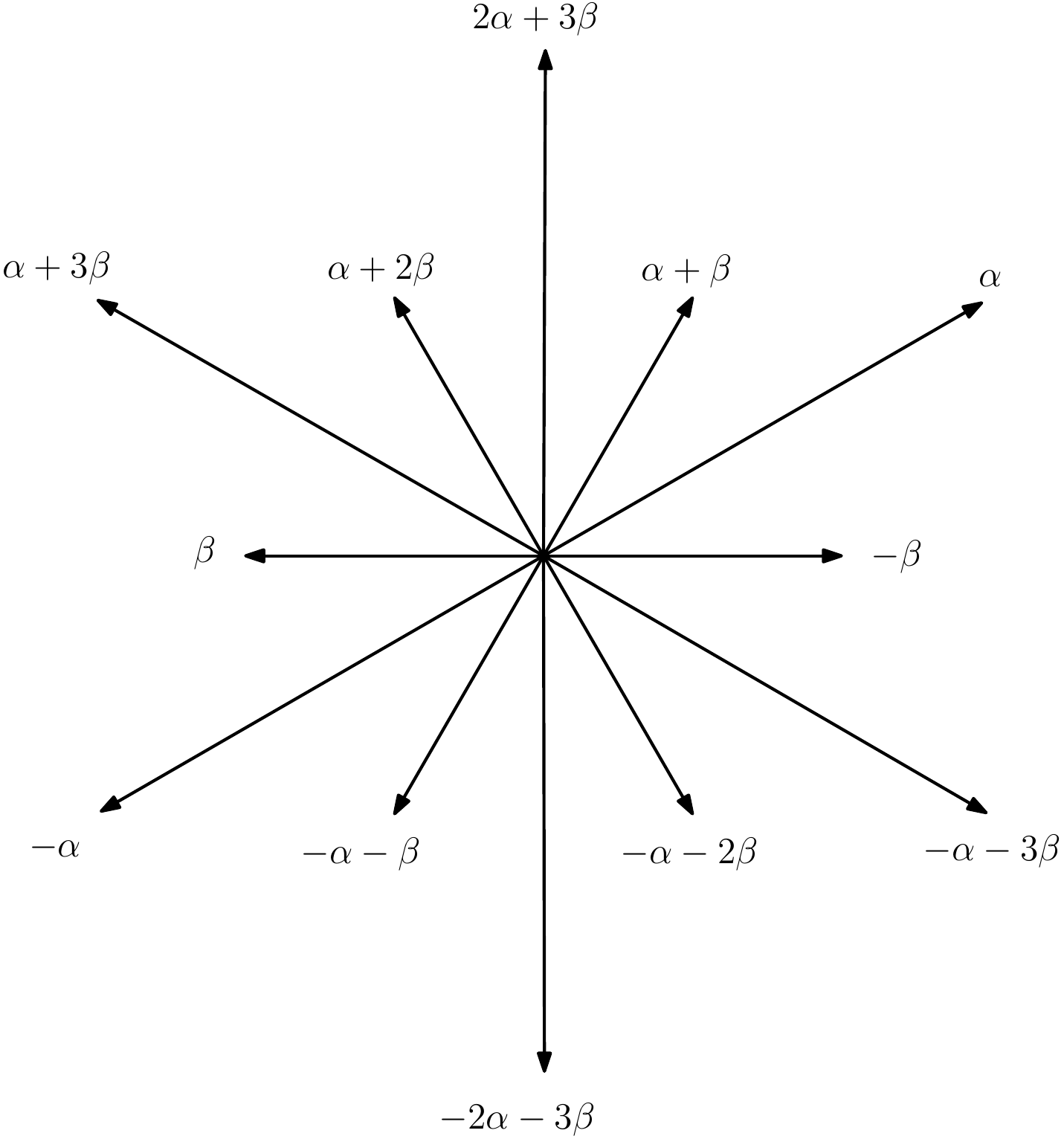}
	\caption{\small \emph{Root space decomposition of $\g_2$.}}
	\label{G2Roots}
\end{figure}

With respect to $\mathfrak{h}, \Delta, \Pi$, we now further choose 
\emph{Chevalley} generators $t_{\delta}, e_{\delta}, e_{-\delta}$ for the Lie algebra, $ \delta \in \Delta^+$.\footnote{As opposed to Kostant 
normalizations whence $\delta(t_{\delta} ) = 1$ instead of $ \delta(t_{\delta} ) = 2$.} These generators $t_{\delta} \in \h, e_{\delta} \in \g_{\delta}$ satisfy following normalizations:
	\begin{align}\label{ChevalleyNormalizations} 
		\begin{cases}
			\; [t_{\delta}, e_{\delta} ] &= 2 e_{\delta} \\
			\; [t_{\delta}, e_{-\delta}] &= -2e_{\delta} \\
			\; [e_{\delta}, e_{-\delta} ] &= t_{\delta}. \\
		\end{cases} 
	\end{align}

\subsection{Chevalley Generators} \label{ChevalleyGenerators} 

Here, we give a complete description of the \emph{Chevalley} generators $e_{\sigma}, e_{-\sigma}, t_{\sigma}$, $\sigma \in \Delta^+$, for our Lie algebra,
satisfying the normalizations \eqref{ChevalleyNormalizations}. \\

Let $E_{ij}$ be the elementary matrix with 1 at entry $(i,j)$, 0's elsewhere and consider the map $\U^k: \C^{7-k} \rightarrow \gl_{7} \C$ by 
$$(a_1, \dots, a_{7-k}) \mapsto \sum_{i=1}^{7-k} a_i E_{k, k+i} .$$ 
That is, $\U^k(a_1, \dots, a_{7-k})$ is the matrix with $a_i$ appearing in order on the $k^{th}$ upper diagonal.
A convenient feature of our matrix model for $\g_2$ is that each root vector $e_{\alpha}$ with $\mathsf{height}(\alpha)=k$ identifies as a matrix $e_{\alpha} \in \mathsf{im}(\U^k)$
Here, we write $\gamma= 2\alpha + 3\beta$ for the highest root. First of all, our principal co-roots are 
$$t_{\beta} = \mathsf{diag}(1,-1, 2, 0, -2, 1, -1) \; \mathrm{and} \; t_{\alpha} = \mathsf{diag}(0, 1, -1,0, 1, -1, 0 ) \; \mathrm{and} \; \; t_{\gamma} = \diag(1,1,0,0,0,-1,-1)$$

The root vectors are 	
\begin{enumerate} 

	\item $e_{\beta} = \begin{pmatrix} 0& 1 & & & & &\\
				      &0 & 0 & & & &\\
			  	      & &0 & \sqrt{-2}& & &\\
			 	      & & &0 & \sqrt{-2} & &\\
				      & & & & 0& 0&\\
				      & & & &&0& 1\\
				      & & & & &&0\\ \end{pmatrix} , \;\;e_{\alpha} = \begin{pmatrix} 0& 0 & & & & &\\
				      &0 & 1 & & & &\\
			  	      & &0 & 0& & &\\
			 	      & & &0 & 0 & &\\
				      & & & & 0& 1&\\
				      & & & &&0& 0\\
				      & & & & &&0\\ \end{pmatrix} $
	\item $e_{\alpha +\beta} = \begin{pmatrix} 0& 0 &1 & & & &\\
				      &0 & 0 & -\sqrt{-2}& & &\\
			  	      & &0 & 0& 0& &\\
			 	      & & &0 & 0 & \sqrt{-2} &\\
				      & & & & 0& 0& -1\\
				      & & & &&0& 0\\
				      & & & & &&0\\ \end{pmatrix} $
	\item $e_{\alpha +2 \beta} = \begin{pmatrix} 0& 0 &0 & \sqrt{-2}& & &\\
				      &0 & 0 &0& 1& &\\
			  	      & &0 & 0& 0& 1&\\
			 	      & & &0 & 0 & 0 & \sqrt{-2}\\
				      & & & & 0& 0& 0\\
				      & & & &&0& 0\\
				      & & & & &&0\\ \end{pmatrix} $
	\item $e_{\alpha +3 \beta} = \begin{pmatrix} 0& 0 &0 & 0& 1& &\\
				      &0 & 0 &0& 0& 0&\\
			  	      & &0 & 0& 0& 0&-1\\
			 	      & & &0 & 0 & 0 & 0\\
				      & & & & 0& 0& 0\\
				      & & & &&0& 0\\
				      & & & & &&0\\ \end{pmatrix} $	
	\item $e_{2\alpha +3 \beta} = \begin{pmatrix} 0& 0 &0 & 0& 0& 1&\\
				      &0 & 0 &0& 0& 0&1\\
			  	      & &0 & 0& 0& 0&0\\
			 	      & & &0 & 0 & 0 & 0\\
				      & & & & 0& 0& 0\\
				      & & & &&0& 0\\
				      & & & & &&0\\ \end{pmatrix} $
\end{enumerate} 

Finally, $e_{-\delta} := \overline{(e_{\delta})}^T$ for each root $\delta \in \Delta^+$.

\subsection{A Principal 3-Dimensional Subalgebra $\fraks$} \label{KostantTheory} 

We now recall the notion of a \emph{principal 3-dimensional subalgebra} \cite{Kos59}. A subalgebra $\mathfrak{s}$ of $\g$ is
called a 3-dimensional subalgebra (3DS) when $\mathfrak{s} \cong \mathfrak{sl}_2\C$. We are interested in a 3DS equipped with distinguished
generators $x, e,\tilde{e}$ that satisfy the normalizations $[x, e] = e, [x, \tilde{e}], = - \tilde{e}, [e, \tilde{e} ] = x$.
In \cite{Kos59}, a distinguished conjugacy class of 3DS is found for any complex simple Lie algebra.
Any member of this conjugacy class is called a \emph{principal 3DS}.  \\

 We now review a recipe of Kostant to construct a convenient P3DS for our purposes. Fix $x \in \h$ to be a \emph{weighting element} distinguished by
  $ [x, e_{\alpha} ] = \mathsf{height}(\alpha) \, e_{\alpha}$ for $e_{\alpha} \in \g_{\alpha}$. 
Re-write $x$ as $ x = q_{\alpha} t_{\alpha} + q_{\beta} t_{\beta}$ for $t_{\alpha}, t_{\beta}$ the co-roots 
 of our principal roots. We then define $e := \sqrt{q_{\alpha}} e_{\alpha} + \sqrt{q_{\beta}} e_{\beta}$
 and $ \tilde{e} := \sqrt{q_{\alpha}} e_{-\alpha} + \sqrt{q_{\beta}} e_{-\beta}$. The Chevalley relations of our root vectors
 guarantee $\fraks$ is a 3DS. Moreover, $q_{\alpha}, q_{\beta} \neq 0$ imply that $\fraks$ is principal (c.f. Corollary 3.7 \& Theorem 5.3 of \cite{Kos59}).
In our matrix model, $x, e, \tilde{e}$ are: 
 \begin{align}\label{Principal3DS} 
	x &= \mathsf{diag}(3, 2, 1, 0, -1, -2, -3) = 3t_{\beta} + 5 t_{\alpha}  \\ 
	e &= \begin{pmatrix} 0& \sqrt{3} & & & & &\\
				      & 0& \sqrt{5} & & & &\\
			  	      & & 0& \sqrt{-6}& & &\\
			 	      & & & 0& \sqrt{-6}& &\\
				      & & & & 0& \sqrt{5}&\\
				      & & & & & 0&\sqrt{3}\\
				      & & & & & &0\\
				     \end{pmatrix} \\     
	\tilde{e} &= \begin{pmatrix} 0&  & & & & &\\
				     \sqrt{3} & 0&  & & & &\\
			  	      & \sqrt{5}&0 & & & &\\
			 	      & & -\sqrt{-6}&0 && &\\
				      & & & -\sqrt{-6}& 0& &\\
				      & & & & \sqrt{5} &0&\\
				      & & & & & \sqrt{3}&0\\
				     \end{pmatrix} 
\end{align}

\subsection{Hitchin's (Cartan) Involution $\sigma$} \label{HitchinTheory} 

We now recall Hitchin's construction of a particular Cartan involution $\sigma$ on a complex simple Lie algebra $\g$. 
The involution $\sigma$ is unique with respect to a fixed P3DS $\fraks= \spann_{\C} <x, e, \tilde{e} >$.
The complex-linear involution $\sigma$ is uniquely constrained by 
\begin{enumerate}
	\item $\sigma(e_i) = - e_i$ for $(e_i)_{i=1}^k$ the centralizer of $e$, where $k = \mathsf{rank} \,\g$.
	\item $\sigma( \tilde{e} ) = -\tilde{e}$. 
\end{enumerate}
Every complex simple Lie algebra $\g$ is given by $\g = \bigoplus_{i=1}^k \mathsf{Sym}^{2m_i}\, V $ as a $\fraks$-module, where $V$ is the standard representation of 
$\SL_2\C$ on $\C^2$ and the positive integers $m_i$ are the \emph{(Kostant) exponents} of $\g$ (Theorem 6.7 \cite{Kos59}).  
Since $\sigma$ preserves Lie bracket, a simple induction shows $\sigma$ is then completely determined by 
$$ \sigma( \, \ad_{\tilde{e}}^{\circ k}(e_i) \, ) = (-1)^{k+1} \ad_{\tilde{e}}^{\circ k}(e_i).$$
Hitchin proved that this involution $\sigma$ is indeed a Lie algebra involution (Proposition 6.1 \cite{Hit92}). 
In $\g_2$, conveniently both Kostant exponents, 1 and 5, are odd. Hence, 
$\g_2 = \fraks \; \oplus \; \mathsf{Sym}^{10} V$. If we decompose $\g_2$ instead into root spaces, then $\sigma$ is equivalently described by
its action on $\h$ and root vectors $e_{\alpha}$:
\begin{align} 
	\sigma( e_{\alpha} )& = (-1)^{\mathsf{height}(\alpha)} e_{\alpha}\\
	 \sigma|_{\h} &= \mathsf{id}_{\h}.
\end{align} 
We emphasize that such a description of $\sigma$, of its action depending only on the height of the root vectors, exists only because all the exponents are odd. \\

We explicitly describe $\sigma$ in this model for $\g_2$. 
Set $$Q = \begin{pmatrix} & & & & & &1\\
					& & & & &1&\\
				   	& & & & 1& & \\
				  	& & & 1& & & \\
				  	& & 1&& & &\\
				   	& 1& & & & & \\
				   	1 & & & & & & \end{pmatrix}$$ and define
				   
\begin{align}
	\sigma'(A) = -Q{A}^TQ.
\end{align} 
Then, in fact, $\sigma = \sigma'$. One can see this by the symmetries of the root vectors. Write $e_{\alpha} = \mathcal{U}^k(a_i)_{i=1}^{7-k}$.
Then by direct inspection, for $k = \mathsf{height}(\alpha)$, reading the root vector backwards, one finds
$$-\mathcal{U}^k(a_i)_{i=7-k}^{1} = (-1)^k e_{\alpha}.$$
On the other hand, $e_{\alpha}^T =\mathcal{U}^{-k}(a_i)_{i=1}^{7-k}$. Hence, by a matrix calculation, we find 
$$ \sigma'( e_{\alpha} ) = -Q(e_{\alpha})^TQ = -Q\mathcal{U}^{-k}(a_i)_{i=1}^{7-k}Q = -\mathcal{U}^k(a_i)_{i= 7-k}^1 =  (-1)^k e_{\alpha}.$$

\subsection{Model Compact \& Split Real Forms} \label{ModelRealForms}

We fix the compact real structure $\rho: \g_2 \rightarrow \g_2$ given by 
\begin{align}\label{ModelCompactRealForm} 
	\rho(A) = - \overline{A}^T.
\end{align} 
An easy calculation shows that $\rho\sigma = \sigma \rho$ so that $\tau = \rho \sigma$ defines another real form. 
By Hitchin, $\tau$ is a split real form (c.f. Proposition 6.1 \cite{Hit92}). Consequently, our model split real structure $\tau$ is given by 
\begin{align}
	 \tau(A) = Q\overline{A}Q.
\end{align} 

The model split real form $\tau$ preserves the distinguished copy of $\R^7$ in $\imoct^\C$ given by $\imoct$. 
We explain this now, by defining another involution $\tau_{V}: \imoct^\C \rightarrow \imoct^\C$ by 
$\tau_V( \mathbf{x} ) = P\overline{\mathbf{x}}$, where we write the vector $\mathbf{x}$ in our given $\imoct^\C$ basis $\mathcal{B}$ from Proposition \ref{PropBaragliaBasis}.
Then note that 
$$ \tau_V(A\mathbf{x}) = \tau(A) \tau_V(\mathbf{x} ),$$
 so that $\tau$ is `multiplicative'. Hence, the copy of $\g_2'$ given by
$\Fix(\tau)$ preserves the subspace $\R^7 = \Fix(\tau_V)$. Indeed, if $A \in \Fix(\tau)$ and $\mathbf{x} \in \Fix(\tau_V)$,
then $Ax \in \Fix(\tau_V)$. \\

Furthermore, the $\R^7$ fixed is $\Fix(\tau) = \imoct $, the fixed points of the complex conjugation $x \mapsto \overline{x}$ on $(\imoct)^\C$. One 
can see this by checking that $\tau$ fixes the real span of the following elements in $(\imoct)^\C:$
$e_0$ and $e_{s} + e_{-s}, \sqrt{-1} (e_s - e_{-s} )$ for $s \in \{1,2,3\}$. Each of these split-octonions
is (up to a real constant) one of our standard multiplication basis $\mathcal{M}$ for $\imoct$ by \eqref{StandardMultiplicationFrame}.
Thus, $\Fix(\tau_V) = \imoct.$

\printbibliography 

\end{document}